\renewcommand{\mod}{\operatorname{mod}\nolimits}
\newcommand{\Fac}{\operatorname{Fac}\nolimits}
\newcommand{\add}{\operatorname{add}\nolimits}
\newcommand{\rad}{\operatorname{rad}\nolimits}
\newcommand{\Hom}{\operatorname{Hom}\nolimits}
\newcommand{\op}{\operatorname{op}\nolimits}
\newcommand{\dpth}{\operatorname{depth}\nolimits}
\newcommand{\CM}{\operatorname{CM}\nolimits}
\newcommand{\id}{\operatorname{id}\nolimits}
\newcommand{\pd}{\operatorname{pd}\nolimits}
\newcommand{\Ker}{\operatorname{Ker}\nolimits}
\newcommand{\gl}{\operatorname{gl.dim}\nolimits}
\newcommand{\Ext}{\operatorname{Ext}\nolimits}
\newcommand{\End}{\operatorname{End}\nolimits}
\newcommand{\Spec}{\operatorname{Spec}\nolimits}
\newcommand{\DTr}{\operatorname{DTr}\nolimits}
\newcommand{\ord}{\operatorname{ord }\nolimits}
\newcommand{\lar}{\longrightarrow}
\newcommand{\ul}{\underline}
\newcommand{\ol}{\overline}
\newcommand{\xto}{\xrightarrow}
\newcommand{\ga}{\Gamma}
\newcommand{\la}{\Lambda}
\newtheorem{theorem}{Theorem}[section]
\newtheorem{corollary}[theorem]{Corollary}
\newtheorem{definition}[theorem]{Definition}
\newtheorem{lemma}[theorem]{Lemma}
\newtheorem{proposition}[theorem]{Proposition}
\newcommand{\cc}{{\mathcal C}}
\begin{document}

\title{Cluster tilting for one-dimensional hypersurface singularities}

\author{Igor Burban}
\address{Johannes-Gutenberg Universit\"at Mainz,
Fachbereich Physik, Mathematik und Informatik,
Institut f\"ur Mathematik, 55099 Mainz, Germany}
\email{burban@mathematik.uni-mainz.de}

\author{Osamu Iyama}
\address{Graduate School of Mathematics, Nagoya University,
Chikusa-ku, Nagoya, 464-8602, Japan}
\email{iyama@math.nagoya-u.ac.jp}

\author{Bernhard Keller}
\address{UFR de Math\'ematiques, UMR 7586 du CNRS, Case 7012,
Universit\'e Paris 7, 2 place Jussieu, 75251 Paris Cedex 05, France}
\email{keller@math.jussieu.fr}

\author{Idun Reiten}
\address{Institutt for matematiske fag, Norges
Teknisk-naturvitenskapelige universitet, N-7491, Trondheim, Norway}
\email{idunr@math.ntnu.no}

\thanks{The first author was supported by the DFG project Bu 1866/1-1, the second and last author by a Storforsk grant 167130 from the Norwegian Research Council}
\maketitle


\begin{abstract}
In this article we study Cohen-Macaulay modules over  
one-dimensional hypersurface singularities
and the relationship with the representation theory of associative 
algebras using  methods of cluster tilting theory.
We give a criterion for existence of cluster tilting objects
and their complete description by homological methods, using
higher almost split sequences and results from birational geometry.
We obtain a large class of 2-CY tilted algebras which are
finite dimensional symmetric and satisfy $\tau^2=\id$.
In particular, we compute 2-CY tilted algebras for
simple and minimally elliptic curve singularities.
\end{abstract}

\section*{Introduction}
Motivated by the Fomin-Zelevinsky theory of cluster algebras \cite{FZ1,FZ2,FZ3}, a tilting
theory in cluster categories was initiated in \cite{BMRRT}. For a
finite dimensional hereditary algebra $H$ over a field $k$, the
associated cluster category $\mathcal{C}_{H}$ is the orbit category
$\mathcal{D}^b(H)/{F}$, where $\mathcal{D}^b(H)$ is the bounded
derived category of finite dimensional $H$-modules and the functor
$F\colon \mathcal{D}^b(H)\to \mathcal{D}^b(H)$ is $\tau^{-1}[1]=
S^{-1}[2]$. Here $\tau$ denotes the translation associated with
almost split sequences/triangles and $S$ the Serre functor \cite{BK} on
$\mathcal{D}^b(H)$. (See \cite{CCS} for an independent
definition of a category equivalent to the cluster category when $H$ is of Dynkin type $A_n$).

An object $T$ in a cluster category $\mathcal{C}_{H}$ was defined to be a
(cluster) tilting object if $\Ext_{\mathcal{C}_{H}}^1(T,T)=0$, and if
$\Ext_{\mathcal{C}_{H}}^1(X,X\oplus T)=0$, then $X$ is in $\add T$. The 
corresponding endomorphism algebras, called cluster tilted algebras, were
investigated in \cite{BMR1} and subsequent papers. A useful additional property of a
cluster tilting object was that even the weaker condition
$\Ext_{\mathcal{C}_{H}}^1(X,T)=0$ implies that $X$ is in $\add T$, called Ext-configuration in \cite{BMRRT}. Such a 
property also appears naturally in the work of the second author on a higher
theory of almost split sequences in module categories
\cite{I1,I2} and the notion corresponding to the above definition was called maximal 1-orthogonal. For the category
$\mod(\Lambda)$ of finite dimensional modules over a preprojective
algebra of Dynkin type $\Lambda$ over an algebraically closed
field $k$, the concept corresponding to the above definition of
cluster tilting object in a cluster category was called maximal
rigid \cite{GLSc}. Also in this setting it was shown that being maximal
1-orthogonal was a consequence of being maximal rigid. The same
result holds for the stable category $\underline{\mod}(\Lambda)$.

The categories $\mathcal{C}_{H}$ and $\underline{\mod}(\Lambda)$ are
both triangulated categories \cite{Ke,H}, with finite
dimensional homomorphism spaces, and they have Calabi-Yau dimension 2
(2-CY for short) (see
\cite{BMRRT,Ke}\cite[3.1,1.2]{AR}\cite{C}\cite[8.5]{Ke}). The last fact
means that there is a Serre functor 
$S=\Sigma^2$, where $\Sigma$ is the shift functor in the triangulated
category. 

For an arbitrary 2-CY triangulated category $\mathcal{C}$ with finite
dimensional homomorphism spaces over a field $k$, a cluster tilting
object $T$ in $\mathcal{C}$ was defined to be an object satisfying the
stronger property discussed above, corresponding to the property of being maximal 1-orthogonal/Ext-configuration \cite{KR}. The corresponding class of algebras, containing the cluster tilted ones, have been called 2-CY tilted. With this
concept many results  have been generalised from cluster categories, and
from the stable categories $\underline{\mod}(\Lambda)$, to this more
general setting in \cite{KR}, which moreover contains several results which
are new also in the first two cases.

One of the important applications of classical tilting theory has been
the construction of derived equivalences: Given a tilting bundle $T$
on a smooth projective variety $X$, the total right derived functor of
$\Hom(T,\text{ })$ is an equivalence from the bounded derived category
of coherent sheaves on $X$ to the bounded derived category of finite
dimensional modules over the endomorphism algebra of $T$. Analogously,
cluster tilting theory allows one to establish equivalences between
very large factor categories appearing in the local situation of
Cohen-Macaulay modules and categories of modules over
finite dimensional algebras. Namely, if $\ul{\CM}(R)$ is the stable
category of maximal Cohen-Macaulay modules over an odd-dimensional isolated
hypersurface singularity, then $\ul{\CM}(R)$ is 2-CY. If it contains a
cluster tilting object $T$, then the functor $\Hom(T,\text{ })$
induces an equivalence between the quotient of $\ul{\CM}(R)$ by the
ideal of morphisms factoring through $\tau T$ and the category of
finite dimensional modules over the endomorphism algebra
$B=\End(T)$. It is then not hard to see that $B$ is symmetric and the
indecomposable nonprojective $B$-modules are $\tau$-periodic of
$\tau$-period at most 2. In this article, we study examples of this
setup arising from finite, tame and wild $\CM$-type isolated
hypersurface singularities $R$. The endomorphism algebras of the
cluster tilting objects in the tame case occur in  lists in
\cite{BS,Er,Sk}. We also obtain a large class of symmetric finite
dimensional algebras where the stable AR-quiver consists only of tubes
of rank one or two. Examples of (wild) selfinjective algebras whose stable
AR-quiver consists only of tubes of rank one or three were known
previously \cite{AR}.


In the process we investigate the relationship between cluster tilting and maximal rigid objects.
It is of interest to know if the first property implies
the second one in general. In this paper we provide interesting
examples where this is not the case. The setting we deal with are the
simple isolated hypersurface singularities $R$ in dimension one over an
algebraically closed field $k$,
with the stable category $\underline{\CM}(R)$ of maximal
Cohen-Macaulay $R$-modules being our 2-CY category. These
singularities are indexed by the Dynkin diagrams, and in the cases $D_n$
for odd $n$ and $E_7$ we give examples of maximal rigid objects which are not cluster tilting.  
We also deal with cluster tilting and (maximal) rigid objects in the category $\CM(R)$, defined in an analogous way.

We also investigate the other Dynkin diagrams, and it is interesting
to notice that there are cases with no nonzero rigid objects
($A_n$, $n$ even, $E_6, E_8$), and cases where the maximal rigid objects coincide with
the cluster tilting objects ($A_n, n$ odd and $D_n, n$ even). In the
last case we see that both loops and 2-cycles can occur for the
associated 2-CY tilted algebras, whereas this never
happens for the cases $\mathcal{C}_H$ and $\underline{\mod}(\Lambda)$
\cite{BMRRT,BMR2,GLSc}. The results are also valid for any 
odd-dimensional simple hypersurface singularity, since the stable
categories of Cohen-Macaulay modules are all triangle equivalent \cite{Kn,S}.

We shall construct a large class of one-dimensional
hypersurface singularities $R$, where $\CM(R)$ or $\ul{\CM}(R)$ has a cluster tilting object,
including examples coming from simple singularities and minimally
elliptic singularities.
We classify all rigid objects in $\CM(R)$ for these $R$, in particular,
we give a bijection between cluster tilting objects in $\CM(R)$ and
elements in a symmetric group. Our method is based on a higher theory of
almost split sequences \cite{I1,I2}, and a crucial role is played by the
endomorphism algebras $\End_R(T)$
(called `three-dimensional Auslander algebras')
of cluster tilting objects $T$ in $\CM(R)$. These algebras
have global dimension three, and have 2-CY tilted algebras as stable factors.
The functor $\Hom_R(T,\ ):\CM(R)\to\mod(\End_R(T))$ sends cluster
tilting objects in $\CM(R)$ to tilting modules over $\End_R(T)$.
By comparing cluster tilting mutations in $\CM(R)$ and tilting mutation in
$\CM(\End_R(T))$, we can apply results on tilting mutation due to
Riedtmann-Schofield \cite{RS} and Happel-Unger \cite{HU1,HU2} to get information on cluster tilting
objects in $\CM(R)$.

We focus on the interplay between cluster tilting theory and
birational geometry (see section 5 for definitions). In \cite{V1,V2}, Van den Bergh established a
relationship between crepant resolutions of singularities and certain
algebras called non-commutative crepant resolutions, via derived equivalence. 
It is known that endomorphism algebras of cluster tilting objects of
three-dimensional normal Gorenstein singularities are 3-CY in the sense that the
bounded derived category of finite length modules is 3-CY, and 
they form a class of non-commutative crepant resolutions \cite{I2,IR}.
Thus we have a connection between cluster tilting theory and
birational geometry. We translate Katz's criterion \cite{Kat} for
three-dimensional $cA_n$--singularities for existence of crepant
resolutions to a criterion 
for one-dimensional hypersurface singularities for existence of cluster
tilting objects.
Consequently the class of hypersurface singularities,
which are shown to have cluster tilting objects by using higher almost split
sequences, are exactly the class having non-commutative crepant resolutions.
However we do not know whether the number of cluster tilting objects has a
meaning in birational geometry.

In section \ref{additive} we investigate maximal rigid objects and cluster tilting
objects in $\underline{\CM}(R)$  for simple one-dimensional
hypersurface singularities. We decide whether extension spaces are
zero or not by using covering techniques. In section \ref{sec2} we point out
that we could also use the computer program {\tt Singular} \cite{GP} to
accomplish the same thing. In section \ref{onedim} we construct cluster tilting
objects for a large class of isolated hypersurface singularities,
where the associated 2-CY tilted algebras can be of finite, tame or
wild representation type. We also classify cluster tilting and indecomposable rigid objects for this class.
In section \ref{sec3} we establish a connection between existence of cluster
tilting objects and existence of small resolutions. 
In section \ref{sec4} we give a geometric approach to some of the results in section \ref{onedim}.
Section \ref{CYtilted} is devoted to computing some concrete examples of 2-CY
tilted algebras. In section \ref{finite} we
generalize results from section \ref{additive} to 2-CY triangulated
categories with only a finite number of indecomposable objects. 


We refer to \cite{Y} as a general reference for representation theory
of Cohen-Macaulay rings, and \cite{AGV,GLSh} for classification of singularities.    

Our modules are usually right modules, and composition of maps $fg$ means first $g$, then $f$.
We call a module {\it basic} if it is a direct sum of mutually non-isomorphic indecomposable modules.

\section*{Acknowledgment} The first author would like to thank Duco van Straten and the second author would like to thank Atsushi Takahashi
and Hokuto Uehara for stimulating discussions.

\section{Main results}\label{main}

Let $(R,{\mathfrak m})$ be a local complete $d$-dimensional commutative noetherian
Gorenstein isolated singularity and $R/{\mathfrak m}=k\subset R$,
where $k$ is an algebraically closed field of characteristic zero.
We denote by $\CM(R)$ the category of maximal Cohen-Macaulay modules over $R$. Then $\CM(R)$ is a Frobenius category (i.e. an exact category with enough projectives and injectives which coincide),
and so the stable category $\ul{\CM}(R)$ is a $\Hom$-finite triangulated category with shift functor $\Sigma=\Omega^{-1}$ \cite{H}.
For an integer $n$, we say that $\ul{\CM}(R)$ or $\CM(R)$ is {\it $n$-CY} if there exists a functorial isomorphism
$$\ul{\Hom}_R(X,Y)\simeq D\ul{\Hom}_R(Y,\Sigma^nX)$$
for any $X,Y\in\CM(R)$.

We collect some fundamental results.
\begin{itemize}
  \item We have AR-duality 
    $$\ul{\Hom}_R(X,Y)\simeq D\Ext^1_R(Y,\tau X)$$
    with $\tau\simeq\Omega^{2-d}$ \cite{Au}. In particular, $\ul{\CM}(R)$ is $(d-1)$-CY.
  \item If $R$ is a hypersurface singularity, then $\Sigma^2= \id$ \cite{Ei}.
\newline Consequently, if $d$ is odd, then $\tau=\Omega$ and $\ul{\CM}(R)$ is 2-CY. If $d$ is even, then $\tau=\id$ and $\ul{\CM}(R)$ is 1-CY, hence 
any non-free Cohen-Macaulay $R$-module $M$ satisfies $\Ext^1_R(M,M)\neq0$.
  \item (Kn\"orrer periodicity) $$\ul{\CM}(k[[x_0,\cdots, x_d,y,z]]/{(f+yz)})\simeq \ul{\CM}(k[[x_0,\cdots, x_d]]/(f))$$
for any $f\in k[[x_0,\cdots,x_d]]$ \cite{Kn} (\cite{S} in characteristic two).
\end{itemize}

We state some of the definitions, valid more generally, in the context of $\CM(R)$ and $\ul{\CM}(R)$.

\begin{definition}
Let $\mathcal{C}=\CM(R)$ or $\ul{\CM}(R)$.
We call an object $M\in\mathcal{C}$
\begin{itemize}
\item \emph{rigid} if $\Ext^1_R(M,M)=0$,
\item \emph{maximal rigid} if it is rigid and any rigid
  $N\in\mathcal{C}$ satisfying $M\in\add N$ satisfies $N\in\add M$,
\item \emph{cluster tilting} if $\add M=\{X\in\mathcal{C}\ |\ \Ext^1_R(M,X)=0\}=\{X\in\mathcal{C}\ |\ \Ext^1_R(X,M)=0\}$.
\end{itemize}
\end{definition}

Cluster tilting objects are maximal rigid, but we show that the
converse does not necessarily hold for 2-CY triangulated categories $\ul{\CM}(R)$.
If $\mathcal{C}$ is 2-CY, then $M\in\mathcal{C}$ is cluster tilting
if and only if $\add M=\{X\in\mathcal{C}\ |\ \Ext^1_R(M,X)=0\}$.

\begin{definition}\label{cluster tilting mutation}
Let $\mathcal{C}=\CM(R)$ (or $\ul{\CM}(R)$) be 2-CY and
$M\in\mathcal{C}$ a basic cluster tilting object.
Take an indecomposable summand $X$ of $M=X\oplus N$.
Then there exist short exact sequences (or triangles) (called \emph{exchange
  sequences})
$$X\stackrel{g_1}{\to}N_1\stackrel{f_1}{\to}Y\ \mbox{ and }\ Y\stackrel{g_0}{\to}N_0\stackrel{f_0}{\to}X$$
such that $N_i\in\add N$ and $f_0$ is a minimal right $(\add N)$-approximation.
Then $Y\oplus N$ is a basic cluster tilting object again called \emph{cluster tilting mutation}
of $M$ \cite{BMRRT,GLSc}\cite[Def. 2.5, Th. 5.3]{IY}.
In this case $f_1$ is a minimal right $(\add N)$-approximation
and $g_i$ is a minimal left $(\add N)$-approximation automatically,
so $X\oplus N$ is a cluster tilting mutation of $Y\oplus N$.
It is known that there are no more basic cluster tilting objects
containing $N$ \cite[Th. 5.3]{IY}.
\end{definition}

\bigskip
Let $R=k[[x,y,z_2,\cdots, z_d]]/(f)$ be a simple hypersurface
    singularity so that in characteristic zero $f$ is one of the following polynomials,
\[\begin{array}{ccc}
(A_n)&x^2+y^{n+1}+z_2^2+z_3^2+\cdots+z_d^2&(n\ge1)\\
(D_n)&x^2y+y^{n-1}+z_2^2+z_3^2+\cdots+z_d^2&(n\ge4)\\
(E_6)&x^3+y^4+z_2^2+z_3^2+\cdots+z_d^2&\\
(E_7)&x^3+xy^3+z_2^2+z_3^2+\cdots+z_d^2&\\
(E_8)&x^3+y^5+z_2^2+z_3^2+\cdots+z_d^2&
\end{array}\]
Then $R$ is of finite Cohen-Macaulay representation type \cite{Ar,GK,Kn,S}.

We shall show the following result in section \ref{additive} using additive
functions on the AR quiver. We shall explain another proof using
{\tt Singular} in section \ref{sec2}.
\begin{theorem}\label{main1}
Let $R$ be a simple hypersurface singularity of dimension $d\ge1$
over an algebraically closed field $k$ of characteristic zero.

(1) Assume that $d$ is even. Then $\underline{\rm CM}(R)$ does not
have non-zero rigid objects.

(2) Assume that $d$ is odd. Then the number of indecomposable rigid
objects, basic cluster tilting objects, basic maximal rigid objects, and
indecomposable summands of basic maximal rigid objects in $\underline{\rm CM} (R)$ are as follows:
\[\begin{array}{|c|c|c|c|c|}\hline
f&\mbox{indec. rigid}&\mbox{cluster tilting}&\mbox{max.
  rigid}&\mbox{summands of max. rigid}\\ \hline\hline
(A_n)\ n:\mbox{odd}&2&2&2&1\\ \hline
(A_n)\ n:\mbox{even}&0&0&1&0\\ \hline
(D_n)\ n:\mbox{odd}&2&0&2&1\\ \hline
(D_n)\ n:\mbox{even}&6&6&6&2\\ \hline
(E_6)&0&0&1&0\\ \hline
(E_7)&2&0&2&1\\ \hline
(E_8)&0&0&1&0\\ \hline
\end{array}\]
\end{theorem}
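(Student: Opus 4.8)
The plan is to reduce everything, via Knörrer periodicity, to the cases $d=1$ (when $d$ is odd) and $d=2$ (when $d$ is even), so that $\ul{\CM}(R)$ is the stable category of a plane curve singularity, whose AR quiver is known explicitly and small. For $d$ even, $\ul{\CM}(R)$ is $1$-CY by the facts collected above, so $\Ext^1_R(M,M)\simeq D\ul{\Hom}_R(M,M)\ne 0$ for every non-free $M$; this immediately gives part (1). For part (2), the key point is that $\ul{\CM}(R)$ has only finitely many indecomposables (finite CM-type), so $\Ext^1_R(X,Y)$ between indecomposables is a finite list of vector spaces that one can in principle read off. First I would fix, for each Dynkin type, the explicit list of indecomposable MCM modules over the corresponding one-dimensional hypersurface (these are classical: the $A_n$ and $D_n$ cases come from matrix factorizations, the exceptional cases from the tables in \cite{Y,GK,Kn}), together with the AR quiver and the AR translate $\tau=\Omega$.

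Next I would compute the extension groups using the AR-duality recalled above, $\Ext^1_R(Y,\tau X)\simeq D\ul{\Hom}_R(X,Y)$, i.e. $\Ext^1_R(X,Y)\simeq D\ul{\Hom}_R(\Omega^{-1}Y, X)=D\ul{\Hom}_R(\Sigma Y,X)$. Since the category is $\Hom$-finite with a known AR quiver, each space $\ul{\Hom}_R(\Sigma Y,X)$ is computed by counting (classes of) paths in the AR quiver modulo mesh relations — equivalently, by evaluating an additive function on the AR quiver, as announced in the statement (the section-\ref{sec2} alternative being to feed the matrix factorizations to \texttt{Singular}). Concretely: an object $M$ is rigid iff $\ul{\Hom}_R(\Sigma M, M)=0$, which by the mesh structure becomes a purely combinatorial condition on which vertices of the AR quiver occur in $M$; running through the (finitely many) subsets of vertices one finds exactly the indecomposable rigid modules, and then the rigid basic modules, giving the first column of the table. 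The maximal rigid ones are the rigid modules maximal under "$\add$-containment", found by the same finite search; this yields the third and fourth columns.

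For the cluster tilting column, in the cases where $\ul{\CM}(R)$ is $2$-CY (i.e. $d$ odd, which is the situation of part (2)) I would use the criterion recalled in the text: $M$ is cluster tilting iff it is rigid and $\add M=\{X\mid \Ext^1_R(M,X)=0\}$. So for each maximal rigid $M$ already found, I check whether its "rigid perpendicular" $\{X\mid\Ext^1_R(M,X)=0\}$ is exactly $\add M$ or is strictly larger; when it is strictly larger, $M$ is maximal rigid but not cluster tilting, which is precisely what happens for $D_n$ ($n$ odd) and $E_7$, and accounts for the $0$'s in the cluster-tilting column there. In the cases $A_n$ ($n$ odd) and $D_n$ ($n$ even) one checks equality holds, recovering the cluster tilting counts $2$ and $6$; in the no-rigid cases ($A_n$ $n$ even, $E_6$, $E_8$) there is nothing to do since even the zero object is the only rigid one (and the unique "maximal rigid" object is $0$, matching the table entry $1$ in that column with $0$ summands).

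The main obstacle is the bookkeeping in the two exceptional cases $E_7$ and $E_6,E_8$: one must have the AR quiver of the one-dimensional $E$-singularities correctly in hand (including the shape of $\Omega$ on it) and compute enough $\ul{\Hom}$-spaces to certify both that a given module is maximal rigid and that its perpendicular category is or is not all of $\add$ of it. This is finite but delicate, and it is where the covering technique — lifting the AR quiver to its universal cover (a $\mathbb{Z}\Delta$-type translation quiver) where $\ul{\Hom}$-dimensions are literally counts of paths modulo mesh relations — does the real work and makes the verification systematic rather than ad hoc. Everything else (the reduction by Knörrer periodicity, the $1$-CY argument for $d$ even, the general cluster-tilting criterion) is formal given the results already quoted.
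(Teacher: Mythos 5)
Your proposal is correct and follows essentially the same route as the paper: part (1) via the $1$-CY property in even dimension, reduction to $d=1$ by Kn\"orrer periodicity, and then a case-by-case computation of $\ul{\Hom}$-spaces on the known AR quivers using AR-duality with $\tau=\Omega$ and the covering/additive-function technique, together with the $2$-CY criterion $\add M=\{X\mid\Ext^1(M,X)=0\}$ for cluster tilting. The paper merely streamlines your ``finite search'' by first using Lemma \ref{L1.1} to cut the candidates for indecomposable rigid objects (and Lemma \ref{L1.2} as an alternative count in the $D_n$, $n$ odd, case), which is a refinement rather than a different method.
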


\bigskip
We also consider a {\it minimally elliptic} curve singularity $T_{p,q}(\lambda)$ ($p\le q$).
Assume for simplicity that our base field $k$ is algebraically closed
of characteristic zero. Then these singularities are given by the
equations
$$
x^p + y^q + \lambda x^2 y^2 = 0,
$$
where $\frac{1}{p} + \frac{1}{q} \le \frac{1}{2}$ and certain values of $\lambda \in k$ have to be excluded. 
They are of tame Cohen-Macaulay representation type \cite{Di,Ka,DG}.
We divide into two cases.

(i) Assume $\frac{1}{p} + \frac{1}{q} = \frac{1}{2}$. This case occurs if
and only if $(p,q)=(3,6)$ or $(4,4)$, and $T_{p,q}(\lambda)$ is called
{\it simply elliptic}.
The corresponding coordinate rings can be written in the form 
$$
T_{3,6}(\lambda) = k[[x,y]]/(y(y-x^2)(y-\lambda x^2))
$$
and 
$$
T_{4,4}(\lambda) = k[[x,y]]/(xy(x-y)(x -\lambda y)), 
$$
where in both cases $\lambda \in k \setminus\{0,1\}$. 

(ii) Assume $\frac{1}{p} + \frac{1}{q} < \frac{1}{2}$.
Then $T_{p,q}(\lambda)$ does not depend on the continuous
parameter $\lambda$, and is called a \emph{cusp} singularity. In this
case 
the corresponding coordinate rings can be written in the form 
$$
T_{p,q} = k[[x,y]]/((x^{p-2}-y^2)(x^2-y^{q-2})).
$$

We shall show the following result in section \ref{sec4} by applying a result
in birational geometry.
\begin{theorem}\label{main2}
Let $R$ be a minimally elliptic curve singularity $T_{p,q}(\lambda)$
over an algebraically closed field $k$ of characteristic zero.
\begin{itemize}
\item[(a)] $\ul{\CM}(R)$ has a cluster tilting object if and only if $p=3$ and $q$ is
even or if both $p$ and $q$ are even.
\item[(b)] The number of indecomposable
rigid objects, basic cluster tilting objects, and indecomposable summands of
basic cluster tilting objects in $\ul{\CM}(R)$ are as follows:
\[\begin{array}{|c|c|c|c|}\hline
p,q&\mbox{indec. rigid}&\mbox{cluster tilting}&\mbox{summands of
  cluster tilting}\\ \hline\hline
p=3,\ q:\mbox{even}&6&6&2\\ \hline
p,q:\mbox{even}&14&24&3\\ \hline
\end{array}\]
\end{itemize}
\end{theorem}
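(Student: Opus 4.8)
The plan is to pass from the one-dimensional singularity to a three-dimensional $cA_n$ singularity by Kn\"orrer periodicity and then to exploit the relationship between cluster tilting objects, non-commutative crepant resolutions and small resolutions. Write $R=T_{p,q}(\lambda)=k[[x,y]]/(f)$ and put $R^\sharp=k[[x,y,u,v]]/(f+uv)$. By Kn\"orrer periodicity \cite{Kn} there is a triangle equivalence $\ul{\CM}(R)\simeq\ul{\CM}(R^\sharp)$; since $R$ and $R^\sharp$ both have odd dimension, both categories are $2$-CY, so this equivalence carries rigid (resp.\ cluster tilting) objects to rigid (resp.\ cluster tilting) objects and preserves the number of indecomposable summands. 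Moreover $R^\sharp$ is again an isolated singularity, and it is a $cA_{m-1}$ threefold singularity, where $m=\operatorname{mult}(f)$: a generic hyperplane section of $\Spec R^\sharp$ is, up to a coordinate change, the surface singularity $\{uv=x^m\}$, i.e.\ an $A_{m-1}$ singularity, since a generic line through the origin of the plane curve $\Spec R$ meets it with multiplicity $m$. Reading off the lowest-degree part of $f=x^p+\lambda x^2y^2+y^q$, and using $p\le q$ together with $\tfrac1p+\tfrac1q\le\tfrac12$ (so $q\ge4$ and $(p,q)\neq(3,3)$), one gets $m=3$ when $p=3$ and $m=4$ when $p\ge4$.

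Now I would run the following chain of equivalences for the three-dimensional normal Gorenstein isolated singularity $R^\sharp$: existence of a basic cluster tilting object in $\CM(R^\sharp)$ (which necessarily has $R^\sharp$ as a summand, its complement being a cluster tilting object in $\ul{\CM}(R^\sharp)$, and conversely) is equivalent to existence of a non-commutative crepant resolution of $R^\sharp$ by \cite{I2,IR}; existence of a non-commutative crepant resolution is equivalent to existence of a crepant resolution of $\Spec R^\sharp$ by Van den Bergh \cite{V1,V2}; and, $R^\sharp$ being an isolated $cDV$ (hence terminal Gorenstein) threefold, a crepant resolution is the same as a small resolution. By Katz's criterion \cite{Kat}, the $cA_{m-1}$ singularity $\Spec R^\sharp=\{uv=-f(x,y)\}$ admits a small resolution if and only if $f$ factors in $k[[x,y]]$ as a product of $m$ power series (necessarily each of multiplicity one), i.e.\ if and only if the plane curve $\Spec R$ is a union of $m$ smooth branches. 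Using the normal forms recorded before the statement this is easily checked: for $T_{3,6}(\lambda)=k[[x,y]]/(y(y-x^2)(y-\lambda x^2))$ and $T_{4,4}(\lambda)=k[[x,y]]/(xy(x-y)(x-\lambda y))$ we get $3$, resp.\ $4$, smooth branches, distinct because $\lambda\neq0,1$; for the cusp $T_{p,q}=k[[x,y]]/((x^{p-2}-y^2)(x^2-y^{q-2}))$ the factor $x^{p-2}-y^2$ splits into two smooth branches exactly when $p-2$ is even, and $x^2-y^{q-2}$ splits into two smooth branches exactly when $q-2$ is even, while for an odd exponent the corresponding factor is irreducible of multiplicity two, obstructing any factorization into $m$ smooth branches. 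Hence $\Spec R^\sharp$ has a small resolution precisely when $p=3$ and $q$ is even, or $p$ and $q$ are both even; combined with $\ul{\CM}(R)\simeq\ul{\CM}(R^\sharp)$ this gives part (a). (For the positive direction one may alternatively note that in these cases $R$ already belongs to the class of curve singularities for which a cluster tilting object is constructed directly in Section \ref{onedim}.)

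For part (b), in the two surviving cases $R$ is a union of $m$ smooth branches with $m=3$ (if $p=3$ and $q$ even) or $m=4$ (if $p,q$ even), so the classification of rigid and cluster tilting objects of Section \ref{onedim} applies: the non-free indecomposable rigid modules are the ``partial product'' modules indexed by the proper nonempty subsets of the set of branches, so there are $2^m-2$ of them; the basic cluster tilting objects in $\CM(R)$ are in bijection with the complete flags of such subsets, hence with the symmetric group $S_m$, so there are $m!$ of them, each with $m$ indecomposable summands. Passing from $\CM(R)$ to $\ul{\CM}(R)$ removes the free summand $R$ from every cluster tilting object, leaving the numbers $2^m-2$ and $m!$ unchanged and reducing the number of summands to $m-1$. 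For $m=3$ this gives $6$, $6$, $2$, and for $m=4$ it gives $14$, $24$, $3$, as asserted.

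The main obstacle is making the middle step watertight. One must know not only that cluster tilting objects give non-commutative crepant resolutions but that the converse holds in a form to which Van den Bergh's theorem applies, and that for the particular $cDV$ threefold $R^\sharp$ every crepant resolution is small; and one must make sure that Katz's criterion, stated for equations $\{xy=g(z,w)\}$, applies to the power series $g=-f$ here, and that the ``only if'' direction genuinely rules out \emph{every} cluster tilting object, not merely the one associated to a small resolution --- this is exactly why the chain above must be a chain of equivalences rather than mere implications. The remaining ingredients --- Kn\"orrer periodicity, the multiplicity computation, and the combinatorics imported from Section \ref{onedim} --- are routine.
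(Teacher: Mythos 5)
Your proposal is correct and is essentially the paper's own argument: Kn\"orrer periodicity to the threefold defined by $f+uv$, the chain of equivalences of Theorems \ref{4.5} and \ref{4.7} together with Katz's criterion (Theorem \ref{4.6}) and the $cA_m$-type computation (Proposition \ref{5.1}) yield part (a) exactly as in Corollary \ref{5.3}, and the counts in part (b) are imported from the classification of Theorems \ref{XAA} and \ref{XAB} in section \ref{onedim}, just as the paper does via Theorem \ref{main3}. The only points where the paper does more work are that Proposition \ref{5.1} justifies the claim that a generic hyperplane section is $A_{m-1}$ by right equivalence, Milnor numbers and finite determinacy rather than your one-line genericity heuristic, and that your clause ``for an odd exponent the corresponding factor is irreducible of multiplicity two'' must except the exponent-one factor $x-y^{2}$ occurring for $p=3$, which is a single smooth branch --- as your final case list (allowing $p=3$, $q$ even) already correctly reflects.
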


\bigskip
We also prove the following general theorem, which includes 
both Theorem \ref{main1} (except the assertion on maximal rigid
objects) and Theorem \ref{main2}.
The `if' part in (a) and the assertion (b)
are proved in section \ref{onedim} by a purely homological method.
The proof of (a), including another proof of the `if' part, 
is given in section \ref{sec4} by applying Katz's criterion in
birational geometry. 

\begin{theorem}\label{main3}
Let $R=k[[x,y]]/(f)$ ($f\in(x,y)$) be a one-dimensional reduced hypersurface singularity
over an algebraically closed field $k$ of characteristic zero.
\begin{itemize}
\item[(a)] $\ul{\CM}(R)$ has a cluster tilting object
if and only if $f$ is a product $f=f_1\cdots f_n$ with $f_i\notin(x,y)^2$.
\item[(b)] The number of indecomposable
rigid objects, basic cluster tilting objects, and indecomposable summands of
basic cluster tilting objects in $\ul{\CM}(R)$ are as follows:
\[\begin{array}{|c|c|c|}\hline
\mbox{indec. rigid}&\mbox{cluster tilting}&\mbox{summands of
  cluster tilting}\\ \hline\hline
2^n-2&n!&n-1\\ \hline
\end{array}\]
\end{itemize}
\end{theorem}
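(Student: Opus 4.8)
The plan is to analyze $\CM(R)$ for $R = k[[x,y]]/(f)$ with $f = f_1 \cdots f_n$ a product of $n$ distinct irreducible factors, none lying in $(x,y)^2$ (i.e.\ each $f_i$ defines a smooth branch), and to construct a distinguished cluster tilting object explicitly, then classify all rigid objects from it. The geometric input is that the normalization $\widetilde{R}$ is the product $\prod_{i=1}^n \widetilde{R_i}$ of $n$ smooth branches $\widetilde{R_i} \cong k[[t_i]]$, and $R \subset \widetilde{R}$. I would first build the candidate cluster tilting object $T$ out of the partial products / partial normalizations: for each subset (or rather, for each "initial segment" determined by an ordering of the branches) one gets an overring $R \subset R' \subset \widetilde{R}$ which is a Cohen–Macaulay $R$-module, and one takes $T$ to be the direct sum of a suitable family of these. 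The key structural fact, to be established first, is that $\End_R(T)$ has global dimension three — this is the "three-dimensional Auslander algebra" referred to in the introduction — and that $T$ is a cluster tilting object in $\CM(R)$; here I would invoke the higher almost split sequence machinery of \cite{I1,I2,IY} together with the fact that $\CM(R)$ has, over a reduced curve singularity, a well-understood structure via the conductor square relating $\CM(R)$, $\CM(\widetilde R)$ and finite-length modules.

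Second, having one cluster tilting object $T$ in the $2$-CY category $\ul{\CM}(R)$ with $\Gamma = \End_R(T)$ of global dimension $3$, I would use the functor $\Hom_R(T,-)\colon \CM(R) \to \mod\Gamma$, which (as stated in the introduction) sends cluster tilting objects to tilting $\Gamma$-modules, to transport the problem to tilting theory over $\Gamma$. Cluster tilting mutation in $\ul{\CM}(R)$ corresponds to tilting mutation over $\Gamma$, so the set of basic cluster tilting objects is in bijection with (a connected component of) the tilting mutation graph of $\Gamma$. The branch structure forces this graph to be the one whose vertices are the $n!$ orderings of the branches and whose edges are adjacent transpositions — i.e.\ the Cayley graph / weak order on the symmetric group $S_n$ — which gives the count $n!$ for basic cluster tilting objects. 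The "$n-1$ summands" count then follows because the cluster tilting objects have $n$ indecomposable summands of which one, corresponding to $R$ itself (a projective-injective in $\CM(R)$), is common to all and hence disappears in $\ul{\CM}(R)$, leaving $n-1$; more precisely one shows each basic cluster tilting object in $\ul{\CM}(R)$ has exactly $n-1$ nonzero indecomposable summands, matching $\operatorname{rank} K_0$ considerations. I would verify that the mutation graph is \emph{connected} (so that we really get all cluster tilting objects, not just one orbit) by the results of Riedtmann–Schofield \cite{RS} and Happel–Unger \cite{HU1,HU2} on the tilting graph, exploiting that $\gl\Gamma = 3 < \infty$.

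Third, for the count of indecomposable rigid objects, I would argue that every indecomposable rigid object of $\ul{\CM}(R)$ is a summand of some cluster tilting object (this uses $2$-CY-ness plus the classification just obtained, or a direct argument via approximations), and then enumerate the distinct indecomposables occurring among all the $n!$ cluster tilting objects. These correspond to the overrings $R_S$ indexed by proper nonempty subsets $S \subsetneq \{1,\dots,n\}$ of branches — there are $2^n - 2$ such subsets — each giving a distinct indecomposable non-free maximal Cohen–Macaulay module, and one checks no others are rigid. This yields $2^n - 2$. The main obstacle I expect is the first step: proving that the explicitly constructed $T$ really is cluster tilting and that $\End_R(T)$ has global dimension exactly three. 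This requires a careful analysis of the exact structure of $\CM(R)$ for a reduced (possibly non-Gorenstein-looking but here hypersurface, hence Gorenstein) curve singularity with several smooth branches — controlling extensions between the overrings $R_S$ — and is where the hypotheses "$f$ reduced" and "$f_i \notin (x,y)^2$" are essential; without smoothness of branches the overring lattice and the homological dimensions degenerate. The birational-geometry route via Katz's criterion \cite{Kat}, indicated in the introduction as an alternative proof of part (a), would instead identify existence of a cluster tilting object with existence of a small resolution of the associated $cA_{n-1}$-threefold singularity $\Spec k[[x,y,u,v]]/(f - uv)$, and that threefold has a small resolution precisely when $f$ factors into smooth branches — but for the enumeration in (b) the homological approach through $\Gamma$ and $S_n$ seems unavoidable.
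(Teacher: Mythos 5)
Your overall architecture does match the paper's: an explicit object built from the partial products attached to an ordering of the branches, verified to be cluster tilting by the higher almost split sequence machinery and the global-dimension-three criterion of \cite{I1,I2}; then transport along $\Hom_R(T,-)$ to tilting theory over $\End_R(T)$, with \cite{RS,HU1,HU2} giving the $\mathfrak{S}_n$-classification and hence the counts $n!$ and $n-1$; the fact that every rigid object is a summand of a cluster tilting object (the paper quotes \cite[Th. 1.9]{BIRS}) giving $2^n-2$; and Katz's criterion via Kn\"orrer periodicity for the geometric part of (a). However, two steps as you state them would not go through. First, the building blocks are \emph{not} overrings $R\subset R'\subset\widetilde{R}$: the correct summands are the quotients $S_i=S/(f_1\cdots f_i)$, i.e. the coordinate rings of the partial unions of branches (equivalently the ideals $(f_{i+1}\cdots f_n)/(f)$ of $R$), and a sum of genuine intermediate rings fails already in the simplest case. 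For the node $R=k[[x,y]]/(xy)$ the only proper overring is $\widetilde{R}=S/(x)\oplus S/(y)$, and $\Ext^1_R(S/(x),S/(y))=k\neq0$, so your $T$ would not even be rigid; the actual cluster tilting objects of $\CM(R)$ are $R\oplus S/(x)$ and $R\oplus S/(y)$. The paper's verification (Theorem \ref{XAA}) is by explicit exchange and 2-almost split sequences for the $S_i$ (Lemmas \ref{XAF} and \ref{XAG}), proved first under the auxiliary hypothesis $\mathfrak{m}=(f_i,f_{i+1})$ and then reduced to it by inserting extra smooth factors and restricting along surjections (Lemma \ref{XAN}); some such computation is unavoidable and is, as you anticipate, the main work.

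Second, the classification cannot rest on ``the tilting mutation graph of $\Gamma$ is connected because $\gl\Gamma=3<\infty$'': connectedness of the tilting quiver is not what \cite{RS,HU1,HU2} give, and the relevant class is in any case the Cohen--Macaulay tilting $\Gamma$-modules, not all tilting modules. The paper's argument (Theorem \ref{XAB}) shows that the projective $\Hom_R(M_w,R)$ is a direct summand of every Cohen--Macaulay tilting module (Lemma \ref{XAK}), so each such module has at most $n-1$ Cohen--Macaulay mutations, while the modules $\Hom_R(M_w,M_{w'})$ realize exactly $n-1$ of them; one then uses the Riedtmann--Schofield/Happel--Unger partial order to produce a descending chain of neighbours from $\Gamma$ to an arbitrary Cohen--Macaulay tilting module, terminating because $\mathfrak{S}_n$ is finite, and full faithfulness of $\Hom_R(M_w,-)$ pulls the result back to cluster tilting objects in $\CM(R)$. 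Finally, note that your homological plan only yields the ``if'' half of (a): the ``only if'' genuinely requires the Kn\"orrer periodicity/Van den Bergh/Katz chain (Theorem \ref{4.7} together with Proposition \ref{5.1}), so that route is not an optional alternative but the necessary complement, which is exactly how the paper splits the proof between sections \ref{onedim} and \ref{sec4}.
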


\bigskip
The following result gives a bridge between cluster tilting theory and
birational geometry. The terminologies are explained in section \ref{sec3}.
\begin{theorem}\label{main4}
Let $(R,{\mathfrak m})$ be a three-dimensional isolated $cA_n$--singularity 
over an algebraically closed field $k$ of characteristic zero
defined by the equation $g(x,y)+zt$ and $R'$ a one-dimensional singularity defined by $g(x,y)$.
Then the following conditions are equivalent.
\begin{itemize}
\item[(a)] $\Spec(R)$ has a small resolution.
\item[(b)] $\Spec(R)$ has a crepant resolution.
\item[(c)] $(R,{\mathfrak m})$ has a non-commutative crepant resolution.
\item[(d)] $\ul{\CM}(R)$ has a cluster tilting object.
\item[(e)] $\ul{\CM}(R')$ has a cluster tilting object.
\item[(f)] The number of irreducible power series in the prime decomposition of $g(x,y)$ is $n+1$.
\end{itemize}
\end{theorem}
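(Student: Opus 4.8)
The plan is to prove Theorem \ref{main4} by assembling a cycle of implications, most of which are either standard in the literature on non-commutative crepant resolutions or follow from the earlier theorems in this paper. The key structural idea is Knörrer-type reduction: the three-dimensional singularity $R = k[[x,y,z,t]]/(g(x,y)+zt)$ is built from the one-dimensional singularity $R' = k[[x,y]]/(g(x,y))$ by adding the hyperbolic form $zt$, so cluster tilting objects in $\ul{\CM}(R)$ and $\ul{\CM}(R')$ should correspond. This gives the equivalence of (d), (e) and (f): indeed (e) $\Leftrightarrow$ (f) is exactly Theorem \ref{main3}(a) applied to $R'$, since writing $g(x,y)$ as a product of $n+1$ irreducible power series is the same as writing it as $f_1\cdots f_{n+1}$ with each $f_i \notin (x,y)^2$ — here one must note that an irreducible power series in $k[[x,y]]$ lies outside $(x,y)^2$ precisely when the corresponding branch is smooth, but for a $cA_n$-singularity the defining equation is reduced and the relevant factorizations are into smooth branches, so the count of irreducible factors matches the count in Theorem \ref{main3}. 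The bridge (d) $\Leftrightarrow$ (e) I would obtain by a matrix-factorization / Knörrer periodicity argument lifting cluster tilting objects along the $+zt$ construction; alternatively one can cite the fact that $\ul{\CM}(R)$ is 2-CY (since $d=3$ is odd) and that adding $zt$ is the $d \mapsto d+2$ Knörrer step applied to the two-variable ring, which preserves the poset of rigid subcategories.

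Next I would handle the geometric equivalences (a) $\Leftrightarrow$ (b) $\Leftrightarrow$ (c). The equivalence (a) $\Leftrightarrow$ (b) is classical: for a three-dimensional terminal Gorenstein (in particular $cA_n$, hence compound Du Val) singularity, a resolution is crepant if and only if it is small, because the exceptional locus of a small resolution has codimension $\ge 2$ so the discrepancy is forced to vanish, and conversely a crepant resolution of a cDV singularity is small. For (b) $\Leftrightarrow$ (c) I would invoke Van den Bergh's theorem \cite{V1,V2} that for a three-dimensional Gorenstein singularity the existence of a crepant resolution is equivalent to the existence of a non-commutative crepant resolution, these being derived-equivalent. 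Finally (c) $\Leftrightarrow$ (d): by the results of the second author and Reiten \cite{I2,IR} cited in the introduction, the endomorphism algebra $\End_R(T)$ of a cluster tilting object $T \in \ul{\CM}(R)$ (equivalently in $\CM(R)$, adding $R$ as a summand) for a three-dimensional normal Gorenstein singularity is a non-commutative crepant resolution; conversely a non-commutative crepant resolution, being a maximal Cohen-Macaulay $R$-module with endomorphism ring of finite global dimension that is a finitely generated CM module, yields a cluster tilting object in $\CM(R)$ by the correspondence between NCCRs and cluster tilting objects established there. So we get the chain (a) $\Leftrightarrow$ (b) $\Leftrightarrow$ (c) $\Leftrightarrow$ (d), and combined with the first paragraph, all six conditions are equivalent.

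Concretely, I would organize the write-up as: first cite Theorem \ref{main3} for (e) $\Leftrightarrow$ (f); then prove (d) $\Leftrightarrow$ (e) via Knörrer periodicity $\ul{\CM}(k[[x,y,z,t]]/(g+zt)) \simeq \ul{\CM}(k[[x,y]]/(g))$ together with the observation that a triangle equivalence between 2-CY categories induces a bijection on (basic) cluster tilting objects; then cite \cite{I2,IR} for (c) $\Leftrightarrow$ (d); then cite \cite{V1,V2} for (b) $\Leftrightarrow$ (c); and finally give the short birational-geometry argument for (a) $\Leftrightarrow$ (b) using that $cA_n$-singularities are cDV. The bulk of the genuinely new content is already contained in Theorem \ref{main3}, so Theorem \ref{main4} is essentially a synthesis.

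The main obstacle, I expect, is making the Knörrer-periodicity step (d) $\Leftrightarrow$ (e) fully rigorous: the Knörrer equivalence as stated in the excerpt goes $k[[x_0,\dots,x_d,y,z]]/(f+yz) \simeq k[[x_0,\dots,x_d]]/(f)$, i.e. it adds \emph{two} variables and a hyperbolic summand, which is exactly what we need ($d=1 \to d=3$), but one must check that the one-dimensional $R'$ is reduced so that the hypotheses of Theorem \ref{main3} and of the normality/isolated-singularity statements apply, and that the $cA_n$ hypothesis on $R$ translates exactly to the factorization hypothesis on $g$. A secondary subtlety is the precise meaning of "small resolution" versus "crepant resolution" in the possibly non-$\mathbb{Q}$-factorial setting and citing the correct form of Van den Bergh's result; but since $cA_n$-singularities are hypersurfaces, hence Gorenstein, the cleanest references apply directly. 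I do not anticipate difficulty in the module-theoretic bijection of cluster tilting objects under a triangle equivalence, since that is formal from the intrinsic characterization of cluster tilting objects in a 2-CY category recalled after Definition \ref{cluster tilting mutation}.
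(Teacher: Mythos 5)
Your chain (a)$\Leftrightarrow$(b)$\Leftrightarrow$(c)$\Leftrightarrow$(d)$\Leftrightarrow$(e) is exactly the paper's route: Reid's result that for an isolated cDV singularity crepant resolutions are small (Theorem \ref{t3.1}), Van den Bergh's theorem for (b)$\Leftrightarrow$(c) \cite{V1,V2}, Theorem \ref{IR} from \cite{I2,IR} for (c)$\Leftrightarrow$(d), and Kn\"orrer periodicity \cite{Kn,S} for (d)$\Leftrightarrow$(e) (the paper, like you, notes the equivalence is induced by an exact functor preserving projectives, so it is triangulated and matches up cluster tilting objects). Up to this point your proposal is correct and essentially identical to Theorems \ref{4.5} and \ref{4.7} in the paper.

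The genuine gap is in how you attach condition (f). You obtain (e)$\Leftrightarrow$(f) by quoting Theorem \ref{main3}(a), but inside this paper that is circular: only the ``if'' direction of Theorem \ref{main3}(a) (condition (A) implies existence of a cluster tilting object, Theorem \ref{XAA}(b)) has an independent homological proof; the ``only if'' direction --- which is precisely what you need for (e)$\Rightarrow$(f) --- is proved in the paper \emph{as a consequence of} Theorem \ref{main4}, via Corollary \ref{5.3}. The paper instead closes the cycle on the other side, using Katz's criterion \cite[Th. 1.1]{Kat} (Theorem \ref{4.6}): an isolated $cA_n$--singularity of the form $g(x,y)+uv$ admits a small resolution if and only if $g$ has $n+1$ distinct branches at the origin, giving (a)$\Leftrightarrow$(f) directly. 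Your write-up never invokes Katz, and without it (or some substitute argument that existence of a cluster tilting object forces the branch count) you only have the implication (f)$\Rightarrow$(e) from Theorem \ref{XAA}; no implication from (a)--(e) into (f) is established, so the six conditions are not shown equivalent. A secondary, fixable point: even granting a correct statement of Theorem \ref{main3}(a), identifying ``$n+1$ irreducible factors'' with ``each factor outside $(x,y)^2$'' requires knowing that $\ord(g)=n+1$ when $g(x,y)+zt$ is $cA_n$ (this is Proposition \ref{5.1}(e), proved via Milnor numbers and right equivalence); your remark that ``the relevant factorizations are into smooth branches'' asserts rather than proves this, since the number of irreducible factors equals $\ord(g)$ exactly when every branch is smooth.
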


\bigskip
We end this section by giving an application to finite dimensional algebras.
A {\it 2-CY tilted algebra} is an endomorphism ring $\End_{\mathcal{C}}(M)$
of a cluster tilting object $T$ in a 2-CY triangulated category $\mathcal{C}$.
In section \ref{CYtilted}, we shall show the following result and compute
2-CY tilted algebras associated with minimally elliptic curve singularities.

\begin{theorem}\label{main5}
  Let $(R,{\mathfrak m})$ be an odd-dimensional isolated hypersurface singularity
and $\ga$ a 2-CY tilted algebra coming from $\ul{\CM}(R)$. Then we have the following.
\begin{itemize}
  \item[(a)] $\ga$ is a symmetric algebra.
  \item[(b)] All components in the stable AR-quiver of infinite type $\ga$ are tubes of rank 1 or 2.
\end{itemize}
\end{theorem}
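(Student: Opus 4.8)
The plan is to obtain (a) from Serre duality together with the hypersurface periodicity $\Sigma^2=\id$, and (b) by combining the self-injectivity from (a) with the fact that the stable module category of a $2$-CY tilted algebra is $3$-CY; these force $\tau^2\simeq\id$ on $\underline{\mod}\,\ga$, after which the structure theory of stable Auslander--Reiten components finishes most of the argument.

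For (a), write $\ga=\End_{\ul{\CM}(R)}(T)$ for a basic cluster tilting object $T$; it is finite dimensional because $\ul{\CM}(R)$ is $\Hom$-finite. Since $R$ is an odd-dimensional isolated hypersurface singularity, $\Sigma^2=\id$ on $\ul{\CM}(R)$ \cite{Ei}, so the Serre functor $S=\Sigma^2$ of the $2$-CY category $\ul{\CM}(R)$ equals the identity; that is, $\ul{\Hom}_R(X,Y)\simeq D\ul{\Hom}_R(Y,X)$ naturally in $X$ and $Y$. Specialising to $X=Y=T$ and using functoriality in both arguments, this becomes an isomorphism of $\ga$-bimodules $\ga\simeq D\ga$. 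A finite dimensional algebra is symmetric precisely when it is isomorphic to its $k$-dual as a bimodule, so $\ga$ is symmetric.

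For (b) I would first record two formulas for the Auslander--Reiten translation $\tau_\ga$ on $\underline{\mod}\,\ga$. By (a), $\ga$ is self-injective with trivial Nakayama functor, so $\underline{\mod}\,\ga=\underline{\CM}(\ga)$ and $\tau_\ga\simeq\Omega_\ga^2$. On the other hand $\ga$ is a $2$-CY tilted algebra, hence Gorenstein of dimension at most one, with $\underline{\CM}(\ga)$ being $3$-CY \cite{KR}; since $\ga$ is self-injective this says $\underline{\mod}\,\ga$ is $3$-CY, with Serre functor $\Sigma_\ga^3$, so $\tau_\ga\simeq\Sigma_\ga^3\circ\Sigma_\ga^{-1}=\Sigma_\ga^2\simeq\Omega_\ga^{-2}$. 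Comparing the two gives $\Omega_\ga^4\simeq\id$ on $\underline{\mod}\,\ga$, and hence $\tau_\ga^2\simeq\id$: every indecomposable non-projective $\ga$-module is $\tau$-periodic of period $1$ or $2$. Now assume $\ga$ is representation-infinite (and, as holds for the $\ga$ occurring here, connected). Each component of its stable Auslander--Reiten quiver then consists of $\tau$-periodic modules, so by the structure theory of such components (Riedtmann; Happel--Preiser--Ringel) it has the form $\mathbb Z\Delta/\Pi$ with $\Delta$ an infinite Dynkin diagram --- the finite ones are excluded since a finite stable component would force finite representation type, and the valued types $B_\infty,C_\infty$ do not occur over the algebraically closed field $k$ --- so $\Delta\in\{A_\infty,D_\infty,A_\infty^\infty\}$, and $\tau_\ga^2\simeq\id$ forces $\Pi=\langle\tau^r\rangle$ with $r\le 2$. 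The components with $\Delta=A_\infty$ are exactly the stable tubes of rank $1$ or $2$.

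The remaining step, which I expect to be the main obstacle, is to rule out the non-tube possibilities $\mathbb Z D_\infty/\langle\tau^r\rangle$ and $\mathbb Z A_\infty^\infty/\langle\tau^r\rangle$ ($r\le 2$), which satisfy $\tau^2\simeq\id$ but are not tubes. Here I would use the cluster-tilting picture of $\underline{\mod}\,\ga$: by \cite{KR} the functor $\ul{\Hom}_R(T,-)$ induces an equivalence $\ul{\CM}(R)/[\tau T]\xrightarrow{\ \sim\ }\mod\ga$, and factoring out in addition the image $\add\ga$ of $\add T$ yields $\underline{\mod}\,\ga\simeq\ul{\CM}(R)/[T\oplus\tau T]$. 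Thus the stable Auslander--Reiten quiver of $\ga$ is obtained from that of $\ul{\CM}(R)$ by deleting the finitely many $\tau$-orbits through the indecomposable summands of $T\oplus\tau T$. One then invokes the known shape of the stable Auslander--Reiten quiver of $\ul{\CM}(R)$ for the one-dimensional hypersurface singularities in play --- to which the general case reduces by Kn\"orrer periodicity, and which by Theorem \ref{main3} are the rings $k[[x,y]]/(f_1\cdots f_n)$ with $f_i\notin(x,y)^2$ --- namely that it is a disjoint union of stable tubes of rank $1$ and $2$, and verifies that deleting finitely many $\tau$-orbits from such a union leaves only tubes of rank $1$ and $2$. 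Establishing this compatibility of the two Auslander--Reiten structures, together with the tube description of $\ul{\CM}(R)$ in the wild cases, is the delicate part; alternatively one can argue intrinsically, by showing that in $\underline{\mod}\,\ga$ every component contains a module whose almost split sequence has indecomposable middle term while no module has an almost split sequence with three or more indecomposable summands in its middle term, which already forces the $\mathbb Z A_\infty/\langle\tau^r\rangle$ shape.
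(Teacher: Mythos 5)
Your part (a) is correct and is essentially the paper's argument: since $\Sigma^2=\id$ by Eisenbud, the Serre functor of $\ul{\CM}(R)$ is the identity, and evaluating the resulting bifunctorial duality at $(T,T)$ gives $\ga\simeq D\ga$ as bimodules. Your derivation of $\tau_\ga^2\simeq\id$ is also correct, but it takes a different route from the paper: you combine symmetry ($\tau_\ga\simeq\Omega_\ga^2$) with the $3$-CY property of $\underline{\mod}\,\ga$ from \cite{KR} to get $\Omega_\ga^4\simeq\id$, whereas the paper (Lemma \ref{L1.2}(b),(c)) transports $\tau^2=\id$ directly from $\ul{\CM}(R)$ through the equivalence $\ul{\CM}(R)/\add(\tau T)\simeq\mod\ga$, using that the AR-quiver of $\ga$ is the AR-quiver of $\ul{\CM}(R)$ with the vertices of $\tau T$ deleted and that $\tau C$ stays non-projective when $C$ is. Both routes are sound.

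The genuine gap is in the last step of (b), exactly where you flag it, and your two proposed repairs do not close it. The exclusion of the shapes $\mathbb{Z}D_\infty/\langle\tau^r\rangle$ and $\mathbb{Z}A_\infty^\infty/\langle\tau^r\rangle$ is not an open obstacle: it is the full strength of the Happel--Preiser--Ringel theorem on components of the stable AR-quiver of an Artin algebra. A connected component containing a $\tau$-periodic module carries a $\tau$-invariant subadditive function given by composition length, and the subadditive-function analysis shows such a component is either finite or a stable tube $\mathbb{Z}A_\infty/\langle\tau^r\rangle$; the infinite non-tube translation quivers you list simply do not occur as AR-components of module categories. Combined with connectedness of $\ga$ and infinite representation type (so no finite components, by Auslander), and $\tau_\ga^2\simeq\id$, this gives tubes of rank $1$ or $2$ --- which is precisely what the paper's Lemma \ref{L1.2}(c)(iii) asserts. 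By contrast, your first repair assumes that the stable AR-quiver of $\ul{\CM}(R)$ is known to be a union of tubes of rank $1$ and $2$; this is available in the tame (minimally elliptic) cases \cite{Ka,DGK}, but not a priori in the wild cases covered by the theorem --- indeed the paper advertises the tube structure of $\ga$ in the wild case as a new consequence, so taking the corresponding statement for $\ul{\CM}(R)$ as known input is circular (it would itself have to be proved by the same periodicity-plus-HPR argument). Moreover, the bookkeeping of ``deleting the $\tau$-orbits of $T\oplus\tau T$ from a union of tubes'' is more delicate than stated, since removing an interior $\tau$-orbit disconnects a tube into a finite piece and a tube. Your second, ``intrinsic'' alternative (every component contains a module with indecomposable AR middle term, and no AR sequence has three or more indecomposable middle summands) is only asserted, and proving the second claim without already knowing the component structure is essentially the original problem. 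So: replace the final paragraph by an appeal to the Happel--Preiser--Ringel classification of periodic components, and the proof is complete.
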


For example, put 
$$R=k[[x,y]]/((x-\lambda_1y)\cdots(x-\lambda_ny))\ \mbox{ and }\ 
M=\bigoplus_{i=1}^nk[[x,y]]/((x-\lambda_1y)\cdots(x-\lambda_iy))$$
for distinct elements $\lambda_i\in k$.
Then $M$ is a cluster tilting object in $\CM(R)$ by Theorem \ref{XAA},
so $\ga=\ul{\End}_R(M)$ satisfies the conditions in Theorem \ref{main5}.
Since $\CM(R)$ has wild Cohen-Macaulay representation type if $n>4$
\cite[Th. 3]{DG}, we should get a
family of examples of finite dimensional symmetric $k$-algebras
whose stable AR-quiver consists only of tubes of rank 1 or 2, and are of
wild representation type.

\section{Simple hypersurface singularities}\label{additive}

Let $R$ be a one-dimensional  simple hypersurface singularity. In
this case the AR-quivers are known for $\CM(R)$ \cite{DW}, and so also
for $\underline{\CM}(R)$. We use the notation from \cite{Y}. 

In order to locate the indecomposable rigid modules $M$, that is, the modules $M$ with \newline$\Ext^1(M,M)=0$, the following lemmas are useful, where part (a) of the first one is proved in \cite{HKR}, and the second one is a direct consequence of \cite{KR} (generalizing
\cite{BMR1}).
\begin{lemma}\label{L1.1}
  \begin{itemize}
  \item[(a)] Let $\mathcal{C}$ be an abelian or triangulated $k$-category with finite dimensional homomorphism spaces. Let $A\xrightarrow{
\left( 
\begin{smallmatrix}
  f_1\\
  f_2
\end{smallmatrix}
\right)} B_1\oplus B_2\xrightarrow{(g_1,g_2)} C $ be a short exact sequence or a triangle, where $A$ is  indecomposable, $B_1$ and $B_2$ nonzero, and $(g_1,g_2)$ has no nonzero indecomposable summand which is an isomorphism. Then
    $\Hom(A,C) \neq 0$ .
  \item[(b)]Let $0\to A\xrightarrow{f} B\xrightarrow{g} C\to 0$ be an almost split sequence in $\CM(R)$, where $R$ is an isolated hypersurface singularity, and $B$ has at least two indecomposable nonprojective summands in a decomposition of $B$ into a direct sum of indecomposable modules. Then $\Ext^1(C,C)\neq0$.
  \end{itemize}
\end{lemma}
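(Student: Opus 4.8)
The plan is to prove Lemma~\ref{L1.1} in two parts, treating (a) and (b) separately.

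\medskip

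\textbf{Proof plan for (a).} Suppose for contradiction that $\Hom(A,C)=0$. Apply the functor $\Hom(A,-)$ to the short exact sequence (or triangle) $A\xrightarrow{\binom{f_1}{f_2}} B_1\oplus B_2\xrightarrow{(g_1,g_2)} C$. In the triangulated case, completing $A\to B_1\oplus B_2\to C$ to a triangle and applying $\Hom(A,-)$ yields an exact sequence, and the vanishing of $\Hom(A,C)$ (together with the vanishing of $\Hom(A,\Sigma^{-1}C)$ if we argue carefully, or simply using the long exact sequence one step at a time) forces the map $\Hom(A,A)\to\Hom(A,B_1\oplus B_2)$ induced by $\binom{f_1}{f_2}$ to be surjective; in the abelian case one gets the same surjectivity from right exactness once $\Hom(A,C)=0$. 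In particular $\id_A$ lifts: there is $h=\binom{h_1}{h_2}\colon A\to B_1\oplus B_2$ --- wait, more precisely there exists $(s_1,s_2)\colon B_1\oplus B_2\to A$, i.e.\ $s_i\colon B_i\to A$, with $s_1f_1+s_2f_2=\id_A$. Since $A$ is indecomposable, $\End(A)$ is local, so one of $s_1f_1$, $s_2f_2$ is invertible, say $s_1f_1$ is an automorphism of $A$. Then $f_1\colon A\to B_1$ is a split monomorphism, so $A$ is (isomorphic to) a direct summand of $B_1$; write $B_1\cong A\oplus B_1'$. Feeding this splitting back into the sequence, one extracts from $(g_1,g_2)$ a direct summand of the form $A\xrightarrow{\,\cong\,}A$ inside the component $g_1|_A$ --- more carefully, the induced map on the complement shows $(g_1,g_2)$ acquires an indecomposable summand which is an isomorphism, contradicting the hypothesis on $(g_1,g_2)$. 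Hence $\Hom(A,C)\neq0$. The mild subtlety in the triangulated case --- making sure the relevant $\Hom$-group from the shifted term does not interfere --- is handled exactly as in \cite{HKR}, which we cite for this part.

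\medskip

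\textbf{Proof plan for (b).} This is deduced from the general theory of 2-CY categories as developed in \cite{KR} (generalizing \cite{BMR1}). Since $R$ is a one-dimensional --- or more generally odd-dimensional --- isolated hypersurface singularity, $\ul{\CM}(R)$ is $2$-CY with $\tau\simeq\Sigma\simeq\Omega$, and an almost split sequence $0\to A\to B\xrightarrow{g} C\to0$ in $\CM(R)$ becomes an almost split triangle $A\to B\to C\to\Sigma A$ in $\ul{\CM}(R)$ with $A\cong\tau C$. The key point is that, by $2$-CY duality, $\Ext^1(C,C)\cong D\ul{\Hom}(C,\tau C)=D\ul{\Hom}(C,A)$, so it suffices to produce a nonzero morphism $C\to A$ in $\ul{\CM}(R)$; equivalently, by another application of AR-duality, $\Ext^1(C,C)\neq0$ iff $\ul{\Hom}(C,C)\to$ --- rather, one uses that $\Ext^1(C,C)\cong D\ul{\Hom}(C,A)$ and applies part (a). Indeed, apply (a) to the almost split triangle $A\to B\to C\to\Sigma A$: here $A$ is indecomposable (it is $\tau C$ with $C$ indecomposable, as $C$ is a nonprojective indecomposable at the mouth of an AR-sequence), $B=B_1\oplus B_2\oplus\cdots$ has at least two indecomposable nonprojective summands by hypothesis so we may group it as $B_1\oplus B_2$ with both nonzero, and the map $B\to C$ is the minimal right almost split map, hence radical, so $(g_1,g_2)$ has no indecomposable summand that is an isomorphism. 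Part (a) then gives $\ul{\Hom}(A,C)\neq0$, and by $2$-CY duality $D\ul{\Hom}(A,C)\cong\ul{\Hom}(C,\Sigma^2 A\ominus\ \text{--}$ --- using $\Sigma^2=\id$ and $\tau=\Sigma$, $\ul{\Hom}(A,C)\cong D\ul{\Hom}(C,\Sigma^2 A)=D\ul{\Hom}(C,A)\cong D\,D\Ext^1(C,\tau^{-1}A)$, and $\tau^{-1}A\cong\tau^{-1}\tau C=C$, whence $\Ext^1(C,C)\neq0$.

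\medskip

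\textbf{Main obstacle.} The routine calculations --- chasing long exact sequences and bookkeeping direct summands --- are not the difficulty. The one genuinely delicate point is part (a) in the triangulated setting: one must ensure that the lift of $\id_A$ through $\binom{f_1}{f_2}$ really exists, i.e.\ that the connecting map $\Hom(A,C)\to\Hom(A,\Sigma A)$ and the term $\Hom(A,\Sigma^{-1}C)$ do not obstruct surjectivity of $\Hom(A,\binom{f_1}{f_2})$ from the hypothesis $\Hom(A,C)=0$ alone. In fact, exactness of $\Hom(A,\Sigma^{-1}C)\to\Hom(A,A)\xrightarrow{(f_1,f_2)_*}\Hom(A,B_1)\oplus\Hom(A,B_2)\to\Hom(A,C)=0$ shows the middle map is surjective without any extra input, so the argument goes through; this is precisely the content of \cite{HKR}, and we simply invoke it. Part (b) is then a formal consequence of (a) together with the $2$-CY property and Knörrer-type reductions already recorded above.
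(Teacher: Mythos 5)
Your sketch of part (a) breaks at its central step. From $\Hom(A,C)=0$ and the exact sequence obtained by applying $\Hom(A,-)$ you correctly conclude that post-composition $f_*\colon\Hom(A,A)\to\Hom(A,B_1\oplus B_2)$, $u\mapsto\binom{f_1}{f_2}u$, is surjective. But this produces no morphism $B_1\oplus B_2\to A$ whatsoever, so it does not ``lift $\id_A$''; the retraction $(s_1,s_2)$ with $s_1f_1+s_2f_2=\id_A$ that you then feed into the locality argument would say precisely that the sequence (or triangle) splits, and that is not a consequence of $\Hom(A,C)=0$. Concretely, over the path algebra of the quiver $A_2$ take the two simples $S,S'$ with $\Hom(S,S')=0$ and a nonsplit extension $0\to S\to P\to S'\to 0$; adding the trivial summand $S'\xrightarrow{\id}S'$ gives a nonsplit exact sequence $0\to A\to B_1\oplus B_2\to C\to 0$ with $A=S$ indecomposable, $B_1=P$, $B_2=S'$ nonzero and $\Hom(A,C)=0$, so no such $(s_1,s_2)$ exists. (Of course $(g_1,g_2)$ here has an isomorphism as a summand, exactly as the lemma predicts; but you derive the retraction before ever using the no-iso-summand hypothesis, so the deduction is invalid.) Moreover the ``subtlety'' you defer to \cite{HKR} — surjectivity of $f_*$ in the triangulated case — is not where the content of the lemma lies. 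The actual argument (this is what the cited lemma does) uses $\Hom(A,C)=0$ differently: $g_1f_1$ and $g_2f_2$ are elements of $\Hom(A,C)$, hence zero, so each of $\binom{f_1}{0}$ and $\binom{0}{f_2}$ factors through $f$, say as $fu$ and $fv$ with $f(u+v)=f$; locality of $\End(A)$ then forces, after adjusting by an automorphism of $A$ (and treating separately the possibility $f=0$ in the triangulated case), $f_2=0$ or $f_1=0$, whence the sequence or triangle decomposes and $\id_{B_2}$ (respectively $\id_{B_1}$) splits off $(g_1,g_2)$, contradicting the hypothesis. Simply quoting \cite[Lem.\ 6.5]{HKR} for all of (a), as the paper does, is fine; but the proof you actually wrote does not establish it.

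Part (b) of your proposal is essentially the paper's argument: apply (a) to the almost split triangle in $\ul{\CM}(R)$ (the hypothesis of two indecomposable nonprojective summands gives two nonzero summands there, and a right almost split map cannot become a split epimorphism in the stable category), then identify $\Ext^1(C,C)$ with $D\ul{\Hom}(A,C)$. Two small remarks: your chain of dualities is more roundabout than needed — the paper gets $D\Ext^1(C,C)\simeq\ul{\Hom}(\tau^{-1}C,C)=\ul{\Hom}(A,C)$ directly from the AR-formula and $\tau^2=\id$, with no appeal to the 2-CY property — and your restriction to odd dimension (where $\tau\simeq\Sigma$) is unnecessary, though harmless, since for even-dimensional hypersurfaces $\tau=\id$ makes the conclusion trivial.
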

\begin{proof}
  (a) See \cite[Lem. 6.5]{HKR}.

(b) Using (a) together with the above AR-formula and $\tau^2=\id$, we obtain  $ D\Ext^1(C,C)\simeq$

\noindent
$\underline{\Hom}(\tau^{-1}C,C)=\underline{\Hom}(\tau C,C)\simeq\underline{\Hom}(A,C)\neq 0$, where $ D=\Hom_k(\text{ },k)$.
\end{proof}

\begin{lemma}\label{L1.2}
  Let $T$ be  a cluster tilting object in the $\Hom$-finite connected 2-{\rm CY} category
  $\mathcal{C}$, and $\Gamma=\End_{\cc}(T)$.
\begin{itemize}
\item[(a)] The functor $G=\Hom_{\cc}(T,\text{ })\colon\cc\to \mod(\Gamma)$ induces an equivalence of categories \newline$\ol{G}\colon\cc/{\add(\tau T)}\to \mod(\Gamma)$.
\item[(b)] The AR-quiver for $\Gamma$ is as a translation quiver obtained from the AR-quiver for $\cc$ by removing the vertices corresponding to the indecomposable summands of $\tau T$.
\item[(c)] Assume $\tau^2=\id$. Then  we have the following.
\begin{itemize}
  \item[(i)] $\Gamma$ is a symmetric algebra.
  \item[(ii)] The indecomposable nonprojective $\Gamma$-modules have $\tau$-period one or two.
\item[(iii)] If $\cc$ has an infinite number of nonisomorphic indecomposable objects, then all components in the stable AR-quiver of $\Gamma$ are tubes of rank one or two.
\end{itemize}
\item[(d)] If $\cc$ has only a finite number $n$ of nonisomorphic
  indecomposable objects, and $T$ has $t$ nonisomorphic indecomposable
  summands, then there are $n-t$ nonisomorphic indecomposable
  $\Gamma$-modules. 
\end{itemize}
\end{lemma}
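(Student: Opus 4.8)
The proof proposal for Lemma \ref{L1.2} is essentially to invoke the general theory of cluster tilting objects in Hom-finite $2$-CY triangulated categories developed in \cite{KR} (building on \cite{BMR1}), and then to extract the explicit combinatorial and algebraic consequences under the extra hypothesis $\tau^2 = \id$. For part (a), the key input is that $\End_{\cc}(T)$ has global dimension controlled by the $2$-CY property: one shows that the functor $G = \Hom_{\cc}(T, -)$ restricted to $\cc$ kills precisely the morphisms factoring through $\add(\tau T)$, because $\Hom_{\cc}(T, \tau T) \cong D\,\ul{\Ext}^1_{\cc}(T,T) = 0$ forces $G$ to vanish on $\add(\tau T)$, while the cluster tilting hypothesis $\add T = \{X : \Ext^1(T,X)=0\}$ guarantees that the induced functor $\ol G$ is dense and faithful. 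That $\ol G$ is full follows from the approximation sequences available in a cluster tilting setup. This is exactly \cite[Prop.\ 2.1]{KR} (or the analogous statement there), so I would just cite it.

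Part (b) is a direct translation of (a) into the language of AR-theory: since $\ol G : \cc/\add(\tau T) \to \mod(\Gamma)$ is an equivalence, it carries almost split sequences in $\cc$ (those not involving a summand of $\tau T$ as an end term) to almost split sequences in $\mod\Gamma$, and the remaining AR-sequences in $\cc$ become, after applying $G$, the projective covers / the sequences $0 \to \rad P \to P \to P/\rad P \to 0$ for the projective $\Gamma$-modules. Hence the AR-quiver of $\Gamma$ is obtained from that of $\cc$ by deleting the vertices $\tau T_i$; I would spell this out by matching up the translate $\tau$ on both sides via the equivalence. For part (c), the hypothesis $\tau^2 = \id$ combined with the $2$-CY Serre duality $\ul\Hom(X,Y) \cong D\ul\Hom(Y, \Sigma^2 X)$ and the AR-formula $\ul\Hom(X,Y)\cong D\Ext^1(Y,\tau X)$ yields $\Sigma^2 = \id$ as well; then (i) follows because a finite-dimensional algebra whose stable module category has $\tau$ acting as a self-inverse equivalence with $\nu$ (the Nakayama functor) trivial is symmetric — more precisely, one identifies the Serre functor on $\mod\Gamma$ using that $\cc$ is $2$-CY and concludes $\nu = \id$, giving symmetry. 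Statement (ii) is immediate from (b) together with $\tau^2 = \id$ on $\cc$, since the translate on the non-projective part of $\mod\Gamma$ is inherited from $\tau$ on $\cc$. Statement (iii): an infinite component of a stable AR-quiver on which $\tau$ has order dividing $2$ must, by the classification of possible shapes of stable components (Riedtmann's structure theorem — a component is of the form $\mathbb{Z}\Delta/G$), be a tube of rank $1$ or $2$; a tube $\mathbb{Z}A_\infty/\langle \tau^r\rangle$ has $\tau^2 = \id$ only for $r \in \{1,2\}$, and the non-tube shapes $\mathbb{Z}\Delta$ with $\Delta$ infinite Dynkin or $\mathbb{A}_\infty^\infty$ admit no nontrivial finite power of $\tau$ acting as identity.

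Finally, part (d) is just counting: if $\cc$ has exactly $n$ indecomposables and $T$ has $t$ indecomposable summands, then $\tau T$ also has $t$ indecomposable summands (as $\tau$ is an autoequivalence), so by (a) the category $\mod\Gamma \simeq \cc/\add(\tau T)$ has exactly $n - t$ indecomposable objects. The main obstacle I anticipate is part (c)(i): pinning down that $\Gamma$ is \emph{symmetric} (not merely self-injective or Frobenius) requires identifying the Nakayama automorphism via the $2$-CY structure, which is the subtlest point; I would handle it by showing $\ul\Hom_\Gamma(-,-)$ admits the identity-shifted Serre duality $D\ul\Hom_\Gamma(X,Y) \cong \ul\Hom_\Gamma(Y,X)$ transported from $\cc$ through $\ol G$, together with the well-known fact that a finite-dimensional algebra is symmetric iff it is self-injective with Nakayama functor isomorphic to the identity — all of which is already available in \cite{KR}, so in the write-up this reduces to a careful citation with the $\tau^2 = \id$ specialization made explicit.
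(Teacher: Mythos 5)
Most of your plan coincides with the paper's proof: for (a) and (b) the paper simply cites \cite{BMR1,KR}, (d) is the same counting argument via (a), (c)(ii) is the same observation that the translate of a nonprojective indecomposable $C$ in $\mod(\Gamma)$ is inherited from $\tau_{\cc}$ (the paper notes that $\tau_{\cc}C$ is not a projective $\Gamma$-module because $C$ is not one of the removed vertices, so $\tau^2_{\Gamma}C\simeq\tau^2_{\cc}C\simeq C$), and (c)(iii) is the same appeal to the fact that for an algebra of infinite type every stable component is infinite, and an infinite $\tau$-periodic component with $\tau$-period dividing $2$ is a tube of rank $1$ or $2$.

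The gap is in (c)(i), exactly the point you flag as the subtlest. Your concrete plan --- to transport an ``identity-shifted Serre duality'' $D\ul{\Hom}_\Gamma(X,Y)\cong\ul{\Hom}_\Gamma(Y,X)$ through $\ol{G}$ and then invoke ``symmetric iff self-injective with Nakayama functor isomorphic to the identity'' --- does not work as stated. First, $\ol{G}$ is an equivalence of $\cc/\add(\tau T)$ with $\mod(\Gamma)$, not with the stable module category, so a duality on stable $\Gamma$-Homs is not obtained by transport; second, even granted such a duality, it would identify the Serre functor of $\ul{\mod}(\Gamma)$ (which is $\nu\Omega$ once self-injectivity is known), not $\nu$ itself, and self-injectivity is part of what has to be proved, so the ``well-known fact'' cannot be applied at that stage. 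The correct and much shorter argument, which is the paper's, is to evaluate the 2-CY Serre duality of $\cc$ on the pair $(T,T)$ itself: since $\tau=\Sigma$, one has functorial isomorphisms $D\Hom_{\cc}(T,T)\simeq\Hom_{\cc}(T,\Sigma^2T)=\Hom_{\cc}(T,\tau^2T)\simeq\Hom_{\cc}(T,T)$, i.e. $D\Gamma\cong\Gamma$ as $\Gamma$-bimodules, which is precisely the definition of a symmetric algebra. Your intermediate remark about identifying $\nu\cong\id$ on the projectives is essentially this computation in disguise, and that is the version you should write down rather than the stable-category detour.
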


\begin{proof}
For (a) and (b) see  \cite{BMR1,KR}. Since $\cc$ is 2-CY, we have $\tau=\Sigma$, and a functorial isomorphism 
$$D\Hom_{\cc}(T,T)\simeq\Hom_{\cc}(T,\Sigma^2T)=\Hom_{\cc}(T,\tau^2T)\simeq\Hom_{\cc}(T,T).$$
This shows that $\Gamma$ is symmetric. Let $C$ be an indecomposable
nonprojective $\Gamma$-module. Viewing $C$ as an object in $\cc$ we have $\tau^2_{\cc}C\simeq C$, and $\tau C$ is not a projective $\ga$-module since $C$ is not removed. Hence we have $\tau^2_{\ga}C\simeq C$. If $\cc$ has an infinite number of nonisomorphic indecomposable objects, then $\ga$ is of infinite type. Then each component of the AR-quiver is infinite, and hence is a tube of rank one or two. Finally, (d) is a direct consequence of (a).
\end{proof}

We also  use that in our cases we have a covering functor $\Pi\colon k(\mathbb{Z}Q)\to \underline{\CM}(R)$, where $Q$ is the appropriate Dynkin quiver and $k(\mathbb{Z}Q)$ is the mesh category of the translation quiver $\mathbb{Z}Q$ \cite{Rit,Am}, (see also \cite[Section 4.4]{I1} for another explanation using functorial methods).

For the one-dimensional simple hypersurface singularities we have the
cases $A_n$ ($n$ even or odd), $D_n$ ($n$ odd or even), $E_6$, $E_7$ and $E_8$. We now investigate them case by case. 
\begin{proposition}\label{p1.3}
  In the case $A_n$ (with $n$ even) there are no indecomposable rigid objects.
\end{proposition}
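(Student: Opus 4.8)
The plan is to use the explicit description of the AR-quiver of $\underline{\CM}(R)$ for the one-dimensional simple singularity of type $A_n$ together with the covering functor $\Pi\colon k(\mathbb{Z}Q)\to\underline{\CM}(R)$, where $Q$ is the Dynkin quiver $A_n$. Recall from \cite{Y,DW} that for $A_n$ with $n$ even, the singularity $R=k[[x,y]]/(x^2+y^{n+1})$ has exactly $n$ indecomposable non-free Cohen-Macaulay modules, and the stable AR-quiver is $\mathbb{Z}A_n/(\tau^?)$ — more precisely, the indecomposables form a single $\tau$-orbit structure in which $\tau$ acts without fixed points (since $d=1$ is odd, $\tau=\Omega=\Sigma$ and $\tau^2=\id$, so the $\tau$-orbits have size $1$ or $2$; for $n$ even all orbits have size $2$). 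So it suffices to check that every single indecomposable $M$ has $\Ext^1_R(M,M)\neq 0$.

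The key computational step is the following: for each indecomposable $M$, use the AR-formula $D\Ext^1_R(M,M)\simeq\underline{\Hom}_R(\tau^{-1}M,M)=\underline{\Hom}_R(\tau M,M)$ (using $\tau^2=\id$), so that rigidity of $M$ is equivalent to $\underline{\Hom}_R(\tau M,M)=0$. Via the covering functor $\Pi$, this Hom-space is computed as a sum over the fibre of morphism spaces in the mesh category $k(\mathbb{Z}A_n)$, i.e.\ as paths in $\mathbb{Z}A_n$ modulo mesh relations. Thus I would translate the question into a purely combinatorial one: does there exist a nonzero path in $\mathbb{Z}A_n$ (mod mesh relations) from a lift of $\tau M$ to a lift of $M$? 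Equivalently, I would use an additive function (a dimension-type function on the vertices of $\mathbb{Z}A_n$, invariant under the relevant group action) to detect the non-vanishing — this is exactly the "additive functions on the AR quiver" method advertised in the text. For type $A_n$ the relevant additive function is essentially the rank / multiplicity vector of the module, and one checks directly that for every indecomposable $M$, the vertex $\tau M$ lies in the "forward cone" of $M$ (or $M$ in the forward cone of $\tau M$), so the mesh-category Hom is nonzero. Concretely: label the indecomposables $M_1,\dots,M_n$ along the quiver; one sees $\tau M_i$ and $M_i$ are always comparable in the partial order given by the arrows of $\mathbb{Z}A_n$ within one fundamental domain, forcing a nonzero morphism.

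The main obstacle I anticipate is bookkeeping the precise shape of the fundamental domain and the action of the automorphism $\tau^{-1}[1]=F$ (equivalently, pinning down which quotient $\mathbb{Z}A_n/\langle\varphi\rangle$ gives $\underline{\CM}(R)$), so that the fibre over which one sums the mesh-Hom spaces is correctly identified; an error here could either miss a nonzero contribution or spuriously produce one via cancellation. To control this I would rely on the known identification of the stable category $\underline{\CM}(R)$ in the $A_n$ case with a standard orbit category (as recorded in \cite{Y}), where for $n$ even the relevant orbit automorphism has no fixed points on vertices, so no module satisfies $\tau M\simeq M$ and the only way to get rigidity would be a genuine vanishing of all mesh paths — which a glance at the (finite, small) AR-quiver rules out. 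Since $n$ is even makes the AR-quiver "untwisted enough" that each indecomposable sits on a mesh chain connecting it to its $\tau$-translate, this verification is finite and direct, and I would simply carry it out for the generic shape and note it covers all indecomposables.
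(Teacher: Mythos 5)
Your structural description of the stable category is wrong, and this breaks the argument. For $A_n$ with $n$ even (i.e.\ $R=k[[x,y]]/(x^2+y^{n+1})$ with $n+1$ odd, so $R$ is a domain) there are only $n/2$ indecomposable non-free Cohen--Macaulay modules $I_1,\dots,I_{n/2}$, not $n$, and $\tau$ fixes every one of them: $\tau I_j\simeq I_j$ for all $j$. You assert the exact opposite, namely that $\tau$ acts without fixed points and all orbits have size $2$; that is what happens to the pair $N_\pm$ in the $n$ odd case, not here. Once the correct $\tau$-action is in hand the proof is immediate and needs no covering or mesh computation at all: by the AR-formula and $\tau^2=\id$,
$$D\Ext^1_R(I_j,I_j)\simeq\ul{\Hom}_R(I_j,\tau I_j)=\ul{\Hom}_R(I_j,I_j)\neq 0,$$
since the identity of an indecomposable non-free module is nonzero in the stable category. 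This is exactly the paper's one-line argument.

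Beyond the factual error, the mechanism you propose would not be a valid substitute even on its own terms. Your key step is that $\tau M$ and $M$ are ``comparable'' in $\mathbb{Z}A_n$, hence connected by a nonzero path in the mesh category; but existence of a path in the translation quiver does not imply non-vanishing of the corresponding Hom-space, since mesh relations can kill all such paths. Indeed, detecting precisely which of these spaces vanish is the whole point of the additive-function computations in the paper: for example, in the $n$ odd case one has $\ul{\Hom}(N_-,N_+)=0$ even though $N_-$ and $N_+$ are linked by paths in the AR-quiver, and this vanishing is what produces the rigid objects there. So ``a glance at the AR-quiver'' cannot rule out vanishing; you would have to carry out the additive-function (or equivalent) computation, and with the correct quiver that computation degenerates to the trivial observation $\tau I_j\simeq I_j$ above.
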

\begin{proof}
  We have the stable AR-quiver
$$\xymatrix@C0.5cm@R0.5cm{
    &I_1\ar@(dl,dr)@{--}\ar@<0.5ex>[r]& I_2\ar@(dl,dr)@{--}\ar@<0.4ex>[l]\ar@<0.4ex>[r]& \cdots\ar@<0.4ex>[r]\ar@<0.4ex>[l]& I_{n/2}\ar@(dl,dr)@{--}\ar@<0.4ex>[l]
}$$\\

Here, and later, a dotted line between two indecomposable modules
means that they are connected via $\tau$.

Since $\tau I_j\simeq I_j $ for  each $j$, $\Ext^1(I_j,I_j)\neq 0$ for $j= 1,\cdots, n/2$. Hence no $I_j$ is rigid.
\end{proof}
\begin{proposition}\label{p1.4}
  In the case $A_n$ (with $n$ odd) the maximal rigid objects coincide
  with the cluster tilting objects. There are two indecomposable ones, and the corresponding 2-CY tilted algebras are $k[x]/(x^{\frac{(n+1)}{2}})$. 
\end{proposition}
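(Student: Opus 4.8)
The plan is to exploit the covering functor $\Pi\colon k(\mathbb{Z}A_n)\to\underline{\CM}(R)$ together with the AR-quiver of $\underline{\CM}(R)$, which for $A_n$ with $n$ odd is well known (see \cite{DW,Y}). First I would write down the stable AR-quiver explicitly: it is a zigzag on $\frac{n+1}{2}$ vertices, where the $\tau$-orbits correspond to the indecomposables $M_1,\dots,M_{(n-1)/2}$ sitting in two $\tau$-orbits of length $2$ except possibly for the middle one which is $\tau$-stable. (One must be slightly careful about which of the $M_j$ are $\tau$-periodic of period $1$ versus $2$; the middle module $M_{(n-1)/2}$, which corresponds to the unique exceptional vertex, has $\tau M\simeq M$.) Any module with $\tau M\simeq M$ satisfies $\Ext^1(M,M)\neq 0$ by the AR-formula exactly as in Proposition \ref{p1.3}, so it cannot be rigid. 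This immediately cuts the list of candidate rigid indecomposables down to those $M_j$ with $\tau M_j\not\simeq M_j$.

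Next I would determine which of the remaining $M_j$ are actually rigid. Using the covering functor, $\Ext^1_R(M_j,M_j)=\underline{\Hom}(M_j,\tau^{-1}M_j)$ can be computed as a sum of Hom-spaces in the mesh category $k(\mathbb{Z}A_n)$ over the fiber of $\tau^{-1}M_j$; a vanishing criterion is that there is no path in $\mathbb{Z}A_n$ (surviving in the mesh category) from a lift of $M_j$ to a lift of $\tau^{-1}M_j$. A short combinatorial check on the $A_n$ mesh quiver should show that exactly the two "extremal" modules — the ones at the two ends of the zigzag, i.e. the two simple-socle or simple-top modules $M_1$ and (say) $M_1'$ in the opposite $\tau$-orbit — have this property, giving precisely $2$ indecomposable rigid objects, matching the table in Theorem \ref{main1}. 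Alternatively, and more cleanly, one notes that these two modules are the Cohen-Macaulay modules of rank... (they are the two non-free rank-one modules / the ideals generated by $x$ and by $y^{(n+1)/2}$ type summands), whose endomorphism ring is easily computed directly.

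Then I would identify the two rigid indecomposables as cluster tilting objects and compute the $2$-CY tilted algebra. Since each is rigid and indecomposable, and since the argument above shows no nonzero module can be added to it keeping rigidity (every other indecomposable either is $\tau$-periodic of period $1$, or has a nonzero self-extension, or has a nonzero extension with our module — this last point being the crux), each is maximal rigid; and a maximal rigid object in this $2$-CY setting that is also $1$-orthogonal is cluster tilting. To see it is cluster tilting rather than merely maximal rigid, I would check $\add M=\{X:\Ext^1(M,X)=0\}$ directly from the AR-quiver, using that $\Ext^1(M,X)\ne 0$ for every indecomposable $X\notin\add M$ — here Lemma \ref{L1.1}(a) applied to the AR-triangle ending at (or starting at) $M$ handles the neighbours, and the covering-functor computation handles the rest. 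Finally, $\Gamma=\ul{\End}_R(M)$ is a local finite-dimensional algebra (since $M$ is indecomposable), symmetric by Lemma \ref{L1.2}(c)(i), with $\dim_k\Gamma$ equal to the number of paths; counting these in the mesh category gives $\dim_k\Gamma=\frac{n+1}{2}$ and the radical is generated by a single element, so $\Gamma\cong k[x]/(x^{(n+1)/2})$.

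The main obstacle I expect is the middle step: reliably checking, purely from the $A_n$ mesh combinatorics, which $M_j$ are rigid and — more delicately — that the two candidates cannot be enlarged, i.e. that $\Ext^1(M_1, M_j)\ne 0$ or $\Ext^1(M_j,M_1)\ne 0$ for all the other indecomposables $M_j$. This is where one genuinely uses the shape of $\mathbb{Z}A_n$ and the mesh relations, and it is exactly the kind of bookkeeping the authors defer to the alternative \texttt{Singular} computation in Section \ref{sec2}; an honest write-up either does the mesh computation carefully or invokes that computer check.
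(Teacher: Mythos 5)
Your overall strategy is the same as the paper's (rule out $\tau$-stable indecomposables by the AR-formula, compute $\underline{\Hom}$-supports through the covering functor $k(\mathbb{Z}A_n)\to\underline{\CM}(R)$, then read off maximal rigidity, the cluster tilting property and the endomorphism ring from those supports), but your description of the stable AR-quiver is wrong, and this is not a cosmetic slip. For $A_n$ with $n$ odd the stable category has $(n+3)/2$ indecomposables: a chain $M_1,\dots,M_{(n-1)/2}$ \emph{all} of which satisfy $\tau M_i\simeq M_i$, together with two further modules $N_+,N_-$ attached at one end of the chain which form a \emph{single} $\tau$-orbit of length two, $\tau N_+\simeq N_-$. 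Your picture (``$(n+1)/2$ vertices'', the $M_j$ ``in two $\tau$-orbits of length $2$ except the middle one'', the two rigid candidates sitting ``at the two ends of the zigzag'' in ``opposite $\tau$-orbits'') gets both the count and the $\tau$-structure wrong. The $\tau$-structure matters for every subsequent step: rigidity of $N_+$ means $\Ext^1(N_+,N_+)=\underline{\Hom}(N_+,\tau N_+)=\underline{\Hom}(N_+,N_-)=0$, which is exactly the nontrivial covering computation; whereas the fact that $N_+\oplus N_-$ is \emph{not} rigid is immediate, since $\Ext^1(N_+,N_-)=\underline{\Hom}(N_+,\tau N_-)=\underline{\Hom}(N_+,N_+)\neq0$. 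With your picture of two candidates in different $\tau$-orbits you would neither set up the right Hom-space to check for rigidity nor see this easy obstruction to combining them, so the argument as written would not go through.

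The second gap is that the decisive computation is only promised, not performed: you say a ``short combinatorial check should show'' which candidates are rigid, and you flag the nonvanishing of $\Ext^1(N_\pm,M_i)$ for all $i$ as the ``crux'' to be done later or delegated to {\tt Singular}. But this is precisely the content of the proof: the paper computes the additive function giving the support of $\underline{\Hom}(N_-,\ )$ on the covering and reads off simultaneously that $\underline{\Hom}(N_-,N_+)=0$ (rigidity of both $N_\pm$), that $\underline{\Hom}(N_-,M_i)\neq0$ for every $i$ (hence $\Ext^1(N_\pm,M_i)\neq0$, so $N_+$ and $N_-$ are cluster tilting and not merely maximal rigid), and the dimensions needed to identify $\underline{\End}(N_\pm)\simeq k[x]/(x^{(n+1)/2})$. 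Until that single support computation is actually carried out (with the correct quiver and $\tau$-action), none of the three assertions of the proposition — exactly two indecomposable rigid objects, maximal rigid $=$ cluster tilting, and the form of the 2-CY tilted algebra — is established.
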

\begin{proof}
For simplicity, we write $l=({n-1})/2$. We have the stable AR-quiver 
$$\xymatrix@C=0.5cm@R=0.5cm{
    &&&& N_{-}\ar@{--}[dd]\ar@<0.5ex>[dl]\\
    M_1\ar@(dl,dr)@{--}\ar@<0.5ex>[r]& M_2\ar@(dl,dr)@{--}\ar@<0.5ex>[r]\ar@<0.5ex>[l]&
    \cdots\ar@<0.5ex>[r]\ar@<0.5ex>[l]&
    M_{l}\ar@(dl,dr)@{--}\ar@<0.5ex>[ur]\ar@<0.5ex>[l]\ar@<0.5ex>[dr]\\ 
    &&&& N_{+}\ar@<0.5ex>[ul]
  }$$
Since $\tau M_i\simeq M_i$ for $i=1, \cdots ,l$, we have
$$\Ext^1(M_i,M_i)\simeq \underline{\Hom}(M_i,\tau M_i)\simeq
\underline{\Hom}(M_i,M_i)\neq 0.$$ So only the indecomposable objects
$N_{-}$ and $N_{+}$ could be rigid. We use covering techniques and additive functions to compute the
support of $\underline{\Hom}(N_-,\ )$, where we refer to \cite{BG} for the meaning of the diagrams below.

$\xymatrix@C=0cm@R0.3cm{
  &&&&& M_1\ar[dr]&&M_1\ar[dr]&&\ M_1\ \\
  &&&&M_2\ar[ur]\ar[dr]&& M_2\ar[ur]\ar[dr]&&\ M_2\ \ar[ur]&\cdots\\
  &&& M_3\ar[ur]&&M_3\ar[ur]&&\ M_3\ar[ur]\ &&\\
  &&M_{l-1}\ar[dr]\ar@{..}[ur]&&M_{l-1}\ar[dr]\ar@{..}[ur]&&
  M_{l-1}\ar@{..}[ur]&&&\\
  &M_l\ar[ur]\ar[dr]\ar[r]&N_-\ar[r]\ar[r]&
  M_l\ar[ur]\ar[dr]\ar[r]&N_+\ar[r]&M_l\ar[ur]\ar[r]&N_-&\cdots&&\\
  \ N_{-}\ar[ur]\ && N_{+}\ar[ur]&& N_{-}\ar[ur]&&&&&
}$\vspace{0.1cm}\strut\kern-4em
$\xymatrix@C0.2cm@R0.3cm{
  &&&&& 1\ar[dr]&& 0\ar[dr]&&&&&\\
  &&&& 1\ar[ur]\ar[dr]&& 1\ar[ur]\ar[dr]&& 0\ar[dr]&&&&\\
  &&& 1\ar[ur]&& 1\ar[ur]&& 1\ar@{..}[drdr]\ar[ur]&&
  0\ar@{..}[drdr]&&&\\ 
 &&1\ar@{..}[ur]\ar[dr]&& 1\ar@{..}[ur]\ar[dr]&&1\ar@{..}[ur]&&&&&&\\ 
 & 1\ar[ur]\ar[dr]\ar[r]&1\ar[r]& 1\ar[ur]\ar[dr]\ar[r]&0\ar[r]&
  1\ar[ur]\ar[dr]\ar[r]&1\ar@{..}[rrr]&&&1\ar[dr]\ar[r]&0\ar[r]&0\ar[dr]&\\
 1\ar[ur]&& 0\ar[ur]&& 1\ar[ur]&& 0\ar@{..}[rrrr]&&&& 1\ar[ur]&& 0
}$\\

We see that $\underline{\Hom}(N_-,N_+)=0$, so
$\Ext^1(N_+,N_+)=\Ext^1(N_+,\tau N_-)=0$, and
$\Ext^1(N_-,N_-)=0$. Since $\Ext^1(N_+,N_-)\neq 0$, we see that $N_+$
and $N_-$  are exactly the maximal rigid objects. Further
$\underline{\Hom}(N_-,M_i)\neq 0$ for all $i$, so $\Ext^1(N_+,M_i)\neq
0$ and $\Ext^1(N_-,M_i)\neq 0$ for all $i$. This shows that $N_+$ and
$N_-$ are also cluster tilting objects. 

The description of the cluster tilted algebras follows directly from the above picture.
\end{proof}
\begin{proposition}\label{p1.5}
  In the case $D_n$
with $n$ odd we have two maximal rigid objects,
  which both are indecomposable, and neither one is cluster tilting.
\end{proposition}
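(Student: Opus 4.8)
The plan is to argue exactly as in Propositions~\ref{p1.3} and~\ref{p1.4}. We work in the $2$-CY category $\cc=\ul\CM(R)$, so $\tau=\Sigma$ and $\tau^2=\id$, and we freely use AR-duality in the form $\Ext^1_R(Y,Z)\simeq D\ul\Hom_R(\tau Z,Y)$ together with the $2$-CY isomorphism $\ul\Hom_R(X,Y)\simeq D\ul\Hom_R(Y,X)$. First I would write down the stable AR-quiver of $\cc$ for $D_n$ with $n$ odd from \cite{DW} (see also \cite{Y}); it is the push-down along the covering functor $\Pi\colon k(\mathbb{Z}D_n)\to\cc$ of \cite{Rit,Am}. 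Reading off the $\tau$-orbits, every indecomposable $M$ except the two modules $A$ and $B=\tau A$ at the branch point of the diagram is $\tau$-fixed, these two being interchanged by $\tau$ precisely because $n$ is odd. For a $\tau$-fixed indecomposable $M$ one has $\Ext^1_R(M,M)\simeq D\ul\Hom_R(\tau M,M)=D\ul\Hom_R(M,M)\neq0$ (alternatively, apply Lemma~\ref{L1.1}(b) to the AR-sequence ending at $M$, whose middle term has at least two nonprojective summands), so only $A$ and $B$ can be rigid.

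Next I would run the covering/additive-function computation of Proposition~\ref{p1.4}: lift $A$ to a vertex of $\mathbb{Z}D_n$, write down the additive function computing $\ul\Hom_{k(\mathbb{Z}D_n)}(A,-)$, and push it down to read off the support of $\ul\Hom_R(A,-)$ on the AR-quiver. The fact to extract is $\ul\Hom_R(A,B)=0$, equivalently $\ul\Hom_R(B,A)=0$ by $2$-CY duality. Then $\Ext^1_R(A,A)\simeq D\ul\Hom_R(B,A)=0$ and $\Ext^1_R(B,B)\simeq D\ul\Hom_R(A,B)=0$, so $A$ and $B$ are both rigid; on the other hand $\Ext^1_R(A,B)\simeq D\ul\Hom_R(\tau B,A)=D\ul\Hom_R(A,A)\neq0$, so $A\oplus B$ is not rigid and $A,B$ lie in no common rigid object. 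Together with the previous step this shows $A$ and $B$ are exactly the two maximal rigid objects of $\cc$, each indecomposable.

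Finally, to see that neither is cluster tilting: since $\cc$ is $2$-CY, $A$ is cluster tilting iff $\add A=\{X\mid\Ext^1_R(A,X)=0\}$, and $\Ext^1_R(A,X)\simeq D\ul\Hom_R(\tau X,A)$, so $A$ is cluster tilting iff the non-support of $\ul\Hom_R(-,A)$ — equivalently, by $2$-CY duality, of $\ul\Hom_R(A,-)$ — consists of $B$ and nothing more. But the computation above shows the support of $\ul\Hom_R(A,-)$ misses, besides $B$, at least one further indecomposable $M$; for such an $M$ we have $\ul\Hom_R(M,A)\simeq D\ul\Hom_R(A,M)=0$, hence $\Ext^1_R(A,\tau M)\simeq D\ul\Hom_R(M,A)=0$ while $\tau M\notin\add A$ (as $M\neq B=\tau A$), so $A$ is not cluster tilting. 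The symmetric argument gives the same for $B$.

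The main obstacle is the middle step: fixing the correct orientation of the $D_n$-quiver and the $\tau$-orbits near the branch point, and transporting the additive function on $\mathbb{Z}D_n$ through $\Pi$ precisely enough to conclude both that $\ul\Hom_R(A,B)=0$ and that at least one further indecomposable is absent from the support of $\ul\Hom_R(A,-)$ — this last point being exactly what distinguishes $D_n$ ($n$ odd) from the $A_n$ ($n$ odd) case of Proposition~\ref{p1.4}, where the analogous support is full. Everything else is formal from AR-duality, $\tau^2=\id$ and the $2$-CY property.
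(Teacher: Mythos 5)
Your overall architecture is the paper's: rule out all indecomposables except $A$ and $B$, compute the support of $\ul{\Hom}(A,-)$ by covering/additive-function techniques, deduce rigidity of $A$ and $B$ from $\ul{\Hom}(A,B)=0$, maximality from $\Ext^1(A,B)\neq 0$, and failure of cluster tilting from a further hole in the support (the paper uses $\ul{\Hom}(A,M_1)=0$, hence $\Ext^1(A,N_1)=0$ with $N_1\notin\add A$; it also offers a counting alternative via Lemma \ref{L1.2}, since $\ul{\End}(A)^{\op}\simeq k[x]/(x^2)$ has $2$ indecomposables rather than the $2n-4$ a cluster tilting object would force).

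However, your first step rests on a false description of the AR-quiver in the $D_n$, $n$ odd, case. It is not true that every indecomposable other than $A$ and $B$ is $\tau$-fixed: in this quiver $\tau$ interchanges the two rows, $\tau X_i\simeq Y_i$ and $\tau M_i\simeq N_i$, and the only $\tau$-fixed indecomposable is the branch-vertex module $X_{(n-1)/2}$; moreover $A$ and $B$ sit at the end of the quiver opposite the branch vertex, not at it. Consequently your primary argument $\Ext^1(M,M)\simeq D\ul{\Hom}(\tau M,M)=D\ul{\Hom}(M,M)\neq 0$ simply does not apply to the modules $X_i,Y_i,M_i,N_i$. What saves the step is exactly your parenthetical fallback: every vertex other than $A$ and $B$ has at least two incoming arrows, so Lemma \ref{L1.1}(b) excludes them -- and this is precisely what the paper does. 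With that correction, the remaining duality manipulations are fine (they only use $\tau A=B$, $\tau^2=\id$ and the 2-CY property, all correct here), and the two facts you defer to the ``middle step'' -- $\ul{\Hom}(A,B)=0$ and the absence of at least one further vertex (e.g.\ $M_1$) from the support of $\ul{\Hom}(A,-)$ -- are exactly what the paper's additive-function computation delivers, so the deferred computation would indeed go through; but as written your orbit claim is a genuine error and should be replaced by the Lemma \ref{L1.1}(b) argument (or by the correct $\tau$-orbit structure).
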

\begin{proof}
  We have the AR-quiver
  $$\xymatrix@R0.1cm@C0.5cm{
    B\ar@{--}[dd]\ar[r]& Y_1\ar[ddl]\ar[r]\ar@{--}[dd]&
    M_1\ar[ddl]\ar@{--}[dd]\ar[r]& Y_2\ar[r]\ar[ddl]\ar@{--}[dd]& 
    M_2\ar[r]\ar[ddl]\ar@{--}[dd]\ar[r]& 
    \cdots\ar[r]& M_{{(n-3)}/2}\ar@<0.5ex>[dr]\ar[ddl]\ar@{--}[dd]&\\
    &&&&&&& X_{{(n-1)}/2}\ar@<0.5ex>[ul]\ar@<0.5ex>[dl]\\
    A\ar[r]& X_1\ar[uul]\ar[r]& N_1\ar[uul]\ar[r]& X_2\ar[uul]\ar[r]&
    N_2\ar[uul]\ar[r]& \cdots\ar[r]& N_{{(n-3)}/2}\ar[uul]\ar@<0.5ex>[ur]&
}$$
  Using Lemma \ref{L1.1}, the only candidates for being indecomposable
  rigid are $A$ and $B$. We compute the support of
  $\underline{\Hom}(A,\ )$
  
  $\xymatrix@C0cm@R0.3cm{
    &&&&&& A\ar[dr]&&B\\
    &&&&&Y_1\ar[ur]\ar[dr]&& X_1\ar[ur]&\cdots\\
    &&&&Y_l\ar@{..}[ur]\ar[dr]&& M_1\ar[ur]&\\
    &&&N_l\ar[ur]\ar[dr]&&M_l\ar@{..}[ur]&&\\
    &&N_1\ar@{..}[ur]\ar[dr]&&X_{l+1}\ar[ur]&&&\\
    &X_1\ar[ur]\ar[dr]&& Y_1\ar@{..}[ur]&\cdots&&&\\
    A\ar[ur]&& B\ar[ur]&&&&&
  }$ \hspace{1cm}
  $\xymatrix@C0.1cm@R0.3cm{
    &&&&&& 1\ar[dr]&&0 \\
    &&&&&1\ar[ur]\ar[dr]&& 0\ar[ur]&\\
    &&&&1\ar@{..}[ur]\ar[dr]&& 0\ar[ur]&&\\
    &&&1\ar[ur]\ar[dr]&& 0\ar@{..}[ur]&&&\\
    &&1\ar@{..}[ur]\ar[dr]&& 0\ar[ur]&&&&\\
    &1\ar[ur]\ar[dr]&& 0\ar@{..}[ur]&&&&&\\
    1\ar[ur]&& 0\ar[ur]&&&&&&
  }$

\noindent
  where $B=\tau A$ and $l={(n-3)}/2$. We see that
  $\underline{\Hom}(A,B)=0$, so that $\Ext^1(A,A)=0$. Then $A$ is
  clearly maximal rigid. Since $\underline{\Hom}(A,M_1)=0$, we have
  $\Ext^1(A,N_1)=0$, so $A$ is not cluster tilting. Alternatively, 
  we could use that we see that $\ul{\End}(A)^{\op}\simeq k[x]/(x^2)$,
  which has two indecomposable modules, whereas
  $\underline{\CM}(R)$ has $2n-3$ indecomposable objects. If  $A$
  was cluster tilting, $\ul{\End}(A)^{\op}$ would have had $2n-3-1=2n-4$
  indecomposable modules, by Lemma \ref{L1.2}.
\end{proof}
\begin{proposition}\label{p1.6}
  In the case $D_{2n}$
with $n$ a positive integer we have that the maximal rigid objects
  coincide with the cluster tilting ones. There are 6 of them, and
  each is a direct sum of two nonisomorphic indecomposable objects.

  The corresponding 2-CY-tilted algebras are given by the quiver with
  relations 
  $\xymatrix@C0.5cm@R0.1cm{
    \cdot\ar@<0.4ex>[r]^{\alpha}& \cdot\ar@<0.4ex>[l]^{\beta}\\
    }$
  $\alpha\beta\alpha =0= \beta\alpha\beta$ in the case $D_4$, and by 
  $\xymatrix@C0.5cm@R0.1cm{
    \cdot \ar@(ul,dl)[]_{\gamma} \ar@<0.4ex>[r]^{\alpha}&
    \cdot\ar@<0.4ex>[l]^{\beta}\\ 
    }$
with $\gamma^{n-1}=\beta\alpha$, $\gamma\beta=0=\alpha\gamma$
and 
$\xymatrix@C0.5cm@R0.1cm{
    \cdot\ar@<0.4ex>[r]^{\alpha}& \cdot\ar@<0.4ex>[l]^{\beta}\\
    }$
with 
$(\alpha\beta)^{n-1}\alpha=0=(\beta\alpha)^{n-1}\beta$ for  $2n>4$.
\end{proposition}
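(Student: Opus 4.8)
The plan is to argue exactly as in Propositions \ref{p1.4} and \ref{p1.5}. First I would recall from \cite{DW} (see also \cite{Y}) the explicit shape of the stable AR-quiver of $\ul{\CM}(R)$ for $R$ of type $D_{2n}$, keeping track of which $\tau$-orbits have period one and which have period two (recall $\tau^2=\id$ on $\ul{\CM}(R)$). Since $\Ext^1_R(X,Y)\simeq D\ul{\Hom}_R(Y,\tau X)$ and $\tau^2=\id$, rigidity of an object is a purely combinatorial condition on the supports of the functors $\ul{\Hom}_R(Z,-)$, and these supports are computed from the universal cover $k(\mathbb{Z}D_{2n})$ by means of the mesh relations and additive functions, just as in the pictures accompanying Proposition \ref{p1.4}.

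The first step is to cut down the candidates for indecomposable rigid modules. If $\tau C\simeq C$ then $\Ext^1_R(C,C)\simeq D\ul{\Hom}_R(C,C)\neq0$; and if the AR-sequence ending in $C$ has a middle term with at least two indecomposable nonprojective summands then $\Ext^1_R(C,C)\neq0$ by Lemma \ref{L1.1}(b). Inspecting the AR-quiver of $D_{2n}$ one checks that, after discarding these, exactly six vertices survive, all of $\tau$-period two; call them $Z_1,\dots,Z_6$. It then remains to: (i) verify $\ul{\Hom}_R(Z_i,\tau Z_i)=0$, so that each $Z_i$ is in fact rigid; (ii) decide for which pairs $\{i,j\}$ one has $\ul{\Hom}_R(Z_i,\tau Z_j)=0=\ul{\Hom}_R(Z_j,\tau Z_i)$, i.e.\ $Z_i\oplus Z_j$ is rigid; and (iii) for each such pair, check that $\Ext^1_R(Z_i\oplus Z_j,Y)\neq0$ for every indecomposable $Y\notin\add(Z_i\oplus Z_j)$, i.e.\ that $Z_i\oplus Z_j$ is cluster tilting. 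Step (iii) has to be done by hand, since the very point of this section is that ``maximal rigid'' need not imply ``cluster tilting''.

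Steps (i)--(iii) are carried out, as in Proposition \ref{p1.4}, by drawing the supports of the six functors $\ul{\Hom}_R(Z_i,-)$ on $k(\mathbb{Z}D_{2n})$ via additive functions and reading off the relevant $\ul{\Hom}$-spaces. The outcome is: each $Z_i$ is rigid; $Z_i\oplus Z_j$ is rigid for exactly six pairs $\{i,j\}$, which arrange $Z_1,\dots,Z_6$ into a hexagon (each $Z_i$ is compatible with precisely two of the others); consequently the basic maximal rigid objects are exactly the six rank-two objects given by consecutive pairs, none of which can be enlarged and none of which is indecomposable; and the same supports show each of these six is cluster tilting. This gives the first three assertions. (The numerics also follow a posteriori from Theorem \ref{main3}: $x^2y+y^{2n-1}=y\,(x+iy^{n-1})(x-iy^{n-1})$ is a product of three power series not in $(x,y)^2$, so there are $2^3-2=6$ indecomposable rigid objects, $3!=6$ cluster tilting objects, and $3-1=2$ summands.)

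Finally, for the 2-CY tilted algebras: fix a basic maximal rigid object $M=Z_i\oplus Z_j$ and put $\ga=\ul{\End}_R(M)$. By Lemma \ref{L1.2}(b) the AR-quiver of $\ga$ is obtained from that of $\ul{\CM}(R)$ by deleting the two vertices $\tau Z_i$ and $\tau Z_j$; from this translation quiver one reads off the Gabriel quiver of $\ga$ (which, in contrast with the hereditary case, may carry a loop or a $2$-cycle), and the mesh relations together with the facts that $\ga$ is symmetric and has all nonprojective modules of $\tau$-period $\le 2$ (Lemma \ref{L1.2}(c)) pin down the relations. The $\mathbb{Z}/2$-symmetry of $R$ swapping $x+iy^{n-1}$ and $x-iy^{n-1}$ permutes the six objects $M$, so only a few isomorphism types can occur; computing a representative of each yields exactly the two presentations in the statement when $2n>4$ (four of the six carry a genuine loop, two do not), and when $2n=4$ the extra $S_3$-symmetry permuting the three lines $y,\ x+iy,\ x-iy$ makes all six algebras isomorphic to the displayed one, in which the would-be loop collapses onto $\beta\alpha$. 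The main obstacle is the bookkeeping in steps (i)--(iii) — the AR-quiver of $D_{2n}$ is larger and less symmetric than those of $A_n$ and of $D_n$ with $n$ odd — together with the explicit extraction of the quivers with relations of $\ga$, in particular seeing precisely when a genuine loop appears and verifying the monomial relations of length roughly $2n$.
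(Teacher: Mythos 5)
Your plan is essentially the paper's own proof: reduce to the six candidates $A,B,C_\pm,D_\pm$ via Lemma \ref{L1.1}, compute the supports of $\ul{\Hom}(Z_i,\ )$ by covering techniques and additive functions on $k(\mathbb{Z}D_{2n})$ to verify rigidity, the six maximal rigid pairs (arranged in a hexagonal exchange graph), and the cluster tilting property, and then read off the quivers with relations from these pictures together with Lemma \ref{L1.2}. The only additions (the consistency check via Theorem \ref{main3} and the symmetry count of which of the six algebras carry a loop) are compatible with, but not needed for, the paper's argument.
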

\begin{proof}
  We have the AR-quiver
  $$\xymatrix@R0.1cm@C0.5cm{
    &&&&&&& C_+\ar[dl]\ar@{--}[dd]\\
    B\ar@{--}[ddd]\ar[r]& Y_1\ar@{--}[ddd]\ar[dddl]\ar[r]\ar@{--}[ddd]&
    M_1\ar[r]\ar[dddl]\ar@{--}[ddd]& Y_2\ar[r]\ar[dddl]\ar@{--}[ddd]&
    M_2\ar[r]\ar[dddl]\ar@{--}[ddd]& \cdots\ar[r]&
    Y_{n-1}\ar[dr]\ar[ddddr]\ar@{--}[ddd]\ar[dddl]&\\
    &&&&&&& D_+\ar[ddl]\\
    &&&&&&&C_-\ar[uul]\ar@{--}[dd]\\
    A\ar[r]& X_1\ar[r]\ar[uuul]& N_1\ar[uuul]\ar[r]& X_2\ar[uuul]\ar[r]&
    N_2\ar[uuul]\ar[r]& \cdots\ar[r]&
    X_{n-1}\ar[uuul]\ar[uuuur]\ar[ur]\\
    &&&&&&& D_-\ar[ul]
  }$$
  By Lemma \ref{L1.1}, the only possible indecomposable rigid objects
  are: $A$, $B$, $C_+$, $C_-$, $D_+$, $D_-$.
  
  We compute the support of $\underline{\Hom}(C_+,\ )$:
  $$\xymatrix@C0cm@R0.3cm{
    && D_{-}\ar[dr]&& C_{-}\ar[dr]&&&&&&&& D_{-}\ar[dr]&& C_{-}\ar[dr]&&D_-\\
    C_{+}\ar[r]& Y_l\ar[dr]\ar[ur]\ar[r]&D_+\ar[r]&
    X_l\ar[dr]\ar[ur]\ar[r]&C_+\ar[r]&Y_l\ar@{..}[rrrrrr]\ar[dr]
    &&&&&& Y_l\ar[rd]\ar[ur]\ar[r]&D_+\ar[r]& X_l\ar[ur]\ar[dr]\ar[r]&C_+\ar[r]&Y_l\ar[ur]
    &\cdots\\ 
    && N_{l-1}\ar[ur]\ar@{..}[drdr]&&
    M_{l-1}\ar[ur]\ar@{..}[drdr]&&N_{l-1}\ar@{..}[drdr]&&&&M_{l-1}\ar[ur]&& N_{l-1}\ar[ur]
    &&M_{l-1}\ar[ur]\\
    &&&&&&&&&&&&&&\\
    &&&& Y_2\ar[dr]&& X_2\ar[dr]&&Y_2\ar[dr]\ar@{..}[urur]&& X_2\ar@{..}[urur]\ar[dr]&&
    Y_2\ar@{..}[urur]&&\\
    &&&&& N_1\ar[ur]\ar[dr]&& M_1\ar[dr]\ar[ur]&& N_1\ar[ur]\ar[dr]&&M_1\ar[ur]&&&\\
    &&&&&& Y_1\ar[ur]\ar[dr]&& X_1\ar[dr]\ar[ur]&&Y_1\ar[ur]&\cdots&&&\\
    &&&&&&& A\ar[ur]&& B\ar[ur]&&&&&
  }$$
  where $l=n-1$
  $$\xymatrix@C0.2cm@R0.3cm{
    && 1\ar[dr]&& 0\ar[dr]&&&&&&&&1\ar[dr]&& 0\ar[dr]&& 0\\
    1\ar[r]& 1\ar[rd]\ar[ur]\ar[r]&0\ar[r]& 1\ar[rd]\ar[ur]\ar[r]&1\ar[r]&
    1\ar[dr]\ar@{..}[rrrrrr]&&&&&&1\ar[ur]\ar[dr]\ar[r]&0\ar[r]& 1\ar[rd]\ar[ur]\ar[r]
    &1\ar[r]& 0\ar[ur]&\\    
    && 1\ar[ur]\ar@{..}[drdr]&& 1\ar[ur]\ar@{..}[drdr]&&1\ar@{..}[drdr]&&&&1\ar[ur]
    &&1\ar[ur]&& 0\ar[ur]&&\\
    &&&&&&&&&&&&&&&&&&&&\\
    &&&& 1\ar[dr]&& 1\ar[dr]&&1\ar[dr]\ar@{..}[urur]&&
    1\ar[dr]\ar@{..}[urur]&&0\ar@{..}[urur]&&&\\
    &&&&& 1\ar[ur]\ar[dr]&& 1\ar[dr]\ar[ur]&&1\ar[dr]\ar[ur]&&0\ar[ur]&&&&\\ 
    &&&&&& 1\ar[ur]\ar[dr]&& 1\ar[dr]\ar[ur]&& 0\ar[ur]&&&&&\\
    &&&&&&& 1\ar[ur]&& 0\ar[ur]&&&&&&
  }$$
  We see that $\underline{\Hom}(C_+,D_+) =0$, so
  $\Ext^1(C_+,C_+)=0=\Ext^1(D_+,D_+)$. Further,
    $\underline{\Hom}(C_+,C_-)=0$, so 
  $\Ext^1(C_+,D_-)=0$. By symmetry $\Ext^1(D_-,D_-)=0=\Ext^1(C_-,C_-)$
  and $\Ext^1(D_+,C_-)=0$. Also $\Ext^1(C_+,A)=0$,
  $\Ext^1(C_+,B)\neq 0$, so $\Ext^1(D_+,B)=0$, $\Ext^1(D_+,A)\neq
  0$. Further $\Ext^1(C_+,X)\neq 0$ for $X\neq A, D_-, C_+$.
  
  We now compute the support of $\underline{\Hom}(A,\ )$
  $$\xymatrix@C0cm@R0.3cm{
    &&&&& C_-\ar[dr]&& D_-\ar[dr]&&&\\
    &&&& X_l\ar[ur]\ar[r]\ar[dr]& C_+\ar[r]& Y_l\ar[ur]\ar[r]\ar[dr]&
    D_+\ar[r]& X_l\ar[dr]&\cdots&\\
    &&& N_{l-1}\ar[ur]&& M_{l-1}\ar[ur]\ar@{..}[dr]&&
    N_{l-1}\ar[ur]\ar@{..}[dr]&&M_{l-1}\ar@{..}[dr]&\\
    && N_1\ar@{..}[ur]\ar[dr]&& M_1\ar@{..}[ur]&&M_1\ar[dr]&& N_1\ar[dr]&&M_1\ar[dr]\\
    &\ X_1\ar[ur]\ar[dr]\ && Y_1\ar[ur]&&&&X_1\ar[ur]\ar[dr]&& Y_1\ar[dr]\ar[ur]
    &&\ X_1\ar[dr]\ &\cdots\\
    \ A\ar[ur]\ && B\ar[ur]\ar@{..}[rrrrrr]&&&&&&B\ar[ur]&& A\ar[ur]&&\ B\ 
  }$$ 
  where $l=n-1$ and we have an odd number of columns and rows.
  $$\xymatrix@C0.2cm@R0.3cm{
    &&&&& 1\ar[dr]&&0\ar[dr]&&&\\
    &&&& 1\ar[ur]\ar[r]\ar[dr]& 1\ar[r]& 1\ar[dr]\ar[r]\ar[ur]&0\ar[r]&0\ar[dr]&&\\
    &&& 1\ar[ur]&& 0\ar[ur]\ar@{..}[dr]&& 1\ar@{..}[dr]\ar[ur]&&0\ar@{..}[dr]&\\
    && 1\ar@{..}[ur]\ar[dr]&& 0\ar@{..}[ur]&&0\ar[dr]&& 1\ar[dr]&&0\ar[dr]\\
    & 1\ar[ur]\ar[dr]&& 0\ar[ur]&&&&0\ar[ur]\ar[dr]&& 1\ar[dr]\ar[ur]&&0\ar[dr]\\
    1\ar[ur]&& 0\ar[ur]\ar@{..}[rrrrrr]&&&&&&0\ar[ur]&& 1\ar[ur]&&0
  }$$
  We see that $\underline{\Hom}(A,B)=0$, so $\Ext^1(A,A)=0$, hence also
  $\Ext^1(B,B)=0$. Since $\underline{\Hom}(A,D_-)=0$, we have
  $\Ext^1(A,C_-)=0$, hence $\Ext^1(B,D_-)=0$. Since
  $\underline{\Hom}(A,C_-)\neq 0$, we have $\Ext^1(A,D_-)\neq 0$, so
  $\Ext^1(B,D_+)\neq 0$.
  
  It follows that $C_+\oplus D_-$, $C_-\oplus D_+$, $C_+\oplus A$,
  $D_+\oplus B$, $A\oplus C_-$ and $B\oplus D_-$ are maximal rigid.
  
  These are also cluster tilting: We have $\underline{\Hom}(A,X_i)\neq 0$, $\underline{\Hom}(A,N_i)\neq
  0$, so $\Ext^1(B,X_i)\neq 0$, $\Ext^1(B,N_i)\neq 0$. Similarly,
  $\Ext^1(A,Y_i)\neq 0$, $\Ext^1(A,M_i)\neq 0$. Also
  $\underline{\Hom}(C_+,Y_i)\neq 0$, $\underline{\Hom}(C_+,N_i)\neq 0$, so
  $\Ext^1(D_+,Y_i)\neq 0$, $\Ext^1(D_+,N_i)\neq 0$. Hence
  $\Ext^1(C_+,X_i)\neq 0$, \\$\Ext^1(C_+,M_i)\neq 0$. So
  $\Ext^1(D_-,Y_i)\neq 0$, $\Ext^1(D_-,N_i)\neq 0$, $\Ext^1(C_-,X_i)\neq
  0$, $\Ext^1(C_-,M_i)\neq 0$. We see that each indecomposable rigid object can be extended to a cluster tilting object in exactly two ways, which we would know from a general result in \cite[Th. 5.3]{IY}.

The exchange graph is as follows:
$$\xymatrix@C0.2cm@R0.5cm{
  &\{C_+,D_-\}\ar@{-}[dr]&\\
  \{B,D_-\}\ar@{-}[ur]\ar@{-}[d]&& \{A,C_+\}\ar@{-}[d]\\
  \{B,D_+\}\ar@{-}[dr]&& \{A,C_-\}\\
  & \{C_-,D_+\}\ar@{-}[ur]& 
}$$
Considering the above pictures, we get the desired description of the
corresponding 2-CY tilted algebras in terms of quivers with
relations.
\end{proof}
\begin{proposition}\label{P1.7}
  In the case $E_6$ there are no indecomposable rigid objects.
\end{proposition}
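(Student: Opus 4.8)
The plan is to exploit the explicitly known stable AR-quiver of $\ul{\CM}(R)$ in the $E_6$ case (see \cite{DW}, and \cite{Y}) and to rule out, one indecomposable at a time, that it can be rigid, using the same two tools as in Propositions \ref{p1.4}--\ref{p1.6}.

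First, since $d=1$ is odd, $\ul{\CM}(R)$ is $2$-CY with $\tau^2=\id$; hence any $\tau$-invariant indecomposable $M$ satisfies $\Ext^1_R(M,M)\simeq D\ul{\Hom}_R(\tau^{-1}M,M)=D\ul{\Hom}_R(M,M)\neq 0$ and so is not rigid. This removes every indecomposable lying on the ``symmetry axis'' of the $E_6$ AR-quiver. Second, by Lemma \ref{L1.1}(b), whenever the almost split sequence $0\to\tau C\to B\to C\to 0$ ending at an indecomposable $C$ has a middle term $B$ with at least two indecomposable nonprojective summands, one gets $\Ext^1_R(C,C)\neq 0$, so $C$ is not rigid; inspecting the $E_6$ quiver, this eliminates all indecomposables sitting on the far side of the branch vertex, where the meshes are wide.

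The only indecomposables not covered by these two observations are the handful at the ``tips'' of the quiver, where the mesh ending at the module is thin. I would check directly that each such tip module is itself $\tau$-invariant (hence already disposed of), or, should any fail to be, compute the support of $\ul{\Hom}_R(C,-)$ through the covering functor $\Pi\colon k(\mathbb{Z}E_6)\to\ul{\CM}(R)$ together with an additive-function argument exactly as in the earlier propositions, reading off $\ul{\Hom}_R(C,\tau C)\neq 0$, i.e. $\Ext^1_R(C,C)\neq 0$. Combining the three steps shows that no indecomposable object of $\ul{\CM}(R)$ is rigid, which is also consistent with the count in Theorem \ref{main1}.

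The main obstacle is purely organisational: making the case division over the vertices of the AR-quiver exhaustive, and --- for any tip module that is not $\tau$-invariant --- carrying out the mesh/covering computation of $\ul{\Hom}_R(C,\tau C)$ without error. That is the one place where an honest, if routine, calculation rather than a formal argument is required; everything else is bookkeeping on the known AR-quiver.
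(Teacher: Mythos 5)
Your overall strategy is exactly the paper's: use $\tau$-invariance and Lemma \ref{L1.1}(b) to cut the candidate list down, then settle the survivors by a covering/additive-function computation of a stable Hom-space. In the $E_6$ quiver this correctly leaves only $M_1$ and $N_1$ (the $\tau$-invariant vertices $X$, $M_2$ are killed by your first observation, and $X$, $A$, $B$ by the wide meshes). Up to that point the bookkeeping is fine.

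The genuine gap is that you never carry out the one step that actually decides the proposition. The tip modules $M_1$ and $N_1$ are \emph{not} $\tau$-invariant (they form a $\tau$-orbit of size two) and the meshes ending at them have a single indecomposable middle term, so neither of your two formal tools applies to them; everything hinges on computing $\ul{\Hom}(M_1,\tau M_1)=\ul{\Hom}(M_1,N_1)$ and showing it is nonzero. You only announce that you "would" do this and assert the expected outcome, but this non-vanishing is not a formality and cannot be guessed from the shape of the quiver: in the structurally analogous $D_n$ ($n$ odd) and $E_7$ cases the very same computation for the end-of-arm modules gives $\ul{\Hom}(A,\tau A)=0$, i.e.\ those modules \emph{are} rigid. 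So the answer is genuinely case-dependent, and the explicit additive-function computation of the support of $\ul{\Hom}(M_1,\ )$ — which is essentially the whole content of the paper's proof of Proposition \ref{P1.7} — is missing from your argument. Also, invoking consistency with Theorem \ref{main1} is circular, since that table is proved precisely by these case-by-case propositions. To complete the proof you must exhibit the computation showing $\ul{\Hom}(M_1,N_1)\neq 0$, whence $\Ext^1(N_1,N_1)\neq 0$ and, applying $\tau$, $\Ext^1(M_1,M_1)\neq 0$.
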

\begin{proof}
  We have the AR-quiver
  $$\xymatrix@C0.5cm@R0.2cm{
    && B\ar@<0.5ex>[dl]\ar@{--}[dd]\ar[r]& M_1\ar@{--}[dd]\ar[ddl]\\
    M_2\ar@<0.5ex>[r]& X\ar@<.0ex>[l]\ar@<.0ex>[ur]\ar@<.0ex>[dr]&&\\
    && A\ar@<0.5ex>[ul]\ar[r]& N_1\ar[uul]
  }$$
  The only candidates for indecomposable rigid objects according to
  Lemma \ref{L1.1} are $M_1$ and $N_1$. We compute the support of
  $\underline{\Hom}(M_1, \ )$.

  $\xymatrix@C0.1cm@R0.3cm{
    &&&& N_1\ar[dr]&&N_1&\\
    &&& A\ar[dr]\ar[ur]&& B\ar[dr]\ar[ur]&& \cdots\\
    && X\ar[ur]\ar[rd]\ar[r]&M_2\ar[r]& X\ar[ur]\ar[rd]\ar[r]&M_2\ar[r]&X&\\
    & A\ar[ur]\ar[dr]&& B\ar[ur]\ar[dr]&& A\ar[ur]\ar[dr]&&\cdots\\
    M_1\ar[ur]&& N_1\ar[ur]&& M_1\ar[ur]&& N_1&
  }$ \hspace{0.5cm}
  $\xymatrix@C0.2cm@R0.3cm{
    &&&& 1\ar[dr]&&0&\\
    &&& 1\ar[dr]\ar[ur]&&1\ar[dr]\ar[ur]&&\cdots\\
    && 1\ar[ur]\ar[dr]\ar[r]&1\ar[r]& 1\ar[ur]\ar[r]\ar[dr]&0\ar[r]&1& \\
    & 1\ar[ur]\ar[dr]&& 0\ar[ur]\ar[dr]&& 1\ar[dr]\ar[ur]&&\cdots\\
    1\ar[ur]&& 0\ar[ur]&& 0\ar[ur]&&1&
  }$\\

  We see that $\underline{\Hom}(M_1,N_1)\neq 0$, so that
  $\Ext^1(M_1,M_1)\neq 0$ and $\Ext^1(N_1,N_1)\neq 0$.
\end{proof}
\begin{proposition}\label{p1.8}
  In the case $E_7$ there are two maximal rigid objects, which both
  are indecomposable, and neither of them is cluster tilting.
\end{proposition}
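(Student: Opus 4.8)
The plan is to argue exactly as in Propositions \ref{p1.5} and \ref{p1.6}. First I would write down the stable AR-quiver of $\ul{\CM}(R)$ for the one-dimensional $E_7$ singularity $R=k[[x,y]]/(x^3+xy^3)$, using \cite{DW} and the notation of \cite{Y}. On it $\tau^2=\id$ since $R$ is a one-dimensional hypersurface, the quiver has a single trivalent vertex, and there is one $\tau$-orbit $\{A,B\}$ of period two, situated at the end of an arm, with $B=\tau A$; these will turn out to be the only indecomposable rigid objects. By Lemma \ref{L1.1}(b), an indecomposable $C$ with $\Ext^1_R(C,C)=0$ must be the cokernel of an almost split sequence whose middle term has a single indecomposable nonprojective summand, and a $\tau$-periodic object of period one cannot be rigid; reading off the shape of the quiver, these two observations reduce the possible indecomposable rigid objects to $A$, $B$, together with at most one further vertex, which the computation below eliminates.

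Next I would use the covering functor $\Pi\colon k(\mathbb{Z}E_7)\to\ul{\CM}(R)$ \cite{Rit,Am} and additive functions on the translation quiver (with the conventions of \cite{BG}) to compute the support of $\ul{\Hom}_R(A,\ )$. The expected outcome is: $\ul{\Hom}_R(A,B)=0$, so that $\Ext^1_R(A,A)=0$ and, by the symmetry $B=\tau A$, also $\Ext^1_R(B,B)=0$; there is (at least) one further indecomposable $X_0\not\cong B$ with $\ul{\Hom}_R(A,X_0)=0$; and $\ul{\Hom}_R(A,X)\neq0$ for all remaining indecomposables $X$. The same pictures should show that $\ul{\End}_R(A)^{\op}\cong k[x]/(x^2)$, just as in the $D_n$, $n$ odd, case.

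The homological conclusions then follow mechanically from AR-duality, $\tau^2=\id$, and 2-CY. Indeed $\Ext^1_R(A,B)\cong D\ul{\Hom}_R(A,A)\neq0$ automatically, so no rigid object can have both $A$ and $B$ among its direct summands; together with the previous step this shows that the basic rigid objects are exactly $0$, $A$, $B$, so the basic maximal rigid objects are precisely $A$ and $B$, each indecomposable. On the other hand $\Ext^1_R(A,\tau X_0)\cong D\ul{\Hom}_R(A,X_0)=0$ with $\tau X_0\notin\add A$, so neither $A$ nor $B$ is cluster tilting; alternatively, were $A$ cluster tilting then Lemma \ref{L1.2}(d) applied to $\ul{\End}_R(A)^{\op}\cong k[x]/(x^2)$ would force $\ul{\CM}(R)$ to have exactly three indecomposable objects, which is false.

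The main obstacle is the covering computation itself: the $E_7$-shaped mesh quiver is larger than those in the $D_n$ cases, so one must keep careful track of the $G$-orbits forming the fibres of $\Pi$ when summing the mesh-category morphism spaces, and correctly match up which lifts of vertices become isomorphic in $\ul{\CM}(R)$. Once the support of $\ul{\Hom}_R(A,\ )$ has been pinned down, everything else is immediate. (The failure of the cluster tilting property also follows from Theorem \ref{main3}, since $x^3+xy^3=x(x^2+y^3)$ has the factor $x^2+y^3\in(x,y)^2$, but we keep Section \ref{additive} self-contained.)
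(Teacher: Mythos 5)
There is a genuine gap in your candidate-reduction step. In the $E_7$ stable AR-quiver (which is ${\bf Z}E_7/\tau^2$, fourteen indecomposables, no $\tau$-fixed vertices), Lemma \ref{L1.1}(b) only eliminates vertices whose incoming AR sequence has at least two indecomposable summands in the middle term; what survives are the $\tau$-orbits at the ends of \emph{all three} arms, i.e. the six objects $A,B,M_1,N_1,C,D$ in the paper's notation --- not ``$A$, $B$, together with at most one further vertex''. Your remark that period-one objects cannot be rigid buys nothing here, since every $\tau$-orbit has period two. Consequently your plan, which computes only the support of $\ul{\Hom}(A,\ )$, cannot decide whether $M_1,N_1,C,D$ are rigid: rigidity of $M_1$ is governed by $\Ext^1(M_1,M_1)\simeq\ul{\Hom}(M_1,\tau M_1)=\ul{\Hom}(M_1,N_1)$, and similarly $\Ext^1(C,C)\simeq\ul{\Hom}(C,D)$, neither of which is visible in the support of $\ul{\Hom}(A,\ )$. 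The paper therefore carries out two further covering computations, for $\ul{\Hom}(M_1,\ )$ and $\ul{\Hom}(C,\ )$, and finds both self-extension spaces nonzero. This cannot be read off from the shape of the quiver alone: in the $D_n$ ($n$ odd) case of Proposition \ref{p1.5} the analogous arm-end orbit \emph{is} rigid, so the extra computations are essential to conclude that $A$ and $B$ are the only indecomposable rigid objects and hence the only (indecomposable) maximal rigid ones.

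The rest of your argument is in line with the paper: rigidity of $A$ and $B$ from $\ul{\Hom}(A,B)=0$; $\Ext^1(A,B)\simeq D\ul{\Hom}(A,A)\neq0$, so $A\oplus B$ is not rigid; and failure of the cluster tilting property either from an object $X_0$ with $\Ext^1(A,\tau X_0)=0$ and $\tau X_0\notin\add A$ (the paper uses $\Ext^1(A,C)=0$), or from the counting argument via Lemma \ref{L1.2}(d) with $\ul{\End}(A)^{\op}\simeq k[x]/(x^2)$, or indeed from Theorem \ref{main3} since $x^3+xy^3=x(x^2+y^3)$ has a factor in $(x,y)^2$. But as written the proposal does not establish the classification of maximal rigid objects, so it does not prove the proposition; you need to add the supports of $\ul{\Hom}(M_1,\ )$ and $\ul{\Hom}(C,\ )$ (or otherwise verify $\ul{\Hom}(M_1,N_1)\neq0$ and $\ul{\Hom}(C,D)\neq0$).
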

\begin{proof}
  We have the AR-quiver
  $$\xymatrix@C0.7cm@R0.9cm{
    &&& C\ar[dd]\ar@{--}[r]& D\ar[d]&&\\
    A\ar@{--}[d]\ar[r]& M_2\ar[r]\ar@{--}[d]\ar[dl]&
    Y_2\ar[dl]\ar@{--}[d]\ar[rr]&& Y_3\ar[ul]\ar[dll]\ar[r]&
    Y_1\ar@{--}[d]\ar[r]\ar[dll]& M_1\ar[dl]\ar@{--}[d]\\
    B\ar[r]& N_2\ar[ul]\ar[r]& X_2\ar[ul]\ar[r]& X_3\ar[uur]\ar@{--}[ur]\ar[ul]\ar[rr]&&
    X_1\ar[ul]\ar[r]& N_1\ar[ul]
  }$$
  Using Lemma \ref{L1.1}, we see that  the only candidates for
  indecomposable rigid objects are $A$, $B$, $M_1$, $N_1$, $C$ and
  $D$. We first compute the support of $\underline{\Hom}(A,\ )$.

$\xymatrix@C0.1cm@R0.3cm{
    &&&&& M_1\ar[dr]&&\\
    &&&&Y_1\ar[ur]\ar[dr]&& X_1&\cdots\\
    &&&Y_3\ar[ur]\ar[rd]\ar[r] &C\ar[r]& X_3\ar[ur]\ar[dr]\ar[r]&D&\\
    &&Y_2\ar[ur]\ar[dr]&& X_2\ar[ur]\ar[dr]&&Y_2&\\
    &M_2\ar[ur]\ar[dr]&& N_2\ar[ur]\ar[dr]&& M_2\ar[ur]\ar[dr]&&\cdots\\
    A\ar[ur]&& B\ar[ur]&& A\ar[ur]&&B&
  }$\hspace{0cm}
$\xymatrix@C0.1cm@R0.3cm{
    &&&&& 1\ar[dr]&& 0\ar[dr]&& 0\ar[dr]&& 1\ar[dr]&&
    0\ar[dr]&&&&&\\
    &&&& 1\ar[ur]\ar[dr]&& 1\ar[ur]\ar[dr]&& 0\ar[ur]\ar[dr]&&
    1\ar[ur]\ar[dr]&& 1\ar[ur]\ar[dr]&& 0\ar[dr]&&&&\\
    &&& 1\ar[ur]\ar[rd]\ar[r]&1\ar[r]& 1\ar[ur]\ar[rd]\ar[r]&0\ar[r]&
    1\ar[ur]\ar[dr]\ar[r]&1\ar[r]&
    1\ar[ur]\ar[rd]\ar[r]&0\ar[r]&1\ar[ur]\ar[dr]\ar[r]&1\ar[r]&
    1\ar[ur]\ar[rd]\ar[r]&0\ar[r]&0\ar[dr]&&&\\ 
    && 1\ar[ur]\ar[dr]&& 0\ar[ur]\ar[dr]&& 1\ar[ur]\ar[dr]&&
    1\ar[ur]\ar[dr]&& 1\ar[ur]\ar[dr]&& 0\ar[ur]\ar[dr]&&
    1\ar[ur]\ar[dr]&& 0\ar[dr] &&\\
    & 1\ar[ur]\ar[dr]&& 0\ar[ur]\ar[dr]&& 0\ar[ur]\ar[dr]&&
    1\ar[ur]\ar[dr]&& 1\ar[ur]\ar[dr]&& 0\ar[ur]\ar[dr]&&
    0\ar[ur]\ar[dr]&& 1\ar[ur]\ar[dr]&& 0\ar[dr]&\\    
    1\ar[ur]&& 0\ar[ur]&& 0\ar[ur]&& 0\ar[ur]&& 1\ar[ur]&& 0\ar[ur]&&
    0\ar[ur]&& 0\ar[ur]&& 1\ar[ur]&& 0
}$

We see that $\Ext^1(A,A)=0$, and so also $\Ext^1(B,B)=0$, so $A$ and
$B$ are rigid.

Next we compute the support of $\underline{\Hom}(M_1, \ )$.

$\xymatrix@C0.1cm@R0.3cm{
  M_1\ar[dr]&& N_1\ar[dr]&&M_1\ar[dr]&&N_1&\\
  & X_1\ar[ur]\ar[dr]&& Y_1\ar[dr]\ar[ur]&&X_1\ar[dr]\ar[ur]&&\cdots\\
  && Y_3\ar[ur]\ar[rd]\ar[r]&C\ar[r]& X_3\ar[dr]\ar[ur]\ar[r]&D\ar[r]&Y_3&\\
  &&& X_2\ar[ur]\ar[dr]&& Y_2\ar[dr]\ar[ur]&&\\
  &&&& M_2\ar[ur]\ar[dr]&& N_2&\cdots\\
  &&&&& B\ar[ur]&&
}$\hspace{0.1cm}
$\xymatrix@C0.2cm@R0.3cm{
  1\ar[dr]&& 0\ar[dr]&& 0\ar[dr]&& 1\\
  & 1\ar[ur]\ar[dr]&& 0\ar[dr]\ar[ur]&& 1\ar[ur]\ar[dr]&&\cdots\\
  && 1\ar[ur]\ar[r]\ar[dr]&1\ar[r]& 1\ar[dr]\ar[ur]\ar[r]&0\ar[r]&1\\
  &&& 1\ar[ur]\ar[dr]&&1\ar[dr]\ar[ur]&\\
  &&&& 1\ar[dr]\ar[ur]&&1&\cdots \\
  &&&&& 1\ar[ur]&
}$

We see that $\Ext^1(M_1,M_1)\neq 0$ and $\Ext^1(N_1,N_1)\neq 0$, so
that $M_1$ and $N_1$ are not rigid.

Then we compute the support of $\underline{\Hom}(C,\ )$.

$\xymatrix@C0.1cm@R0.3cm{
  &&& N_1\ar[dr]&&M_1\ar[dr]&&&\\
  && X_1\ar[ur]\ar[dr]&& Y_1\ar[dr]\ar[ur]&&X_1&\cdots\\
  C\ar[r]& X_3\ar[ur]\ar[r]\ar[dr]&D\ar[r]& Y_3\ar[ur]\ar[r]\ar[dr]&C\ar[r]&
  X_3\ar[dr]\ar[ur]\ar[r] &D&\\  
  && Y_2\ar[ur]\ar[dr]&& X_2\ar[ur]\ar[dr]&&Y_2&\\
  &&& N_2\ar[ur]\ar[dr]&& M_2\ar[ur]\ar[dr]&&\cdots\\
  &&&& A\ar[ur]&&B&
}$\hspace{0.5cm}
$\xymatrix@C0.2cm@R0.3cm{
  &&& 1\ar[dr]&& 0\ar[dr]&&\\
  && 1\ar[ur]\ar[dr]&& 1\ar[ur]\ar[dr]&& 1&\cdots\\
  1\ar[r]& 1\ar[ur]\ar[r]\ar[dr]&0\ar[r]& 1\ar[ur]\ar[r]\ar[dr]&1\ar[r]&
  2\ar[r]\ar[dr]\ar[ur]&1&\\  
  && 1\ar[ur]\ar[dr]&& 1\ar[dr]\ar[ur]&&2&\\
  &&& 1\ar[ur]\ar[dr]&&1\ar[ur]\ar[dr]&&\cdots\\
  &&&& 1\ar[ur]&&0&
}$

We see that $\Ext^1(C,C)\neq 0$ and $\Ext^1(D,D)\neq 0$, so that $C$
and $D$ are not rigid. Hence $A$ and $B$ are the rigid indecomposable
objects, and they are maximal rigid.

Since $\Ext^1(A,C)= 0$, we see that $A$ and hence $B$ is not cluster
tilting.
\end{proof}
\begin{proposition}\label{p1.9}
  In the case $E_8$ there are no indecomposable rigid objects.
\end{proposition}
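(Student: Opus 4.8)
The plan is to mimic the proofs of the previous $A_n$-even, $E_6$ cases, where it suffices to exhibit, for each candidate indecomposable rigid module $M$ singled out by Lemma~\ref{L1.1}(b), a single nonzero element of $\Ext^1(M,M)$, equivalently (via the AR-formula and $\tau^2=\id$) a nonzero element of $\ul{\Hom}(M,\tau M)$. First I would write down the stable AR-quiver of $\ul{\CM}(R)$ for $E_8$ from \cite{DW,Y}; since $E_8$ has $8$ indecomposable non-free Cohen--Macaulay modules, the quiver is small, and I would identify which of the $\tau$-orbits are fixed by $\tau$ (these are automatically non-rigid, exactly as for the $M_i$ and $I_j$ in the earlier propositions) and which are interchanged in pairs.

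Next, applying Lemma~\ref{L1.1}(b) to every almost split sequence whose middle term has at least two indecomposable non-projective summands, I would eliminate all the modules sitting at the ``branch points'' of the quiver. This should leave only the handful of modules at the ``ends'' of the quiver as candidates, just as $M_1,N_1$ were the only candidates in the $E_6$ case and $A,B,C,D,M_1,N_1$ in the $E_7$ case. For each surviving candidate $M$, I would compute the support of $\ul{\Hom}(M,\ )$ on the covering $k(\mathbb{Z}E_8)$ using the mesh relations and an additive function on $\mathbb{Z}E_8$, exactly in the style of the diagrams in Propositions~\ref{p1.4}--\ref{p1.8} and with the conventions of \cite{BG}; the point in each case is to read off that the vertex $\tau M$ lies in the support, i.e. $\ul{\Hom}(M,\tau M)\neq 0$, whence $\Ext^1(M,M)\neq 0$.

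The only real work is the bookkeeping: drawing the knitting diagram for $\ul{\Hom}(M,\ )$ far enough to reach $\tau M$, and this is routine given the small rank. I would therefore present the proof in the same compressed format as Proposition~\ref{P1.7}: display the AR-quiver, invoke Lemma~\ref{L1.1} to reduce to the few candidates, display for each candidate the two parallel diagrams (the slice of $\mathbb{Z}E_8$ and the corresponding dimension function), and observe that the relevant entry is nonzero. Since every candidate turns out to be non-rigid, the conclusion ``there are no indecomposable rigid objects'' follows, and in particular (consistently with the table in Theorem~\ref{main1}) the only maximal rigid object is $0$.

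The main obstacle I anticipate is purely computational rather than conceptual: one must be careful that the additive function chosen really does stay nonnegative along the knitting and correctly detects the first column where $\tau M$ reappears, and one must make sure that Lemma~\ref{L1.1}(b) has been applied to \emph{all} the relevant meshes so that no genuine rigid candidate is overlooked. Given the explicit shape of the $E_8$ AR-quiver this is manageable, and no new idea beyond those already used for $A_n$ (even), $E_6$ and $E_7$ is required.
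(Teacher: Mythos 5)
Your proposal follows essentially the same route as the paper's proof: Lemma \ref{L1.1} cuts the candidates down to the six modules at the ends of the quiver ($M_1,N_1,M_2,N_2,A_2,B_2$), and the supports of $\ul{\Hom}(M_1,\ )$, $\ul{\Hom}(M_2,\ )$ and $\ul{\Hom}(A_2,\ )$, computed on the covering $k(\mathbb{Z}E_8)$ with additive functions, show $\Ext^1(M,M)\neq 0$ for each candidate (the remaining three follow since they are the $\tau$-translates). One small correction: the one-dimensional $E_8$ singularity has $16$, not $8$, indecomposable non-free Cohen--Macaulay modules, so the knitting is slightly longer than you anticipate but still routine.
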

\begin{proof}
  We have the AR-quiver
  $$\xymatrix@C0.5cm@R0.8cm{
    && A_2\ar[dd]\ar@{--}[r]& B_2\ar[d]&&&&\\
    N_2\ar[r]\ar@{--}[d]& D_2\ar[dl]\ar@{--}[d]\ar[rr]&&
    X_1\ar[ul]\ar@{--}[dl]\ar[r]\ar[dll]& X_2\ar[lld]\ar[dll]\ar@{--}[d]\ar[r]&
    C_1\ar[dl]\ar@{--}[d]\ar[r]& B_1\ar[dl]\ar@{--}[d]\ar[r]&
    N_1\ar[dl]\ar@{--}[d]\\
    M_2\ar[r]& C_2\ar[ul]\ar[r]& Y_1\ar[ul]\ar[uur]\ar[rr]&&
    Y_2\ar[ul]\ar[r]& D_1\ar[ul]\ar[r]& A_1\ar[ul]\ar[r]& M_1\ar[ul]
  }$$
  The only candidates for indecomposable rigid objects are $M_1$, $N_1$, $M_2$, $N_2$, $A_2$ and $B_2$, by Lemma \ref{L1.1}. We first compute the support of
  $\underline{\Hom}(M_1,\ )$:
  
  $\xymatrix@C0.1cm@R0.3cm{
    &&&&&& M_2\ar[dr]&&N_2\\
    &&&&& D_2\ar[ur]\ar[dr] &&C_2\ar[dr]\ar[ur]&&\cdots\\
    &&&& Y_1\ar[ur]\ar[r]\ar[rd]&B_2\ar[r]& X_1
    \ar[ur]\ar[dr]\ar[r]&A_2\ar[r]& Y_1\\
    &&& X_2\ar[ur]\ar[dr]&& Y_2\ar[ur]\ar[dr]&& X_2\ar[ur]\ar[dr]&\\
    && D_1\ar[ur]\ar[dr]&& C_1\ar[ur]\ar[dr]&& D_1\ar[ur]\ar[dr]&&C_1\\
    & B_1\ar[ur]\ar[dr]&& A_1\ar[ur]\ar[dr] && B_1\ar[ur]\ar[dr]&&A_1\ar[ur]\ar[dr]&&\cdots\\
    M_1\ar[ur]&& N_1\ar[ur]&& M_1\ar[ur]&&N_1\ar[ur]&& M_1&&
  }$ \hspace{0.1cm}
$\xymatrix@C0.2cm@R0.3cm{
    &&&&&& 1\ar[dr]&&0\ar[dr]&&\\
    &&&&& 1\ar[ur]\ar[dr] && 1\ar[dr]\ar[ur]&&0&\cdots\\
    &&&&1\ar[ur]\ar[dr]\ar[r]&1\ar[r]&1\ar[ur]\ar[dr]\ar[r]&0\ar[r]
    &1\ar[dr]\ar[ur]\ar[r]&1&\\ 
    &&& 1\ar[ur]\ar[dr]&& 0\ar[ur]\ar[dr]&& 1\ar[dr]\ar[ur]&&1&\\
    && 1\ar[ur]\ar[dr]&& 0\ar[ur]\ar[dr]&& 0\ar[ur]\ar[dr]&& 1\ar[ur]\ar[dr]&&\\
    & 1\ar[ur]\ar[dr]&& 0\ar[ur]\ar[dr] && 0\ar[ur]\ar[dr]&&
    0\ar[ur]\ar[dr]&& 1\ar[dr]&\cdots\\
    1\ar[ur]&& 0\ar[ur]&& 0\ar[ur]&& 0\ar[ur]&& 0\ar[ur] &&1
  }$

We see that $\Ext^1(M_1,M_1)\neq 0$, and hence $\Ext^1(N_1,N_1)\neq
0$. 

Next we compute the support of $\underline{\Hom}(M_2, \ )$:\\
$\xymatrix@C0.1cm@R0.3cm{
    M_2\ar[dr]&& N_2\ar[dr]&& M_2\ar[dr]&& N_2&&\\
    &C_2\ar[dr]\ar[ur]&& D_2\ar[dr]\ar[ur]&& C_2\ar[ur]\ar[dr]&&\cdots&\\ 
    && Y_1\ar[dr]\ar[ur]\ar[r]&B_2\ar[r]& X_1\ar[ur]\ar[dr]\ar[r]&A_2\ar[r]&Y_1&&  \\
    &&&Y_2\ar[dr]\ar[ur]&& X_2\ar[dr]\ar[ur]&&&\\
    &&&& D_1 \ar[dr]\ar[ur]&& C_1&&\\
    &&&&& A_1\ar[dr]\ar[ur]&&\cdots& \\
    &&&&&& M_1 &&
  }$\hspace{0.5cm}
$\xymatrix@C0.2cm@R0.3cm{
    1\ar[dr]&& 0\ar[dr]&& 0\ar[dr]&& 1\\
    & 1\ar[ur]\ar[dr]&& 0\ar[ur]\ar[dr]&& 1\ar[ur]\ar[dr]&&\cdots\\
    && 1\ar[ur]\ar[r]\ar[dr]&1\ar[r]& 1\ar[ur]\ar[r]\ar[dr]&0\ar[r]&1\\
    &&& 1\ar[ur]\ar[dr]&&1\ar[dr]\ar[ur]&\\
    &&&& 1\ar[dr]\ar[ur]&&1\\
    &&&&& 1\ar[dr]\ar[ur]&&\cdots\\
    &&&&&& 1
  }$

We see that $\Ext^1(M_2,M_2)\neq 0$, and hence $\Ext^1(N_2,N_2)\neq 0$.

Finally we compute the support of $\underline{\Hom}(A_2,\ )$:\\
$\xymatrix@C0.1cm@R0.3cm{
  &&&M_2\ar[dr]&& N_2\ar[dr]&\\
  && D_2\ar[ur]\ar[dr]&& C_2\ar[ur]\ar[dr]&&D_2&\cdots\\
  A_2\ar[r]& Y_1\ar[ur]\ar[dr]\ar[r]&B_2\ar[r]&
  X_1\ar[ur]\ar[dr]\ar[r]&A_2\ar[r]&Y_1\ar[r]\ar[ur]\ar[dr]&B_2\\
  && Y_2\ar[ur]\ar[dr]&& X_2\ar[ur]\ar[dr]&&Y_2\\
  &&& D_1\ar[ur]\ar[dr]&&C_1\ar[ur]\ar[dr]&\\
  &&&& A_1\ar[dr]\ar[ur]&&B_1&\cdots\\
  &&&&& M_1\ar[ur]&
}$\hspace{0.5cm}
$\xymatrix@C0.2cm@R0.3cm{
  &&&1\ar[dr]&& 0\ar[dr]&\\
  && 1\ar[ur]\ar[dr]&& 1\ar[ur]\ar[dr]&&1&\cdots\\
  1\ar[r]&1\ar[ur]\ar[dr]\ar[r]&0\ar[r]&1\ar[ur]\ar[dr]\ar[r]&1\ar[r]&
  2\ar[ur]\ar[r]\ar[dr]&1&\\
  && 1\ar[ur]\ar[dr]&& 1\ar[ur]\ar[dr]&&2\\
  &&& 1\ar[ur]\ar[dr]&&1\ar[dr]\ar[ur]&\\
  &&&& 1\ar[dr]\ar[ur]&&1&\cdots\\
  &&&&& 1\ar[ur] &
}$

It follows that $\Ext^1(A_2,A_2)\ne 0$, and similarly $\Ext^1(B_2,B_2)\ne 0$. Hence there are no indecomposable rigid objects.
\end{proof}

\section{Computation with \tt{Singular}} \label{sec2}

An alternative way to carry out computations of $\Ext^1$--spaces in the stable category of 
maximal Cohen-Macaulay modules is to use the computer algebra system {\tt Singular}, see
\cite{GP}.  
Let $$R = k[x_1,x_2,\dots,x_n]_{\langle x_1, x_2,\dots, x_n \rangle}/I$$
 be a Cohen-Macaulay local ring which is an isolated singularity, and $M$ and $N$ two maximal Cohen-Macaulay modules. Denote by $\widehat{R}$ the completion of $R$. Since all the spaces $\Ext^i_R(M,N)$ ($ i \ge 1$) are finite-dimensional over $k$ and the functor $\mod(R)\to \mod(\widehat{R})$ is exact, maps the maximal Cohen-Macaulay modules to  maximal Cohen-Macaulay modules and the finite length modules to finite length modules, we can conclude that 
$$
\dim_k(\Ext^i_R(M,N)) = \dim_k(\Ext^i_{\widehat{R}}(\widehat{M},\widehat{N})).
$$
As an illustration we show how to do this  for the case $E_7$.

\begin{proposition}\label{p2.1}
In the case $E_7$ there are two maximal rigid  objects, which both are indecomposable 
and neither of them is cluster tilting. 
\end{proposition}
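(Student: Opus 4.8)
The plan is to reprove Proposition \ref{p1.8} --- already established in section \ref{additive} via covering techniques and additive functions --- by an entirely computational route, using {\tt Singular} to compute the relevant $\Ext^1$-groups directly. The input data is the hypersurface ring $R=k[[x,y,z]]/(x^3+xy^3+z^2)$ (or, more conveniently for the software, the localization $k[x,y,z]_{\langle x,y,z\rangle}/(x^3+xy^3+z^2)$, which has the same $\Ext$-dimensions by the reduction to the completion explained at the start of section \ref{sec2}, together with Kn\"orrer periodicity to pass from the curve to the surface case where matrix factorizations are most transparent). First I would list explicit matrix factorizations $(\varphi_M,\psi_M)$ of $f=x^3+xy^3+z^2$ for each indecomposable object appearing in the AR-quiver of $\ul{\CM}(R)$ drawn in the proof of Proposition \ref{p1.8}; these are tabulated in \cite{Y} and can be typed in as the presentation matrices of the corresponding MCM modules $M=\operatorname{coker}\varphi_M$.

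Next, for a pair of such modules $M,N$ I would compute $\ul{\Hom}_R(M,N)$ and $\Ext^1_R(M,N)$ as $k$-vector spaces. The key point is that for matrix factorizations there is a completely explicit recipe: $\ul{\Hom}_R(M,N)$ is the cohomology of the $\Hom$-complex of matrix factorizations, and since $\Sigma=\Omega$ with $\Omega^2=\id$ on $\ul{\CM}$ of a hypersurface, one has $\Ext^1_R(M,N)\simeq\ul{\Hom}_R(\Omega M,N)\simeq\ul{\Hom}_R(M,\Omega N)$, where $\Omega M=\operatorname{coker}\psi_M$ is read off from the second component of the matrix factorization. So the whole computation reduces to computing dimensions of stable hom-spaces, which {\tt Singular} handles: present $\Hom_R(M,N)$ via a syzygy/lift computation, divide out the maps factoring through a free module, and take $\dim_k$. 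I would run this for $M,N$ ranging over $A,B,M_1,N_1,C,D$ (the candidates surviving Lemma \ref{L1.1}), exactly paralleling the case analysis in Proposition \ref{p1.8}.

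The conclusions to extract are: $\Ext^1_R(A,A)=0=\Ext^1_R(B,B)$ (so $A,B$ are rigid), while $\Ext^1_R(M_1,M_1)\neq0$, $\Ext^1_R(N_1,N_1)\neq0$, $\Ext^1_R(C,C)\neq0$, $\Ext^1_R(D,D)\neq0$ (so these are not rigid); hence the only indecomposable rigid objects are $A$ and $B$, and since any module strictly containing one of them in its additive closure would have to involve a non-rigid indecomposable, $A$ and $B$ are maximal rigid. Finally $\Ext^1_R(A,C)=0$ with $C\notin\add A$ shows $A$ is not cluster tilting, and likewise for $B$. All of these are single {\tt Singular} queries returning a dimension.

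The main obstacle is bookkeeping rather than mathematics: one must input the correct matrix factorizations for the $E_7$ curve singularity and correctly identify which cokernel corresponds to which vertex $A,B,M_1,\dots$ of the AR-quiver, since an indexing slip would compute $\Ext^1$ for the wrong module. A secondary subtlety is making sure {\tt Singular}'s $\Hom$ and syzygy commands are being applied over the local ring (using the appropriate monomial ordering, e.g. a local/negative degree ordering) so that the finite-length $\Ext$-modules are computed correctly, and that the passage curve~$\leftrightarrow$~surface via Kn\"orrer periodicity is respected if one chooses to compute on the surface side. Once the dictionary between vertices and matrix factorizations is fixed, the rest is a routine script, and the output reproduces the table entry for $(E_7)$ in Theorem \ref{main1}.
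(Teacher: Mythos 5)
Your proposal is essentially the paper's own proof of this proposition: restrict the candidates to $A,B,C,D,M_1,N_1$ via Lemma \ref{L1.1} and the $\tau$-pairing, feed the presentation matrices from \cite{Y} into {\tt Singular}, and read off $\Ext^1(A,A)=0$, $\Ext^1(C,C)\neq0$, $\Ext^1(M_1,M_1)\neq0$ and $\Ext^1(A,C)=0$, which is exactly what the paper does (working directly over the one-dimensional ring $k[x,y]_{\langle x,y\rangle}/(x^3+xy^3)$ with the {\tt Ext} command, rather than via stable $\Hom$ of matrix factorizations or the Kn\"orrer-periodic surface). Your computation over all candidate pairs also yields $\Ext^1(A,B)\neq0$ (equivalently $\ul{\Hom}(A,A)\neq0$ by AR duality), which is the small point needed to rule out $A\oplus B$ being rigid and thus to justify maximality cleanly.
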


\noindent
By \cite{Y} the AR-quiver of $\underline{\CM}(R)$ has the form
$$
\xymatrix@C0.5cm@R0.5cm{
    &&& C\ar[dd]\ar@{--}[r]& D\ar[d]&&\\
    A\ar@{--}[d]\ar[r]& M_2\ar[r]\ar@{--}[d]\ar[dl]&
    Y_2\ar[dl]\ar@{--}[d]\ar[rr]&& Y_3\ar[ul]\ar[dll]\ar[r]&
    Y_1\ar@{--}[d]\ar[r]\ar[dll]& M_1\ar[dl]\ar@{--}[d]\\
    B\ar[r]& N_2\ar[ul]\ar[r]& X_2\ar[ul]\ar[r]& X_3\ar[ul]\ar[rr]\ar[uur]&&
    X_1\ar[ul]\ar[r]& N_1\ar[ul]
}
$$
By Lemma \ref{L1.1} only the modules $A, B, C, D, M_1, N_1$ can be rigid. 
Since $B = \tau(A), D = \tau(C)$, $N_1 = \tau(M_1)$, the pairs of modules 
$(A,B)$, $(C,D)$ and $(M_1, N_1)$ are rigid or not rigid simultaneously. By
\cite{Y} we have the following presentations:

$$
R \xrightarrow{x^2 +y^3} R 
\xrightarrow{x} R \lar A \lar 0,
$$
$$
R^2 \xrightarrow{{x \ \ y\choose y^2\ -x}}
R^2 \xrightarrow{x{x\ \ y\choose y^2\ -x}}
R^2 \lar C \lar 0,
$$
$$
R^2 \xrightarrow{{x^2\ \ \ y\choose xy^2\ -x}}
R^2 \xrightarrow{{x\ \ \ y\choose x y^2\ -x^2}}
R^2 \lar M_1 \lar 0,
$$
so we can use the computer algebra system {\tt Singular} in order to 
compute the $\Ext^1$--spaces between these modules.

\noindent
{\tt > Singular}  (call  the program {\tt ``Singular''})\\
{\tt > LIB ``homolog.lib'';} (call the library of homological algebra)\\
{\tt > ring S = 0,(x,y),ds;}   (defines the ring $S = \mathbb{Q}[x,y]_{\langle x, y\rangle}$)\\
{\tt > ideal I = x3 + xy3;}     (defines the ideal $x^3 + xy^3$ in $S$) \\
{\tt > qring R = std(I);} (defines the ring $\mathbb{Q}[x,y]_{\langle x, y\rangle}/I)$\\
{\tt > module A = [x];} \\
{\tt > module C = [x2, xy2], [xy, -x2];} \\
{\tt > module M1 = [x2, xy2], [y, -x2];}  (define modules $A, C, M_1$) \\
{\tt > list l = Ext(1,A,A,1);} \\
{\tt // dimension of $\Ext^1$:  -1} (Output: $\Ext^1_R(A,A) = 0$)\\
{\tt > list l = Ext(1,C,C,1);} \\
{\tt // ** redefining l **}    \\
{\tt // dimension of $\Ext^1$: 0} (the Krull dimension of $\Ext^1_R(C,C)$ is 0) \\
{\tt // vdim of $\Ext^1$: 2}\ \ \ ($\dim_k(\Ext^1_R(C,C)) = 2$)  \\
{\tt > list l = Ext(1,M1,M1,1);} \\
{\tt // ** redefining l **} \\
{\tt // dimension of $\Ext^1$: 0}\\
{\tt // vdim of $\Ext^1$: 10}\\
{\tt > list l = Ext(1,A,C,1);} \\
{\tt // ** redefining l **} \\
{\tt // dimension of $\Ext^1$: -1} \\

This computation shows that the modules $A$ and $B$ are rigid, $C,D, M_1$ and $N_1$ are not 
rigid and since $\Ext^1_R(A, C) = 0$, there are no cluster tilting objects 
in the 
stable category $\underline{\CM}(R)$.  

\section{One-dimensional hypersurface singularities}\label{onedim}

We shall construct a large class of one-dimensional
hypersurface singularities having a cluster tilting object, then
classify all cluster tilting objects.
Our method is based on the higher theory of almost split sequences and
Auslander algebras studied in \cite{I1,I2}. We also use a relationship
between cluster tilting objects in $\CM(R)$ and tilting modules over
the endomorphism algebra of a cluster tilting object \cite{I2}. Then
we shall compare cluster tilting mutation given in Definition \ref{cluster tilting mutation}
with tilting mutation by using results due to Riedtmann-Schofield \cite{RS} and Happel-Unger \cite{HU1,HU2}.

In this section, we usually consider cluster tilting objects in $\CM(R)$
instead of $\ul{\CM}(R)$.

Let $k$ be an infinite field, $S:=k[[x,y]]$ and ${\mathfrak m}:=(x,y)$.
We fix $f\in{\mathfrak m}$ and write $f=f_1\cdots f_n$ for irreducible
formal power series $f_i\in{\mathfrak m}$ ($1\le i\le n$).
Put
$$S_i:=S/(f_1\cdots f_i)\ \mbox{ and }\ R:=S_n=S/(f).$$
We assume that $R$ is reduced, so we have $(f_i)\neq(f_j)$ for any $i\neq j$, and $R$ is then an isolated singularity.

Our main results in this section are the following, where the part (a) remains true in any dimension.

\begin{theorem}\label{XAA}
\begin{itemize}
\item[(a)] $\bigoplus_{i=1}^nS_i$ is a rigid object in $\CM(R)$.
\item[(b)] $\bigoplus_{i=1}^nS_i$ is a cluster tilting object in $\CM(R)$ if the following condition (A) is satisfied.

(A) $f_i\notin{\mathfrak m}^2$ for any $1\le i\le n$.
\end{itemize}
\end{theorem}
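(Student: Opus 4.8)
The plan is to work with the chain of rings $S = S_0 \to S_1 \to \cdots \to S_n = R$ and exploit the fact that each $S_i$ is a quotient of $S$ by a single power series, so that each $S_i$ is a hypersurface and $\CM(S_i)$ is well understood. First I would record the key structural fact: for a one-dimensional reduced hypersurface $R = S/(f)$ with $f = f_1 \cdots f_n$, the module $M = \bigoplus_{i=1}^n S_i$ is a maximal Cohen-Macaulay $R$-module, since each $S_i = S/(f_1\cdots f_i)$ is an $R$-module (as $f_1\cdots f_i$ divides $f$) of depth one, hence MCM over the one-dimensional ring $R$. To compute $\Ext^1_R$ I would use the standard $2$-periodic free resolution of $S_i$ over $R$: since $R = S/(f)$ and $f = (f_1\cdots f_i)(f_{i+1}\cdots f_n)$, multiplication by $f_{i+1}\cdots f_n$ and by $f_1\cdots f_i$ alternately gives
\[
\cdots \xrightarrow{\ f_1\cdots f_i\ } R \xrightarrow{\ f_{i+1}\cdots f_n\ } R \xrightarrow{\ f_1\cdots f_i\ } R \lar S_i \lar 0.
\]
Applying $\Hom_R(-, S_j)$ and using $S_j = S/(f_1\cdots f_j)$, the differentials become multiplication by the images of $f_{i+1}\cdots f_n$ and $f_1\cdots f_i$ inside $S_j$, and $\Ext^1_R(S_i, S_j)$ is the homology of $S_j \xrightarrow{f_1\cdots f_i} S_j \xrightarrow{f_{i+1}\cdots f_n} S_j$ at the middle term.

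For part (a), I would show this middle homology vanishes for all $i,j$. The point is that in $S = k[[x,y]]$, which is a UFD, the element $f_1\cdots f_i$ and the relevant quotient interact cleanly: working inside $S_j$, the kernel of multiplication by $f_{i+1}\cdots f_n$ and the image of multiplication by $f_1\cdots f_i$ can both be described via divisibility of power series, using that $S$ is a two-dimensional regular local ring and the $f_k$ are pairwise non-associate primes. Concretely, an element of $S_j$ killed by $f_{i+1}\cdots f_n$ is represented by $g \in S$ with $f_1\cdots f_j \mid g\cdot(f_{i+1}\cdots f_n)$; because the primes are distinct, the factors $f_{i+1},\dots,f_j$ among $f_1,\dots,f_j$ divide $f_{i+1}\cdots f_n$ and cancel, forcing $(f_1\cdots f_i \cdot \text{(something)}) \mid g$ up to the overlap, which exhibits $g$ as being in the image of multiplication by $f_1\cdots f_i$ modulo $(f_1\cdots f_j)$. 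Handling the two cases $i \le j$ and $i \ge j$ separately (the roles of which $f_k$ divide which product differ) gives $\Ext^1_R(S_i,S_j) = 0$, hence $\Ext^1_R(M,M)=0$. This argument uses only that $S$ is regular and $f$ is reduced, so it works in any dimension, matching the claim.

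For part (b), I need that $M = \bigoplus_i S_i$ is not merely rigid but cluster tilting in $\CM(R)$, i.e.\ $\add M = \{X \in \CM(R) : \Ext^1_R(M,X) = 0\}$. Here is where condition (A), $f_i \notin \mathfrak{m}^2$, and the higher-Auslander-algebra machinery of \cite{I1,I2} enter. The strategy is to compute the endomorphism ring $\Lambda = \End_R(M)$ and show it has global dimension at most three (a "three-dimensional Auslander algebra" in the terminology of the introduction); by the results of \cite{I2} relating cluster tilting objects in $\CM(R)$ to tilting $\Lambda$-modules, rigidity plus $\gl\Lambda \le 3$ together with $\Lambda$ being an $R$-order of the right homological dimension forces $M$ to be cluster tilting. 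Condition (A) is exactly what makes each $f_i$ part of a regular system of parameters locally, so that the successive quotients $S_{i-1} \to S_i$ behave like smooth blowup data and the relevant $\Ext$-computations in $\End_R(M)$ terminate; without it $f_i \in \mathfrak{m}^2$ and $S_i$ acquires a worse singularity, breaking the global dimension bound. I expect the main obstacle to be precisely this second part: showing $\gl\End_R(M) \le 3$ (equivalently, verifying the $(n-1)$-cluster-tilting-type vanishing that characterizes cluster tilting objects) requires carefully tracking projective resolutions of the simple $\Lambda$-modules, and this is where condition (A) must be used in an essential way rather than the more elementary divisibility bookkeeping of part (a). An alternative, which I would mention, is to verify the cluster-tilting property directly by showing any MCM $R$-module $X$ with $\Ext^1_R(M,X)=0$ is filtered by the $S_i$ and hence lies in $\add M$, using the exchange-sequence description of Definition \ref{cluster tilting mutation} and induction on $n$.
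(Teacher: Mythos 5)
Part (a) of your proposal is correct and is essentially the paper's own argument: the paper proves it via Proposition \ref{XAD}, which is exactly your computation --- take the $2$-periodic (matrix factorization) resolution of one of the modules over $R$, apply $\Hom_R(-,\ )$ into the other, and check exactness of the resulting two-term complex using that the relevant factors have no common prime factor in the UFD $S$.

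Part (b), however, has a genuine gap. You correctly identify the criterion (recorded in the paper as Proposition \ref{XAE}: a rigid Cohen-Macaulay generator $M$ is cluster tilting if and only if $\gl\End_R(M)\le 3$, equivalently if and only if each indecomposable summand $X$ of $M$ admits an exact sequence $0\to M_2\to M_1\to M_0\to X$ with terms in $\add M$ ending in a right almost split map), but you never establish it: you explicitly defer showing $\gl\End_R(M)\le 3$ as the expected main obstacle, and neither of your two suggested routes is carried out. Moreover, the way you expect condition (A) to enter is not right as stated: (A) only says each $f_i$ has a nonzero linear term; it does \emph{not} give $(f_i,f_{i+1})=\mathfrak{m}$ for consecutive factors, and that is precisely what is needed to write down the explicit $2$-almost split sequences (Lemma \ref{XAF}(b)) and the right almost split map ending at the projective summand $S_n=R$ (Lemma \ref{XAG}, which in addition requires choosing an auxiliary $f_{n+1}$ with $\mathfrak{m}=(f_n,f_{n+1})$ coprime to $f$). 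For this reason the paper first proves the theorem under the stronger hypothesis (B) $\mathfrak{m}=(f_1,f_2)=\cdots=(f_{n-1},f_n)$ by exhibiting these sequences, and then reduces (A) to (B) by interleaving generic auxiliary linear factors $g_i$, using cluster tilting mutation (Lemma \ref{XAH}, Proposition \ref{XAI}) to permute the factors and a restriction lemma (Lemma \ref{XAN}) along the surjections $R'\to\cdots\to R$ to descend the cluster tilting property. None of these steps, nor any substitute for them (an actual computation of projective resolutions of the simple $\End_R(M)$-modules, or an actual filtration argument for $X\in\CM(R)$ with $\Ext^1_R(M,X)=0$), appears in your proposal, so the core of part (b) remains unproven.
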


Let ${\mathfrak S}_n$ be the symmetric group of degree $n$. For $w\in{\mathfrak S}_n$ and 
$I\subseteq\{1,\cdots,n\}$, we put
\[S^w_i:=S/(f_{w(1)}\cdots f_{w(i)}),\ M_w:=\bigoplus_{i=1}^nS^w_i\
\mbox{ and }\ S_I:=S/(\prod_{i\in I}f_i).\]

\begin{theorem}\label{XAB}
Assume that (A) is satisfied.
\begin{itemize}
\item[(a)] There are exactly $n!$ basic cluster tilting objects $M_w$
($w\in{\mathfrak S}_n$) and exactly $2^n-1$ indecomposable rigid objects
$S_I$ ($\emptyset\neq I\subseteq\{1,\cdots,n\}$) in $\CM(R)$.
\item[(b)] For any $w\in{\mathfrak S}_n$, there are exactly $n!$ basic Cohen-Macaulay
tilting $\End_R(M_w)$-modules

\noindent
$\Hom_R(M_w,M_{w'})$ ($w'\in{\mathfrak S}_n$)
of projective dimension at most one.
Moreover, all algebras

\noindent
$\End_R(M_w)$ ($w\in{\mathfrak S}_n$) are derived
equivalent.
\end{itemize}
\end{theorem}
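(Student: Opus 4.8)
The plan is to transport the whole question, through the theory of three‑dimensional Auslander algebras of \cite{I2} (see also \cite{IR}), into a counting problem for classical tilting modules over one algebra, and then to resolve that by combining the explicit combinatorics of cluster tilting mutation with the tilting‑mutation results of Riedtmann--Schofield \cite{RS} and Happel--Unger \cite{HU1,HU2}. I begin with the lower bounds. For each $w\in{\mathfrak S}_n$ the reordered factorization $f=f_{w(1)}\cdots f_{w(n)}$ still satisfies (A) and defines the same $R$, so Theorem \ref{XAA}(b) shows $M_w=\bigoplus_{i=1}^nS^w_i$ is a basic cluster tilting object of $\CM(R)$. Each $S_I$ is indecomposable (it is cyclic over $R$, and $\End_R(S_I)\cong S_I$ is local), and $\operatorname{ann}_R(S_I)=(\prod_{i\in I}f_i)R$ recovers the ideal $(\prod_{i\in I}f_i)$ of $S$, hence $I$, since $S$ is factorial and the $f_i$ are non‑associate primes; thus $S_I\cong S_J$ forces $I=J$. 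Consequently the $2^n-1$ objects $S_I$ ($\emptyset\ne I\subseteq\{1,\dots,n\}$) are pairwise non‑isomorphic indecomposable rigid objects, and the set $\{I: S_I\text{ is a summand of }M_w\}$ is the maximal chain $\emptyset\subsetneq\{w(1)\}\subsetneq\cdots\subsetneq\{1,\dots,n\}$ in the Boolean lattice, which determines $w$; so the $M_w$ are $n!$ pairwise non‑isomorphic basic cluster tilting objects. It remains to prove the matching upper bounds.

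Set $T:=M_e=\bigoplus_{i=1}^nS_i$ and $\la:=\End_R(T)$. By \cite{I2}, $\la$ has finite global dimension, $G:=\Hom_R(T,-)\colon\CM(R)\to\mod\la$ is fully faithful and identifies $\add T$ with the projective $\la$‑modules, every object of $\CM(R)$ has projective dimension $\le1$ over $\la$, and $G$ restricts to a bijection between basic cluster tilting objects of $\CM(R)$ and basic Cohen--Macaulay tilting $\la$‑modules of projective dimension $\le1$. Under $G$ the $M_w$ map to pairwise non‑isomorphic such tilting modules $T_w:=\Hom_R(T,M_w)$, with $T_e=\la$ (the free module) and $\End_R(M_w)\cong\End_\la(T_w)$; this already yields the first part of (b) (for a general base $\End_R(M_w)$ one uses that $M_w$ is itself cluster tilting and may play the role of $T$), while the ``moreover'' is Happel's theorem: each $T_w$ is a tilting module of finite projective dimension over $\la$, so $\mathrm{R}\Hom_\la(T_w,-)$ is a triangle equivalence $D^b(\mod\End_R(M_w))\simeq D^b(\mod\la)$, hence all $\End_R(M_w)$ are derived equivalent. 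Thus the theorem reduces to the single statement that $\la$ has \emph{exactly} $n!$ basic tilting modules of projective dimension $\le1$; the count $2^n-1$ for indecomposable rigid objects then follows by Bongartz completion, as an indecomposable rigid object of $\CM(R)$ maps to an indecomposable partial tilting $\la$‑module of projective dimension $\le1$, hence is a summand of a tilting module of projective dimension $\le1$, hence of some $T_w$, hence is some $G(S_I)$.

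For the count I would use cluster tilting mutation. The free module $R=S^w_n$ is projective--injective in the Frobenius category $\CM(R)$, hence a non‑removable direct summand of every cluster tilting object, so by Definition \ref{cluster tilting mutation} and \cite[Th.~5.3]{IY} the cluster tilting objects of $\CM(R)$ are the vertices of an $(n-1)$‑regular exchange graph. One computes the exchange sequences explicitly inside $\{M_w\}$: writing $I_j=\{w(1),\dots,w(j)\}$ with $S_{I_0}:=0$, and letting $I'_i$ be the unique set with $I_{i-1}\subsetneq I'_i\subsetneq I_{i+1}$ and $I'_i\ne I_i$, mutation of $M_w$ at the summand $S^w_i$ ($1\le i\le n-1$) is governed by the short exact sequence
\[0\longrightarrow S_{I'_i}\longrightarrow S_{I_{i+1}}\oplus S_{I_{i-1}}\longrightarrow S_{I_i}\longrightarrow 0,\]
whose middle map --- the natural surjection on $S_{I_{i+1}}$ and multiplication by $f_{w(i)}$ on $S_{I_{i-1}}$ --- is a minimal right $\add(M_w/S^w_i)$‑approximation; the outcome is $M_{ws_i}$ for the transposition $s_i=(i,i+1)$. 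Hence $\{M_w\}$ is closed under mutation, and its induced subgraph is the Cayley graph of ${\mathfrak S}_n$ on the simple transpositions, which is connected, $(n-1)$‑regular, and contains $T_e=\la$. The genuine obstacle is then to prove that there are no further cluster tilting objects, i.e. that the exchange graph is connected --- equivalently, that the $T_w$ exhaust the tilting $\la$‑modules of projective dimension $\le1$, i.e. that all of them lie in the connected component of $\la$ in the tilting quiver of $\la$. This I would settle by matching cluster tilting mutation in $\CM(R)$ with tilting mutation over the globally finite‑dimensional algebra $\la$ and invoking the connectedness and mutation results of Riedtmann--Schofield \cite{RS} and Happel--Unger \cite{HU1,HU2}; once the exchange graph is identified with the Cayley graph of ${\mathfrak S}_n$, parts (a) and (b) follow at once. (As an alternative route to the upper bounds, avoiding tilting theory: condition (A) makes every branch of $R$ smooth, so the normalization of $R$ is $\prod_{i=1}^nk[[t_i]]$, and a rank/conductor analysis should force each indecomposable rigid $\CM(R)$‑module to be some $S_I$.)
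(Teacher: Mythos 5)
Your overall route is the one the paper takes: realize the $M_w$ as cluster tilting objects via Theorem \ref{XAA}, pass to Cohen--Macaulay tilting modules over the three-dimensional Auslander algebra $\la=\End_R(T)$ via Proposition \ref{XAL}, compare cluster tilting mutation in $\CM(R)$ with tilting mutation over $\la$, and bring in Riedtmann--Schofield and Happel--Unger; your lower bounds, the Cayley-graph description of the mutations among the $M_w$, and the derived-equivalence statement are fine. The genuine gap sits exactly at the point you yourself flag as ``the genuine obstacle'': you propose to finish by ``invoking the connectedness and mutation results of \cite{RS} and \cite{HU1,HU2}'', but no such general connectedness theorem exists --- the quiver of tilting modules of an algebra of global dimension $3$ need not be connected, and nothing in your setup shows that $\{T_w\}$ is a full connected component of it (mutation of $T_{w'}$ at the summand $P=\Hom_R(T,R)$ could a priori produce a tilting module outside your family, just a non-Cohen--Macaulay one). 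What actually closes this step in the paper is the partial order $T\ge U\Leftrightarrow\Ext^1_\la(T,U)=0$: for any Cohen--Macaulay tilting module $U$ one has $\la\ge U$, and by Happel--Unger (Proposition \ref{XAJ}(c)) there is a chain of neighbours $\la=T_0>T_1>\cdots$ descending towards $U$; Lemma \ref{XAK}(a) keeps every $T_i$ Cohen--Macaulay because $T_i\ge U$; Lemma \ref{XAK}(b) shows that the projective $P$ (whose $R$-dual is again projective on the other side) is a direct summand of every Cohen--Macaulay tilting module, so each $T_i$ admits at most $n-1$ Cohen--Macaulay mutations, while Propositions \ref{XAI} and \ref{XAL}(b) exhibit exactly $n-1$ such mutations $T_{w's_i}$ of each $T_{w'}$; hence the chain never leaves $\{T_{w'}\ |\ w'\in{\mathfrak S}_n\}$, and finiteness of ${\mathfrak S}_n$ together with the strict descent forces it to terminate at $U$. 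None of these ingredients (the partial order, the chain theorem, the two Cohen--Macaulay lemmas) appears in your sketch, and they cannot be replaced by a connectedness citation.

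Two further points. First, the bijection you attribute to \cite{I2} between basic cluster tilting objects of $\CM(R)$ and basic Cohen--Macaulay tilting $\la$-modules of projective dimension at most one is an overclaim: \cite{I2} gives only the direction recorded in Proposition \ref{XAL}(a), and surjectivity is precisely the content of Theorem \ref{XAB}(b), so as written your reduction is circular. Second, your reformulation ``$\la$ has exactly $n!$ basic tilting modules of projective dimension $\le1$'' silently drops the Cohen--Macaulay restriction; this stronger count is neither asserted by the theorem nor established (it is not known that every such tilting module is Cohen--Macaulay). This also affects your count of the $2^n-1$ indecomposable rigid objects: the Bongartz completion of $\Hom_R(T,X)$ is only known to be some $T_w$ once you check it is Cohen--Macaulay (which can be done, since the Bongartz complement is an extension of copies of a Cohen--Macaulay module by $\la$), or one can bypass this entirely by quoting \cite{BIRS}, as the paper does, to realize every rigid object of $\CM(R)$ as a direct summand of a cluster tilting object.
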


It is interesting to compare with results in \cite{IR}, where
two-dimensional (2-Calabi-Yau) algebras $\Gamma$ are treated and a
bijection between elements in an affine Weyl group and tilting
$\Gamma$-modules of projective dimension at most one is given.
Here the algebra is one-dimensional, and Weyl groups appear.

Here we consider three examples.
\begin{itemize}
\item[(a)] Let $R$ be a curve singularity of type $A_{2n-1}$ or $D_{2n+2}$, so
\[R=S/((x-y^n)(x+y^n))\ \mbox{ or }\ R=S/(y(x-y^n)(x+y^n)).\]
By our theorems, there are exactly $2$ or $6$ cluster tilting objects 
and exactly $3$ or $7$ indecomposable rigid objects in $\CM(R)$, which
fits with our computations in section 1.

\item[(b)] Let $R$ be a curve singularity of type $T_{3,2q+2}(\lambda)$ or
$T_{2p+2,2q+2}(\lambda)$, so
\begin{eqnarray*}
R=S/((x-y^2)(x-y^q)(x+y^q))&&(R=S/(y(y-x^2)(y-\lambda x^2))\mbox{
  for }q=2),\\
R=S/((x^p-y)(x^p+y)(x-y^q)(x+y^q))&&(R=S/(xy(x-y)(x-\lambda
y))\mbox{ for }p=q=1).
\end{eqnarray*}
By our theorems, there are exactly $6$ or $24$ cluster tilting objects 
and exactly $7$ or $15$ indecomposable rigid objects in $\CM(R)$.

\item[(c)] Let $\lambda_i\in k$ ($1\le i\le n$) be mutually distinct elements
in $k$. Put
\[R:=S/((x-\lambda_1y)\cdots(x-\lambda_ny)).\]
By our theorems, there are exactly $n!$ cluster tilting objects and
exactly $2^n-1$ indecomposable rigid objects in $\CM(R)$.
\end{itemize}

\medskip
First of all, Theorem \ref{XAA}(a) follows immediately from the
following observation.

\begin{proposition}\label{XAD}
For $g_1,g_2\in{\mathfrak m}$ and $g_3\in S$, put $R:=S/(g_1g_2g_3)$.
If $g_1$ and $g_2$ have no common factor, then
$\Ext^1_R(S/(g_1g_3),S/(g_1))=0=\Ext^1_R(S/(g_1),S/(g_1g_3))$.
\end{proposition}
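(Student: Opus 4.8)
The plan is to compute the relevant Ext groups via an explicit free (periodic) resolution over $R = S/(g_1 g_2 g_3)$. Recall that over the regular local ring $S = k[[x,y]]$, a cyclic module $S/(g)$ with $g \mid g_1g_2g_3$, say $g_1g_2g_3 = g h$, becomes maximal Cohen–Macaulay over $R$ and has the $2$-periodic matrix-factorization resolution
$$
\cdots \xrightarrow{\ h\ } R \xrightarrow{\ g\ } R \xrightarrow{\ h\ } R \xrightarrow{\ g\ } R \lar S/(g) \lar 0.
$$
First I would write down such resolutions for $M := S/(g_1 g_3)$ (so $g = g_1 g_3$, $h = g_2$) and for $N := S/(g_1)$ (so $g = g_1$, $h = g_2 g_3$). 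Applying $\Hom_R(-, N)$ to the resolution of $M$, the complex computing $\Ext^*_R(M,N)$ is the $2$-periodic complex
$$
N \xrightarrow{\ g_2\ } N \xrightarrow{\ g_1 g_3\ } N \xrightarrow{\ g_2\ } N \xrightarrow{\ g_1 g_3\ } \cdots,
$$
where $N = S/(g_1)$ and the maps are multiplication. Since $g_1 = 0$ in $N$, multiplication by $g_1 g_3$ is the zero map on $N$; hence $\Ext^1_R(M,N) = \Ker(g_1 g_3 \colon N \to N)\,/\,\Im(g_2 \colon N \to N) = N / g_2 N = S/(g_1, g_2)$.

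The key step is therefore to show $S/(g_1,g_2) = 0$, i.e. that the ideal $(g_1,g_2)$ is the whole ring $S$, equivalently that $(g_1, g_2) = S$ as an ideal of the regular local ring $S$. Since $S$ is a two-dimensional regular local (UFD) and $g_1, g_2$ lie in $\mathfrak{m}$, this is where the hypothesis ``$g_1$ and $g_2$ have no common factor'' enters — but note that having no common irreducible factor is \emph{not} by itself enough to make $(g_1,g_2) = S$ (e.g. $x$ and $y$). The honest statement is that $S/(g_1,g_2)$ is a finite-length $S$-module (this is exactly coprimality in the UFD $S$ of dimension $2$: $\dim S/(g_1,g_2) = 0$), and hence $\Ext^1_R(M,N)$ is finite-dimensional; to get it to \emph{vanish} one must use more. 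The cleaner route, which I expect is the intended one, is to observe that $R$ is an isolated singularity, so $M$ and $N$ are locally free on the punctured spectrum, whence $\Ext^1_R(M,N)$ has finite length; but over a one-dimensional ring a finite-length submodule of a torsion-free (maximal Cohen–Macaulay) module computation forces the relevant class to vanish. More precisely, I would argue that the periodic complex above, being eventually $2$-periodic with one of the two alternating maps equal to zero, gives $\Ext^i_R(M,N) \cong S/(g_1,g_2)$ for all $i \ge 1$, and the same computation with $\Hom_R(N,M)$ (resolution of $N$, apply $\Hom_R(-,M)$, use that $g_2 g_3 \equiv 0$ modulo… wait, $g_2 g_3$ is \emph{not} zero in $M = S/(g_1g_3)$) shows $\Ext^1_R(N,M) = \Ker(g_2 g_3 \colon M \to M)/\Im(g_1 \colon M \to M)$.

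The main obstacle, and the part requiring genuine care, is this second computation $\Ext^1_R(N,M)$ and the reconciliation of both with the coprimality hypothesis: one must show $\Ker(g_2 g_3 \colon M \to M) = \Im(g_1 \colon M \to M)$ inside $M = S/(g_1 g_3)$. I would handle this by lifting to $S$: an element $\bar a \in M$ is killed by $g_2 g_3$ iff $g_2 g_3 a \in (g_1 g_3) S$ iff $g_2 a \in (g_1)S$ (cancel $g_3$, valid since $S$ is a domain) iff $g_2 a = g_1 b$ for some $b \in S$; since $g_1, g_2$ are coprime in the UFD $S$, $g_1 \mid a$, so $\bar a \in g_1 M = \Im(g_1)$. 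Hence $\Ext^1_R(N,M) = 0$. The symmetric argument for $\Ext^1_R(M,N)$: $\bar a \in N = S/(g_1)$ with $g_1 g_3 a \in (g_1)S$ is automatic (everything), so $\Ker = N$, and we need $N = g_2 N$, i.e. $(g_1,g_2) = S$ — and here the resolution of $M$ I should use is actually the one with first syzygy matrix pair $(g_1 g_3, g_2)$ only if $g_1 g_3 \cdot g_2 = g_1 g_2 g_3$, which is correct, so indeed $\Ext^1_R(M,N) = S/(g_1,g_2)$. To kill this I reverse the roles: compute $\Ext^1$ the \emph{other} way — resolve $N$ by the pair $(g_1, g_2 g_3)$ and apply $\Hom_R(-,N)$, giving $\Ext^1_R(N,N)$-type nonsense; that is not what we want either. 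The resolution remedy is to use instead that $\Ext^1_R(M,N) \cong \Ext^1_R(N,M)$ need not hold, so I will simply present the $\Ext^1_R(M,N)$ computation with the matrix factorization $(g_2,\ g_1g_3)$ of $M$ — wait, order matters: $M = S/(g_1 g_3)$ is cut out by $g_1 g_3$, with cofactor $g_2$; applying $\Hom_R(-,N)$ to $\cdots \xrightarrow{g_2} R \xrightarrow{g_1 g_3} R \to M \to 0$ yields the complex $N \xrightarrow{g_1 g_3} N \xrightarrow{g_2} N \xrightarrow{g_1 g_3} \cdots$ in homological degrees $0,1,2,\dots$, so $\Ext^1_R(M,N) = \Ker(g_2\colon N\to N)/\Im(g_1g_3\colon N\to N) = \Ker(g_2\colon N\to N)$ since $g_1 g_3 \equiv 0$ on $N$. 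Thus $\Ext^1_R(M,N) = \{\bar a \in S/(g_1) : g_2 a \in (g_1)S\}$, and coprimality of $g_1,g_2$ in $S$ gives $g_1 \mid a$, so this is $0$. This is the clean argument, and it is entirely parallel to the $\Ext^1_R(N,M)$ one; the only subtlety is getting the two matrix-factorization maps in the correct homological positions, which determines which of the two multiplications is the zero map.
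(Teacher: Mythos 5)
Your final argument is correct and is essentially the paper's own proof: apply $\Hom_R(-,S/(g_1))$ to the $R$-free resolution $R\xrightarrow{g_2}R\xrightarrow{g_1g_3}R\to S/(g_1g_3)\to 0$, observe that multiplication by $g_1g_3$ is zero on $S/(g_1)$, and use that $g_1,g_2$ have no common factor in the UFD $S$ to conclude that $g_2$ acts injectively on $S/(g_1)$ (and, for the other vanishing, resolve $S/(g_1)$, cancel $g_3$ in the domain $S$, and use coprimality again to get kernel equal to image). The earlier false start—placing the two matrix-factorization maps in the wrong homological degrees, which produced $S/(g_1,g_2)$ and prompted the unnecessary finite-length digression—is superseded by your own corrected computation at the end, which is the version to keep.
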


\begin{proof}
We have a projective resolution
\[R\stackrel{g_2}{\to}R\stackrel{g_1g_3}{\to}R\to S/(g_1g_3)\to 0.\]
Applying $\Hom_R(\ ,S/(g_1))$, we have a complex
\[S/(g_1)\stackrel{g_1g_3=0}{\longrightarrow}S/(g_1)\stackrel{g_2}{\to}S/(g_1).\]
This is exact since $g_1$ and $g_2$ have no common factor.
Thus we have the former equation, and the other one can be proved
similarly.
\end{proof}

Our plan of proof of Theorem \ref{XAA}(b) is the following.
\begin{itemize}
\item[(i)] First we shall prove Theorem \ref{XAA} under the following
  stronger assumption:

(B) ${\mathfrak m}=(f_1,f_2)=\cdots=(f_{n-1},f_n)$.

\item[(ii)] Then we shall prove the general statement of Theorem \ref{XAA}.
\end{itemize}

We need the following general result in \cite{I1,I2}.

\begin{proposition}\label{XAE}
Let $R$ be a complete local Gorenstein ring of dimension at most three and
$M$ a rigid Cohen-Macaulay $R$-module which is a generator (i.e. $M$ contains $R$ as a direct summand).
Then the following conditions are equivalent.
\begin{itemize}
\item[(a)] $M$ is a cluster tilting object in $\CM(R)$.
\item[(b)] $\gl\End_R(M)\le 3$.
\item[(c)] For any $X\in\CM(R)$, there exists an exact sequence $0\to M_1\to
M_0\to X\to0$ with $M_i\in\add M$.
\item[(d)] For any indecomposable direct summand $X$ of $M$, there
  exists an exact sequence $0\to
  M_2\stackrel{c}{\to}M_1\stackrel{b}{\to}M_0\stackrel{a}{\to}X$
  with $M_i\in\add M$ and $a$ is a right almost split map in $\add M$.
\end{itemize}
\end{proposition}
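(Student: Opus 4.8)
The proof runs everything through the endomorphism algebra $\la:=\End_R(M)$ and the functor $F:=\Hom_R(M,-)\colon\CM(R)\to\mod\la$. Since $M$ is a generator ($R\mid M$), the functor $F$ is faithful, it restricts to an equivalence from $\add M$ onto the projective $\la$-modules, it is full on $\Hom$-spaces whose source lies in $\add M$, and — a key reflexivity fact — an $R$-module $Z\in\CM(R)$ with $F(Z)$ projective already lies in $\add M$ (dually for $\Hom_R(-,M)$). Under $F$, the indecomposable summands $X$ of $M$ correspond to the simple $\la$-modules $S_X=\operatorname{top}F(X)$, the kernel of a minimal right $\add M$-approximation $a\colon M_0\to X$ has $F$-image $\Omega^2 S_X$ (because $F$ is left exact and $F(a)$ has image $\rad F(X)$), and rigidity of $M$ makes $F$ exact on the relevant short exact sequences. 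The plan is: (a) $\Leftrightarrow$ (c) directly; (a) $\Rightarrow$ (d) $\Rightarrow$ (b) by translating into projective resolutions of the simple $\la$-modules; and (b) $\Rightarrow$ (c) by homological algebra over the $R$-order $\la$.

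For (a) $\Leftrightarrow$ (c): given (a), the surjective minimal right $\add M$-approximation $M_0\twoheadrightarrow X$ of an indecomposable summand (surjective as $R\mid M$) has Cohen--Macaulay kernel $M_1$ by the depth lemma, and $\Ext^1_R(M,M_0)=0$ forces $\Ext^1_R(M,M_1)=0$, hence $M_1\in\add M$ by the defining property of a cluster tilting object; this is (c). Conversely, given (c), one first upgrades it to the dual statement that every $X\in\CM(R)$ admits $0\to X\to N_0\to N_1\to0$ with $N_i\in\add M$ — a pushout/pullback argument using the Gorenstein cosyzygy $0\to X\to \text{(free)}\to\Sigma X\to0$ and (c) applied to $\Sigma X$. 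Then for $Y\in\CM(R)$ with $\Ext^1_R(M,Y)=0$, applying $F$ to $0\to Y\to N_0\to N_1\to0$ gives a short exact sequence $0\to F(Y)\to F(N_0)\to F(N_1)\to0$ of $\la$-modules, which splits since $F(N_1)$ is projective, so $F(Y)$ is projective and $Y\in\add M$; for $Y$ with $\Ext^1_R(Y,M)=0$, the dual argument applied to $0\to M_1\to M_0\to Y\to0$ shows $\Hom_R(Y,M)$ is a projective $\la^{\op}$-module, whence $Y\in\add M$. Thus (c) yields both equalities defining a cluster tilting object.

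For (a) $\Rightarrow$ (d) $\Rightarrow$ (b): given (a), for an indecomposable summand $X$ take a minimal right $\add M$-approximation $a\colon M_0\to X$, then a surjective minimal right $\add M$-approximation $M_1\twoheadrightarrow\ker a$, and set $M_2:=\ker(M_1\to\ker a)$. Here is the one place $\dim R\le3$ is used: $\ker a$, being a submodule of the Cohen--Macaulay module $M_0$, is torsion-free, so $\operatorname{depth}\ker a\ge\min\{2,\dim R\}$, whence $\operatorname{depth}M_2\ge\min\{\dim R,\,\operatorname{depth}\ker a+1\}=\dim R$; and $M_2\in\add M$ because $\Ext^1_R(M,M_2)=0$ (the approximation $M_1\to\ker a$ makes $\Hom_R(M,M_1)\to\Hom_R(M,\ker a)$ surjective) and $M$ is cluster tilting. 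This is the sequence in (d). Applying $F$ to it and using rigidity turns it into a projective resolution $0\to F(M_2)\to F(M_1)\to F(M_0)\to F(X)\to S_X\to0$, so $\pd_\la S_X\le3$; as the $S_X$ exhaust the simple $\la$-modules, $\gl\la\le3$, which is (b). Conversely (b) $\Rightarrow$ (c): if $\gl\la\le3$, a depth/projective-dimension argument over the order $\la$ forces $F$ of every Cohen--Macaulay module to have projective dimension $\le1$ over $\la$ — resting again on the estimate $\operatorname{depth}_RF(X)\ge\min\{2,\dim R\}$ — so the Cohen--Macaulay kernel $M_1$ of a surjective right $\add M$-approximation $M_0\twoheadrightarrow X$ has $F(M_1)=\Omega_\la F(X)$ projective, i.e.\ $M_1\in\add M$, giving (c).

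The main obstacle, and the only point where the hypotheses ``$\dim R\le3$'' and ``$R$ Gorenstein'' are genuinely needed, is controlling the depth of the syzygy modules that occur: kernels of right $\add M$-approximations need only be reflexive (of depth $\ge\min\{2,\dim R\}$), and it is precisely because $\dim R\le3$ that one further $\add M$-syzygy returns to $\CM(R)$; this is what caps the length of projective resolutions of the simple $\la$-modules at $3$ under the rigidity hypothesis, and it is what fails in dimension $\ge4$. The reflexivity properties of $F$ and the homological algebra over the order $\la$ used above are established in \cite{I1,I2}.
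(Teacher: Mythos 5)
Most of your argument is sound and genuinely more self-contained than the paper's, which proves (d)$\Rightarrow$(b) directly but otherwise cites Iyama's results ((a)$\Leftrightarrow$(b) is \cite[Th. 5.1(3)]{I2}, (a)$\Leftrightarrow$(c) is \cite[Prop. 2.2.2]{I1}, (a)$\Rightarrow$(d) is \cite[Th. 3.3.1]{I1}). Your (a)$\Leftrightarrow$(c), your (a)$\Rightarrow$(d) (correctly locating where $\dim R\le 3$ enters), and your (d)$\Rightarrow$(b) all check out, modulo one terminological slip: for an indecomposable summand $X$ of $M$ the ``minimal right $\add M$-approximation'' is the identity map, so in (a)$\Rightarrow$(d) you must take the minimal right \emph{almost split} map (sink map) in $\add M$ — which is what your phrase ``$F(a)$ has image $\rad F(X)$'' describes — and you should note its existence, which follows since $\End_R(M)$ is semiperfect.

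The genuine gap is (b)$\Rightarrow$(c), i.e.\ exactly the hard direction that the paper delegates to the Auslander correspondence \cite[Th. 5.1(3)]{I2}. You claim that $\gl\End_R(M)\le 3$ together with the estimate $\dpth_R F(X)\ge\min\{2,\dim R\}$ forces $\pd_{\la}F(X)\le 1$ by ``a depth/projective-dimension argument over the order $\la$.'' No Auslander--Buchsbaum-type formula is available here, and the stated inputs do not suffice: take $\dim R=1$ (the case the paper actually needs in section \ref{onedim}) and $\gl\la=3$ exactly, let $S$ be a simple $\la$-module with $\pd_\la S=3$, and put $N=\Omega_\la S$. Then $N$ is a submodule of a projective, hence torsion-free with $\dpth_R N\ge 1=\min\{2,\dim R\}$, yet $\pd_\la N=2$. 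So depth plus $\gl\la\le3$ cannot yield $\pd\le1$; any correct argument must exploit that the module in question has the special form $\Hom_R(M,X)$ with $X\in\CM(R)$ and $M$ a rigid generator (equivalently, must reprove the relevant part of \cite{I2}), and your sketch gives no indication of how this is used. (In dimension $3$, where $\gl\la\le3=\dim R$ makes $\la$ a nonsingular order, a depth argument can be pushed through, but for $\dim R=1,2$ it fails as above.) Until this implication is supplied — or replaced, e.g.\ by citing \cite[Th. 5.1(3)]{I2} as the paper does — the chain (b)$\Rightarrow$(c) is not proved, and with it the equivalence of (b) with the other conditions.
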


\begin{proof}
(a)$\Leftrightarrow$(b) For $d=\dim R$, take the $d$-cotilting module $T=R$ and apply \cite[Th. 5.1(3)]{I2} for $m=d$ and $n=2$ there.

(a)$\Leftrightarrow$(c) See \cite[Prop. 2.2.2]{I1}.

(a)$\Rightarrow$(d) See \cite[Th. 3.3.1]{I1}.

(d)$\Rightarrow$(b) For any simple $\End_R(M)$-module $S$, there
exists an indecomposable direct summand $X$ of $M$ such that $S$ is
the top of the projective $\Hom_R(M,X)$. Since $\Ext^1_R(M,M_2)=0$,
the sequence in (d) gives a projective resolution
$0\to\Hom_R(M,M_2)\to\Hom_R(M,M_1)\to\Hom_R(M,M_0)\to\Hom_R(M,X)\to
S\to 0$. Thus we have $\pd S\le 3$ and $\gl\End_R(M)\le 3$.
\end{proof}

The sequence in (d) is called a {\it 2-almost split sequence} when $X$
is non-projective and $a$ and $b$ are right minimal.
In this case $a$ is surjective, $c$ is a left almost split map in $\add M$, and $b$ and $c$ are left minimal.
There is a close relationship between 2-almost
split sequences and exchange sequences \cite{IY}.

We shall construct exact sequences satisfying the above condition (d) 
in Lemma \ref{XAF} and Lemma \ref{XAG} below. 

We use the isomorphism
\[\Hom_R(S_i,S_j)\simeq\left\{\begin{array}{cc}
(f_{i+1}\cdots f_j)/(f_1\cdots f_j)&i<j\\
S/(f_1\cdots f_j)&i\ge j.\end{array}\right.\]

\begin{lemma}\label{XAF}
Let $R=S/(f)$ be a one-dimensional reduced hypersurface singularity, $S_0:=0$ and $1\le i<n$.
\begin{itemize}
\item[(a)] We have exchange sequences (see Definition \ref{cluster tilting mutation})
\begin{eqnarray*}
&0\to S_i\xrightarrow{{f_{i+1}\choose -1}}S_{i+1}\oplus S_{i-1}
\xrightarrow{(1\ f_{i+1})}
S/(f_1\cdots f_{i-1}f_{i+1})\to 0,&\\
&0\to S/(f_1\cdots f_{i-1}f_{i+1})\xrightarrow{{f_i\choose 1}}S_{i+1}\oplus
S_{i-1}\xrightarrow{(-1\ f_i)}S_i\to 0.&
\end{eqnarray*}
\item[(b)] If $(f_i,f_{i+1})=\mathfrak{m}$, then we have a 2-almost split sequence
\[0\to S_i\xrightarrow{{f_{i+1}\choose -1}}S_{i+1}\oplus S_{i-1}
\xrightarrow{{f_i\ f_if_{i+1}\choose 1\ \ \ f_{i+1}}}S_{i+1}\oplus
S_{i-1}\xrightarrow{(-1\ f_i)}S_i\to 0\]
in $\add\bigoplus_{i=1}^nS_i$.
\end{itemize}
\end{lemma}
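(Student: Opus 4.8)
The plan is to prove (a) by a direct computation and then deduce (b) by splicing the two sequences of (a).

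\emph{Part (a).} All four modules appearing are cyclic maximal Cohen--Macaulay $R$-modules: each has the form $S/J$ with $J$ generated by a product of some of the $f_j$, and $(f)\subseteq J$ because that product divides $f=f_1\cdots f_n$. I would first check that each of the four maps is a well-defined $R$-homomorphism, using only the chain of containments $(f_1\cdots f_{i+1})=(f_i)\,(f_1\cdots f_{i-1}f_{i+1})\subseteq(f_1\cdots f_{i-1}f_{i+1})\subseteq(f_1\cdots f_{i-1})$ (and $S_0=0$ when $i=1$). Exactness of both sequences is then elementary: each composite of consecutive maps is zero by inspection; injectivity of the left-hand maps and surjectivity of the right-hand maps follow from the same containments; and for exactness in the middle one writes a kernel element explicitly and lifts it, the only non-formal input being that $S$ is a domain, so that $f_i w\in(f_1\cdots f_i)$ forces $w\in(f_1\cdots f_{i-1})$. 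Since $S_{i-1},S_{i+1}\in\add\bigoplus_{j\neq i}S_j$ and $S/(f_1\cdots f_{i-1}f_{i+1})$ is again maximal Cohen--Macaulay, these may be read as exchange sequences in the sense of Definition~\ref{cluster tilting mutation} whenever $\bigoplus_{j=1}^nS_j$ is cluster tilting.

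\emph{Part (b).} Splice the two sequences of (a) along $S/(f_1\cdots f_{i-1}f_{i+1})$; the composite of the epimorphism $(1\ f_{i+1})$ with the monomorphism ${f_i\choose 1}$ is precisely the middle map displayed in the statement, and the resulting four-term complex is exact because the splicing object embeds in the middle term with complementary quotient map onto $S_i$. Also $S_i$ is non-projective for $i<n$: it is a cyclic $R$-module with non-zero annihilator $(f_1\cdots f_i)/(f)$, hence not isomorphic to $R$, hence not free; and $S_{i-1},S_{i+1}$ lie in $\add M$, where $M=\bigoplus_{j=1}^nS_j$. The heart of the matter is to show that $a=(-1\ f_i)\colon S_{i+1}\oplus S_{i-1}\to S_i$ is a right almost split map in $\add M$, and this is the only place the hypothesis $(f_i,f_{i+1})=\mathfrak{m}$ is used. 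First, $a$ is not a split epimorphism: it is surjective with non-zero kernel, while $S_i$ is indecomposable (its endomorphism ring is the local ring $S/(f_1\cdots f_i)$) and not isomorphic to $S_{i-1}$ or $S_{i+1}$, so $S_i$ is not a direct summand of $S_{i+1}\oplus S_{i-1}$. Then I would show that every $h\colon S_j\to S_i$ which is not a split epimorphism factors through $a$; it suffices to treat the indecomposable summands $S_j$ of $M$, using the formula for $\Hom_R(S_j,S_i)$ displayed before the lemma. If $j>i$, then $\Hom_R(S_j,S_i)=S/(f_1\cdots f_i)$ and $(f_1\cdots f_j)\subseteq(f_1\cdots f_{i+1})$, so $h$ lifts along the summand $S_{i+1}$; if $j<i$, then $h$ is multiplication by an element of $(f_{j+1}\cdots f_i)\subseteq(f_i)$, so $h$ lifts along the summand $S_{i-1}$; if $j=i$, then $h$ is a non-unit of the local ring $\End_R(S_i)$, i.e.\ multiplication by some $s\in\mathfrak{m}=(f_i,f_{i+1})$, so writing $s=f_iu-f_{i+1}t$ one lifts $h$ to the map $S_i\to S_{i+1}\oplus S_{i-1}$ given by (multiplication by $f_{i+1}t$, multiplication by $u$). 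In each case composing with $a$ recovers $h$, so $a$ is right almost split in $\add M$.

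Finally, for the "2-almost split" assertion one must also check that $a$ and $b$ are right minimal: if $\theta\in\End_R(S_{i+1}\oplus S_{i-1})$ satisfies $a\theta=a$ (resp.\ $b\theta=b$), then comparing the two columns of $\theta$ and using that the relevant multiplication maps into $S_i$ have proper image forces the two diagonal entries of $\theta$ to be units, whence $\theta$ is an automorphism by Krull--Schmidt. Combined with the non-projectivity of $S_i$ this shows the four-term sequence is a 2-almost split sequence in $\add M$, and in particular it satisfies condition (d) of Proposition~\ref{XAE} for $X=S_i$. The only genuine obstacle is the $j=i$ case above: without the hypothesis $(f_i,f_{i+1})=\mathfrak{m}$, multiplication by an element of $\mathfrak{m}\setminus(f_i,f_{i+1})$ on $S_i$ is a non-split morphism that does not factor through $a$, so $a$ fails to be right almost split; the right-minimality bookkeeping is the other mildly delicate point but is routine given the $\Hom$-formula.
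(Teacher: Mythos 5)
In substance this is the paper's own argument: the exactness of the two sequences in (a) is checked by the same direct kernel computations, and the right almost split property of $(-1\ f_i)$ in (b) is obtained exactly as in the paper, by factoring a map $S_j\to S_i$ through the component $S_{i+1}$ when $j>i$, through $f_i\colon S_{i-1}\to S_i$ when $j<i$, and, for $j=i$, by writing a non-unit of $\End_R(S_i)$ as a combination of $f_i$ and $f_{i+1}$ using $\mathfrak{m}=(f_i,f_{i+1})$; your remarks on right minimality and on the non-projectivity of $S_i$ only make explicit what the paper leaves implicit.

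The one point to repair is the logic of part (a). What (a) asserts is not bare exactness but that $(-1\ f_i)$ is a minimal right $(\add\bigoplus_{j\neq i}S_j)$-approximation (the corresponding property of $(1\ f_{i+1})$ is then automatic, as remarked in the definition of mutation), and your substitute -- ``exact with middle term in $\add N$, hence an exchange sequence whenever $\bigoplus_jS_j$ is cluster tilting'' -- is not a valid inference: a short exact sequence ending in $X$ with middle term in $\add N$ need not be an approximation sequence. For instance, for $n=3$ the natural surjection $S_3=R\to S_2$ sits in a short exact sequence with kernel isomorphic to $S_2$, yet multiplication by $f_2\colon S_1\to S_2$ does not factor through it, since by the $\Hom$-formula this would force $1\in(f_1,f_3)$ -- even though $S_1\oplus S_2\oplus S_3$ is cluster tilting under condition (A). Fortunately, the factorizations you carry out in (b) for $j>i$ and $j<i$ use only the displayed $\Hom$-formula and not the hypothesis $(f_i,f_{i+1})=\mathfrak{m}$, and together with your right-minimality check they are precisely the approximation statement that (a) requires (and that the paper proves under (a)). So your proof is complete once those pieces are moved to, or invoked in, part (a); nothing else is missing.
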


\begin{proof}
(a) Consider the map $a:=(-1\ f_i):S_{i+1}\oplus S_{i-1}\to S_i$.
Any morphism from $S_j$ to $S_i$ factors through $1:S_{i+1}\to
S_i$ (respectively, $f_i:S_{i-1}\to S_i$) if $j>i$ (respectively, $j<i$).
Thus $a$ is a minimal right $(\add\bigoplus_{j\neq i}S_j)$-approximation.

It is easily checked that $\Ker a=\{s\in S_{i+1}\ |\ \overline{s}\in
f_iS_i\}=(f_i)/(f_1\cdots f_{i+1})\simeq S/(f_1\cdots
f_{i-1}f_{i+1})$, where we denote by $\overline{s}$ the image of 
$s$ via the natural surjection $S_{i+1}\to S_i$. 

Consider the surjective map $b:=(1\ f_{i+1}):S_{i+1}\oplus
S_{i-1}\to S/(f_1\cdots f_{i-1}f_{i+1})$.
It is easily checked that $\Ker b=\{s\in S_{i+1}\ |\ \overline{s}\in
(f_{i+1})/(f_1\cdots f_{i-1}f_{i+1})\}=(f_{i+1})/(f_1\cdots
f_{i+1})\simeq S_i$, where we denote by $\overline{s}$ the image of
$s$ via the natural surjection $S_{i+1}\to S/(f_1\cdots f_{i-1}f_{i+1})$. 

(b) This sequence is exact by (a).
Any non-isomorphic endomorphism of $S_i$ is multiplication with an
element in ${\mathfrak m}$, which is equal to $(f_i,f_{i+1})$ by our assumption.
Since $f_{i+1}$ (respectively, $f_i$) $:S_i\to S_i$ factors through
$1:S_{i+1}\to S_i$ (respectively, $f_i:S_{i-1}\to S_i$), we have that $a$ is a
right almost split map.
\end{proof}

Now we choose $f_{n+1}\in{\mathfrak m}$
such that ${\mathfrak m}=(f_n,f_{n+1})$, and $f_{n+1}$ and $f_1\cdots
f_n$ have no common factor. This is possible by our assumption (A).

\begin{lemma}\label{XAG}
We have an exact sequence
\[0\to S_{n-1}\xrightarrow{{f_n\choose -f_{n+1}}}S_n\oplus S_{n-1}
\xrightarrow{(f_{n+1}\ f_n)}S_n\]
with a minimal right almost split map $(f_{n+1}\ f_n)$ in $\add\bigoplus_{i=1}^nS_i$.
\end{lemma}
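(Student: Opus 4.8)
The plan is to establish, one at a time, the exactness of the complex $0\to S_{n-1}\xrightarrow{\binom{f_n}{-f_{n+1}}}S_n\oplus S_{n-1}\xrightarrow{(f_{n+1}\ f_n)}S_n$, that $a:=(f_{n+1}\ f_n)$ is right almost split in $\add\bigoplus_{i=1}^n S_i$, and that $a$ is right minimal. Throughout I lift elements of the cyclic modules $S_i$ to $S$ and use repeatedly that $S$ is a UFD, that $f_{n+1}$ was chosen coprime to $f_1\cdots f_n$ (hence to $f_n$), and that $\mathfrak{m}=(f_n,f_{n+1})$.

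\emph{Exactness.} Injectivity of $\binom{f_n}{-f_{n+1}}$ is clear from its second coordinate, multiplication by $f_{n+1}$ on $S_{n-1}=S/(f_1\cdots f_{n-1})$, which is injective since $f_{n+1}$ and $f_1\cdots f_{n-1}$ have no common factor; and the composite of the two maps is $f_{n+1}f_n-f_nf_{n+1}=0$. For $\Ker(f_{n+1}\ f_n)\subseteq\Im\binom{f_n}{-f_{n+1}}$, take $(u,v)$ in the kernel, lift to $\tilde u,\tilde v\in S$ so that $f_{n+1}\tilde u+f_n\tilde v\in(f_1\cdots f_n)=(f_n)(f_1\cdots f_{n-1})$; then $f_n\mid f_{n+1}\tilde u$, so $f_n\mid\tilde u$ by coprimality, say $\tilde u=f_n\tilde s$, and cancelling $f_n$ gives $f_{n+1}\tilde s+\tilde v\in(f_1\cdots f_{n-1})$, i.e. $(u,v)$ is the image of $\tilde s$ viewed in $S_{n-1}$.

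\emph{Right almost split and right minimal.} Since $\Im a=(f_{n+1})R+(f_n)R=\mathfrak{m}$, the map $a$ factors as $S_n\oplus S_{n-1}\xrightarrow{\bar a}\mathfrak{m}\hookrightarrow S_n$ with $\bar a$ surjective, and by the exactness just proved $0\to S_{n-1}\to S_n\oplus S_{n-1}\xrightarrow{\bar a}\mathfrak{m}\to 0$ is short exact. Now let $g\colon S_j\to S_n$ ($1\le j\le n$) not be a split epimorphism; then $\Im g\subseteq\mathfrak{m}$ — for $j=n$ because $g$ is multiplication by a non-unit of $R$, and for $j<n$ because $\Hom_R(S_j,S_n)\simeq(f_{j+1}\cdots f_n)/(f_1\cdots f_n)$ with $f_{j+1}\cdots f_n\in\mathfrak{m}$ — so $g$ factors through $\mathfrak{m}\hookrightarrow S_n$, and applying $\Hom_R(S_j,-)$ to the short exact sequence above and using $\Ext^1_R(S_j,S_{n-1})=0$ (Theorem \ref{XAA}(a)) lifts the resulting map $S_j\to\mathfrak{m}$ along $\bar a$; hence $g$ factors through $a$. (A non-split-epimorphism from an arbitrary object of $\add\bigoplus_i S_i$ reduces to this componentwise.) Finally, the only indecomposable summands of $S_n\oplus S_{n-1}$ are $S_n=R$ and $S_{n-1}$ (the latter indecomposable since $\Spec S_{n-1}$ is connected), and $a$ restricts to multiplication by $f_{n+1}$, respectively to the nonzero map $f_n$, on them; so $a$ annihilates no nonzero direct summand of its source and is therefore right minimal.

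The only genuinely arithmetic ingredient is the middle exactness, which reduces to the coprimality of $f_n$ and $f_{n+1}$ in the UFD $S$. The point requiring care in the almost split part is that $a$ is \emph{not} surjective — its image is the radical $\mathfrak{m}$ — so the factorization must be routed through $\mathfrak{m}$, which is where the rigidity of $\bigoplus_i S_i$ enters. Everything else is formal.
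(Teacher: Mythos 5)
Your exactness computation is correct, and your proof that $a=(f_{n+1}\ f_n)$ is right almost split is also correct, though it takes a genuinely different route from the paper: you corestrict to $\Im a=\mathfrak{m}R$ and lift a map $S_j\to\mathfrak{m}R$ along the surjection $S_n\oplus S_{n-1}\to\mathfrak{m}R$ using $\Ext^1_R(S_j,S_{n-1})=0$ (Theorem \ref{XAA}(a), which rests only on Proposition \ref{XAD}, so there is no circularity), whereas the paper factors each non-split epimorphism explicitly: a map $S_j\to S_n$ with $j<n$ is multiplication by an element of $(f_{j+1}\cdots f_n)$ and hence factors through $f_n\colon S_{n-1}\to S_n$, and a non-isomorphism $S_n\to S_n$ is multiplication by an element of $(f_n,f_{n+1})=\mathfrak{m}$, whose two pieces factor through the two components of $a$. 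Your version is less elementary but buys a cleaner conceptual statement (rigidity forces the lifting); both are fine.

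The gap is in the right minimality step. You check that $a$ restricted to the two coordinate summands $S_n$ and $S_{n-1}$ is nonzero and conclude that $a$ annihilates no nonzero direct summand of its source. That inference is invalid: a direct summand of $S_n\oplus S_{n-1}$ need not be a coordinate summand, and nonvanishing of the components does not exclude a ``diagonal'' summand lying in the kernel --- for instance $(1\ 1)\colon R\oplus R\to R$ has both components nonzero but kills the antidiagonal and is not right minimal. Here the conclusion is true, and your own kernel computation supplies the missing argument: if $S_n\oplus S_{n-1}=X\oplus Y$ with $a|_Y=0$ and $Y\neq0$, then $Y\subseteq\Ker a$, and by the modular law $Y$ is a direct summand of $\Ker a\simeq S_{n-1}$, which is indecomposable, so $Y=\Ker a$ and ${f_n\choose -f_{n+1}}$ would be a split monomorphism; a retraction $(\phi\ \psi)$ would give $1_{S_{n-1}}=\phi f_n-\psi f_{n+1}$, impossible since $f_n,f_{n+1}\in\mathfrak{m}$ forces the right-hand side into the radical of the local ring $\End_R(S_{n-1})$ (the case $n=1$ is trivial, as $S_0=0$). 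Equivalently, the complement $X\cong R$ would then map isomorphically onto $\Im a=\mathfrak{m}R$, which is not cyclic for $n\ge2$ because $f\in\mathfrak{m}^2$. With this supplement your proof is complete; the paper likewise deduces minimality from its computation of $\Ker a$.
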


\begin{proof}
Consider the map $a:=(f_{n+1}\ f_n):S_n\oplus S_{n-1}\to S_n$.
Any morphism from $S_j$ ($j<n$) to $S_n$ factors through $f_n:S_{n-1}\to S_n$.

Any non-isomorphic endomorphism of $S_n$ is multiplication with an
element in ${\mathfrak m}=(f_{n+1},f_n)$.
Since $f_n:S_n\to S_n$ factors through $f_n:S_{n-1}\to S_n$, we
have that $a$ is a right almost split map. 

It is easily checked that $\Ker a=\{s\in S_{n-1}\ |\ f_ns\in
f_{n+1}S_n\}=(f_{n+1},f_1\cdots f_{n-1})/(f_1\cdots f_{n-1})$, which
is isomorphic to $S_{n-1}$ by the choice of $f_{n+1}$.
In particular, $a$ is right minimal.
\end{proof}

Thus we finished the proof of Theorem \ref{XAA} under the stronger assumption
(B).

To show the general statement of Theorem \ref{XAA}, we need some
preliminary observations.
Let us consider cluster tilting mutation in $\CM(R)$.
We use the notation introduced at the beginning of this section.

\begin{lemma}\label{XAH}
For $w\in{\mathfrak S}_n$, we assume that $M_w$ is a cluster tilting object
in $\CM(R)$. Then, for $1\le i<n$ and $s_i=(i\ i+1)$, we have exchange sequences
\begin{eqnarray*}
0\to S^w_i\to S^w_{i+1}\oplus S^w_{i-1}\to S^{ws_i}_i\to0
\ \mbox{ and } \ 0\to S^{ws_i}_i\to S^w_{i+1}\oplus S^w_{i-1}\to S^w_i\to0.
\end{eqnarray*}
\end{lemma}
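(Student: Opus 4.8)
The statement to be proved (Lemma \ref{XAH}) asserts the existence of exchange sequences in $\CM(R)$ realizing the mutation of the cluster tilting object $M_w=\bigoplus_i S^w_i$ at the indecomposable summand $S^w_i$ (for $1\le i<n$), with the replacement summand being $S^{ws_i}_i$. The key observation is that, after renaming the factors, $S^w_{i-1}$, $S^w_i$, $S^w_{i+1}$ are exactly $S/(g_1\cdots g_{i-1})$, $S/(g_1\cdots g_i)$, $S/(g_1\cdots g_{i+1})$ where $g_j:=f_{w(j)}$; so the local structure around position $i$ is literally the situation of Lemma \ref{XAF}(a) with $(f_i,f_{i+1})$ replaced by $(g_i,g_{i+1})=(f_{w(i)},f_{w(i+1)})$, and $S^{ws_i}_i = S/(g_1\cdots g_{i-1}g_{i+1}) = S/(f_{w(1)}\cdots \widehat{f_{w(i)}}\cdots)$ which is the module denoted $S/(f_1\cdots f_{i-1}f_{i+1})$ in Lemma \ref{XAF}. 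Thus the two short exact sequences of Lemma \ref{XAF}(a), transported via $w$, give
$$0\to S^w_i\xrightarrow{\binom{g_{i+1}}{-1}} S^w_{i+1}\oplus S^w_{i-1}\xrightarrow{(1\ g_{i+1})} S^{ws_i}_i\to 0$$
and the companion sequence $0\to S^{ws_i}_i\to S^w_{i+1}\oplus S^w_{i-1}\to S^w_i\to 0$. The exactness of these is purely formal — it is exactly the computation of kernels already carried out in the proof of Lemma \ref{XAF}(a), which only used that $g_i$ and $g_{i+1}$ have no common factor (guaranteed since $R$ is reduced).

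What remains is to verify that these are genuinely \emph{exchange sequences} in the sense of Definition \ref{cluster tilting mutation}, i.e.\ that the maps appearing are the relevant minimal (add of the complement)-approximations, where the complement is $N:=\bigoplus_{j\neq i}S^w_j$. First I would show that $a:=(-1\ g_i):S^w_{i+1}\oplus S^w_{i-1}\to S^w_i$ is a minimal right $(\add N)$-approximation: every map $S^w_j\to S^w_i$ with $j>i$ factors through the natural surjection $S^w_{i+1}\twoheadrightarrow S^w_i$, and every map with $j<i$ factors through the multiplication map $g_i:S^w_{i-1}\to S^w_i$; minimality follows since neither component map is a split epimorphism. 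This is the same argument as in the proof of Lemma \ref{XAF}(a), and I would cite it rather than repeat it. Then, by the general symmetry statement built into Definition \ref{cluster tilting mutation} (once one of the four maps in an exchange triple is a minimal approximation, the others are automatically minimal approximations), both displayed sequences are exchange sequences and $M_{ws_i}=S^{ws_i}_i\oplus N$ is again cluster tilting — in particular $S^{ws_i}_i$ is the correct replacement summand, which is consistent with the indexing $M_{ws_i}=\bigoplus_j S^{ws_i}_j$ since $S^{ws_i}_j=S^w_j$ for all $j\neq i$.

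The only subtlety — and the main thing to be careful about — is the hypothesis: we are \emph{given} that $M_w$ is cluster tilting, so by Definition \ref{cluster tilting mutation} (equivalently \cite[Th.~5.3]{IY}) an exchange sequence at the summand $S^w_i$ exists and is unique up to the data described there; the content of the lemma is the \emph{explicit identification} of the cokernel $Y$ with $S^{ws_i}_i$ and of the middle term with $S^w_{i+1}\oplus S^w_{i-1}$. So strictly the argument is: exhibit the explicit short exact sequence above with middle term in $\add N$; check $a$ is a minimal right $(\add N)$-approximation; invoke uniqueness of exchange sequences to conclude this \emph{is} the exchange sequence, whence $Y\cong S^{ws_i}_i$. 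I expect the identification of kernels (already done in Lemma \ref{XAF}) and the approximation/minimality bookkeeping to be entirely routine; the one genuine point requiring attention is making sure the complement is $\bigoplus_{j\neq i}S^w_j$ and that all the factorization claims are checked against \emph{this} complement, using the $\Hom$-formula $\Hom_R(S_i,S_j)$ recalled just before Lemma \ref{XAF} (transported by $w$).
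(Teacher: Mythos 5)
Your proposal is correct and follows essentially the same route as the paper: the paper's proof simply says that without loss of generality one may take $w=1$ (i.e.\ relabel $f_j$ as $f_{w(j)}$), after which the two sequences are exactly those of Lemma \ref{XAF}(a), whose proof already contains the kernel computations and the verification that $(-1\ f_i)$ is a minimal right $(\add\bigoplus_{j\neq i}S_j)$-approximation. Your extra remarks on uniqueness of exchange sequences are harmless but not needed, since exhibiting the explicit short exact sequences with the approximation property is precisely what Definition \ref{cluster tilting mutation} requires.
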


\begin{proof}
Without loss of generality, we can assume $w=1$.
Then the assertion follows from Lemma \ref{XAF}(a).
\end{proof}

Immediately, we have the following.

\begin{proposition}\label{XAI}
Assume that $M_w$ is a cluster tilting object in $\CM(R)$ for some
$w\in{\mathfrak S}_n$.
\begin{itemize}
\item[(a)] The cluster tilting mutations of $M_w$ are $M_{ws_i}$ ($1\le i<n$).
\item[(b)] $M_{w'}$ is a cluster tilting object in $\CM(R)$ for any
  $w'\in{\mathfrak S}_n$.
\end{itemize}
\end{proposition}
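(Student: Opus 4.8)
The plan is to read off both statements from Lemma \ref{XAH} together with the formal properties of cluster tilting mutation recorded in Definition \ref{cluster tilting mutation}; this is why the paper marks the proposition as immediate. The first step is the ``forward'' assertion: if $M_w$ is a cluster tilting object in $\CM(R)$ and $1\le i<n$, then $M_{ws_i}$ is again a cluster tilting object. Indeed, Lemma \ref{XAH} says precisely that the two sequences displayed there are exchange sequences for the indecomposable summand $S^w_i$ of $M_w=S^w_i\oplus N$ with $N=\bigoplus_{j\neq i}S^w_j$, so by Definition \ref{cluster tilting mutation} the object $S^{ws_i}_i\oplus N$ is again a basic cluster tilting object. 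The only bookkeeping is the identity $S^{ws_i}_i\oplus N=M_{ws_i}$: since $ws_i$ agrees with $w$ off $\{i,i+1\}$ and $\{ws_i(1),\dots,ws_i(i+1)\}=\{w(1),\dots,w(i+1)\}$, one has $S^{ws_i}_j=S^w_j$ for every $j\neq i$, whence $M_{ws_i}=\bigoplus_{j=1}^nS^{ws_i}_j=S^{ws_i}_i\oplus N$.

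Part (b) then follows formally: ${\mathfrak S}_n$ is generated by $s_1,\dots,s_{n-1}$, so any $w'\in{\mathfrak S}_n$ is reached from $w$ by finitely many right multiplications by simple transpositions, and by the forward assertion every term along such a chain is a cluster tilting object; hence $M_{w'}$ is a cluster tilting object. For part (a) I would in addition use the uniqueness clause built into Definition \ref{cluster tilting mutation} (namely \cite[Th. 5.3]{IY}), which says that for each fixed complement $N$ there is no basic cluster tilting object besides $M_w$ and its mutation. The indecomposable summands of $M_w$ are $S^w_1,\dots,S^w_n$, and $S^w_n=S/(f_1\cdots f_n)=R$ is the unique projective-injective one; indeed, $\CM(R)$ is Frobenius with projective-injectives $\add R$, while the remaining $S^w_i$ are non-free cyclic $R$-modules, hence non-projective. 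As every cluster tilting object in $\CM(R)$ contains $R$ as a summand, no mutation of $M_w$ removes $R$; therefore the mutations of $M_w$ are exactly those at the summands $S^w_i$ with $1\le i<n$, and by the first step these are precisely the $M_{ws_i}$, giving (a).

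The argument contains no genuine difficulty. The only point demanding a little care is matching the new summand produced abstractly by the exchange sequence of Lemma \ref{XAH} with the concrete module $S^{ws_i}_i$, that is, keeping track of which of the two modules in the exchange pair is the new one; beyond that, one only needs the minor observation that $R$ is the single summand of $M_w$ at which no mutation is possible inside $\CM(R)$.
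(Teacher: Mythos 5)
Your proof is correct and follows the paper's own route: part of the statement is read off from the exchange sequences of Lemma \ref{XAH} together with Definition \ref{cluster tilting mutation} (and the uniqueness from \cite[Th. 5.3]{IY}), and (b) follows because ${\mathfrak S}_n$ is generated by the $s_i$. The extra bookkeeping you supply ($S^{ws_i}_j=S^w_j$ for $j\neq i$, and the exclusion of mutation at the projective summand $S^w_n=R$) is exactly what the paper leaves implicit.
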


\begin{proof}
(a) This follows from Lemma \ref{XAH}.

(b) This follows from (a) since ${\mathfrak S}_n$ is generated by
$s_i$ ($1\le i<n$).
\end{proof}

The following result is also useful.

\begin{lemma}\label{XAN}
Let $R$ and $R'$ be complete local Gorenstein rings with $\dim R=\dim R'$
and $M$ a rigid object in $\CM(R)$ which is a generator.
Assume that there exists a surjection $R'\to R$, and
we regard $\CM(R)$ as a full subcategory of $\CM(R')$. 
If $R'\oplus M$ is a cluster tilting object in $\CM(R')$, then
$M$ is a cluster tilting object in $\CM(R)$.
\end{lemma}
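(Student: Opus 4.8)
The plan is to verify criterion~(c) of Proposition~\ref{XAE}: since $M$ is a rigid Cohen--Macaulay generator over $R$, it suffices to produce, for each $X\in\CM(R)$, an exact sequence $0\to L\to N\to X\to 0$ with $L,N\in\add_R M$, and I will obtain it by pushing down such a sequence from $R'$. First some bookkeeping that makes the situation precise. If $X\in\CM(R)$ then $\dpth_{R'}X=\dpth_R X=\dim R=\dim R'$, so $X$ is maximal Cohen--Macaulay over $R'$, i.e. $\CM(R)\subseteq\CM(R')$. For $R$-modules $Y,Z$ every $R'$-linear map is $R$-linear, so $\Hom_{R'}(Y,Z)=\Hom_R(Y,Z)$ and $\add_{R'}Y=\add_R Y$. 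The indecomposable summands of $M$ are $R$-modules, whereas $R'$ is not (we may assume $I:=\ker(R'\to R)\ne 0$). Finally $R$ is a direct summand of $M$ since $M$ is a generator.

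Fix $X\in\CM(R)\subseteq\CM(R')$. Applying Proposition~\ref{XAE}(c) to the cluster tilting object $R'\oplus M$ of $\CM(R')$ yields an exact sequence $0\to N_1\to N_0\xrightarrow{\pi}X\to 0$ with $N_0,N_1\in\add_{R'}(R'\oplus M)$. Since $R'\oplus M$ is rigid and $N_1\in\add_{R'}(R'\oplus M)$, the long exact sequence of $\Hom_{R'}(R'\oplus M,-)$ shows that the cokernel of $\Hom_{R'}(R'\oplus M,N_0)\to\Hom_{R'}(R'\oplus M,X)$ embeds into $\Ext^1_{R'}(R'\oplus M,N_1)=0$, so $\pi$ is a right $\add_{R'}(R'\oplus M)$-approximation. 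Write $N_0=Q\oplus M_0$ with $Q$ a finitely generated free $R'$-module and $M_0\in\add_{R'}M=\add_R M$; then $IN_0=IQ$ since $IM_0=0$, and as $X$ is annihilated by $I$ the map $\pi$ kills $IN_0$ and hence factors as $N_0\xrightarrow{q}N_0^R\xrightarrow{\pi'}X$, where $N_0^R:=N_0/IN_0\cong (Q/IQ)\oplus M_0$. Since $Q/IQ$ is $R$-free and $R$ is a summand of $M$, we have $N_0^R\in\add_R M$, and $\pi'$ is surjective. Put $L:=\ker\pi'$; from $0\to L\to N_0^R\to X\to 0$ a depth count gives $L\in\CM(R)$.

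The heart of the proof is that $L\in\add_R M$. By rigidity of $R'\oplus M$ and $N_0^R\in\add_{R'}M$ we have $\Ext^1_{R'}(M,N_0^R)=0$; and every $R'$-linear map $M\to X$ factors through $\pi$ (because $M\in\add_{R'}(R'\oplus M)$ and $\pi$ is an approximation), hence through $\pi'$, so $\Hom_{R'}(M,N_0^R)\to\Hom_{R'}(M,X)$ is surjective. The long exact sequence of $\Ext^{\bullet}_{R'}(M,-)$ applied to $0\to L\to N_0^R\to X\to 0$ therefore forces $\Ext^1_{R'}(M,L)=0$, and since $\Ext^1_{R'}(R',L)=0$ we get $\Ext^1_{R'}(R'\oplus M,L)=0$. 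As $R'\oplus M$ is cluster tilting in $\CM(R')$ and $L\in\CM(R')$, this yields $L\in\add_{R'}(R'\oplus M)$. But $L$ is an $R$-module while $R'$ is not, so no copy of $R'$ occurs in a decomposition of $L$; hence $L\in\add_{R'}M=\add_R M$. Thus $0\to L\to N_0^R\to X\to 0$ is the desired sequence, and Proposition~\ref{XAE} shows that $M$ is a cluster tilting object in $\CM(R)$.

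I expect the third paragraph to be the main obstacle. The naive idea of comparing $\Ext^1_R(M,-)$ with $\Ext^1_{R'}(M,-)$ directly does not work, since $R$ has infinite projective dimension over $R'$; instead the whole descent is carried out on the $R'$-side, using that the ambient rigid object there is $R'\oplus M$ rather than $M$, that the resolving epimorphism coming from Proposition~\ref{XAE}(c) is automatically an $\add_{R'}(R'\oplus M)$-approximation, and that a free $R'$-module cannot be a direct summand of an $R$-module.
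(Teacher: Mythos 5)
Your argument is correct, but it runs in the opposite direction from the paper's. You start from the two-term $\add(R'\oplus M)$-resolution of $X$ over $R'$ (condition (c) of Proposition \ref{XAE} applied to $R'\oplus M$), reduce its middle term modulo $I=\ker(R'\to R)$, and then identify the new kernel $L$ by an $\Ext$-vanishing computation over $R'$, finally discarding possible $R'$-summands because $L$ is an $R$-module. The paper instead works upstairs from the $R$-side: it takes a right $(\add M)$-approximation $f\colon M_0\to X$ over $R$, which is surjective because $M$ is a generator, observes that $f$ is then automatically a right $\add(R'\oplus M)$-approximation over $R'$ (maps from $M$ factor by the approximation property, maps from $R'$ factor by projectivity of $R'$ and surjectivity of $f$), and concludes that the kernel $Y$ -- which already lies in $\CM(R)$, so no reduction modulo $I$ is needed -- satisfies $\Ext^1_{R'}(R'\oplus M,Y)=0$, hence $Y\in\add(R'\oplus M)$ and therefore $Y\in\add M$, exactly your final step. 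The paper's route is shorter: it needs only the definition of cluster tilting for $R'\oplus M$ plus the implication (c)$\Rightarrow$(a) for $M$, whereas you also invoke (a)$\Rightarrow$(c) for $R'\oplus M$ and the factorization through $N_0/IN_0$. One small point you should make explicit, since the lemma carries no dimension restriction and you use Proposition \ref{XAE} (stated there for $\dim\le 3$) on both sides: the equivalence (a)$\Leftrightarrow$(c) holds in arbitrary dimension, as the paper notes with the reference to \cite[Prop. 2.2.2]{I1}. With that remark added, your proof is complete and a legitimate alternative to the one in the paper.
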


\begin{proof}
We use the equivalence (a)$\Leftrightarrow$(c) in Proposition \ref{XAE}, which remains true in any dimension \cite[Prop. 2.2.2]{I1}.
For any $X\in\CM(R)$, take a right $(\add M)$-approximation $f:M_0\to X$ of $X$.
Since $M$ is a generator of $R$, we have an exact sequence
$0\to Y\to M_0\stackrel{f}{\to}X\to 0$ with $Y\in\CM(R)$.
Since $R'$ is a projective $R'$-module,
$f$ is a right $\add(R'\oplus M)$-approximation of $X$.
Since $R'\oplus M$ is a cluster tilting object in $\CM(R')$,
we have $Y\in\add(R'\oplus M)$. Since $Y\in\CM(R)$, we have $Y\in\add M$.
Thus $M$ satisfies condition (c) in Proposition \ref{XAE}.\end{proof}

Now we shall prove Theorem \ref{XAA}.
Since $k$ is an infinite field and the assumption (A) is satisfied, we can take irreducible formal power series
$g_i\in{\mathfrak m}$ ($1\le i<n$) such that $h_{2i-1}:=f_i$ and $h_{2i}:=g_i$
satisfy the following conditions:
\begin{itemize}
\item $(h_i)\neq(h_j)$ for any $i\neq j$.
\item ${\mathfrak m}=(h_1,h_2)=(h_2,h_3)=\cdots=(h_{2n-2},h_{2n-1})$.
\end{itemize}

Put $R':=S/(h_1\cdots h_{2n-1})$. This is reduced by the first condition.

Since we have already proved Theorem \ref{XAA} under the assumption (B),
we have that \newline$\bigoplus_{i=1}^{2n-1}S/(h_1\cdots h_i)$
is a cluster tilting object in $\CM(R')$.
By Proposition \ref{XAI}, \newline$\bigoplus_{i=1}^{2n-1}S/(h_{w(1)}\cdots h_{w(i)})$
is a cluster tilting object in $\CM(R')$ for any $w\in{\mathfrak S}_{2n-1}$.
In particular,
\[(\bigoplus_{i=1}^nS/(f_1\cdots f_i))\oplus
(\bigoplus_{i=1}^{n-1}S/(f_1\cdots f_ng_1\cdots g_i))\]
is a cluster tilting object in $\CM(R')$. Moreover we have surjections
\[R'\to\cdots\to S/(f_1\cdots f_ng_1g_2)\to S/(f_1\cdots f_ng_1)\to R.\]
Using Lemma \ref{XAN} repeatedly, we have that
$\bigoplus_{i=1}^n(S/(f_1\cdots f_i))$ is a cluster tilting object in $\CM(R)$.
Thus we have proved Theorem \ref{XAA}.\qed

\bigskip
Before proving Theorem \ref{XAB}, we give the following description of the quiver of the endomorphism algebras.

\begin{proposition}
Assume that (A) is satisfied.
\begin{itemize}
\item[(a)] The quiver of $\End_R(\bigoplus_{i=1}^nS_i)$ is
\[\xymatrix@C0.5cm@R0.5cm{
    &S_1\ar@<0.5ex>[r]& S_2\ar@<0.5ex>[l]\ar@<0.5ex>[r]& \cdots\ar@<0.5ex>[l]\ar@<0.5ex>[r]& S_{n-1}\ar@<0.5ex>[l]\ar@<0.5ex>[r]&S_n\ar@<0.5ex>[l]\ar@(dr,ur)[]&}\]
where in addition there is a loop at $S_i$ ($1\le i<n$) if and only if $(f_i,f_{i+1})\neq\mathfrak{m}$.
\item[(b)] We have the quiver of $\underline{\End}_R(\bigoplus_{i=1}^nS_i)$ by removing the vertex $S_n$ from the quiver in (a).
\end{itemize}
\end{proposition}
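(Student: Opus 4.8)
The plan is to compute the Gabriel quiver of $\Gamma:=\End_R(\bigoplus_{i=1}^nS_i)$ directly from its radical filtration, using only the formula for $\Hom_R(S_i,S_j)$ recalled just above. Its vertices are the $S_i$ (each indecomposable, since $\End_R(S_i)\simeq S/(f_1\cdots f_i)$ is local), and the number of arrows $S_i\to S_j$ equals $\dim_k\bigl(\rad(S_i,S_j)/\rad^2(S_i,S_j)\bigr)$, where $\rad$ denotes the radical of the Krull--Schmidt category $\add\bigoplus_iS_i$. Since the $S_i$ are pairwise non-isomorphic, $\rad(S_i,S_j)=\Hom_R(S_i,S_j)$ for $i\ne j$ and $\rad(S_i,S_i)=\mathfrak{m}\cdot\End_R(S_i)$; moreover, under the stated identifications the composition sends $\rad(S_l,S_j)\times\rad(S_i,S_l)$ into $\Hom_R(S_i,S_j)$ by multiplication of the corresponding elements of $S$. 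Everything therefore reduces to computing $\rad^2(S_i,S_j)=\sum_l\rad(S_l,S_j)\circ\rad(S_i,S_l)$ inside the relevant subquotient of $S$.

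First I would treat the ``backbone''. If $|i-j|\ge2$, say $i<j$, the cyclic generator $f_{i+1}\cdots f_j$ of $\Hom_R(S_i,S_j)$ factors as $S_i\to S_{i+1}\to S_j$ by splitting off the single factor $f_{i+1}$; since $\rad^2(S_i,S_j)$ is a sub-bimodule of $\rad(S_i,S_j)$ containing this generator, the two coincide and there is no arrow. If $|i-j|=1$, any factorisation $S_i\to S_l\to S_j$ through an $l$ other than $i$ or $j$ splits off an extra factor lying in $\mathfrak{m}$, hence lands in $\mathfrak{m}\cdot\rad(S_i,S_j)$, while the factorisations through $l\in\{i,j\}$ contribute exactly $\mathfrak{m}\cdot\rad(S_i,S_j)$; thus $\rad/\rad^2\simeq k$ and there is exactly one arrow in each direction. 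This recovers the $A_n$-backbone, and the maps involved are precisely the ones appearing in Lemma \ref{XAF} and Lemma \ref{XAG}.

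The loops are the delicate point. Working inside $\End_R(S_i)=S/(f_1\cdots f_i)$, factorisations $S_i\to S_{i-1}\to S_i$ contribute the image of the ideal $(f_i)$, factorisations through $S_{i+1}$ the image of $(f_{i+1})$, the square of $\rad\End_R(S_i)$ the image of $\mathfrak{m}^2$, and factorisations through any more distant vertex contribute only submodules of these. Hence $\rad^2(S_i,S_i)$ is the image of $(f_i)+(f_{i+1})+\mathfrak{m}^2$ when $1\le i<n$ (the summand $(f_i)$ being vacuous for $i=1$) and of $(f_n)+\mathfrak{m}^2$ when $i=n$, so that $\rad(S_i,S_i)/\rad^2(S_i,S_i)\simeq(\mathfrak{m}/\mathfrak{m}^2)/\langle\ol{f_i},\ol{f_{i+1}}\rangle$ for $i<n$ and $\simeq(\mathfrak{m}/\mathfrak{m}^2)/\langle\ol{f_n}\rangle$ for $i=n$. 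Here hypothesis (A) enters decisively: it forces $\ol{f_j}\ne0$ in the two-dimensional space $\mathfrak{m}/\mathfrak{m}^2$, so there is always exactly one loop at $S_n$, and for $i<n$ the above quotient vanishes precisely when $\ol{f_i},\ol{f_{i+1}}$ are linearly independent, i.e. (by Nakayama) precisely when $(f_i,f_{i+1})=\mathfrak{m}$. This gives (a). The main obstacle is exactly this $\rad^2$ bookkeeping, in particular handling the boundary vertices $S_1$ and $S_n$ uniformly.

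For (b) I would use that $\ul{\End}_R(\bigoplus_iS_i)$ is the quotient of $\Gamma$ by the ideal of endomorphisms factoring through a projective object of $\CM(R)$; since $R=S_n$ is the unique indecomposable such object, this ideal is $\Gamma e_n\Gamma$, with $e_n$ the idempotent projecting onto $S_n$. For $i,j\ne n$ the space $e_j(\Gamma e_n\Gamma)e_i$ consists of morphisms $S_i\to S_n^a\to S_j$ and is therefore contained in $e_j\rad^2\Gamma\,e_i$; consequently $e_j(\rad\Gamma/\rad^2\Gamma)e_i$ is unchanged upon passing to the quotient, and the quiver of $\ul{\End}_R(\bigoplus_iS_i)$ is exactly the full subquiver of the quiver in (a) on the vertices $S_1,\dots,S_{n-1}$ — that is, the quiver of (a) with the vertex $S_n$ (and with it the loop at $S_n$ and the two arrows $S_{n-1}\rightleftarrows S_n$) deleted.
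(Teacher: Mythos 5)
Your argument is correct, but it takes a genuinely different route from the paper's. The paper determines the arrows ending at each vertex by exhibiting minimal right almost split maps in $\add\bigoplus_{i=1}^nS_i$: Lemma \ref{XAG} gives $S_n\oplus S_{n-1}\to S_n$ (hence the loop at $S_n$), Lemma \ref{XAF} gives $S_{i+1}\oplus S_{i-1}\to S_i$ when $(f_i,f_{i+1})=\mathfrak{m}$, and when $(f_i,f_{i+1})\neq\mathfrak{m}$ it constructs a right almost split map $(-1\ g\ f_i)\colon S_{i+1}\oplus S_i\oplus S_{i-1}\to S_i$ with $(f_i,f_{i+1},g)=\mathfrak{m}$ and checks minimality by ruling out each proper summand of the domain, which yields the loop. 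You instead compute $\rad/\rad^2$ of $\End_R(\bigoplus_{i=1}^nS_i)$ directly from the explicit description of the $\Hom$-spaces as subquotients of $S$, composition being multiplication of representatives; since the domain of a minimal right almost split map into $S_i$ is governed by exactly these spaces $\rad(S_j,S_i)/\rad^2(S_j,S_i)$, the two computations capture the same invariant, but yours is more self-contained: it does not use Lemmas \ref{XAF} and \ref{XAG}, it shows directly that there are no arrows between non-adjacent vertices and at most one loop per vertex, and it isolates precisely where hypothesis (A) enters, namely the (in)dependence of $\ol{f_i},\ol{f_{i+1}}$ in the two-dimensional space $\mathfrak{m}/\mathfrak{m}^2$, with Nakayama translating independence into $(f_i,f_{i+1})=\mathfrak{m}$. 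You also give an actual proof of (b), which the paper treats as immediate (``we only have to show (a)''), by noting that for $i,j\neq n$ any map $S_i\to S_j$ factoring through $\add S_n=\add R$ lies in $\rad^2$, so killing the ideal generated by $e_n$ removes the vertex $S_n$ without altering $\rad/\rad^2$ between the remaining vertices; this is exactly the standard fact being invoked. Two small points, neither a gap: in the case $|i-j|\ge2$ you only write out $i<j$, the case $i>j$ being the analogous factorization of the projection $S_i\to S_{i-1}\to S_j$; and the pairwise non-isomorphy of the $S_i$, needed for $\rad(S_i,S_j)=\Hom_R(S_i,S_j)$ when $i\neq j$, follows from the standing assumption that $R$ is reduced, i.e. $(f_i)\neq(f_j)$ for $i\neq j$.
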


\begin{proof}
We only have to show (a). We only have to calculate minimal right almost split maps in $\add\bigoplus_{i=1}^nS_i$.
We have a minimal right almost split map $S_n\oplus S_{n-1}\to S_n$ by Lemma \ref{XAG}.
If $(f_i,f_{i+1})=\mathfrak{m}$ ($1\le i<n$), then we have a minimal right
almost split map $S_{i+1}\oplus S_{i-1}\to S_i$ by Lemma \ref{XAF}.

We only have to consider the case $(f_i,f_{i+1})\neq\mathfrak{m}$ ($1\le i<n$).
Take $g\in\mathfrak{m}$ such that $(f_i,f_{i+1},g)=\mathfrak{m}$.
It is easily check (cf proof of Lemma \ref{XAF}) that we have a right almost split map
\[c:=(-1\ g\ f_i):S_{i+1}\oplus S_i\oplus S_{i-1}\to S_i.\]

Assume that $c$ is not right minimal. Then there exists a right almost split map
of the form $c':S_{i+1}\oplus S_i\to S_i$ ($i>1$), $S_{i}\oplus S_{i-1}\to S_i$ or $S_{i+1}\oplus S_{i-1}\to S_i$.
For the first case, it is easily checked that $f_i:S_{i-1}\to S_i$ does not factor through $c'$, a contradiction.
Similarly we have the contradiction for the remaining cases.
Thus $c$ is the minimal right almost split map.
\end{proof}

\bigskip
In the rest we shall show Theorem \ref{XAB}.
We recall results on tilting mutation due to Riedtmann-Schofield
\cite{RS} and Happel-Unger \cite{HU1,HU2}.
For simplicity, a tilting module means a tilting module of projective
dimension at most one.

Let $\Gamma$ be a module-finite algebra with $n$ simple modules over a complete local ring
with $n$ simple modules. Their results remain valid in this setting.
Recall that, for basic tilting $\Gamma$-modules $T$ and $U$,
we write
\[T\ge U\]
if $\Ext^1_\Gamma(T,U)=0$. 
By tilting theory, we have $\Fac T=\{X\in\mod(\Gamma)\ |\ \Ext^1_\Gamma(T,X)=0\}$.
Thus $\Ext^1_\Gamma(T,U)=0$ is equivalent to $\Fac T\supset\Fac U$,
and $\ge$ gives a partial order.
On the other hand, we call a $\Gamma$-module $T$ {\it almost complete tilting}
if $\pd{}_\Gamma T\le 1$, $\Ext^1_\Gamma(T,T)=0$ and $T$ has exactly $(n-1)$
non-isomorphic indecomposable direct summands.

We collect some basic results.

\begin{proposition}\label{XAJ}
\begin{itemize}
\item[(a)] Any almost complete tilting $\Gamma$-module has at most two complements.
\item[(b)] $T$ and $U$ are neighbors in the partial order if and only if
there exists an almost complete tilting $\Gamma$-module which is
a common direct summand of $T$ and $U$.
\item[(c)] Assume $T\ge U$. Then there exists a sequence $T=T_0>T_1>T_2>\cdots>U$
satisfying the following conditions.
\begin{itemize}
\item[(i)] $T_i$ and $T_{i+1}$ are neighbors.

\item[(ii)] Either $T_i=U$ for some $i$ or the sequence is infinite.
\end{itemize}
\item[(d)] $T\ge U$ if and only if there exists an exact
sequence $0\to T\to U_0\to U_1\to 0$ with $U_i\in\add U$.
\end{itemize}
\end{proposition}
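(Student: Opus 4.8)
The plan is to obtain Proposition \ref{XAJ} by transporting the corresponding statements for finite-dimensional algebras over a field --- due to Bongartz, Riedtmann--Schofield \cite{RS} and Happel--Unger \cite{HU1,HU2} --- to our setting of a module-finite algebra $\Gamma$ over a complete local noetherian ring. The point is that the classical proofs use only a handful of formal properties of $\mod(\Gamma)$, all of which survive this change of base, so the main task is to verify those properties and then invoke the cited arguments verbatim.

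First I would isolate and check the three ambient facts on which everything rests. (i) $\mod(\Gamma)$ is a Krull--Schmidt category with exactly $n$ indecomposable projectives; this holds because such a $\Gamma$ is semiperfect. (ii) Every $\Gamma$-module $M$ admits minimal left and right $\add M$-approximations; this follows from (i) together with the fact that $\Hom_\Gamma(X,Y)$ is finitely generated over the (noetherian, complete local) base ring, so that $\add M$ is functorially finite. (iii) The \emph{Bongartz completion} holds: any $\Gamma$-module $M$ with $\pd_\Gamma M\le 1$ and $\Ext^1_\Gamma(M,M)=0$ is a direct summand of a tilting module, produced by the universal extension $0\to\Gamma\to E\to M^{r}\to 0$ killing $\Ext^1_\Gamma(M,\Gamma)$; this is legitimate because $\Ext^1_\Gamma(M,\Gamma)$ is finitely generated over $\End_\Gamma(M)$. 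These are exactly the places where ``finite-dimensional over a field'' was used in \cite{RS,HU1,HU2}.

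With (i)--(iii) in hand, (a) is the Riedtmann--Schofield bound, proved by analysing the two short exact sequences $0\to X\to E\to Y\to 0$ and $0\to Y\to E'\to X\to 0$ with $E,E'\in\add M$ that link two complements $X,Y$ of an almost complete tilting module $M$. Part (b) identifies neighbours for $\ge$ with pairs $\{T,U\}$ of the (at most two) complements of a common almost complete tilting summand, using that passing from one complement to the other changes $\Fac$ minimally and makes the two comparable. Part (c) follows by iterating (b): if $T\ge U$ and $T\ne U$, choose an indecomposable summand $X$ of $T$ not a summand of $U$, replace $X$ by the other complement of $M:=T/X$ to get $T_1$, and check $T>T_1\ge U$; this yields the required finite or infinite chain. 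Part (d) is the homological description of $\ge$: given $0\to T\to U_0\to U_1\to 0$ with $U_i\in\add U$, applying $\Hom_\Gamma(-,U)$ and using $\Ext^1_\Gamma(U,U)=0$ and $\pd_\Gamma U\le 1$ forces $\Ext^1_\Gamma(T,U)=0$; conversely, $\Fac T\supseteq\Fac U$ lets one build such a copresentation of $T$ inside $\add U$ from minimal right $\add U$-approximations.

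I expect the only genuine work to be the verification of (i)--(iii); within the main arguments, the bookkeeping in (c) --- confirming that each Happel--Unger mutation step is a strict decrease for $\ge$ and that ``neighbour'' is a symmetric relation --- is the point needing the most care, while the rest is a direct transcription of the proofs in \cite{RS,HU1,HU2}.
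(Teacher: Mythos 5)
Your proposal matches the paper's approach: Proposition \ref{XAJ} is stated there without proof, the authors simply citing Riedtmann--Schofield \cite{RS} and Happel--Unger \cite{HU1,HU2} and remarking that their results remain valid for module-finite algebras over a complete local ring, so your verification of the Krull--Schmidt property, minimal approximations and the Bongartz completion is exactly the justification the paper leaves implicit. (One cosmetic slip: the coresolution in (d) is built from a minimal \emph{left} $\add U$-approximation of $T$, not a right one.)
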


If the conditions in (b) above are satisfied, we call $T$ a {\it tilting mutation} of $U$.

\medskip
We also need the following easy observation on Cohen-Macaulay
tilting modules.
For a module-finite $R$-algebra $\Gamma$, we call a $\Gamma$-module {\it Cohen-Macaulay} if it is a Cohen-Macaulay $R$-module. As usual, we denote by $\CM(\Gamma)$ the category of Cohen-Macaulay $\Gamma$-modules.

\begin{lemma}\label{XAK}
Let $\Gamma$ be a module-finite algebra over a complete local Gorenstein
ring $R$ such that $\Gamma\in\CM(R)$, and $T$ and $U$ tilting $\Gamma$-modules. Assume $U\in\CM(\Gamma)$.
\begin{itemize}
\item[(a)] If $T\ge U$, then $T\in\CM(\Gamma)$.
\item[(b)] Let $P$ be a projective $\Gamma$-module such that $\Hom_R(P,R)$ is a
projective $\Gamma^{\op}$-module. Then $P\in\add U$.
\end{itemize}
\end{lemma}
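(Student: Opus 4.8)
The plan is to prove the two parts of Lemma \ref{XAK} using the characterization of the partial order $\ge$ via short exact sequences, namely Proposition \ref{XAJ}(d), together with the observation that $\CM(R)$ is closed under extensions and direct summands in $\mod(R)$. For part (a), suppose $T\ge U$. By Proposition \ref{XAJ}(d) there is an exact sequence $0\to T\to U_0\to U_1\to 0$ with $U_0,U_1\in\add U$. Since $U\in\CM(\Gamma)$, i.e. $U$ is a Cohen-Macaulay $R$-module, both $U_0$ and $U_1$ are Cohen-Macaulay $R$-modules. Now I would invoke the standard depth lemma: given a short exact sequence $0\to A\to B\to C\to 0$ of $R$-modules with $B$ and $C$ maximal Cohen-Macaulay (so $\dpth B=\dpth C=\dim R$), one gets $\dpth A\ge\dim R$, hence $A$ is maximal Cohen-Macaulay as well (the reverse inequality $\dpth A\le\dim R$ being automatic for a finitely generated module over a $d$-dimensional local ring). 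Applying this to $0\to T\to U_0\to U_1\to 0$ shows $T\in\CM(R)$, and since $T$ is a $\Gamma$-module this means $T\in\CM(\Gamma)$.

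For part (b), let $P$ be a projective $\Gamma$-module with $\Hom_R(P,R)$ projective over $\Gamma^{\op}$. The idea is that $P$ is then a direct summand of any tilting module, in particular of $U$; equivalently, the indecomposable summands of such $P$ are the ``non-removable'' projectives. I would argue as follows. Write $U$ as a tilting module; by Proposition \ref{XAJ}(d) applied to $U\ge U$ trivially this gives nothing, so instead I would use the defining property of a tilting module: there is an exact sequence $0\to\Gamma\to U^0\to U^1\to 0$ with $U^i\in\add U$. Applying $\Hom_R(-,R)$ and using that $U\in\CM(\Gamma)$ (so the higher $\Ext^i_R(U^i,R)$ vanish, as $U^i$ are maximal Cohen-Macaulay over the Gorenstein ring $R$), we obtain an exact sequence $0\to\Hom_R(U^1,R)\to\Hom_R(U^0,R)\to\Hom_R(\Gamma,R)\to 0$ of $\Gamma^{\op}$-modules. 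Since $\Hom_R(\Gamma,R)$ contains $\Hom_R(P,R)$ as a direct summand and the latter is projective over $\Gamma^{\op}$, the sequence splits along that summand, so $\Hom_R(P,R)$ is a direct summand of $\Hom_R(U^0,R)$ with $U^0\in\add U$. Applying $\Hom_R(-,R)$ once more and using reflexivity of maximal Cohen-Macaulay modules over the Gorenstein ring $R$ (so $\Hom_R(\Hom_R(M,R),R)\simeq M$ for $M\in\CM(R)$, and $\Gamma,P,U^0$ are all in $\CM(R)$), we recover that $P$ is a direct summand of $U^0$, hence $P\in\add U$.

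The main obstacle, as I see it, is making precise the duality bookkeeping in (b): one must be careful that $\Hom_R(-,R)$ induces a genuine duality $\CM(\Gamma)\to\CM(\Gamma^{\op})$ (which uses that $R$ is Gorenstein and that $\Gamma\in\CM(R)$, so $\Gamma$ is reflexive and $\Ext^i_R(-,R)$ vanishes on $\CM(R)$ for $i>0$), and that under this duality projective $\Gamma$-modules need not go to projective $\Gamma^{\op}$-modules in general — this is precisely why the hypothesis ``$\Hom_R(P,R)$ is projective over $\Gamma^{\op}$'' is imposed. Once that duality is set up cleanly, (b) is a short splitting argument. Part (a) is essentially immediate from the depth lemma and should present no difficulty. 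An alternative route for (a), avoiding depth, is to note that $T$ is a second syzygy (over $\Gamma$, hence over $R$) of the $\CM$ module $U_1$ via the sequence, together with closure of $\CM(R)$ under $R$-syzygies of $\CM$ modules; but the depth lemma is the most direct.
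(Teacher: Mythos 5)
Your proposal is correct. Part (a) is exactly the paper's argument: apply Proposition \ref{XAJ}(d) to get $0\to T\to U_0\to U_1\to 0$ with $U_i\in\add U$ and conclude by the depth lemma (the paper leaves this last step implicit). For part (b) you take a slightly different, though closely related, route. The paper first transports the trivial vanishing $\Ext^1_{\Gamma^{\op}}(\Hom_R(P,R),\Hom_R(U,R))=0$ across the duality $\Hom_R(-,R)\colon\CM(\Gamma)\leftrightarrow\CM(\Gamma^{\op})$ to obtain $\Ext^1_\Gamma(U,P)=0$, then takes a coresolution $0\to P\to U_0\to U_1\to 0$ with $U_i\in\add U$ (citing \cite[Lem. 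III.2.3]{H}) and observes it splits. You instead stay on the opposite side: you dualize the coresolution $0\to\Gamma\to U^0\to U^1\to 0$ coming from the tilting property of $U$ (exactness of the dual being guaranteed by $\Ext^1_R(U^1,R)=0$ since $U^1$ is maximal Cohen--Macaulay over the Gorenstein ring $R$), split off the projective summand $\Hom_R(P,R)$ from the surjection onto $\Hom_R(\Gamma,R)$, and dualize back using reflexivity. The ingredients are the same (the $R$-duality and the projectivity hypothesis on $\Hom_R(P,R)$); your version has the small advantage of using only the defining coresolution of $\Gamma$ rather than the cited lemma producing one for $P$, at the cost of the extra dualization bookkeeping you correctly flag. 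One cosmetic slip: $P$ is a summand of $\Gamma^n$ rather than of $\Gamma$, so $\Hom_R(P,R)$ is a summand of $\Hom_R(\Gamma,R)^n$; replacing the sequence by a direct sum of $n$ copies fixes this and changes nothing else.
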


\begin{proof}
(a) By Proposition \ref{XAJ}(d), there exists an exact
sequence $0\to T\to U_0\to U_1\to 0$ with $U_i\in\add U$. Thus the assertion holds.

(b) We have $\Ext^1_{\Gamma^{\op}}(\Hom_R(P,R),\Hom_R(U,R))=0$.
Since we have a duality $\Hom_R(\ ,R):\CM(\Gamma)\leftrightarrow\CM(\Gamma^{\op})$, it holds $\Ext^1_\Gamma(U,P)=0$.
There exists an exact sequence $0\to P\to U_0\to U_1\to0$
with $U_i\in\add U$ \cite[Lem. III.2.3]{H}, which must split since $\Ext^1_\Gamma(U,P)=0$. Thus we have $P\in\add U$.
\end{proof}

\medskip
Finally, let us recall the following relation between cluster tilting and
tilting (see \cite[Th. 5.3.2]{I2} for (a), and (b) is clear).

\begin{proposition}\label{XAL}
Let $R=S/(f)$ be a one-dimensional reduced hypersurface singularity and $M$, $N$ and $N'$ cluster
tilting objects in $\CM(R)$.
\begin{itemize}
\item[(a)] $\Hom_R(M,N)$ is a tilting $\End_R(M)$-module of projective
dimension at most one.
\item[(b)] If $N'$ is a cluster tilting mutation of $N$, then $\Hom_R(M,N')$ is a
tilting mutation of $\Hom_R(M,N)$.
\end{itemize}
\end{proposition}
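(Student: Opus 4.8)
The plan is to analyze the functor $G=\Hom_R(M,-)\colon\CM(R)\to\mod(\Gamma)$ with $\Gamma=\End_R(M)$ and to transport cluster tilting mutation through it. Part~(a) is \cite[Th. 5.3.2]{I2}, and part~(b) will follow formally from~(a) together with Proposition~\ref{XAJ}. Since $M$ is a cluster tilting object containing $R$ as a summand, Proposition~\ref{XAE}(c) furnishes, for every $X\in\CM(R)$, an exact sequence $0\to M_1\to M_0\to X\to0$ with $M_i\in\add M$, and rigidity of $M$ gives $\Ext^1_R(M,M_1)=0$, so $G$ is exact on this sequence. In the standard way this yields that $G$ is fully faithful on $\CM(R)$, that $\add M$ is carried to the projective $\Gamma$-modules (so the number of indecomposable summands of $M$ equals the number of simple $\Gamma$-modules), and that $\pd_\Gamma G(X)\le1$ for every $X\in\CM(R)$.

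For~(a), apply the above to $X=N$. First $\pd_\Gamma G(N)\le1$. Next, applying $\Hom_\Gamma(-,G(N))$ to the projective resolution $0\to G(M_1)\to G(M_0)\to G(N)\to0$ and using full faithfulness to identify the resulting complex with $\Hom_R(M_0,N)\to\Hom_R(M_1,N)$, one sees that $\Ext^1_\Gamma(G(N),G(N))$ is the cokernel of this map; but applying $\Hom_R(-,N)$ to $0\to M_1\to M_0\to N\to0$ and invoking $\Ext^1_R(N,N)=0$ shows that this map is surjective, so $\Ext^1_\Gamma(G(N),G(N))=0$. Finally, by full faithfulness $G(N)$ has exactly as many non-isomorphic indecomposable summands as $N$, and this equals the number of indecomposable summands of $M$ (all basic cluster tilting objects of $\CM(R)$ have the same number of summands, \cite[Th. 5.3]{IY}), hence the number of simple $\Gamma$-modules. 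Therefore $G(N)=\Hom_R(M,N)$ is a tilting $\Gamma$-module of projective dimension at most one.

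For~(b), let $N'$ be the cluster tilting mutation of $N=X\oplus\overline{N}$ at the indecomposable summand $X$, so $N'=Y\oplus\overline{N}$ with $Y$ indecomposable and $Y\not\cong X$ (Definition~\ref{cluster tilting mutation}). By part~(a), $G(N)=G(X)\oplus G(\overline{N})$ and $G(N')=G(Y)\oplus G(\overline{N})$ are tilting $\Gamma$-modules. Full faithfulness of $G$ shows that $G(X)$ and $G(Y)$ are non-isomorphic indecomposables and that $G(\overline{N})$ has exactly $t-1$ non-isomorphic indecomposable summands, where $t$ is the number of simple $\Gamma$-modules; being a direct summand of the tilting module $G(N)$, it is an almost complete tilting module. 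By Proposition~\ref{XAJ}(a) it has at most two complements, so $G(X)$ and $G(Y)$ are precisely its two complements, and by Proposition~\ref{XAJ}(b) the tilting modules $G(N)$ and $G(N')$, having the common almost complete tilting summand $G(\overline{N})$, are neighbors in the tilting order. That is exactly the assertion that $\Hom_R(M,N')$ is a tilting mutation of $\Hom_R(M,N)$.

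The one substantial ingredient is the part packaged in \cite[Th. 5.3.2]{I2}: that $G(N)$ actually attains the full complement of indecomposable summands, so that what is a priori only a partial tilting module is genuinely tilting; everything else is formal once full faithfulness of $G$ on $\CM(R)$ is in hand. It is worth recording that the cluster tilting exchange sequences $0\to X\to N_1\to Y\to0$ and $0\to Y\to N_0\to X\to0$ should not be expected to remain exact after applying $G$ — exactness would force $X$ or $Y$ into $\add M$ — which is precisely why the argument for~(b) proceeds through the characterization of tilting mutation by a common almost complete tilting summand rather than by pushing these sequences forward directly.
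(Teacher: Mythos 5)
Your proof is correct and follows the paper's route: the paper itself disposes of (a) by citing \cite[Th. 5.3.2]{I2} and declares (b) ``clear'', and your derivation of (b) — $G(\overline N)$ is a common almost complete tilting direct summand of the two tilting modules, so Proposition \ref{XAJ}(a)(b) makes them neighbors — is exactly the intended argument, with your sketch of (a) (exactness of $G$ on $\add M$-resolutions, full faithfulness, rigidity and the summand count, the substantial part deferred to the same citation) consistent with the paper's use of \cite{I2}.

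One caveat about your closing aside, which is fortunately not used in the argument: it is false that exactness of $G$ on an exchange sequence would force $X$ or $Y$ into $\add M$. For instance, take $R=k[[x,y]]/(f_1f_2f_3)$ with $f_i$ distinct linear forms and $M=R\oplus S/(f_1)\oplus S/(f_1f_2)$; the exchange sequence $0\to S/(f_2)\to S/(f_2f_3)\to S/(f_3)\to0$ between the cluster tilting objects $R\oplus S/(f_2)\oplus S/(f_2f_3)$ and $R\oplus S/(f_3)\oplus S/(f_2f_3)$ stays exact under $\Hom_R(M,\ )$, since $\Hom_R(S/(f_1),S/(f_3))=0=\Hom_R(S/(f_1f_2),S/(f_3))$ ($S/(f_3)$ is a domain in which $f_1$ and $f_1f_2$ are nonzero) while $R$ is projective, yet neither $S/(f_2)$ nor $S/(f_3)$ lies in $\add M$. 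In fact one of the two exchange sequences is generally expected to remain exact under $G$, reflecting which of $\Hom_R(M,N)$, $\Hom_R(M,N')$ is larger in the tilting order; what fails is only that both cannot be pushed forward, so your choice to argue via the common almost complete tilting summand is still the right one.
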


Now we shall prove Theorem \ref{XAB}.
Fix $w\in{\mathfrak S}_n$ and put $\Gamma:=\End_R(M_w)$.
Since $M_w$ is a generator of $R$, the functor $\Hom_R(M_w,\ ):\CM(R)\to\CM(\Gamma)$ is fully faithful.
By Theorem \ref{XAA}, $M_w$ is a cluster tilting object in $\CM(R)$.
By Proposition \ref{XAL}(a), $\Hom_R(M_w,M_{w'})$ ($w'\in{\mathfrak S}_n$) is a
Cohen-Macaulay tilting $\Gamma$-module.

(b) Take any Cohen-Macaulay tilting $\Gamma$-module $U$.
Since $P:=\Hom_R(M_w,R)$ is a projective $\Gamma$-module such that
$\Hom_R(P,R)=M_w=\Hom_R(R,M_w)$ is a projective $\Gamma$-module,
we have $P\in\add U$ by Lemma \ref{XAK}(b).
In particular, by Proposition \ref{XAJ}(a)(b),
\begin{itemize}
\item any Cohen-Macaulay tilting $\Gamma$-module has at most
$(n-1)$ tilting mutations which are Cohen-Macaulay.
\end{itemize}
Conversely, by Proposition \ref{XAI} and Proposition \ref{XAL}(b),
\begin{itemize}
\item any Cohen-Macaulay tilting $\Gamma$-module of the form
$\Hom_R(M_w,M_{w'})$ ($w'\in{\mathfrak S}_n$) has precisely
$(n-1)$ tilting mutations $\Hom_R(M_w,M_{w's_i})$ ($1\le i<n$) which are Cohen-Macaulay.
\end{itemize}
Consequently, any successive tilting mutation of $\Gamma=\Hom_R(M_w,M_w)$ has the form 
$\Hom_R(M_w,M_{w'})$ for some $w'\in{\mathfrak S}_n$ if each step is Cohen-Macaulay.

Using this observation, we shall show that $U$ is isomorphic to $\Hom_R(M_w,M_{w'})$ for
some $w'$. 
Since $\Gamma\ge U$, there exists a sequence
\[\Gamma=T_0>T_1>T_2>\cdots>U\]
satisfying the conditions in Proposition \ref{XAJ}(c).
By Lemma \ref{XAK}(a), each $T_i$ is Cohen-Macaulay.
Thus the above observation implies that each $T_i$ has the form 
$\Hom_R(M_w,M_{w_i})$ for some $w_i\in{\mathfrak S}_n$.
Moreover, $w_i\neq w_j$ for $i\neq j$.
Since ${\mathfrak S}_n$ is a finite group, the above sequence must be finite.
Thus $U=T_i$ holds for some $i$, hence the proof is completed.

(a) Let $U$ be a cluster tilting object in $\CM(R)$.
Again by Proposition \ref{XAL}(a), $\Hom_R(M_w,U)$ is a Cohen-Macaulay
tilting $\Gamma$-module. By part (b) which we already proved,
$\Hom_R(M_w,U)$ is isomorphic to
$\Hom_R(M_w,M_{w'})$ for some $w'\in{\mathfrak S}_n$.
Since the functor $\Hom_R(M_w,\ ):\CM(R)\to\CM(\Gamma)$ is fully faithful,
$U$ is isomorphic to $M_{w'}$, and the former assertion is proved.

For the latter assertion, we only have to show that any rigid object
in $\CM(R)$ is a direct summand of some cluster tilting object in $\CM(R)$.
This is valid by the following general result in \cite[Th. 1.9]{BIRS}.
\qed

\begin{proposition}
Let $\mathcal{C}$ be a 2-CY Frobenius category with a cluster tilting object.
Then any rigid object in $\mathcal{C}$ is a direct summand of some
cluster tilting object in $\mathcal{C}$.
\end{proposition}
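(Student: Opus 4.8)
The plan is to reduce the assertion to a combinatorial statement about \emph{indices} in the $2$-CY triangulated category obtained from $\mathcal{C}$, reproducing in essence the argument of \cite{BIRS} (which rests on the subfactor construction of \cite{IY}). Fix a basic cluster tilting object $T=T_1\oplus\cdots\oplus T_n$ of $\mathcal{C}$, with $T_1,\dots,T_n$ the pairwise non-isomorphic indecomposable summands, and pass to the stable category $\ul{\mathcal{C}}$, a $\Hom$-finite 2-CY triangulated category with shift $\Sigma$. To every object $X$ one attaches its index $\ind_T(X)\in K_0(\add T)\cong\mathbb{Z}^n$, read off from a triangle $T^1\to T^0\to X\to\Sigma T^1$ with $T^0,T^1\in\add T$ by $\ind_T(X)=[T^0]-[T^1]$; it is additive on direct sums and sends $T_i$ to the $i$-th standard basis vector. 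The homological core of the proof is that $\ind_T$ is injective on rigid objects and sends the indecomposable summands of any rigid object to $\mathbb{Z}$-linearly independent vectors; I would establish this from 2-CY duality together with a computation with the (non-degenerate, anti-symmetrised) Euler form that the 2-CY tilted algebra $\End_{\ul{\mathcal{C}}}(T)$ induces on $K_0(\add T)$, also using the coindex and its relation to the index for rigid objects.

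Granting this, any rigid object of $\mathcal{C}$ has at most $n$ indecomposable summands, so, starting from the given rigid $U$ and repeatedly adjoining an indecomposable $X\notin\add U$ with $U\oplus X$ still rigid, one reaches after finitely many steps a maximal rigid object $M$ having $U$ as a direct summand; it then suffices to show $M$ is cluster tilting. Let $m$ be the number of indecomposable summands of $M$, so $m\le n$. If $m<n$, the linearly independent classes $\ind_T(M_1),\dots,\ind_T(M_m)$ do not span $K_0(\add T)$, and the same form calculus — now comparing $\dim\Ext^1(-,-)$ in $\mathcal{C}$ with pairings of index and coindex vectors, including those of the summands of $T$ — produces an indecomposable rigid $X\in\mathcal{C}$ with $X\notin\add M$ and $\Ext^1(M,X)=0=\Ext^1(X,M)$; then $M\oplus X$ is rigid with $m+1$ indecomposable summands, contradicting maximality of $M$. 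Hence $m=n$, and a rigid object with $n$ indecomposable summands is cluster tilting — equivalently, setting $\mathcal{Z}=\{X\in\mathcal{C}\mid\Ext^1(M,X)=0\}$ (which by 2-CY duality equals $\{X\mid\Ext^1(X,M)=0\}$), the $\Hom$-finite 2-CY triangulated category $\mathcal{Z}/[\add M]$ supplied by \cite{IY} is zero, i.e.\ $\mathcal{Z}=\add M$. Thus $M$, and in particular $U$, is a direct summand of a cluster tilting object.

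The main obstacle is exactly the index package just invoked: proving that $\ind_T$ is injective on rigid objects, that the indices of the indecomposable summands of a rigid object are linearly independent, and — the decisive point — that a linearly independent family of index vectors of the indecomposable summands of a rigid object, when it fails to span $K_0(\add T)$, can always be enlarged by the index of a compatible indecomposable rigid object. This is where both the 2-CY property and the existence of the cluster tilting object $T$ are genuinely used: one needs the non-degeneracy of the Euler form of $\End_{\ul{\mathcal{C}}}(T)$, the precise interplay of index, coindex and 2-CY duality, and a comparison with the standard basis of $K_0(\add T)$ realised by $T$ itself; cf.\ \cite{IY,I2}. Once this is in place, everything else — the reduction to the maximal rigid case, the passage to the Iyama--Yoshino subfactor, and the bookkeeping with direct summands — is formal, and an equivalent organisation is a downward induction on $n$ minus the number of indecomposable summands of $U$, whose inductive step is precisely the enlargement of index vectors described above.
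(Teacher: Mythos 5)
The paper itself contains no proof of this proposition: it is quoted from \cite[Th. 1.9]{BIRS}, so your sketch has to stand on its own, and it does not. Its skeleton --- bound the number of indecomposable summands of a rigid object by the number $n$ of summands of a fixed cluster tilting object $T$ via indices, complete the given rigid $U$ to a maximal rigid $M$, then show $M$ is cluster tilting --- is a reasonable plan, but both load-bearing steps are exactly the nontrivial theorems and are asserted rather than proved. That $\ind_T$ is injective on rigid objects and that the indices of the indecomposable summands of a rigid object are linearly independent are genuine theorems (due to Dehy and Keller); their proofs are not the advertised ``computation with the Euler form of $\End(T)$ and the coindex'' but arguments with approximation/exchange triangles and sign properties of index vectors, none of which appears in your text.

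The decisive gap is the step where $m<n$: you claim that ``the same form calculus'' produces an indecomposable rigid $X\notin\add M$ with $\Ext^1(M,X)=0=\Ext^1(X,M)$. No such calculus is given, and neither \cite{IY} nor \cite{I2} contains one; this claim is precisely the statement that, in the presence of a cluster tilting object, maximal rigid objects are cluster tilting --- i.e.\ a reformulation of what remains to be proved, not a proof of it. The same applies to your final assertion that a rigid object with $n$ indecomposable summands is cluster tilting, ``equivalently'' that the Iyama--Yoshino subfactor $\mathcal{Z}/[\add M]$ vanishes: vanishing cannot be deduced from the absence of nonzero rigid objects, since nonzero $2$-CY categories with no nonzero rigid objects exist (the paper's own Theorem \ref{main1} gives $A_n$ with $n$ even, $E_6$, $E_8$), and by \cite{IY} cluster tilting subcategories of $\mathcal{Z}/[\add M]$ correspond to cluster tilting objects of $\mathcal{C}$ lying in $\mathcal{Z}$ and containing $M$ --- whose existence is the original question. (A minor point: when passing from the Frobenius category $\mathcal{C}$ to $\ul{\mathcal{C}}$ you should record that the projective-injectives are summands of every cluster tilting object and that rigid, maximal rigid and cluster tilting objects of $\mathcal{C}$ and of $\ul{\mathcal{C}}$ correspond; that part is harmless.) As it stands, the proposal is an outline whose two central steps are deferred to computations it does not carry out and to references that do not contain them.
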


\medskip
We end this section with the following application to dimension three.

Now let $S'':=k[[x,y,u,v]]$, $f_i\in{\mathfrak m}=(x,y)$ ($1\le i\le n$)
and $R'':=S''/(f_1\cdots f_n+uv)$.
For $w\in{\mathfrak S}_n$ and $I\subseteq\{1,\cdots,n\}$, we put
\[U^w_i:=(u,f_{w(1)}\cdots f_{w(i)})\subset R'',\ M_w:=\bigoplus_{i=1}^nU^w_i\
\mbox{ and }\ U_I:=(u,\prod_{i\in I}f_i)\subset S''.\]
We have the following result (see \ref{D:vdB} for definition).

\begin{corollary}\label{XAQ}
Under the assumption (A), we have the following.
\begin{itemize}
\item[(a)] There are exactly $n!$ basic cluster tilting objects $M_w$
($w\in{\mathfrak S}_n$) and exactly $2^n-1$ indecomposable rigid objects
$U_I$ ($\emptyset\neq I\subset\{1,\cdots,n\}$) in $\CM(R'')$.
\item[(b)] There are non-commutative crepant resolutions
$\End_{R''}(M_w)$ ($w\in{\mathfrak S}_n$) of $R''$, which are
derived equivalent.
\end{itemize}
\end{corollary}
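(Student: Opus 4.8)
The plan is to transport Theorems \ref{XAA} and \ref{XAB}, applied to the one-dimensional ring $R=S/(f)$ with $f=f_1\cdots f_n$, to $R''$ by means of Knörrer periodicity. First I would record the basic facts about $R''$: it is a complete local hypersurface, hence Cohen-Macaulay and Gorenstein, of Krull dimension three, and its singular locus is defined by the ideal $(f+uv,\,\partial_x f,\,\partial_y f,\,u,\,v)$, which contains $(u,v,f,\partial_x f,\partial_y f)$; the latter is $\mathfrak{m}$-primary because $R$ is an isolated singularity, so $R''$ has at most an isolated singularity, hence is normal. In particular $\ul{\CM}(R'')$ is $2$-CY and Proposition \ref{XAE} applies to $R''$.

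Next I would make Knörrer's triangle equivalence $G\colon\ul{\CM}(R)\xrightarrow{\sim}\ul{\CM}(R'')$ \cite{Kn,S} explicit: on matrix factorizations it sends a factorization $(\phi,\psi)$ of $f$ to $\bigl(\left(\begin{smallmatrix}\phi & u\\ v & -\psi\end{smallmatrix}\right),\left(\begin{smallmatrix}\psi & u\\ v & -\phi\end{smallmatrix}\right)\bigr)$. Taking $\phi=g:=f_{w(1)}\cdots f_{w(i)}$ and $\psi=h:=f_{w(i+1)}\cdots f_{w(n)}$, a short computation of the $2$-periodic free resolution of the ideal $U^w_i=(u,g)\subset R''$ identifies it with the Knörrer lift of the $2$-periodic resolution $\cdots\xrightarrow{g}R\xrightarrow{h}R\xrightarrow{g}R$ of $S^w_i=S/(g)$ over $R$; hence, after composing $G$ with a power of the shift if necessary, $G(S^w_i)\cong U^w_i$, and likewise $G(S_I)\cong U_I$ and $G(R)\cong R''$, so $G(\ul M_w)\cong\ul M_w$. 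Being a triangle equivalence of $2$-CY triangulated categories, $G$ preserves $\Ext^1$ (because $\Ext^1_R(X,Y)\cong\ul{\Hom}_R(X,\Sigma Y)$ and $G\Sigma\cong\Sigma G$), hence rigid objects and cluster tilting objects, as well as cluster tilting mutation, the latter being intrinsic to a $2$-CY triangulated category by \cite[Th.\ 5.3]{IY}.

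For part (a): by Theorems \ref{XAA}(b) and \ref{XAB}(a), under (A) the objects $M_w$ ($w\in\mathfrak{S}_n$) are precisely the $n!$ basic cluster tilting objects of $\CM(R)$ and the $S_I$ ($\emptyset\neq I\subseteq\{1,\dots,n\}$) precisely the $2^n-1$ indecomposable rigid objects. Using that a basic cluster tilting object of $\CM(R)$ is the same as $R$ together with a basic cluster tilting object of $\ul{\CM}(R)$, and that an indecomposable maximal Cohen-Macaulay module without free summand is determined up to isomorphism by its class in the stable category, this passes to $\ul{\CM}(R)$; applying $G$ and then passing back from $\ul{\CM}(R'')$ to $\CM(R'')$ (each $M_w$ contains $U^w_n\cong R''$, hence is a generator) yields part (a). For part (b): each $M_w$ is a cluster tilting object of $\CM(R'')$ containing $R''$, so $\End_{R''}(M_w)$ is a non-commutative crepant resolution of $R''$ in the sense of \ref{D:vdB} by \cite{I2,IR} (concretely, $\gl\End_{R''}(M_w)\le 3$ by Proposition \ref{XAE}(a)$\Leftrightarrow$(b) and $\End_{R''}(M_w)\in\CM(R'')$ by the cluster tilting property); moreover, the cluster tilting mutations of $M_w$ in $\CM(R'')$ are the $M_{ws_i}$ ($1\le i<n$), so by the comparison of cluster tilting mutation with tilting mutation over Calabi-Yau algebras \cite{IR}, $\Hom_{R''}(M_w,M_{ws_i})$ is a tilting $\End_{R''}(M_w)$-module of projective dimension at most one; since $\mathfrak{S}_n=\langle s_1,\dots,s_{n-1}\rangle$, iterating yields derived equivalences among all the $\End_{R''}(M_w)$.

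I expect the main obstacle to be the second step: checking that Knörrer's equivalence carries the particular modules $S^w_i=S/(g)$ to the particular ideals $U^w_i=(u,g)$ --- equivalently, identifying the Knörrer lift of the matrix factorization $(g,h)$ of $f$ with the periodic free resolution of $(u,g)$ over $R''$ --- together with the $\CM$--$\ul{\CM}$ bookkeeping needed to make the counts ($n!$ and $2^n-1$) exact in $\CM(R'')$ and not merely up to free summands.
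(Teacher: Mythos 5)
Your proposal is correct and follows essentially the same route as the paper: transport the classification of Theorems \ref{XAA} and \ref{XAB} through Kn\"orrer periodicity by lifting the matrix factorization $(f_{w(1)}\cdots f_{w(i)},\,f_{w(i+1)}\cdots f_{w(n)})$ of $f$ to $R''$ and identifying the lifted module with the ideal $(u,f_{w(1)}\cdots f_{w(i)})=U^w_i$, then invoke \cite{I2,IR} for the non-commutative crepant resolution property and the derived equivalences. Your extra details (normality of $R''$, the $\CM$ versus $\ul{\CM}$ bookkeeping, and realizing the derived equivalences via iterated tilting mutation rather than directly citing \cite[Cor.\ 5.3.3]{I2}) are sound elaborations of the same argument.
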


\begin{proof}
(a) We only have to apply Kn\"orrer periodicity $\ul{\CM}(R)\to\ul{\CM}(R'')$ \cite{Kn,S} as follows:

Since $S^w_i\in\CM(R)$ has a projective resolution
$$
R\stackrel{b}{\longrightarrow}R\stackrel{a}{\longrightarrow}R\to S^w_i\to0
$$
for $a:=f_{w(1)}\cdots f_{w(i)}$ and $b:=f_{w(i+1)}\cdots f_{w(n)}$,
the corresponding object $X\in\CM(R'')$ has a projective resolution
$$
R''^2\stackrel{{u\ \ a\choose b\ -v}}{\longrightarrow}R''^2\stackrel{{v\ \ a\choose b\ -u}}{\longrightarrow}R''^2\to X\to0.
$$
It is easily checked that $X$ is isomorphic to $(u,a)=U^w_i$.

(b) Any cluster tilting object gives a non-commutative crepant
resolution by \cite[Th. 5.2.1]{I2}. They are derived equivalent by \cite[Cor. 5.3.3]{I2}.
\end{proof}

\medskip
For example, 
\[k[[x,y,u,v]]/((x-\lambda_1 y)\cdots(x-\lambda_n y)+uv)\]
has a non-commutative crepant resolution for distinct elements $\lambda_1,\cdots,\lambda_n\in k$.

\section{Link with birational geometry}\label{sec3}
There is another approach to the investigation of cluster tilting
objects for maximal Cohen-Macaulay modules, using birational
geometry. 
More specifically there is a close connection between resolutions of 
three-dimensional Gorenstein singularities and cluster tilting theory,  provided by the so-called 
non-commutative crepant resolutions of Van den Bergh.
This gives at the same time alternative proofs for geometric
results, using cluster tilting
objects. The aim of this section is to establish a link with small
resolutions. We give relevant criteria for having
small resolutions, and apply them to give an alternative approach to
most of the results in the previous sections. 

Let $(R,{\mathfrak m})$ be a three-dimensional complete normal Gorenstein singularity
over an algebraically closed field $k$ of characteristic zero, and let $X = \Spec(R)$. 
 A resolution of singularities $Y \stackrel{\pi}\lar X$ is called 

\begin{itemize}
\item  {\it crepant}, 
if $\omega_Y \cong \pi^* \omega_X$ for canonical sheaves $\omega_X$ and $\omega_Y$ of $X$ and $Y$ respectively.
\item {\it small}, 
if the fibre of the closed point has dimension at most one.
\end{itemize}

\noindent
A small resolution is automatically crepant, but the converse is in general not true. However, both types of resolutions coincide  for  certain important classes of three-dimensional singularities.

A {\it cDV (compound Du Val)} singularity is a three-dimensional singularity
given by the equation 
$$
f(x,y,z) + t g(x,y,z,t)=0,
$$
where $f(x,y,z)$ defines a simple surface singularity and $g(x,y,z,t)$ is arbitrary. 

A cDV singularity is called $cA_n$ 
if the intersection of $f(x,y,z)+tg(x,y,z,t)=0$ with a generic 
hyperplane $ax+by+cz+dt=0$ in $k^4$ is an $A_n$
surface singularity.
{\it Generic} means that the coefficients $(a,b,c,d)$
belong to a non-empty Zariski open subset in $k^4$.

\begin{theorem}\cite[Cor. 1.12, Th. 1.14]{Re}\label{t3.1}
\noindent
Let $X$ be a three-dimensional Gorenstein singularity.
\begin{itemize}
\item[(a)] If $X$ has a small resolution, then it is cDV. 
\item[(b)] If $X$ is an isolated cDV singularity, then any crepant
  resolution of $X$ is small. 
\end{itemize} 
\end{theorem}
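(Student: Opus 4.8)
\emph{Proof proposal.} The plan is to obtain both statements from the behaviour of discrepancies under birational morphisms, combined with Reid's structural description of Gorenstein canonical and terminal threefold singularities; the latter is the only step that is not formal, and it is the one I would quote.

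For (a), suppose $\pi\colon Y\to X$ is a small resolution. Since $X$ is Gorenstein, $K_X$ is Cartier, so on $Y$ we may write $K_Y=\pi^*K_X+\sum_i a_iE_i$ where the $E_i$ are the $\pi$-exceptional prime divisors. As $\pi$ is small, its exceptional locus has codimension at least two in the threefold $Y$, so there are no such $E_i$ and $K_Y=\pi^*K_X$; that is, $\pi$ is crepant. Comparing with an arbitrary resolution of $X$ (pull both back to a common smooth model and use that the canonical class of a resolution of a smooth, hence terminal, variety exceeds the pullback by an effective divisor) shows that every discrepancy over $X$ is $\ge 0$, i.e.\ that $X$ has canonical singularities. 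Finally I would invoke Reid's theorem that a Gorenstein canonical threefold singularity is exactly a compound Du Val singularity --- equivalently, that a general hyperplane section through the closed point is a rational double point --- which is precisely the conclusion of (a).

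For (b), let $X$ be an isolated cDV singularity and $g\colon Y\to X$ a crepant resolution, so $K_Y=g^*K_X$. By Reid's classification an isolated cDV threefold point is precisely a Gorenstein terminal singularity, so $a(E,X)>0$ for every divisor $E$ exceptional over $X$. If $g$ contracted a prime divisor $E\subset Y$, then reading off its coefficient in $K_Y=g^*K_X$ would give $a(E,X)=0$, a contradiction. Hence the exceptional locus of $g$ --- which, $X$ being an isolated singularity, is exactly the fibre over the closed point --- contains no divisor, so has dimension at most one, and $g$ is small.

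The main obstacle, and really the only non-formal input, is Reid's two equivalences used above: Gorenstein canonical (resp.\ terminal) threefold singularities coincide with cDV (resp.\ isolated cDV) singularities. These rest on a Bertini-type analysis of general hyperplane sections together with the ADE classification of Du Val surface singularities, and I would cite them rather than reprove them, the remaining discrepancy bookkeeping being routine minimal model program formalism.
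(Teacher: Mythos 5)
The paper offers no argument for this theorem: it is quoted verbatim from Reid \cite{Re}, so what has to be checked is whether your discrepancy bookkeeping and the classification results you propose to cite are accurate. Your part (b) is correct and is the standard argument: isolated cDV $\Leftrightarrow$ Gorenstein terminal is genuinely Reid's theorem, crepancy then forbids any contracted divisor (a contracted prime divisor would have discrepancy $0$), so the exceptional set, and hence the fibre over the closed point, has dimension at most one.

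Part (a), however, rests on a false citation. You deduce only that $X$ is canonical and then invoke ``Reid's theorem that a Gorenstein canonical threefold singularity is exactly a compound Du Val singularity''. No such theorem holds: cDV points are canonical, but the converse fails --- for instance the cone $x^3+y^3+z^3+t^3=0$ over a smooth cubic surface is Gorenstein and canonical (blowing up the vertex even gives a crepant resolution), yet its general hyperplane section is a simple elliptic singularity, not a rational double point. The equivalence Reid actually proves, and the one the paper cites, is: Gorenstein terminal $\Leftrightarrow$ isolated cDV. Fortunately your own computation yields more than you claim: every divisor exceptional over $X$ appears on a smooth model dominating the small resolution $Y$, and an exceptional divisor over a \emph{smooth} threefold has discrepancy at least $1$, not merely a nonnegative one; composing with $K_Y=\pi^*K_X$ gives strictly positive discrepancies over $X$, so $X$ is terminal, and the correct Reid equivalence then gives (a) (indeed the stronger conclusion that $X$ is an isolated cDV point). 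One further caveat: with the paper's definition of ``small'' (the fibre over the closed point has dimension at most one), the inference ``small $\Rightarrow$ no exceptional divisor'' uses that the singularity is isolated (or the usual codimension-two reading of ``small''); Reid's own statement handles morphisms all of whose fibres are at most one-dimensional by passing to a general hyperplane section, which is the route needed if one insists on (a) without any isolatedness hypothesis.
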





Since any isolated cDV singularity is terminal \cite{Re}, we can apply Van den Bergh's
results on non-commutative crepant resolutions defined as follows.

\begin{definition}\cite[Def. 4.1]{V2}\label{D:vdB}
Let $(R,{\mathfrak m})$ be a three-dimensional normal Gorenstein domain.  An $R$-module $M$ gives rise to a non-commutative crepant resolution if
\begin{itemize}
\item[(i)] $M$ is reflexive,
\item[(ii)] $A = \End_R(M)$ is Cohen-Macaulay as an $R$--module,
\item[(iii)] $\mathrm{gl.dim}(A) = 3$. 
\end{itemize}
\end{definition}

The following result establishes a useful connection.
\begin{theorem}\cite[Cor. 3.2.11]{V1}\cite[Th. 6.6.3]{V2}
Let $(R,{\mathfrak m})$ be an isolated cDV singularity.
Then there exists a crepant resolution of $X=\Spec(R)$ if and only
if there exists a non-commutative one in the sense of Definition \ref{D:vdB}.
\end{theorem}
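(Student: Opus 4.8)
The statement is exactly Van den Bergh's theorem \cite{V1,V2}: an isolated cDV singularity is a three-dimensional terminal Gorenstein singularity, so it lies within the scope of those works, and strictly speaking nothing further is needed. For completeness I indicate the shape of the argument. The plan is to establish the two implications separately, in each case by producing a derived equivalence between $\cd^b$ of the resolution (commutative or not) and $\cd^b$ of the other one, and then reading off all required properties from that equivalence.

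\emph{Crepant resolution $\Rightarrow$ non-commutative crepant resolution.} Let $\pi\colon Y\to X=\Spec R$ be a crepant resolution. By Theorem \ref{t3.1}(b) it is small, so the exceptional fibre has dimension $\le 1$ and $R^i\pi_*\co_Y=0$ for $i>0$. The key step, which is the technical heart of \cite{V1}, is to produce a tilting bundle $\ct$ on $Y$ having $\co_Y$ as a direct summand; this uses that $Y$ is a quasi-projective threefold, projective over the affine base $X$, with fibres of dimension $\le 1$. Put $A:=\End_Y(\ct)$; then $\mathbb{R}\Hom_Y(\ct,-)$ is an equivalence $\cd^b(\operatorname{coh}Y)\simeq\cd^b(\mod A)$. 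From the equivalence one reads off $\gl A=3$ (because $Y$ is smooth of dimension three), $A\in\CM(R)$ (by the vanishing of the higher direct images), and the existence of a reflexive $R$-module $M$ with $A\cong\End_R(M)$; hence $A$ is a non-commutative crepant resolution in the sense of Definition \ref{D:vdB}.

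\emph{Non-commutative crepant resolution $\Rightarrow$ crepant resolution.} Given $A=\End_R(M)$ as in Definition \ref{D:vdB}, one would form the moduli space $Y=\cm_\theta(A)$ of $\theta$-stable $A$-modules of a suitable dimension vector, for a generic stability parameter $\theta$. Van den Bergh shows that $Y$ carries a tautological tilting bundle, again yielding $\cd^b(\operatorname{coh}Y)\simeq\cd^b(\mod A)$, that $Y$ is smooth --- here the finiteness of $\gl A$ together with the local structure of terminal singularities enters --- and that the structure morphism $Y\to X$ is projective and birational. Crepancy is then detected by transporting the Serre functor across the derived equivalence and comparing with the Serre functor of $Y$.

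The main obstacle in either direction is the same: constructing the tilting bundle on the small resolution, respectively establishing smoothness of the moduli space $\cm_\theta(A)$. These rest on the three-dimensionality, the terminal Gorenstein hypothesis, and the vanishing of the higher direct images, and they constitute precisely the substantial input of \cite{V1,V2}; the remaining assertions are formal consequences of the resulting derived equivalence.
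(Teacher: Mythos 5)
Your proposal is correct and takes essentially the same route as the paper: the paper gives no proof of its own, but simply notes that isolated cDV singularities are terminal (so Van den Bergh's hypotheses apply) and cites \cite[Cor. 3.2.11]{V1} and \cite[Th. 6.6.3]{V2}, exactly as you do. Your sketch of the two directions (tilting bundle on the small resolution with fibres of dimension $\le 1$, respectively the moduli space $\cm_\theta(A)$ with its tautological tilting bundle) is an accurate summary of Van den Bergh's argument, but it is supplementary background rather than a divergence from the paper.
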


\noindent
The existence of a non-commutative crepant resolution turns out to be
equivalent to the existence of a cluster tilting object in the
triangulated category $\underline{\CM}(R)$.

\begin{theorem}\cite[Th. 5.2.1]{I2}\cite[Cor. 8.13]{IR}\label{IR}
Let $(R,{\mathfrak m})$ be a three-dimensional normal Gorenstein domain
which is an isolated singularity.
Then the existence of a non-commutative crepant
resolution is equivalent to the existence of a cluster tilting object
in the stable category of maximal Cohen-Macaulay modules
$\underline{\CM}(R)$. 
\end{theorem}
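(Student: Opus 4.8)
The plan is to prove the two implications separately, using Proposition~\ref{XAE} as the bridge between the homological condition $\gl\End_R(M)\le 3$ and the categorical property of being a cluster tilting object of $\CM(R)$, together with the standard passage between cluster tilting objects in the Frobenius category $\CM(R)$ and those in its stable category. We may assume $R$ complete, since passing to the completion affects neither the existence of a non-commutative crepant resolution nor that of a cluster tilting object, and then Proposition~\ref{XAE} applies. Because $d=3$ is odd and $R$ is Gorenstein, AR--duality reads $\ul{\Hom}_R(X,Y)\simeq D\Ext^1_R(Y,\tau X)$ with $\tau=\Omega^{-1}=\Sigma$, and since $\Ext^1_R(Y,Z)\simeq\ul{\Hom}_R(Y,\Sigma Z)$ for $Y,Z\in\CM(R)$ this gives $\ul{\Hom}_R(X,Y)\simeq D\ul{\Hom}_R(Y,\Sigma^2 X)$; thus $\ul{\CM}(R)$ is a $\Hom$-finite $2$-CY triangulated category. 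As $R$ is a projective-injective object of $\CM(R)$ and $\Ext^1_R$ on $\CM(R)$ factors through $\ul{\CM}(R)$, one checks in the usual way that $M\mapsto\ul M$ induces a bijection between basic cluster tilting objects of $\CM(R)$ having $R$ as a direct summand and basic cluster tilting objects of $\ul{\CM}(R)$.

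Assume first that $\ul{\CM}(R)$ has a cluster tilting object $\ul T$; pick a representative $T\in\CM(R)$ without free summands and set $M:=R\oplus T$. By the bijection above, $M$ is a cluster tilting object of $\CM(R)$, so it is a rigid maximal Cohen--Macaulay generator of $R$, and since $R$ is normal $M$ is reflexive. Moreover $A:=\End_R(M)\in\CM(R)$: taking a minimal free cover $0\to\Omega M\to F\to M\to 0$, the module $\Omega M$ is again maximal Cohen--Macaulay, and applying $\Hom_R(-,M)$ and using $\Ext^1_R(M,M)=0$ produces a short exact sequence
$$0\to\End_R(M)\to\Hom_R(F,M)\to\Hom_R(\Omega M,M)\to 0,$$
in which $\Hom_R(F,M)$ is maximal Cohen--Macaulay and $\Hom_R(\Omega M,M)$ has depth at least $2$, so the depth lemma forces $\dpth_R\End_R(M)=3$. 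Finally $\gl A\le 3$ by Proposition~\ref{XAE}, while $A$, being an $R$-order of finite global dimension, satisfies $\gl A\ge\dim R$, so $\gl A=3$. Hence $M$ is a non-commutative crepant resolution in the sense of Definition~\ref{D:vdB}.

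Conversely, suppose $M$ gives a non-commutative crepant resolution, so $M$ is reflexive, $A:=\End_R(M)\in\CM(R)$ and $\gl A=3=\dim R$. It suffices to exhibit a \emph{maximal Cohen--Macaulay} rigid generator $N$ of $R$ with $\gl\End_R(N)\le 3$, for then Proposition~\ref{XAE} shows $N$ is a cluster tilting object of $\CM(R)$, and since $R$ is a direct summand of $N$ the bijection above yields a cluster tilting object of $\ul{\CM}(R)$. Producing such an $N$ is where the real work lies, and it is the content of \cite[Th.~5.2.1]{I2} and \cite[Cor.~8.13]{IR}, building on Van den Bergh \cite{V1,V2}: the hypotheses make $A$ a symmetric $R$-order, and $\gl A=\dim R$ forces $A$ to be a non-singular order, so that $\CM(A)$ is a Frobenius category with projective-injective objects exactly $\add_A A$; transporting this structure back to $\CM(R)$ by means of the reflexive module $M$ yields the required maximal Cohen--Macaulay rigid generator $N$ of $R$ together with the bound $\gl\End_R(N)\le 3$.

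The forward implication is thus little more than Proposition~\ref{XAE} plus a depth computation. The main obstacle is the converse: a non-commutative crepant resolution in Van den Bergh's sense supplies only a reflexive module, whereas Proposition~\ref{XAE} requires a maximal Cohen--Macaulay generator, so the heart of the proof is the order-theoretic argument that replaces a general non-commutative crepant resolution by one coming from a maximal Cohen--Macaulay rigid generator of $R$.
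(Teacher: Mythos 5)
Your proposal is correct and follows essentially the same route as the paper's own outline: the direction ``cluster tilting $\Rightarrow$ non-commutative crepant resolution'' is done by the identical depth-lemma computation on $0\to\End_R(M)\to\Hom_R(F,M)\to\Hom_R(\Omega M,M)\to\Ext^1_R(M,M)=0$ together with the global-dimension criterion (your Proposition \ref{XAE} versus the paper's reference to \cite[Th. 3.6.2]{I1}), and the converse is, just as in the paper, deferred in substance to \cite[Th. 8.9, Lem. 8.5]{IR} and \cite[Th. 5.2.1]{I2}, which replace a general reflexive NCCR module by a rigid maximal Cohen--Macaulay generator. Your additional bookkeeping (the correspondence between cluster tilting objects of $\CM(R)$ containing $R$ and those of $\ul{\CM}(R)$, and the reduction to the complete case, which the paper assumes from the outset of that section) is harmless and does not change the argument.
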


\begin{proof}
For convenience of the reader, we give an outline of the proof (see also Proposition \ref{XAE}).

Let us first assume that $M$ is a cluster tilting object in $\underline{\CM}(R)$. Then $M$ is automatically reflexive.  From the exact sequence
$$
 0 \lar \Omega(M) \lar F \lar M \lar 0
$$
we obtain
\begin{equation}\label{E:resol}
0 \lar \End_R(M) \lar \Hom_R(F, M) \lar \Hom_R(\Omega(M), M) \lar \Ext^1_R(M,M)  \lar 0.
\end{equation}
Since $M$ is rigid, $\Ext^1_R(M, M) = 0$. Moreover, 
$\dpth(\Hom_R(F, M)) = \dpth(M) = 3$ and

\noindent
$\dpth(\Hom_R(\Omega(M), M)
\ge 2$, and hence $\dpth(\End_R(M)) = 3$ and $A=\End(M)$ is maximal
Cohen-Macaulay over $R$.

For the difficult part of this implication, claiming that $\mathrm{gl.dim}(A) = 3$, we refer to \cite[Th. 3.6.2]{I1}. 

For the other direction, let $M$ be a module giving rise to a non-commutative crepant resolution. Then by  \cite[Th. 8.9]{IR} there exists another module $M'$ giving rise to a non-commutative crepant resolution,  which is maximal Cohen-Macaulay  and contains  $R$ as a direct summand.  

By the assumption, $\dpth(\End_R(M')) = 3$ and we can apply
\cite[Lem. 8.5]{IR} to the exact sequence (\ref{E:resol})  to deduce
that  $\Ext^1_R(M', M') = 0$, so that $M'$ is rigid. The difficult
part saying that $M'$ is cluster tilting is proven in   \cite[Th. 5.2.1]{I2}.
\end{proof}

\medskip
We now summarize the results of this section.
\begin{theorem}\label{4.5}
  Let $(R,{\mathfrak m})$ be an isolated cDV singularity. Then the following are equivalent.
\begin{itemize}
\item[(a)] $\Spec(R)$ has a small resolution.

\item[(b)] $\Spec(R)$ has a crepant resolution.

\item[(c)] $(R,{\mathfrak m})$ has a non-commutative crepant resolution.

\item[(d)] $\ul{\CM}(R)$ has a cluster tilting object.
\end{itemize}
\end{theorem}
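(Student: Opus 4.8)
The plan is to deduce this theorem directly from the results already assembled in this section, so that the argument is essentially a matter of chaining implications. The four conditions (a)--(d) concern an isolated cDV singularity $(R,{\mathfrak m})$, which is in particular a three-dimensional complete normal Gorenstein singularity (domain, by normality plus connectedness of the spectrum), and which is moreover terminal by \cite{Re}. So each of the three cited theorems of this section applies. First I would record that Theorem \ref{t3.1}(b) gives (b)$\Rightarrow$(a): if $X=\Spec(R)$ has a crepant resolution, then since $X$ is an isolated cDV singularity that resolution is automatically small. Conversely, Theorem \ref{t3.1}(a) shows that a small resolution of any three-dimensional Gorenstein singularity is in particular a resolution of singularities whose closed fibre has dimension at most one, hence crepant by the standard fact that small resolutions are crepant (stated in the text just after the definitions); this gives (a)$\Rightarrow$(b). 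Thus (a)$\Leftrightarrow$(b).

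Next I would invoke the theorem of Van den Bergh \cite[Cor. 3.2.11]{V1}\cite[Th. 6.6.3]{V2} quoted above: for an isolated cDV singularity, $X$ has a crepant resolution if and only if $R$ has a non-commutative crepant resolution in the sense of Definition \ref{D:vdB}. This is precisely (b)$\Leftrightarrow$(c). Finally, Theorem \ref{IR} (due to the second author \cite[Th. 5.2.1]{I2} and \cite[Cor. 8.13]{IR}) states that for a three-dimensional normal Gorenstein domain which is an isolated singularity, the existence of a non-commutative crepant resolution is equivalent to the existence of a cluster tilting object in $\ul{\CM}(R)$; since our $R$ satisfies these hypotheses, this gives (c)$\Leftrightarrow$(d).

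Assembling the chain (a)$\Leftrightarrow$(b)$\Leftrightarrow$(c)$\Leftrightarrow$(d) completes the proof. There is essentially no new mathematical content here beyond checking that the standing hypotheses of each cited result are met: that an isolated cDV singularity is three-dimensional, complete, normal, Gorenstein, a domain, an isolated singularity, and terminal. The only point that requires a word is (a)$\Rightarrow$(b), where one uses that a small resolution is automatically crepant — a fact recalled in the text — rather than Theorem \ref{t3.1}, which only runs in the direction crepant $\Rightarrow$ small for isolated cDV singularities. I do not anticipate a genuine obstacle; the mild subtlety is simply orchestrating the implications so that each cited theorem is applied in the direction in which it is valid, and verifying the normality/domain hypothesis needed for Theorem \ref{IR}.
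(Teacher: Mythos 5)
Your proposal is correct and coincides with the paper's (implicit) argument: Theorem \ref{4.5} is stated there as a summary of the section, obtained exactly by chaining Theorem \ref{t3.1} together with the remark that small resolutions are crepant for (a)$\Leftrightarrow$(b), Van den Bergh's theorem for (b)$\Leftrightarrow$(c), and Theorem \ref{IR} for (c)$\Leftrightarrow$(d), after checking that an isolated cDV singularity meets the standing hypotheses (three-dimensional, complete, normal Gorenstein domain, isolated, terminal). Your self-correction about using the definition-level fact ``small $\Rightarrow$ crepant'' rather than Theorem \ref{t3.1}(a) is exactly the right reading.
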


We have an efficient criterion for existence of a small resolution of a $cA_n$--singularity.

\begin{theorem}\cite[Th. 1.1]{Kat}\label{4.6}
Let $X = \Spec(R)$ be an isolated $cA_n$--singularity.
\begin{itemize}
\item[(a)] Let $Y \lar X$ be  a small resolution.
Then the exceptional curve in $Y$ is a chain of $n$ projective
lines and $X$ has the  form $g(x,y)+uv$, where the curve singularity
$g(x,y)$ has $n+1$  distinct branches at the origin. 

\item[(b)] If $X$ has the  form $g(x,y)+uv$, where the curve singularity
$g(x,y)$ has $n+1$  distinct branches at the origin, then 
$X$ has a small resolution. 
\end{itemize}
\end{theorem}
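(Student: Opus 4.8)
The plan is to prove the two implications separately, following the geometric approach of Katz \cite{Kat}. Since $X$ is an isolated $cA_n$-singularity, the first step is to put $R$ into the standard analytic normal form $R\cong k[[x,y,u,v]]/(g(x,y)-uv)$ with $g\in(x,y)$, where $g$ is reduced because the singularity is isolated; this reduction rests on Reid's classification of $cDV$-points together with the hypothesis on the generic hyperplane section. Writing $g=g_0g_1\cdots g_{r-1}$ with the $g_i$ pairwise non-associate irreducible power series, so that $r$ is the number of branches of the plane curve $g=0$, I would first record a preliminary observation obtained by computing the $cA$-type of $k[[x,y,u,v]]/(uv-g)$ from a generic hyperplane section (which is $\{w^2=\lambda g(x,y)+\ell(x,y)^2\}$ for generic $\lambda$ and generic linear $\ell$): namely $r\le n+1$, with equality precisely when every $g_i$ lies outside $(x,y)^2$, i.e. when all $r$ branches of $g$ are smooth. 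This reduces part (b) to the case $g=g_0\cdots g_n$ with all $g_i\notin(x,y)^2$.

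For part (b), the plan is to build a small resolution $Y\to X$ by successively blowing up the non-Cartier Weil divisors $D_j=V(u,\,g_0g_1\cdots g_j)$ for $j=0,1,\dots,n-1$, passing to the strict transform at each stage. The key local computation is that the blow-up of $\{uv=g_0\cdots g_m\}$ along $V(u,g_0\cdots g_j)$ with $j<m$ is covered by two affine charts: on one, $u=s\,g_0\cdots g_j$ and the equation reduces to $sv=g_{j+1}\cdots g_m$; on the other, $g_0\cdots g_j=s'u$ and it reduces to $v=s'\,g_{j+1}\cdots g_m$, which is already smooth. Hence each blow-up is proper birational with a single projective line over the closed point and strictly shortens the product in the surviving $uv=\cdots$ chart; after $n$ steps that chart equation is $uv=g_n$ with $g_n\notin(x,y)^2$, which is smooth, so $Y$ is smooth. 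The morphism is small by construction, hence crepant; the $n$ rational curves produced make up the exceptional fibre, and the nesting of the charts exhibits them as a chain.

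For part (a), suppose a small resolution $\pi\colon Y\to X$ is given, so the fibre $E$ over the closed point is a connected projective curve of dimension at most one. Cutting with a sufficiently general hyperplane $H$ through the closed point, $X_H=X\cap H$ is an $A_n$ rational double point and the strict transform $\widetilde{X_H}\subseteq Y$ maps to $X_H$ as a resolution which is again crepant (small resolutions being crepant); since a crepant resolution of a rational double point is the minimal one, its exceptional divisor is a chain of exactly $n$ copies of the projective line. For general $H$ these $n$ curves are the traces on $\widetilde{X_H}$ of the irreducible components of $E$, so $E$ is itself a chain of $n$ smooth rational curves. To recover the branch count, I would then follow the strict transform in $Y$ of the reduced surface $\{u=0\}\subseteq X$, whose irreducible components correspond bijectively to the branches of $g$: each of the $n$ contractions realizing $Y\to X$ separates exactly one adjacent pair of branches, so the branches get attached one each to the $n-1$ nodes and the $2$ endpoints of the chain $E$, giving exactly $n+1$ branches.

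The parts I expect to be the real work are, in (b), the chart-by-chart verification that the iterated blow-up is smooth and that the exceptional curves genuinely form a chain, and, in (a), the passage to a general hyperplane section — establishing that for general $H$ the strict transform $\widetilde{X_H}\to X_H$ is exactly the minimal resolution and that its exceptional lines are the general sections of the components of $E$ — together with the combinatorial bookkeeping that turns the length-$n$ chain into the count $n+1$ of branches.
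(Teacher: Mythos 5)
You should first be aware that the paper contains no proof of this statement: Theorem \ref{4.6} is imported verbatim from Katz \cite[Th. 1.1]{Kat} and used as a black box (only its consequences, combined with Kn\"orrer periodicity, are developed in Theorem \ref{4.7}, Theorem \ref{4.8} and Corollary \ref{5.3}). So there is nothing in the paper to compare your argument with; what you have written is an attempted reproof of Katz's theorem. Judged on its own terms, your part (b) is essentially the standard construction and is acceptable as a sketch: the normal form $uv=g(x,y)$ comes from the splitting lemma (the quadratic part of a $cA_n$ equation has rank at least two), the identity $\ord(g)=n+1$ forces the hypothesis ``$n+1$ branches'' to mean $n+1$ pairwise distinct \emph{smooth} branches, and your two-chart computation for the blow-ups of $(u,\,g_0\cdots g_j)$ is correct, terminates in a smooth total space, and is an isomorphism away from the origin with one-dimensional fibre over it. This is exactly the family of small resolutions that, on the algebraic side of the paper, corresponds to the $n!$ cluster tilting objects $M_w$.

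Part (a), however, has a genuine gap precisely where you locate ``the real work''. You assert that the proper transform $\widetilde{X_H}$ of a general hyperplane section is a smooth crepant resolution of the $A_n$ surface singularity, hence the minimal one with $n$ exceptional lines. Bertini only gives smoothness of a general member of $\pi^*|H|$ away from its base locus, and the base locus is exactly the exceptional fibre $E$; along $E$ the pullback of a general hyperplane section is in general only a normal crepant \emph{partial} resolution with residual Du Val points. This failure genuinely occurs for small resolutions of other cDV types (for instance a $cD_4$ point whose small resolution contracts a single curve of length two), so ``small $\Rightarrow$ the general section pulls back to the minimal resolution'' is not a formal consequence of crepancy; the statement that $E$ has exactly $n$ components forming a chain is the substantive content of Katz's theorem and needs his analysis (lengths of exceptional curves, respectively an explicit comparison with the blow-up models of $uv=g$). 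Similarly, your final step --- each contraction ``separates exactly one adjacent pair of branches'', hence $g$ has $n+1$ branches --- presupposes both the normal form and the chain structure you are in the course of proving, and as written is a heuristic rather than an argument. To make (a) self-contained you would essentially have to reproduce Katz's case analysis; note that you cannot instead borrow the paper's Theorem \ref{main3} or Theorem \ref{IR} here, since the ``only if'' direction of Theorem \ref{main3}(a) is itself deduced from Katz's criterion in section \ref{sec4}, so that route would be circular.
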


Using the criterion of Katz together with Kn\"orrer periodicity, we
get additional equivalent conditions in a special case. 

\begin{theorem}\label{4.7}
  Let $(R,{\mathfrak m})$ be an isolated $cA_n$--singularity
  defined by the equation $g(x,y)+zt$. Then the following conditions
  are equivalent in addition to (a)-(d) in Theorem \ref{4.5}.
\begin{itemize}
\item[(e)] Let $R'$ be a one-dimensional singularity defined by $g(x,y)$. Then $\ul{\CM}(R')$ has a cluster tilting object.

\item[(f)] The number of irreducible power series in the prime decomposition of $g(x,y)$ is $n+1$.
\end{itemize}
\end{theorem}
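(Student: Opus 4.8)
The plan is to graft conditions (e) and (f) onto the chain (a)$\Leftrightarrow$(b)$\Leftrightarrow$(c)$\Leftrightarrow$(d) already supplied by Theorem \ref{4.5}, which applies because an isolated $cA_n$-singularity is in particular an isolated cDV singularity. It then suffices to prove (d)$\Leftrightarrow$(e) via Kn\"orrer periodicity and (a)$\Leftrightarrow$(f) via Katz's criterion (Theorem \ref{4.6}). A preliminary observation links the two halves. Since $R$ is an isolated singularity, $g$ is reduced: indeed, if $h^{2}\mid g$ for a non-unit $h$, then $h$ divides $g_{x}$ and $g_{y}$ too, so the singular locus of $R=k[[x,y,z,t]]/(g(x,y)+zt)$ contains the curve $V(h,z,t)$, a contradiction. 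Hence the irreducible power series in the prime decomposition $g=g_{1}\cdots g_{r}$ are exactly the analytic branches of $\{g=0\}$ at the origin, so (f) says precisely that $\{g=0\}$ has $n+1$ distinct branches; moreover $R'=k[[x,y]]/(g)$ is then a reduced plane curve, hence an isolated one-dimensional Gorenstein singularity, so $\ul{\CM}(R')$ is a $\Hom$-finite ($2$-CY) triangulated category in which the notion of cluster tilting object makes sense.

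For (d)$\Leftrightarrow$(e): applying Kn\"orrer periodicity with $d=1$ and $f=g(x,y)$ gives a triangle equivalence $\ul{\CM}(R)=\ul{\CM}(k[[x,y,z,t]]/(g(x,y)+zt))\simeq\ul{\CM}(k[[x,y]]/(g))=\ul{\CM}(R')$. Being a cluster tilting object is a purely triangulated-categorical property ($\add M=\{X\mid\Ext^{1}(M,X)=0\}$ with $\Ext^{1}(M,X)=\ul{\Hom}(M,\Sigma X)$), hence preserved and reflected by a triangle equivalence; so $\ul{\CM}(R)$ has a cluster tilting object if and only if $\ul{\CM}(R')$ does.

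For (f)$\Leftrightarrow$(a): the implication (f)$\Rightarrow$(a) is immediate, since writing $zt=uv$ presents $X=\Spec R$ in the form $g(x,y)+uv$ with $g$ having $n+1$ distinct branches, so Theorem \ref{4.6}(b) produces a small resolution. For (a)$\Rightarrow$(f) the key geometric input is the identification $n+1=\operatorname{ord}(g)$ (the multiplicity of $g$ at $0$): a generic hyperplane section through $0$ of $\{g(x,y)+zt=0\}$ has quadratic part equal to the degree-two part of $g$ plus the restriction of $zt$, a form of corank at most one whose radical (when present) lies in the $(x,y)$-plane along a direction on which the leading form of $g$ does not vanish, so by the splitting lemma the section is the surface singularity $A_{\operatorname{ord}(g)-1}$; thus $n+1=\operatorname{ord}(g)$. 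Since $\operatorname{ord}(g)=\sum_{i}\operatorname{ord}(g_{i})\ge r$ with equality if and only if every $g_{i}\notin{\mathfrak m}^{2}$, a failure of (f) means $r<n+1$, hence some $g_{i}\in{\mathfrak m}^{2}$, hence $\ul{\CM}(R')$ has no cluster tilting object by Theorem \ref{main3}(a); combined with (d)$\Leftrightarrow$(e) and Theorem \ref{4.5}, this contradicts (a). So (a)$\Rightarrow$(f), and chaining with Theorem \ref{4.5} yields the equivalence of (a)--(f).

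The main obstacle I anticipate is this last bookkeeping: pinning down that the subscript of the $cA_{n}$-type equals $\operatorname{ord}(g)$, and ruling out a branch deficiency $r<n+1$ when a small resolution exists. One can sidestep the finer form of Katz's theorem by proving (e)$\Leftrightarrow$(f) directly: by (d)$\Leftrightarrow$(e) and Theorem \ref{main3}(a), condition (e) holds if and only if $g=g_{1}\cdots g_{r}$ with all $g_{i}\notin{\mathfrak m}^{2}$, i.e. if and only if $r=\operatorname{ord}(g)=n+1$, which is (f); Katz's Theorem \ref{4.6} then enters only as the birational-geometry incarnation confirming (a)$\Leftrightarrow$(f), consistently with Theorem \ref{4.5}.
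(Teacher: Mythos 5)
Your overall route is the paper's: you chain (e) and (f) onto Theorem \ref{4.5} via (d)$\Leftrightarrow$(e) from Kn\"orrer periodicity (cluster tilting being a triangle-categorical notion, this part is fine and matches the paper, which only adds the remark that the Kn\"orrer equivalence is induced by an exact functor preserving projectives) and via Katz's criterion for the link with (a); your (f)$\Rightarrow$(a) via Theorem \ref{4.6}(b) is exactly the paper's step.

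The genuine problem is your treatment of the converse direction. Both your main argument for (a)$\Rightarrow$(f) and your proposed ``sidestep'' (e)$\Leftrightarrow$(f) invoke the \emph{only if} direction of Theorem \ref{main3}(a), i.e.\ the statement that $\ul{\CM}(R')$ has no cluster tilting object as soon as some branch $g_i$ lies in ${\mathfrak m}^2$. But in this paper that direction of Theorem \ref{main3}(a) is \emph{not} proved homologically: section \ref{onedim} only establishes the ``if'' part (Theorem \ref{XAA}), while the ``only if'' part is deduced in section \ref{sec4} from Theorem \ref{4.7} itself (via Corollary \ref{5.3}), so your argument is circular relative to the paper's logical structure. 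The paper avoids this entirely by reading Katz's result (Theorem \ref{4.6}, i.e.\ \cite[Th.~1.1]{Kat}) as a criterion for the given presentation $g(x,y)+uv$: a small resolution exists if and only if the given $g$ has $n+1$ distinct branches, which yields (a)$\Leftrightarrow$(f) in one line. Your auxiliary identification $n+1=\ord(g)$ via a generic hyperplane section is essentially Proposition \ref{5.1}(e) (there proved carefully with finite determinacy, Theorem \ref{T:crit2}), and is not needed once Katz is used in this form; if you insist on your route, you must either supply an independent (non-circular) proof that a singular branch obstructs cluster tilting in $\ul{\CM}(R')$, or simply quote Katz's equivalence for the given $g$ as the paper does.
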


\begin{proof}
(a)$\Leftrightarrow$(f) This follows from Theorem \ref{4.6}.

(d)$\Leftrightarrow$(e) By the Kn\"orrer periodicity there is an
equivalence of triangulated categories between the stable categories 
$\underline{\CM}(R) \cong \underline{\CM}(R')$.
For, the equivalence of these stable categories given in \cite{Kn,S} is induced by an exact functor taking projectives to projectives.
\end{proof}

\begin{theorem}\label{4.8}
  Assume that the equivalent conditions in Theorem \ref{4.7} are satisfied. Then the following numbers are equal.
\begin{itemize}
  \item[(a)] One plus the number of irreducible components of the exceptional curve of a small resolution of $\Spec(R)$.
\item[(b)] The number of irreducible power series in the prime decomposition of $g(z,t)$.
\item[(c)] The number of simple  modules of non-commutative crepant resolutions of $(R,{\mathfrak m})$.
\item[(d)] One plus the number of non-isomorphic indecomposable summands of
  basic cluster tilting objects in $\ul{\CM}(R)$.
\end{itemize}
\end{theorem}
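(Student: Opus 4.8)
The plan is to prove that all four numbers equal $n+1$: quantities (a) and (b) are read off the birational picture, while (c) and (d) come from the counting in Theorem~\ref{main3} transported across Kn\"orrer periodicity. For (a) and (b), note that by assumption the equivalent conditions of Theorem~\ref{4.7} hold, so $X=\Spec(R)$ admits a small resolution $Y\to X$; by Theorem~\ref{4.6}(a) the exceptional curve of such a resolution is a chain of $n$ projective lines, hence has exactly $n$ irreducible components, so (a) $=n+1$, while the same theorem — equivalently, condition (f) of Theorem~\ref{4.7} — says $g$ has exactly $n+1$ distinct branches at the origin, i.e.\ $n+1$ irreducible factors in $k[[x,y]]$, so (b) $=n+1$.

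For (d) I would invoke condition (e) of Theorem~\ref{4.7}: the category $\ul{\CM}(R')$ of the one-dimensional hypersurface $R'=k[[x,y]]/(g)$ has a cluster tilting object. Since $R$ is an isolated $cA_n$-singularity, $g$ is reduced, so Theorem~\ref{main3}(a) applies and gives a factorization $g=g_1\cdots g_m$ with $g_i\notin{\mathfrak m}^2$; any element of ${\mathfrak m}\setminus{\mathfrak m}^2$ is irreducible in $k[[x,y]]$ (a product of two non-units would lie in ${\mathfrak m}^2$), so $m$ is the number of irreducible factors of $g$, i.e.\ $m=n+1$. Theorem~\ref{main3}(b) then shows that a basic cluster tilting object of $\ul{\CM}(R')$ has $m-1=n$ indecomposable summands. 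Kn\"orrer periodicity provides a triangle equivalence $\ul{\CM}(R)\simeq\ul{\CM}(R')$ — this is exactly how (d)$\Leftrightarrow$(e) was proved in Theorem~\ref{4.7} — and a triangle equivalence carries cluster tilting objects to cluster tilting objects, so a basic cluster tilting object of $\ul{\CM}(R)$ also has $n$ indecomposable summands; hence (d) $=n+1$.

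For (c), recall from Theorem~\ref{IR} and its proof (using \cite[Th.~8.9]{IR} to pass to a Cohen-Macaulay generator) that a non-commutative crepant resolution of $R$ is, up to derived equivalence, of the form $A=\End_R(M)$ for a basic cluster tilting object $M$ of $\CM(R)$; since derived equivalent algebras have the same number of simple modules (cf.\ \cite{I2,IR}), (c) equals the number of indecomposable summands of such an $M$. Now $R$ is projective-injective in the Frobenius category $\CM(R)$, so $R\mid M$; writing $M\cong R\oplus M'$ with $M'$ having no free summand, the image $\ol{M'}$ in $\ul{\CM}(R)$ is a basic cluster tilting object whose indecomposable summands are exactly those of $M$ other than $R$. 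Hence the number of indecomposable summands of $M$ is one more than that of $\ol{M'}$, which equals $1+n$ by the previous paragraph, so (c) $=n+1$ and all four numbers agree. The point I expect to require the most care is this last dictionary: one must use that every non-commutative crepant resolution here arises, up to derived equivalence, from a cluster tilting generator, and that the number of indecomposable summands of a basic cluster tilting object is independent of the choice — the latter being guaranteed by the explicit classification in Theorem~\ref{main3} carried over by Kn\"orrer periodicity (or by cluster tilting mutation theory). Everything else is a direct substitution into the equivalences already established in Theorems~\ref{4.6} and~\ref{4.7}.
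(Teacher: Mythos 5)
Your argument is correct, but it is organized quite differently from the paper's proof, which is a three-line citation: (a)$=$(b) by Theorem~\ref{4.6}, (a)$=$(c) by \cite[Th.~3.5.6]{V1}, and (c)$=$(d) by \cite[Cor.~8.8]{IR}. You instead show that each of the four numbers equals $n+1$: for (a) and (b) this is the same use of Katz's theorem, but for (d) you stay inside the paper, combining condition (e) of Theorem~\ref{4.7} with the classification of Theorem~\ref{main3} (i.e.\ Theorem~\ref{XAB}, proved homologically in section~\ref{onedim}, so there is no circularity) and the Kn\"orrer equivalence $\ul{\CM}(R)\simeq\ul{\CM}(R')$; your observations that isolatedness forces $g$ reduced, that factors outside ${\mathfrak m}^2$ are irreducible, and that passing between $\CM$ and $\ul{\CM}$ costs exactly the free summand $R$, are all sound. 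What this buys is an explicit value ($n+1$) and independence, for (d), of the external geometric references. The one place where you are really just re-deriving the paper's citation is (c): the assertion that an arbitrary non-commutative crepant resolution is derived equivalent to $\End_R(M)$ for a cluster tilting generator $M$ does not follow from the proof of Theorem~\ref{IR} as reproduced in the paper (which only produces \emph{some} cluster-tilting NCCR from a given one, with no comparison between the two); it needs the uniqueness-up-to-derived-equivalence results of Van den Bergh and Iyama--Reiten, which is precisely the content of the references \cite[Th.~3.5.6]{V1} and \cite[Cor.~8.8]{IR} that the paper invokes for (a)$=$(c) and (c)$=$(d). Since you do point to \cite{IR} (and derived invariance of the number of simples over a semiperfect algebra is standard), this is an acceptable, if slightly under-specified, appeal to the same external input rather than a gap; just be aware that well-definedness of the numbers in (a) and (c) over all choices of small resolution and of NCCR is part of what those citations are carrying.
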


\begin{proof}
(a) and (b) are equal by Theorem \ref{4.6}.

(a) and (c) are equal by \cite[Th. 3.5.6]{V1}.

(c) and (d) are equal by \cite[Cor. 8.8]{IR}.
\end{proof}

\section{Application to curve singularities}\label{sec4}


%
%

In this section we apply results in the previous section to some curve
singularities to investigate whether they have some cluster tilting object or
not. In addition to simple singularities, we study some other nice singularities.
In what follows we refer to \cite{AGV} as a general reference for classification 
of singularities.    

To apply results in previous sections to minimally elliptic
singularities, we also consider a three-dimensional hypersurface singularity 
$$
T_{p,q,2,2}(\lambda) = k[[x,y,u,v]]/(x^p + y^q + \lambda x^2 y^2 + uv).
$$

%
%
%

\medskip
To apply Theorem \ref{4.7} to a curve singularity, we have to know
that the corresponding three-dimensional singularity is $cA_n$.
It is given by the following result, where
we denote by $\ord(g)$ the degree of the lowest term of a power series
$g$. 

\begin{proposition}\label{5.1}
We have the following properties of three-dimensional 
hypersurface singularities: 
\begin{itemize}
\item[(a)] $A_n$ ($1\le n$) is a $cA_1$--singularity,
\item[(b)] $D_n$ ($4\le n$) and $E_n$ ($n=6,7,8$) are $cA_2$--singularities,
\item[(c)] $T_{3,q,2,2}(\lambda)$ ($6\le q$) is a $cA_2$--singularity,
\item[(d)] $T_{p,q,2,2}(\lambda)$ ($4\le p\le q$) is a $cA_3$--singularity, 
\item[(e)] $k[[x,y,z,t]]/(x^2+y^2+g(z,t))$ ($g\in k[[z,t]]$) is a
  $cA_m$--singularity if $m=\ord(g)-1\ge1$.
\end{itemize}
\end{proposition}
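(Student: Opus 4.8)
The statement to prove is Proposition \ref{5.1}, which asserts that certain three-dimensional hypersurface singularities are of type $cA_m$ for the indicated values of $m$. The key observation is that a $cA_m$-singularity is, by the definition given earlier, a three-dimensional singularity whose intersection with a generic hyperplane $ax+by+cz+dt=0$ is an $A_m$ surface singularity; and an isolated surface double point is of type $A_m$ precisely when it can be written (after a formal change of coordinates) as $u_1^2+u_2^2+u_3^{m+1}=0$. So in each case the plan is to take a generic hyperplane section, reduce to three variables, and identify the resulting surface singularity by computing the order of vanishing and applying the splitting lemma (Morse lemma with parameters) to put the equation in normal form.

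First I would recall the elementary fact that if $h(z_1,z_2,z_3)\in k[[z_1,z_2,z_3]]$ has a nondegenerate quadratic part of rank $2$ — say the quadratic part is equivalent to $z_1^2+z_2^2$ — then by the splitting lemma $h$ is formally equivalent to $z_1^2+z_2^2+\phi(z_3)$, and if moreover $\ord(\phi)=m+1$ then $\phi(z_3)$ is a unit times $z_3^{m+1}$, so $h$ defines an $A_m$ singularity. The bulk of the proof is then checking, for each of (a)--(e), that a generic hyperplane section has quadratic part of rank exactly $2$ and that the ``remaining'' variable enters with the correct order. For (e), $x^2+y^2+g(z,t)$ with $\ord(g)=m+1\ge 1$: a generic section sets, say, $t=ax+by+cz$ with $(a,b,c)$ generic; substituting, the quadratic part is still $x^2+y^2$ plus lower-order contributions from $g$ (which vanish to order $\ge 2$), and $g(z,ax+by+cz)$ restricted appropriately retains a term of order $m+1$ in the surviving variable because $g$ has an order-$(m+1)$ term not annihilated by a generic substitution — hence $A_m$. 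For (a), $A_n$ in the form $x^2+y^{n+1}+z_2^2+z_3^2$: this is already $x^2 + z_2^2 + z_3^2 + y^{n+1}$, four variables, and a generic hyperplane section lowers to three variables; generically the quadratic part $x^2+z_2^2+z_3^2$ restricts to a rank-$2$ form and the $y^{n+1}$ term survives to give $A_1$ — note the claim is $cA_1$ for all $n$, because a generic hyperplane through the origin cuts the $y$-axis transversally, killing the $y$-direction to first order and leaving a rank-$2$ quadric, i.e.\ an $A_1$ surface point. The cases (b), (c), (d) are the substantive ones: one must check that for $D_n$, $E_6$, $E_7$, $E_8$ (viewed as $x^3+(\cdots)+z_2^2+z_3^2$ etc.), and for $T_{3,q,2,2}(\lambda)$ and $T_{p,q,2,2}(\lambda)$, a generic hyperplane section has quadratic part of rank exactly $2$ and the transverse normal form has $\phi$ of order $3$ (giving $A_2$) or order $4$ (giving $A_3$). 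Here the $uv$ summand in the $T_{p,q,2,2}$ cases contributes a rank-$2$ piece $uv\sim u'^2 - v'^2$ already; so after a generic section the surviving surface equation looks like $u'^2+v'^2+\psi(w)$ where $w$ is a generic linear combination of $x,y$ and the remaining coordinate, and $\psi$ picks up the lowest-order term of $x^p+y^q+\lambda x^2y^2$ restricted to a generic line through $0$ in the $(x,y)$-plane, which is order $3$ when $p=3$ (since then the $x^3$-term survives generically) and order $4$ when $p\ge 4$ (since then the $\lambda x^2y^2$-term, or the leading term, is order $4$ on a generic line) — giving $cA_2$ and $cA_3$ respectively. Similarly the Dynkin cases: for $D_n$, $n\ge 4$, and $E_n$, the quadratic part in the two variables $z_2,z_3$ already has rank $2$, the remaining surface (in $x,y$) after a generic section has its lowest-degree term of degree $3$ ($x^2y$, $x^3$), so $\psi$ has order $3$ and we get $cA_2$.

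The main obstacle — and the only point requiring genuine care rather than bookkeeping — is verifying that the hyperplane section is \emph{generic} in the precise sense of landing in a nonempty Zariski open subset of $k^4$ of coefficient tuples, i.e.\ that the ``bad'' tuples (where the quadratic part drops rank below $2$, or where the order of the transverse term jumps) form a proper closed subset. Concretely one must exhibit, for each case, an explicit choice of $(a,b,c,d)$ making the section an $A_m$ singularity, and then argue semicontinuity: the locus where the section has at worst an $A_m$ singularity is open (this uses upper semicontinuity of Milnor/Tjurina number, or directly that having rank-$2$ quadratic part and prescribed order is an open condition on coefficients). I would organize this as a single lemma — ``if $F(x_1,\dots,x_4)$ has an isolated singularity at $0$ and some hyperplane section is $A_m$, then a generic one is'' — proved by the splitting-lemma normal form plus semicontinuity, and then in each of (a)--(e) merely produce one convenient hyperplane and compute its section's normal form, which is a routine substitution-and-Morse-lemma calculation. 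I expect (e) to be the cleanest (it is essentially the definition of $cA_m$ with $u=x+iy$, $v=x-iy$) and the $T_{p,q,2,2}$ cases (c),(d) to require the most attention to the interaction between the $uv$ term and the $x^p+y^q+\lambda x^2y^2$ term under a genuinely generic section.
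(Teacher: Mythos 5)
Your computational core (generic hyperplane section, a rank-two quadratic part plus a transverse term of order $m+1$, splitting lemma/normal form plus finite determinacy) is the same as the paper's, which in addition shortens the bookkeeping by noting that (a)--(d) are literally special cases of (e) after rewriting $uv$ and $z_2^2+z_3^2$ as sums of two squares. But your organizing lemma is false, and this is a genuine gap: it is not true that ``if some hyperplane section is $A_m$, then a generic one is.'' Upper semicontinuity of the Milnor number goes the wrong way for your purpose: in the family of sections the generic Milnor number is the \emph{minimum}, so exhibiting one section of type $A_m$ only shows that the generic section is $A_{m'}$ with $m'\le m$; it does not exclude that the generic section is strictly less degenerate. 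The proposition itself supplies a counterexample to your lemma: the three-dimensional $A_n$ singularity $x^2+y^{n+1}+z_2^2+z_3^2$ has the special section $z_3=0$ of type $A_n$, yet by part (a) it is $cA_1$, because the generic section has a rank-three quadratic part (as you yourself observe in your sketch of (a)). So the strategy ``produce one convenient explicit hyperplane, compute its section, conclude by semicontinuity'' cannot deliver the lower bound needed to pin the generic type down to exactly $A_m$, and taken literally it would even prove false statements such as ``$A_n$ is $cA_n$.''

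The repair is exactly what the paper does: run the normal-form computation for an \emph{arbitrary} hyperplane subject to an explicit nonempty Zariski-open condition, so that the conclusion holds on a whole open set of hyperplanes at once. Writing the hyperplane as $t=\alpha x+\beta y+\gamma z$ and the lowest form of $g$ as $a_0z^{m+1}+a_1z^mt+\cdots+a_{m+1}t^{m+1}$, one imposes $a_0+a_1\gamma+\cdots+a_{m+1}\gamma^{m+1}\neq 0$ (the degree-$(m+1)$ form of $g$ does not vanish on the corresponding line in the $(z,t)$-plane); for every such hyperplane the section is $x^2+y^2+(\mbox{a degree }m+1\mbox{ form with nonzero }z^{m+1}\mbox{ coefficient})+(\mbox{higher terms})$, and the completion-of-squares argument of Lemma \ref{L:simple}, together with $\mu(x^2+y^2+z^{m+1})=m$, Lemma \ref{milnor} and the finite-determinacy Theorem \ref{T:crit2}, gives type $A_m$ on this open set, hence for the generic hyperplane (for $m=1$ one argues instead that the quadratic part is generically nondegenerate of rank three). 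Your case-by-case order counts (order $3$ for $D_n$, $E_n$, $T_{3,q,2,2}$; order $4$ for $T_{p,q,2,2}$ with $4\le p\le q$; rank-three quadratic part, hence $A_1$, for $A_n$) are correct once recast in this form, but without replacing the false lemma the argument does not go through.
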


We shall give a detailed proof at the end of this section.
In view of Theorem \ref{4.7} and Proposition \ref{5.1}, we have the
following main result in this section.

\begin{theorem}
\begin{itemize}
\item[(a)] A simple three-dimensional singularity satisfies the
  equivalent conditions in Theorem \ref{4.7} if and only if it is of
  type $A_n$ ($n$ is odd) or $D_n$ ($n$ is even).
\item[(b)] A $T_{p,q,2,2}(\lambda)$--singularity satisfies the
  equivalent conditions in Theorem \ref{4.7} if and only if 
$p = 3$ and $q$ is even or if both $p$ and $q$ are even.
\item[(c)] A singularity $k[[x,y,u,v]]/(uv+f_1\cdots f_n)$ with
irreducible and mutually prime $f_i\in(x,y)\subset k[[x,y]]$ ($1\le i\le n$) satisfies the
equivalent conditions in Theorem \ref{4.7} if and only if 
$f_i\notin(x,y)^2$ for any $i$.
\end{itemize}
\end{theorem}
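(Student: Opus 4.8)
The plan is to reduce all three statements to condition (f) of Theorem~\ref{4.7}, which says: for an isolated $cA_n$--singularity presented in the form $g(x,y)+zt$, the equivalent conditions hold if and only if the prime decomposition of $g(x,y)$ has exactly $n+1$ irreducible factors, equivalently $g$ has $n+1$ analytic branches. So for each family I would (i) put the defining equation into the shape $g(x,y)+zt$, (ii) read off the $cA$--index from Proposition~\ref{5.1}, and (iii) count the branches of the plane curve germ $g(x,y)=0$ and check whether the number equals the index plus one. Isolatedness is not an issue: the relevant $g$ is reduced in every case, so $R'=k[[x,y]]/(g)$ is an isolated curve singularity and, by Kn\"orrer periodicity, so is the three-dimensional hypersurface. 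The elementary fact used throughout is that a binomial $x^a-y^b$ splits into exactly $\gcd(a,b)$ pairwise distinct smooth branches --- in particular it is irreducible exactly when $\gcd(a,b)=1$ --- and that the number of branches is an analytic invariant, so any convenient normal form for $g$ may be used.

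For part (a), write a simple three-dimensional singularity as $k[[x,y,z_2,z_3]]/(f)$ and absorb $z_2^2+z_3^2$ into $zt$ by putting $z=z_2+iz_3$, $t=z_2-iz_3$; then $f=g(x,y)+zt$ with $g=x^2+y^{n+1}$, $g=x^2y+y^{n-1}=y(x^2+y^{n-2})$, $g=x^3+y^4$, $g=x(x^2+y^3)$, $g=x^3+y^5$ in types $A_n$, $D_n$, $E_6$, $E_7$, $E_8$ respectively. By Proposition~\ref{5.1}(a),(b) the first is $cA_1$ and the others are $cA_2$, so condition (f) demands two branches in type $A_n$ and three branches in the remaining types. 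Counting with the binomial rule: $x^2+y^{n+1}$ has two branches iff $n$ is odd and one otherwise; $y(x^2+y^{n-2})$ has three branches iff $n$ is even and two otherwise (using $n\ge 4$); and $x^3+y^4$, $x(x^2+y^3)$, $x^3+y^5$ have one, two and one branches. Comparing with the required numbers selects exactly $A_n$ with $n$ odd and $D_n$ with $n$ even.

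For part (b), $T_{p,q,2,2}(\lambda)=k[[x,y,u,v]]/(g+uv)$ with $g=x^p+y^q+\lambda x^2y^2$, which by Proposition~\ref{5.1}(c),(d) is $cA_2$ when $p=3$ and $cA_3$ when $p\ge 4$; so condition (f) asks for three, respectively four, branches. I would read off the branch count from the analytic normal forms recorded in the introduction. In the cusp case $g$ is equivalent to $(x^{p-2}-y^2)(x^2-y^{q-2})$; since $g$ is reduced the two factors are coprime, so the number of branches of $g$ equals $\gcd(p-2,2)+\gcd(2,q-2)$. For $p=3$ this is $1+\gcd(2,q-2)$, which equals three iff $q$ is even; for $p\ge 4$ it is $\gcd(p-2,2)+\gcd(2,q-2)$, which equals four iff both $p$ and $q$ are even. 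The simply elliptic boundary cases $T_{3,6}(\lambda)=k[[x,y]]/(y(y-x^2)(y-\lambda x^2))$ and $T_{4,4}(\lambda)=k[[x,y]]/(xy(x-y)(x-\lambda y))$ have three and four distinct smooth branches respectively, matching $cA_2$ and $cA_3$ and consistent with $q=6$ and $p=q=4$ being even. Altogether these comparisons give exactly the stated criterion.

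For part (c), $g=f_1\cdots f_n$ is already a prime decomposition into $n$ pairwise non-associate irreducibles, so $g$ is reduced and has exactly $n$ branches. Over the algebraically closed field of characteristic zero there is a linear change of the variables $u,v$ turning $uv$ into $a^2+c^2$, so $k[[x,y,u,v]]/(uv+g)\cong k[[x,y,a,c]]/(a^2+c^2+g(x,y))$, which by Proposition~\ref{5.1}(e) is a $cA_m$--singularity with $m=\ord(g)-1$, provided $\ord(g)\ge 2$ (if $\ord(g)=1$ the hypersurface is smooth and the conditions hold trivially). Condition (f) then reads $n=m+1=\ord(g)=\sum_{i=1}^n\ord(f_i)$; since every $f_i\in(x,y)$ has $\ord(f_i)\ge 1$, this sum is $\ge n$, with equality if and only if $\ord(f_i)=1$, i.e.\ $f_i\notin(x,y)^2$, for all $i$ --- precisely the assertion. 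The only step requiring real care is the branch counting in part (b): one must use the correct analytic normal forms for the $T_{p,q}(\lambda)$ germs, apply the binomial irreducibility criterion correctly, and treat separately the degeneration at $p=3$ (where $x^{p-2}-y^2=x-y^2$ becomes smooth) and the two simply elliptic boundary values; parts (a) and (c) are then bookkeeping on top of Theorem~\ref{4.7} and Proposition~\ref{5.1}.
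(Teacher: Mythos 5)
Your proposal is correct and follows essentially the same route as the paper: reduce everything to condition (f) of Theorem \ref{4.7}, read off the $cA_m$--index from Proposition \ref{5.1}, and count the irreducible factors of $g(x,y)$ in each normal form (binomial/gcd counting for the simple and cusp cases, the order computation $\sum\ord(f_i)$ for part (c)). The only differences are cosmetic — you make explicit the coordinate change turning $z_2^2+z_3^2$ into $zt$ and flag the degenerate smooth case in (c), both of which the paper leaves implicit.
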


\begin{proof}
Each singularity is $cA_m$ by Proposition \ref{5.1}, and defined by an equation
of the form $g(x,y)+uv$. By Theorem \ref{4.7}, we only have to check
whether the number of irreducible power series factors of $g(x,y)$ is $m+1$ or not.

(a) For an $A_n$--singularity, we have $m=1$ and $g(x,y)=x^2+y^{n+1}$.
So $g$ has two factors if and only if $n$ is odd.

For a $D_n$--singularity, we have $m=2$ and $g(x,y)=(x^2+y^{n-2})y$.
So $g$ has three factors if and only if $n$ is even.

For an $E_n$--singularity, we have $m=2$ and $g(x,y)=x^3+y^4$,
$x(x^2+y^3)$ or $x^3+y^5$. In each case, $g$ does not have three factors.

(b) First we consider the simply elliptic case. We have $m=2$ and
$g(x,y)=y(y-x^2)(y-\lambda x^2)$ for $(p,q)=(3,6)$,
and $m=3$ and $g(x,y)=xy(x-y)(x-\lambda y)$ for $(p,q)=(4,4)$.
In both cases, $g$ has $m+1$ factors.

Now we consider the cusp case.
We have $m=2$ for $p=3$ and $m=3$ for $p>3$, and $g(x,y)=(x^{p-2}-y^2)(x^2-y^{q-2})$.
So $g$ has $m+1$ factors if and only if $p=3$ and $q$ is even or if
both $p$ and $q$ are even. 

(c) We have $m=\sum_{i=1}^n\ord(f_i)-1$ and $g=f_1\cdots f_n$. So $g$ 
has $m+1$ factors if and only if $\ord(f_i)=1$ for any $i$.
\end{proof}

Immediately we have the following conclusion.

\begin{corollary}\label{5.3}
\begin{itemize}
\item[(a)] A simple curve singularity $R$
has a cluster tilting object if and only if it is of
type $A_n$ ($n$ is odd) or $D_n$ ($n$ is even).
The number of non-isomorphic indecomposable summands
of basic cluster tilting objects in $\ul{\CM}(R)$ is $1$ for type $A_n$ ($n$ is odd) and $2$
for type $D_n$ ($n$ is even).
\item[(b)] A $T_{p,q}(\lambda)$-singularity $R$ has a cluster tilting
object if and only if 
$p = 3$ and $q$ is even or if both $p$ and $q$ are even.
The number of non-isomorphic indecomposable summands
of basic cluster tilting objects in $\ul{\CM}(R)$ is $2$ if $p=3$ and $q$ is even, and $3$ if
both $p$ and $p$ are even.
\item[(c)] A singularity $R=k[[x,y]]/(f_1\cdots f_n)$ with
irreducible and mutually prime $f_i\in(x,y)\subset k[[x,y]]$ ($1\le i\le n$) 
has a cluster tilting object if and only if $f_i\notin(x,y)^2$ for any $i$.
In this case, the number of non-isomorphic indecomposable summands
of basic cluster tilting objects in $\ul{\CM}(R)$ is $n-1$.
\end{itemize}
\end{corollary}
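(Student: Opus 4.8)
The plan is to obtain Corollary~\ref{5.3} by assembling the preceding Theorem with Theorems~\ref{4.7} and~\ref{4.8} and Proposition~\ref{5.1}, using Kn\"orrer periodicity as the bridge between the one-dimensional and three-dimensional pictures. Given a one-dimensional reduced hypersurface singularity $R=k[[x,y]]/(g)$, I would first pass to the three-dimensional hypersurface singularity $\widetilde{R}:=k[[x,y,u,v]]/(g+uv)$. By Kn\"orrer periodicity there is a triangle equivalence $\ul{\CM}(R)\simeq\ul{\CM}(\widetilde{R})$, and, as recalled in the proof of Theorem~\ref{4.7}, it is induced by an exact functor carrying projectives to projectives; consequently it sends cluster tilting objects to cluster tilting objects and preserves the number of non-isomorphic indecomposable summands. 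Hence $\ul{\CM}(R)$ has a cluster tilting object precisely when $\widetilde{R}$ satisfies condition~(d) of Theorem~\ref{4.5}, that is, the equivalent conditions of Theorem~\ref{4.7}, once $\widetilde{R}$ has been recognized as an isolated $cA_m$ singularity; the latter is supplied by Proposition~\ref{5.1} (using, for the form $g(x,y)+uv$, that $uv$ can be written as a sum of two squares, so this is the situation of Proposition~\ref{5.1}(e)).

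The next step is to identify $\widetilde{R}$ in each case and invoke the appropriate part of the preceding Theorem. If $R$ is a simple curve singularity of Dynkin type $\Delta$, then, since the base field is algebraically closed of characteristic zero, a linear change of coordinates turning $uv$ into $z^2+t^2$ identifies $\widetilde{R}$ with the standard three-dimensional simple singularity of type $\Delta$ (for $D_n$ this amounts to rewriting $x^2y+y^{n-1}+uv$ in the normal form appearing in Proposition~\ref{5.1}), so part~(a) of the preceding Theorem applies. If $R=T_{p,q}(\lambda)$, then $\widetilde{R}=T_{p,q,2,2}(\lambda)$ and part~(b) applies. If $R=k[[x,y]]/(f_1\cdots f_n)$ with the $f_i$ irreducible and mutually prime, then $\widetilde{R}=k[[x,y,u,v]]/(uv+f_1\cdots f_n)$ and part~(c) applies. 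In each case the resulting criterion is exactly the existence statement claimed in Corollary~\ref{5.3}.

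For the numerical statements I would use Theorem~\ref{4.8}: when the equivalent conditions hold, one plus the number of non-isomorphic indecomposable summands of a basic cluster tilting object in $\ul{\CM}(\widetilde{R})$ equals the number of irreducible power series in the prime decomposition of the defining curve equation, which by Theorem~\ref{4.7}(f) equals $m+1$ when $\widetilde{R}$ is $cA_m$. Since the Kn\"orrer equivalence preserves the number of indecomposable summands, the count for $\ul{\CM}(R)$ is again $m$. Reading $m$ off from Proposition~\ref{5.1} then gives the tables: $m=1$ for simple type $A_n$, $m=2$ for simple type $D_n$ and for $T_{3,q}(\lambda)$, $m=3$ for $T_{p,q}(\lambda)$ with $p,q$ even, and $m=\sum_{i=1}^n\ord(f_i)-1=n-1$ when every $f_i\notin(x,y)^2$.

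There is no substantial difficulty in this argument; the corollary is a formal consequence of the material already established (and part~(c) overlaps with Theorems~\ref{XAA}--\ref{XAB}, obtained homologically in Section~\ref{onedim}). The only points requiring attention are bookkeeping ones: matching the Kn\"orrer partner $\widetilde{R}$ of each curve singularity with the three-dimensional singularity named in the preceding Theorem and in Proposition~\ref{5.1} (in particular checking that $uv$ and $z^2+t^2$ define isomorphic singularities, which uses that $\sqrt{-1}\in k$), verifying that each $\widetilde{R}$ is indeed an isolated singularity — which follows since the $f_i$ are mutually prime, so $R$ is a reduced curve singularity — so that Theorems~\ref{4.5}--\ref{4.8} apply, and keeping the index $n$ of the simple or minimally elliptic singularity notationally separate from the index $m$ of its $cA_m$-type.
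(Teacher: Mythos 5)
Your proposal is correct and follows exactly the paper's intended route: the paper deduces the corollary ``immediately'' from the preceding theorem together with condition (e) of Theorem \ref{4.7} and the count in Theorem \ref{4.8}, transported to the curve via the Kn\"orrer equivalence, which is precisely what you spell out (including the harmless bookkeeping of identifying $uv$ with $z^2+t^2$ and reading off $m$ from Proposition \ref{5.1}).
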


In view of Theorem \ref{XAB}, we have completed the proof of Theorem \ref{main3}.

In the rest of this section, we shall prove Proposition \ref{5.1}.

Let $k$ be an algebraically closed field of characteristic zero, 
$R = k[[x_1,x_2,\dots,x_n]]$ the local ring of formal power series
and ${\mathfrak m}$ its maximal ideal. 
We shall need the following standard notions.

\begin{definition}
For $f \in {\mathfrak m}^2$ we 
denote by $J(f) = \langle \frac{\partial f}{\partial x_1},\dots,
\frac{\partial f}{\partial x_n} \rangle$ its Jacobi ideal. 
The Milnor number $\mu(f)$ is defined as
$$
\mu(f):= \dim_k(R/J(f)).
$$
\end{definition}

\noindent
The following lemma is standard (see for example \cite{AGV, GLSh}):

\begin{lemma}
A  hypersurface singularity $f=0$ is isolated if and only if $\mu(f) < \infty$. 
\end{lemma}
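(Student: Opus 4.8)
The plan is to reformulate the geometric hypothesis as a statement about radicals of ideals in $R=k[[x_1,\dots,x_n]]$. By the Jacobian criterion, the singular locus of $V(f):=\Spec(R/(f))$ is the closed subset defined by the ideal $(f)+J(f)$; since $f\in\mathfrak{m}^2$ every $\partial f/\partial x_i$ lies in $\mathfrak{m}$, so the origin always belongs to this locus and $V(f)$ is genuinely singular there. Hence ``$f=0$ is an isolated singularity'' means exactly that $\sqrt{(f)+J(f)}=\mathfrak{m}$, while ``$\mu(f)<\infty$'' means that $R/J(f)$ has finite length, equivalently Krull dimension zero, equivalently $\sqrt{J(f)}=\mathfrak{m}$ (for a Noetherian local $k$-algebra with residue field $k$, finite $k$-dimension, finite length, and dimension zero all coincide). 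So the lemma reduces to the equivalence $\sqrt{(f)+J(f)}=\mathfrak{m}\iff\sqrt{J(f)}=\mathfrak{m}$.

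One implication is trivial: $\sqrt{J(f)}=\mathfrak{m}$ forces $\sqrt{(f)+J(f)}=\mathfrak{m}$, since $J(f)\subseteq (f)+J(f)\subseteq\mathfrak{m}$; geometrically, the singular locus is contained in $V(J(f))=\{0\}$ and is nonempty, hence equals $\{0\}$. For the converse --- the substantial direction --- I would prove the single fact that $f\in\sqrt{J(f)}$: granting this, $(f)+J(f)\subseteq\sqrt{J(f)}$, so $\sqrt{(f)+J(f)}=\mathfrak{m}$ immediately yields $\sqrt{J(f)}=\mathfrak{m}$. The statement $f\in\sqrt{J(f)}$ says set-theoretically that $V(J(f))\subseteq V(f)$ near the origin, and I would deduce it from the curve selection lemma (or its formal analogue, available since $k$ is algebraically closed of characteristic zero): if $V(J(f))$ were strictly bigger than $\{0\}$ it would contain a nonconstant formal arc $\gamma(t)$ with $\gamma(0)=0$ along which every $\partial f/\partial x_i$ vanishes; then $\frac{d}{dt}f(\gamma(t))=\sum_i (\partial f/\partial x_i)(\gamma(t))\,\gamma_i'(t)=0$, so in characteristic zero $f(\gamma(t))$ is the constant $f(0)=0$, and $\gamma$ is a nonconstant arc inside $V((f)+J(f))=\{0\}$, a contradiction. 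Alternatively one can invoke the classical fact that $f$ lies in the integral closure $\overline{J(f)}$ of its Jacobi ideal, whence $f\in\overline{J(f)}\subseteq\sqrt{J(f)}$.

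The main obstacle is precisely this step, establishing $f\in\sqrt{J(f)}$, equivalently bounding $V(J(f))$ by the singular locus $V((f)+J(f))$: this is the one place where genuine input is needed --- the curve selection lemma or the integral-closure statement --- and essentially the only place where the characteristic-zero hypothesis enters, through the implication $\frac{d}{dt}(f\circ\gamma)=0\Rightarrow f\circ\gamma$ constant. Everything else (the Jacobian criterion, the translation between finite $k$-dimension and dimension zero, and the easy implication) is routine. Since the lemma is entirely standard I would in the final version simply refer to \cite{AGV,GLSh}, but the argument sketched above is what lies behind it.
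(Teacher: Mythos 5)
Your proposal is correct, and there is nothing in the paper to measure it against: the paper does not prove this lemma at all, but simply declares it standard and refers to \cite{AGV,GLSh}. Your sketch is exactly the standard argument behind those references --- reduce to $\sqrt{(f)+J(f)}=\mathfrak{m}\Leftrightarrow\sqrt{J(f)}=\mathfrak{m}$ via the Jacobian criterion, note one inclusion is trivial, and obtain the other from $f\in\sqrt{J(f)}$ (formal curve selection, or $f\in\overline{J(f)}$) --- and you correctly isolate the only place where characteristic zero is needed; indeed the lemma fails in characteristic $p$ (e.g.\ $f=x^p+y^{p+1}$ defines an isolated curve singularity, yet $J(f)=(y^p)$ gives $\mu(f)=\infty$).
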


\begin{definition}[\cite{AGV}]
Two hypersurface singularities $f= 0$ and $g= 0$ are called
\emph{right} equivalent ($f \stackrel{r}\sim g$)  if
there exists an algebra automorphism $\varphi \in Aut(R)$ such that 
$g = \varphi(f)$. 
\end{definition}

Note that $f \stackrel{r}\sim g$ implies an isomorphism of $k$--algebras
$$R/(f) \cong R/(g).$$ The following lemma is straightforward, see
for example  \cite[Lem. 2.10]{GLSh}.  

\begin{lemma}\label{milnor}
Assume $f\stackrel{r}\sim g$,  then $\mu(f) = \mu(g)$. 
\end{lemma}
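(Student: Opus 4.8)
The plan is to show that the algebra automorphism $\varphi$ realizing the right equivalence carries the Jacobi ideal $J(f)$ onto the Jacobi ideal $J(g)$. Once this is established, $\varphi$ descends to an isomorphism of $k$-algebras $R/J(f) \xrightarrow{\sim} R/J(g)$, in particular to an isomorphism of $k$-vector spaces, whence $\mu(f) = \dim_k(R/J(f)) = \dim_k(R/J(g)) = \mu(g)$ (the equality holding regardless of whether these dimensions are finite).

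For the key step I would spell out what $g = \varphi(f)$ means concretely. An algebra endomorphism of $R = k[[x_1,\dots,x_n]]$ is automatically local, hence continuous, and is therefore determined by the images $\varphi(x_i) =: \psi_i \in {\mathfrak m}$; thus $g = f(\psi_1,\dots,\psi_n)$. Applying the chain rule for formal power series then gives, for each $j$,
\[
\frac{\partial g}{\partial x_j} \;=\; \sum_{i=1}^n \Bigl(\frac{\partial f}{\partial x_i}\Bigr)(\psi_1,\dots,\psi_n)\cdot \frac{\partial \psi_i}{\partial x_j} \;=\; \sum_{i=1}^n \varphi\Bigl(\frac{\partial f}{\partial x_i}\Bigr)\cdot \frac{\partial \psi_i}{\partial x_j}.
\]
This exhibits each generator $\partial g/\partial x_j$ of $J(g)$ as an $R$-linear combination of the elements $\varphi(\partial f/\partial x_i)$, so $J(g) \subseteq \varphi(J(f))$ (note that $\varphi(J(f))$ is genuinely an ideal, since $\varphi$ is an automorphism).

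To obtain the reverse inclusion I would apply the same computation to the inverse automorphism $\varphi^{-1}$ and the relation $f = \varphi^{-1}(g)$, getting $J(f) \subseteq \varphi^{-1}(J(g))$, i.e.\ $\varphi(J(f)) \subseteq J(g)$. Hence $\varphi(J(f)) = J(g)$, and the conclusion follows as described in the first paragraph.

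I do not expect any real obstacle here. The only points that deserve a word of care are the legitimacy of the chain-rule computation in the power series ring --- which is standard, the hypothesis $\psi_i \in {\mathfrak m}$ ensuring that the substitution $f(\psi_1,\dots,\psi_n)$ is well defined and that differentiation commutes with the relevant limits --- and the remark that any $k$-algebra endomorphism of $k[[x_1,\dots,x_n]]$ is local, so that it is indeed given by substitution of the $\psi_i = \varphi(x_i)$. Neither of these is deep, so the proof should be quite short.
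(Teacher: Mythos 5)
Your argument is correct. The paper itself gives no proof of this lemma, merely citing \cite[Lem.\ 2.10]{GLSh} as a standard fact, and your chain-rule computation showing $\varphi(J(f))=J(g)$ (hence $R/J(f)\simeq R/J(g)$ as $k$-algebras, so the Milnor numbers agree) is precisely the standard argument behind that citation; the points you flag — locality of any $k$-algebra endomorphism of $k[[x_1,\dots,x_n]]$, so that $\varphi$ is substitution by $\psi_i=\varphi(x_i)\in\mathfrak m$, and the validity of the formal chain rule — are exactly the right ones and are handled correctly.
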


In what follows, we shall need the 
next  standard result on classification of singularities, see for example
\cite[Cor. 2.24]{GLSh}.

\begin{theorem}\label{T:crit2}
Let $f \in {\mathfrak m}^2$ be an isolated singularity with Milnor number 
$\mu$. Then 
$$
f \stackrel{r}\sim  f + g
$$
for any  $g \in {\mathfrak m}^{\mu + 2}$. 
\end{theorem}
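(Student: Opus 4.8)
The plan is to recognize Theorem \ref{T:crit2} as the classical \emph{finite determinacy theorem}: one must show that an isolated singularity $f\in\mathfrak{m}^2$ is $(\mu+1)$-determined, i.e. that altering $f$ by any element of $\mathfrak{m}^{\mu+2}$ amounts only to a formal coordinate change. The proof I would give has two ingredients: a short commutative-algebra lemma, namely the inclusion $\mathfrak{m}^{\mu}\subseteq J(f)$, and a homotopy (Moser-type) argument that integrates an infinitesimal automorphism built from the resulting membership $g\in\mathfrak{m}^2 J(f)$.

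First I would prove the algebraic lemma. Since $f$ is an isolated singularity, $\mu:=\mu(f)<\infty$, so $A:=R/J(f)$ is an Artinian local ring of length $\mu$, with maximal ideal $\ol{\mathfrak{m}}=\mathfrak{m}/J(f)$ (note $J(f)\subseteq\mathfrak{m}$ because $f\in\mathfrak{m}^2$). By Nakayama, the descending chain $A\supseteq\ol{\mathfrak{m}}\supseteq\ol{\mathfrak{m}}^2\supseteq\cdots$ is strictly decreasing until it reaches $0$; as the length of $A$ is $\mu$, this happens by step $\mu$, so $\ol{\mathfrak{m}}^{\mu}=0$, that is, $\mathfrak{m}^{\mu}\subseteq J(f)$. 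Consequently, for $g\in\mathfrak{m}^{\mu+2}$ we get $g\in\mathfrak{m}^2 J(f)$, and also $\partial g/\partial x_i\in\mathfrak{m}^{\mu+1}\subseteq\mathfrak{m}J(f)$ for every $i$. Feeding the latter into a Nakayama argument over the local ring $R[[t]]$ shows that the Jacobi ideals of $f_t:=f+tg$ are all the same, $J(f_t)R[[t]]=J(f)R[[t]]$, and hence $g\in\mathfrak{m}^2 J(f_t)$ uniformly in the parameter $t$.

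Next comes the homotopy step. Put $f_t=f+tg$ for $t\in[0,1]$ and look for automorphisms $\varphi_t$ of $R$ with $\varphi_0=\id$, $\varphi_t(\mathfrak{m})=\mathfrak{m}$ and $f_t\circ\varphi_t=f$; then $\varphi:=\varphi_1$ satisfies $\varphi(f+g)=f$, which is precisely $f\stackrel{r}{\sim}f+g$. Writing $\varphi_t$ as the flow of a time-dependent vector field $X_t=\sum_i a_i(t)\,\partial/\partial x_i$ and differentiating $f_t\circ\varphi_t=f$ in $t$, the problem reduces to solving the homological equation
\[
g=-\sum_{i=1}^{n}a_i(t)\,\frac{\partial f_t}{\partial x_i},\qquad a_i(t)\in\mathfrak{m}^2 R[[t]],
\]
which is possible exactly because of the uniform membership $g\in\mathfrak{m}^2 J(f_t)$ from the previous paragraph. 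Since the $a_i(t)$ have no constant and no linear part, the flow of $X_t$ is a well-defined family of formal automorphisms fixing $\mathfrak{m}$ (it is unipotent): modulo $\mathfrak{m}^N$ the defining system $\dot\varphi_t=X_t\circ\varphi_t$, $\varphi_0=\id$, is an ordinary ODE in finitely many coefficients, solved by Picard iteration, and the truncations are compatible as $N\to\infty$ (equivalently, in the analytic category one integrates over $t$ in a neighbourhood of $[0,1]$ using convergence estimates). This yields the desired $\varphi$.

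The main obstacle is this last integration step: one has to make sense of the flow of a $t$-dependent vector field inside the power series ring and verify that $\varphi_t$ remains an automorphism preserving $\mathfrak{m}$ throughout $t\in[0,1]$. The hypothesis that $g$ lies in $\mathfrak{m}^2 J(f)$ — and here it is genuinely the \emph{two} factors of $\mathfrak{m}$, which is exactly what the exponent $\mu+2$ buys us over $\mu+1$ — is what forces $X_t$ to be quadratic and hence the flow unipotent, so that the $\mathfrak{m}$-adic successive-approximation argument converges. The only other point that is not a one-line inclusion is the uniform identity $J(f_t)=J(f)$ over $R[[t]]$, but this is a routine Nakayama argument using $\partial g/\partial x_i\in\mathfrak{m}J(f)$.
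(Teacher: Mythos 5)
Note first that the paper itself does not prove this theorem: it is quoted as a standard result with the reference \cite[Cor. 2.24]{GLSh}, so what you are doing is reproving the classical finite determinacy theorem, and your strategy is the standard one used in that reference: the length/Nakayama count giving $\mathfrak{m}^{\mu}\subseteq J(f)$, hence $g\in\mathfrak{m}^{2}J(f)$, and then the homotopy method for $f_t=f+tg$. The algebraic part of your argument is correct, including $J(f_t)R[[t]]=J(f)R[[t]]$ by Nakayama over the local ring $R[[t]]$.

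There is, however, a genuine gap at the integration step, exactly the one you flag as the main obstacle, and in the paper's setting (formal power series over an arbitrary algebraically closed field $k$ of characteristic zero) it is not just a technicality. Your Nakayama argument only produces $a_i(t)\in\mathfrak{m}^{2}R[[t]]$, i.e.\ vector field coefficients that are \emph{formal} power series in $t$. Modulo $\mathfrak{m}^{N}$ the triangular system $\dot\varphi_t=X_t\circ\varphi_t$ can then only be solved formally in $t$ (term-by-term integration), and the resulting $\varphi_t$ cannot be evaluated at $t=1$; ``Picard iteration'' and ``convergence estimates on a neighbourhood of $[0,1]$'' presuppose an analytic category that is unavailable here, and even for $k=\mathbb{C}$ the given $f,g\in k[[x_1,\dots,x_n]]$ need not be convergent, so one cannot retreat to the analytic setting. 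Thus, as written, ``$\varphi:=\varphi_1$'' is not defined. The standard repair stays entirely within your framework: solve the homological equation $g=-\sum_i a_i(t)\,\partial f_t/\partial x_i$ recursively in powers of $t$, starting from $g=\sum_i b_i\,\partial f/\partial x_i$ with $b_i\in\mathfrak{m}^{2}$ and using $\partial g/\partial x_i\in\mathfrak{m}^{\mu+1}\subseteq\mathfrak{m}J(f)$ at each step; this yields $a_i(t)=\sum_{j\ge 0}a_{ij}t^{j}$ with $a_{ij}\in\mathfrak{m}^{j+2}$. Then modulo every $\mathfrak{m}^{N}$ the vector field is \emph{polynomial} in $t$, the flow is obtained by finitely many integrations of polynomials (characteristic zero), is polynomial in $t$, and can be evaluated at $t=1$; the truncations are compatible since the $a_{ij}$ are chosen once and the solution with initial condition the identity is unique, and $\frac{d}{dt}(f_t\circ\varphi_t)=0$ forces $f_t\circ\varphi_t$ to be constant in $t$ modulo each $\mathfrak{m}^{N}$. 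With this modification (or by simply invoking the finite determinacy theorem in the formal category, as the paper does via the citation), your proof is complete.
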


We shall need the following easy lemma.

\begin{lemma}\label{L:simple}
Let $f = x^2 + y^2 + p(x,y,z)$, where
$$p(x,y,z) = z^n + p_1(x,y) z^{n-1} + \dots + p_n(x,y)$$ is a homogeneous
form of degree $n \ge 3$. Then 
$$
f \stackrel{r}\sim   x^2 + y^2 + z^n.
$$ 
\end{lemma}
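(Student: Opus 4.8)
The plan is to remove, by one elementary change of variables, the part of $p$ involving $x$ or $y$, and then to conclude via the finite determinacy statement of Theorem~\ref{T:crit2}.

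First I would split off $q := p - z^n = p_1(x,y)z^{n-1} + \dots + p_n(x,y)$, which is homogeneous of degree $n$ and every one of whose monomials is divisible by $x$ or by $y$. Grouping monomials according to divisibility by $x$, one can write $q = x\,q_1 + y\,q_2$ with $q_1 \in k[[x,y,z]]$ and $q_2 \in k[[y,z]]$ both homogeneous of degree $n-1$; since $n \ge 3$ they lie in ${\mathfrak m}^2$. Hence $\varphi(x) := x - \tfrac12 q_1$, $\varphi(y) := y - \tfrac12 q_2$, $\varphi(z) := z$ defines an automorphism $\varphi$ of $k[[x,y,z]]$ (its differential at the origin is the identity). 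Expanding the squares and using that $q(x - \tfrac12 q_1,\, y - \tfrac12 q_2,\, z) - q(x,y,z) \in {\mathfrak m}^{2n-2}$ (the perturbations lie in ${\mathfrak m}^{n-1}$ and the partials $q_x, q_y$ are homogeneous of degree $n-1$), the cross terms $-x q_1 - y q_2$ produced by the squares cancel against $q(x,y,z) = x q_1 + y q_2$, and one is left with
$$ \varphi(f) \;=\; x^2 + y^2 + z^n + r , \qquad r \in {\mathfrak m}^{2n-2}. $$
The hypothesis $n \ge 3$ is exactly what gives $2n - 2 \ge n+1$, so $r \in {\mathfrak m}^{n+1}$. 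Pinning down these orders is the only genuinely delicate point; everything else is a routine expansion.

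To finish, note that $f_0 := x^2 + y^2 + z^n$ has Jacobi ideal $J(f_0) = (x,y,z^{n-1})$ (characteristic zero), whence $\mu(f_0) = \dim_k k[[z]]/(z^{n-1}) = n-1 < \infty$, so $f_0$ is an isolated singularity. Applying Theorem~\ref{T:crit2} with $\mu = n-1$ gives $f_0 \stackrel{r}\sim f_0 + g$ for every $g \in {\mathfrak m}^{\mu+2} = {\mathfrak m}^{n+1}$, in particular for $g = r$. Therefore $f \stackrel{r}\sim \varphi(f) = f_0 + r \stackrel{r}\sim f_0 = x^2 + y^2 + z^n$, as claimed.
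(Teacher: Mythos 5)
Your argument is correct and follows essentially the same route as the paper: you absorb the cross terms $xq_1+yq_2$ by shifting $x$ and $y$ by half the cofactors (the paper does this by completing the square with $p-z^n=xu+yv$), check the leftover lies in ${\mathfrak m}^{2(n-1)}\subset{\mathfrak m}^{n+1}$ using $n\ge 3$, and conclude by Theorem~\ref{T:crit2} with $\mu(x^2+y^2+z^n)=n-1$. Your explicit bookkeeping of the error term $q(\varphi(x),\varphi(y),z)-q(x,y,z)$ and of the Milnor number is just a more detailed rendering of the same proof.
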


\noindent
\emph{Proof}. Write $p(x,y,z) = z^n + xu + yv$ for some homogeneous
forms $u$ and $v$ of degree $n-1$. Then 
$$
x^2 + y^2 + z^n + xu + yv = (x + u/2)^2 + (y + v/2)^2 + z^n - (u^2 + v^2)/4.
$$
After a change of variables 
$x \mapsto  x + u/2$, $y \mapsto y + v/2$ and $z \mapsto z$
we reduce $f$ to the form 
$$
f = x^2 + y^2 + z^n + h,
$$
where $h \in {\mathfrak m}^{2(n-1)} \subset {\mathfrak m}^{n+1}$. 
Note that $\mu(x^2 + y^2 + z^n) = n-1$, hence by Theorem 
\ref{T:crit2} we have
$$
f \stackrel{r}\sim   x^2 + y^2 + z^n.
$$
\qed

Now we are ready to give a proof of  Proposition \ref{5.1}.
We only have to show the assertion (e) since the other cases are special
cases of this. We denote by $H$ the hyperplane in a
four-dimensional space defined by the equation $t  = \alpha x + \beta
y + \gamma z$, $\alpha, \beta, \gamma \in k$. 
We put
$$g(z,t)=a_0z^{m+1}+a_1z^mt+\cdots+a_{m+1}t^{m+1}+(\mbox{higher terms}).$$
Then the intersection of $H$ with the singularity defined by the
equation $x^2+y^2+g(z,t)$ is given by the equation
$f=h+(\mbox{higher terms})$, where 
$$h=x^2+y^2+a_0z^{m+1}+a_1z^m(\alpha x+\beta y+\gamma
z)+\cdots+a_{m+1}(\alpha x+\beta y+\gamma z)^{m+1}.$$

Now we consider the case $m=1$.
We have $h\stackrel{r}\sim x^2+y^2+z^2$ since any 
quadratic form can be diagonalized using linear transformations. 
By Lemma \ref{milnor}, we have $\mu(h)=\mu(x^2+y^2+z^2)=1$.
Hence $f\stackrel{r}\sim h\stackrel{r}\sim x^2+y^2+z^2$ by Theorem
\ref{T:crit2}. 

Next we consider the case $m\ge 2$. 
Assume $\alpha\in k$ satisfies $a_0+a_1\alpha+\cdots+a_{m+1}\alpha^{m+1}\neq0$.
By Lemma \ref{L:simple}, we have $h\stackrel{r}\sim x^2+y^2+z^{m+1}$.
By Lemma \ref{milnor}, we have $\mu(h)=\mu(x^2+y^2+z^{m+1})=m$.
Hence $f\stackrel{r}\sim h\stackrel{r}\sim
x^2+y^2+z^{m+1}$ by Theorem \ref{T:crit2}.

Consequently, $x^2+y^2+g(z,t)$ is $cA_m$.
\qed

\section{Examples of 2-CY tilted algebras}\label{CYtilted}

Since the 2-CY tilted algebras coming from maximal Cohen-Macaulay modules over hypersurfaces have some nice properties, it is of interest to have more explicit information about such algebras.
This section is devoted to some such computations for algebras coming
from minimally elliptic singularities. We obtain algebras appearing in
classification lists for some classes of tame self-injective algebras
\cite{Er,BS,Sk}.

We start with giving some general properties which are direct consequences of Lemma \ref{L1.2}.
\begin{theorem}
  Let $(R,{\mathfrak m})$ be an odd-dimensional isolated hypersurface singularity and $\ga$ a 2-CY tilted algebra coming from $\ul{\CM}(R)$. Then we have the following.
\begin{itemize}
  \item[(a)] $\ga$ is a symmetric algebra.
  \item[(b)] All components in the stable AR-quiver of infinite type $\ga$ are tubes of rank 1 or 2.
\end{itemize}
\end{theorem}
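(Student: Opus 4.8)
The plan is to derive the theorem directly from Lemma \ref{L1.2}(c), so the real work is just to check that its hypotheses hold for $\cc:=\ul{\CM}(R)$. Since $R$ is an isolated hypersurface singularity it is in particular a complete local Gorenstein isolated singularity, so $\cc$ is a $\Hom$-finite triangulated category with shift $\Sigma=\Omega^{-1}$, as recalled in Section \ref{main}. Because $R$ is a hypersurface we have $\Sigma^2=\id$, and because $d=\dim R$ is odd the AR-duality $\tau\simeq\Omega^{2-d}$ gives $\tau=\Omega^{2-d}=\Sigma^{d-2}=\Sigma$; in particular $\cc$ is $2$-CY and $\tau^2=\Sigma^2=\id$. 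A $2$-CY tilted algebra $\ga$ coming from $\cc$ is by definition $\End_\cc(T)$ for some cluster tilting object $T\in\cc$.

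Since Lemma \ref{L1.2} is stated for \emph{connected} categories, I would first reduce to that case: decompose $\cc=\cc_1\times\dots\times\cc_r$ into its connected components, each of which is again a $\Hom$-finite $2$-CY triangulated category with $\tau^2=\id$. Writing $T=\bigoplus_{j}T_j$ with $T_j\in\cc_j$, the vanishing of $\Hom$ (hence of $\Ext^1$) between distinct components shows that each $T_j$ is a cluster tilting object of $\cc_j$ and that $\ga\simeq\prod_j\ga_j$ with $\ga_j=\End_{\cc_j}(T_j)$. Both assertions are compatible with this product decomposition: a finite product of symmetric algebras is symmetric, and every connected component of the stable AR-quiver of $\ga$ is a connected component of the stable AR-quiver of exactly one factor $\ga_j$. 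Hence it suffices to prove (a) and (b) for each $\ga_j$, i.e.\ we may assume $\cc$ is connected.

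For part (a) I would simply invoke Lemma \ref{L1.2}(c)(i): since $\tau^2=\id$, the algebra $\ga$ is symmetric. For part (b), suppose $\ga$ is of infinite representation type. If $\cc$ had only finitely many indecomposable objects, then by Lemma \ref{L1.2}(d) $\ga$ would have only finitely many indecomposable modules, contrary to assumption; so $\cc$ has infinitely many indecomposables, and Lemma \ref{L1.2}(c)(iii) yields that every component of the stable AR-quiver of $\ga$ is a tube of rank one or two, completing the proof.

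The argument is essentially formal, and I expect no genuine obstacle: the only points requiring care are the reduction to connected $\cc$ (so that Lemma \ref{L1.2} literally applies) and the implication ``$\ga$ of infinite type $\Rightarrow$ $\cc$ of infinite CM type'', which is exactly what Lemma \ref{L1.2}(d) supplies. Everything else rests only on the hypersurface identity $\Sigma^2=\id$ together with the parity of $d$, which yield $\tau=\Sigma$ and $\tau^2=\id$.
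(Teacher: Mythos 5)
Your proposal is correct and follows essentially the same route as the paper: the paper proves this theorem simply by observing that it is a direct consequence of Lemma \ref{L1.2}, after the facts from section \ref{main} that $\Sigma^2=\id$ for hypersurfaces and $\tau=\Sigma$ (hence $\tau^2=\id$ and $2$-CY) in odd dimension. Your explicit reduction to the connected case and the use of Lemma \ref{L1.2}(d) to pass from infinite type of $\ga$ to infinitely many indecomposables in $\cc$ are extra care that the paper leaves implicit, and they do not change the substance of the argument.
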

We now start with our computations of 2-CY tilted algebras coming from
minimally elliptic singularities. We first introduce and investigate
two classes of algebras, and then show that they are isomorphic to
2-CY tilted algebras coming from minimally elliptic singularities.

For a quiver $Q$ with finitely many vertices and arrows
we define the  radical completion $\widehat{kQ}$ of the path algebra $kQ$ by the formula
$$
\widehat{kQ} = \lim_{\longleftarrow}  kQ/\rad^n(kQ).
$$

The reason we deal with completion is the following:
Let $Q$ be a finite quiver, $J$ the ideal of $\widehat{kQ}$ generated by the arrows and $I\subseteq J^2$ a complete ideal such that $\la=\widehat{kQ}/I$ is finite-dimensional.
\begin{lemma}
  The ideal $I$ is generated in $\widehat{kQ}$ by a minimal system
  of relations, that is, a set of elements $\rho_1, \cdots, \rho_n$
  of $I$ whose images form a $k$-basis of $I/{IJ+JI}$. 
\end{lemma}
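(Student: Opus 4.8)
The statement asserts that if $\Lambda = \widehat{kQ}/I$ is finite-dimensional with $I \subseteq J^2$ a closed ideal (here $J$ is the arrow ideal of $\widehat{kQ}$), then $I$ is generated by any finite set $\rho_1,\dots,\rho_n \in I$ whose residues form a $k$-basis of $I/(IJ+JI)$. The plan is to exploit the fact that $\widehat{kQ}$ is complete with respect to the $J$-adic topology, so that a ``graded Nakayama'' argument upgrades to a convergence argument.

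\textbf{Step 1: finiteness of $I/(IJ+JI)$.} First I would check that $I/(IJ+JI)$ is finite-dimensional, so that a finite basis $\rho_1,\dots,\rho_n$ exists. Since $\Lambda = \widehat{kQ}/I$ is finite-dimensional, there is an integer $N$ with $J^N \subseteq I$; moreover $J^{N+1} \subseteq JI \subseteq IJ+JI$. Hence $I/(IJ+JI)$ is a subquotient of $\widehat{kQ}/J^{N+1}$, which is finite-dimensional. So a minimal system of relations in the stated sense exists and is finite.

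\textbf{Step 2: the submodule generated by the $\rho_i$ is dense, i.e.\ $\widehat{kQ}$-adically the whole of $I$.} Let $I' = \sum_i \widehat{kQ}\,\rho_i\,\widehat{kQ}$ be the two-sided ideal generated by the chosen relations; clearly $I' \subseteq I$. By construction $I = I' + (IJ + JI)$. The key point is to iterate this. I would argue that for every $m \ge 1$,
\[
I \;=\; I' + \sum_{a+b = m,\ a,b \ge 0}\; J^a\, I\, J^b,
\]
proved by induction on $m$: feeding $I = I' + IJ + JI$ back into each occurrence of $I$ on the right-hand side and absorbing the $I'$-terms (using that $I'$ is a two-sided ideal, so $J^a I' J^b \subseteq I'$). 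Now combine this with Step 1: since $J^{N+1} \subseteq JI$, for $m$ large every term $J^a I J^b$ with $a+b=m$ is contained in $J^{m - N}$ once $m - N \ge 0$... more carefully, $J^a I J^b \subseteq J^{a} J^{\min(b,1)\cdot 0}\cdots$ — the clean estimate is $J^a I J^b \subseteq J^{a+b}$ is false in general, but $\sum_{a+b=m} J^a I J^b \subseteq J^{\,\max(0,\,m-N)}\cdot(\text{something})$; the honest bound is: if $a \ge N$ then $J^a \subseteq J^N \subseteq I$ so $J^aIJ^b \subseteq IJ^{b}\cdots$. Rather than fuss, I would instead use directly that $I \subseteq J^2$ and $J^{N} \subseteq I$, giving $\sum_{a+b=m} J^a I J^b \subseteq J^{m}$ when... the genuinely correct and simplest route: each $J^aIJ^b \subseteq J^{a}\cdot J\cdot J^{b} = J^{a+b+1}$ since $I \subseteq J$ (indeed $I \subseteq J^2$). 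Hence $\sum_{a+b=m} J^aIJ^b \subseteq J^{m+1}$, and therefore $I \subseteq I' + J^{m+1}$ for all $m$.

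\textbf{Step 3: conclude by completeness.} From $I \subseteq I' + J^{m+1}$ for all $m$, take any $x \in I$. I will construct a sequence converging to $x$: inductively choose $y_m \in I'$ with $x - y_m \in J^{m+1}$, arranging $y_{m+1} - y_m \in J^{m+1}$ (possible since $y_{m+1}-y_m \in I' \cap J^{m+1}$, and one builds $y_{m+1}$ from $y_m$ by adding the new $I'$-term extracted from $x - y_m \in J^{m+1} \subseteq I' + J^{m+2}$). Then $(y_m)$ is Cauchy in the $J$-adic topology; since $\widehat{kQ}$ is $J$-adically complete and $I'$ is a closed submodule — here one must invoke that $I$ is given as a \emph{closed} ideal and that $I'$, being finitely generated as a two-sided ideal over a Noetherian-type complete algebra, is closed, or alternatively argue that $\lim y_m$ lies in $I' $ because $I'$ contains $x + \bigcap_m J^{m+1} = x$ — the limit $y := \lim y_m$ satisfies $x - y \in \bigcap_m J^{m+1} = 0$, so $x = y \in \overline{I'}$. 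If $I'$ is already closed we are done: $I \subseteq I' \subseteq I$. The remaining point, that $I' = \overline{I'}$, I would settle by noting $\widehat{kQ}/I'$ surjects onto $\Lambda$ with nilpotent... actually more cleanly: $\widehat{kQ}/I' $ is finitely generated over the complete local ring $k$, hence complete, hence $I'$ is closed.

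\textbf{Main obstacle.} The subtle point — and the one I would spend the most care on — is exactly this closedness of $I'$, the two-sided ideal generated by the finitely many $\rho_i$ inside the (generally non-Noetherian) algebra $\widehat{kQ}$. In the graded (path-algebra) case the analogous statement is pure linear algebra via the grading; in the completed case one replaces ``graded Nakayama'' by the $J$-adic Cauchy argument of Step 3, and the legitimacy of passing to the limit rests on $\widehat{kQ}/I'$ being $J$-adically separated and complete. I would establish that from the finite-dimensionality of $\Lambda$ (so $J^N \subseteq I$) plus the inclusion $I \subseteq I' + J^{m+1}$: these together force $J^{N} \subseteq I' + J^{N+1}$, hence by iteration $J^N \subseteq I' + J^{m}$ for all $m$, so $J^N/(I'\cap J^N)$ is a finitely generated module over $\widehat{kQ}$ annihilated modulo all powers of $J$, whence (Artin–Rees / completeness) $\bigcap_m (I' + J^m) = I'$, giving closedness and finishing the proof.
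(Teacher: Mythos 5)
The paper itself gives no argument for this lemma beyond the pointer to a ``standard argument'' (cf.\ \cite{BMR3}), so your proposal has to be judged on its own merits. Its skeleton is the standard one and it does close up: Step 1 ($J^N\subseteq I$ for some $N$ because $\la=\widehat{kQ}/I$ is finite dimensional and the image of $J$ is its radical, hence $I/(IJ+JI)$ is finite dimensional) and Step 2 (the iterated decomposition $I=I'+\sum_{a+b=m}J^aIJ^b$, whence $I\subseteq I'+J^{m+1}$ for all $m$, using $I\subseteq J$) are correct as written.

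Step 3 is where you wobble, and it needs cleaning. The side claims you float are false or circular: a finitely generated two-sided ideal of $\widehat{kQ}$ need \emph{not} be closed (in $k\langle\langle x,y\rangle\rangle$ the element $\sum_{n\ge1}y^nxy^n$ lies in the closure of the ideal generated by $x$ but not in that ideal, by the usual finite-rank argument on the coefficient matrix of the words $y^axy^b$), and ``$\widehat{kQ}/I'$ is finitely generated over $k$, hence complete'' assumes exactly what is to be proved. Likewise ``Artin--Rees / completeness'' is not the right tool: Artin--Rees is a commutative Noetherian statement, and completeness of $\widehat{kQ}$ alone does not make $I'$ closed. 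What does finish the proof is precisely the data you assemble in your last paragraph, combined with ordinary Nakayama. Since the quiver is finite, $J^N$ is generated as a left ideal by the finitely many paths of length $N$ (group any series supported in lengths $\ge N$ according to its terminal subpath of length $N$), so $M:=J^N/(I'\cap J^N)$ is a finitely generated left $\widehat{kQ}$-module. For $m\ge N$ the inclusion $J^N\subseteq I'+J^m$ refines to $J^N\subseteq (I'\cap J^N)+J^m$, which says $M=J^{m-N}M$; in particular $M=JM$. Because $1+J$ consists of units of the complete algebra $\widehat{kQ}$, $J$ lies in the Jacobson radical, so Nakayama gives $M=0$, i.e.\ $J^N\subseteq I'$. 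Now Step 2 with $m=N$ yields $I\subseteq I'+J^N=I'\subseteq I$, hence $I=I'$, and you even get generation by honest finite sums, with no discussion of closures needed. So: right strategy, correct endpoint, but delete the false closedness claims and replace the misattributed ``Artin--Rees'' step by this Nakayama argument.
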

The lemma is shown by a standard argument (cf \cite[Section
3]{BMR3}). Its analogue for the non complete path algebra is not
always true.  For example, for the algebra $\la=B_{2,2}(\lambda)$
defined below, the elements $\rho_1, \cdots, \rho_n$ listed as
generators for $I$ form a minimal system of relations. So they
generate $I$ in $\widehat{kQ}$. They also yield a $k$-basis of
$I'/{I'J+JI'}\xto{\sim}I/{IJ+JI}$, where $I'=I\cap kQ$ and $J'=J\cap
kQ$. But they do not generate the ideal $I'$ of $kQ$ since, as one can
show, the quotient $kQ/\langle\rho_1, \cdots, \rho_n\rangle$ is
infinite-dimensional.

On the other hand, the ideal $I'$ is generated by the preimage
$\rho_1, \cdots, \rho_n$ of a basis of $I'/{I'J'+J'I'}$ if the
quotient $kQ/\langle\rho_1, \cdots, \rho_n\rangle$ is
finite-dimensional, since then the ideal $\langle\rho_1, \cdots, \rho_n\rangle$ 
contains a power of $J'$. This happens for example for the algebra
$A_2(\lambda)$ as defined below, cf. also \cite[5.9]{Sk} and \cite[Th. 1]{BS}.

We know that for all vertices $i,j$ of $Q$, we have 
$$
\dim_k e_i(I/{IJ+JI})e_j=\dim_k \Ext_{\la}^2(S_i,S_j)
$$
where $S_i$ and $S_j$ denote the simple $\la$-modules corresponding to the vertices $i$ and $j$ \cite{B}. When $\la$ is 2-CY tilted, then
$$
\dim \Ext^1_{\la}(S_j,S_i)\ge\dim\Ext^2_{\la}(S_i,S_j)
$$
(see \cite{BMR3,KR}). Thus the number of arrows in $Q$ is an
upper bound on the number of elements in a minimal system of
relations. 

\begin{definition} (1) For $q \ge 2$ and $\lambda \in k^*$ we write
$A_q(\lambda) = \widehat{kQ}/I$, where
$$
Q = 
\xymatrix{
    \cdot \ar@(dl,ul)[]^{\varphi} \ar@<0.4ex>[r]^{\alpha}&
    \cdot\ar@<0.4ex>[l]^{\beta}  \ar@(dr,ur)[]_{\psi}\\ 
    }
$$ 
and 
$$
I = \langle \psi  \alpha - \alpha \varphi, \beta \psi  - \varphi  \beta,  
\varphi^2 -  \beta \alpha, 
\psi^q  - \lambda \alpha \beta \rangle.
$$
If $q = 2$, then  we additionally assume $\lambda \ne 1$. (It can be shown that 
for $q \ge 3$ we have $A_q(\lambda) \cong A_q(1)$, so we drop the parameter $\lambda$ 
in this case.)

\medskip
\noindent
(2) For $p, q \ge 1$ and $\lambda \in k^*$ we write $B_{p,q}(\lambda) = 
\widehat{kQ}/I$, where
$$
Q = 
\xymatrix{
    \cdot \ar@(dl,ul)[]^{\varphi}   \ar@<0.4ex>[r]^{\alpha}&
    \cdot\ar@<0.4ex>[l]^{\beta}  \ar@<0.4ex>[r]^{\gamma} & 
\cdot\ar@<0.4ex>[l]^{\delta} \ar@(dr,ur)[]_{\psi}   \\ 
    }
$$ 
and 
$$
I = \langle 
\beta \alpha - \varphi^p, \gamma \delta - \lambda \psi^q, 
\alpha \varphi - \delta \gamma \alpha, \varphi \beta - \beta \delta \gamma, 
\delta \psi  - \alpha \beta \delta, \psi \gamma - \gamma \alpha \beta
\rangle.
$$
For $p = q = 1$ we additionally assume $\lambda \ne 1$. 
\end{definition}
When $p=q=1$, the generators $\varphi$  and $ \psi$ can be excluded and $B_{1,1}(\lambda)$ is given by 
the completion of the path algebra of the quiver 
$$
Q = 
\xymatrix{
    \cdot  \ar@<0.4ex>[r]^{\alpha}&
    \cdot\ar@<0.4ex>[l]^{\beta}  \ar@<0.4ex>[r]^{\gamma} & 
\cdot\ar@<0.4ex>[l]^{\delta}   \\ 
    }
$$ 
modulo the relations  
$$
I = \langle 
\alpha \beta \alpha - \delta \gamma \alpha, 
\alpha \beta \delta - \lambda \delta \gamma \delta, 
\gamma \alpha \beta - \lambda \gamma \delta \gamma, 
\beta \delta \gamma  - \beta \alpha \beta \rangle.
$$
For $(p,q) \ne (1,1)$ we have $B_{p,q}(\lambda) \cong B_{p,q}(1)$. In particular,
for $p = 1$ and   $q \ge 2$ the algebra is isomorphic to $\widehat{kQ}/I$, where 
$$
Q = 
\xymatrix{
    \cdot  \ar@<0.4ex>[r]^{\alpha}&
    \cdot\ar@<0.4ex>[l]^{\beta}  \ar@<0.4ex>[r]^{\gamma} & 
\cdot\ar@<0.4ex>[l]^{\delta} \ar@(dr,ur)[]_{\psi}  \\ 
    }
$$ 
and 
$$
I = \langle \gamma \delta - \psi^q, \alpha \beta \alpha - \delta \gamma \alpha, 
\beta \alpha \beta - \beta \delta \gamma, \delta \psi - \alpha \beta \delta, 
\psi \gamma - \gamma \alpha \beta  \rangle.  
$$

It turns out that the algebras $A_q(\lambda)$ and $B_{p,q}(\lambda)$ are finite dimensional. 
In order to show this it suffices to check that all  oriented cycles in 
$\widehat{kQ}/I$ are nilpotent. 

\vspace{0.5cm}

\begin{lemma}
In the algebra $A_q(\lambda)$ the following zero relations hold:
$$
\alpha \beta \alpha = 0, \beta \alpha \beta = 0, \alpha \varphi^2 = \psi^2 \alpha = 0,
\varphi^2 \beta = \beta \psi^2 = 0, \varphi^4 = 0, \psi^{q+2} = 0.
$$
\end{lemma}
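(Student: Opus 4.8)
The plan is to work entirely inside $A_q(\lambda)=\widehat{kQ}/I$ using only the four defining relations
$$\psi\alpha=\alpha\varphi,\qquad \beta\psi=\varphi\beta,\qquad \varphi^2=\beta\alpha,\qquad \psi^q=\lambda\alpha\beta,$$
and to reduce all eight asserted identities to the two equalities $\alpha\varphi^2=0$ and $\beta\psi^2=0$. As a first step I would prove the ``commutation'' identities $\psi^n\alpha=\alpha\varphi^n$ and $\varphi^n\beta=\beta\psi^n$ for all $n\ge0$; both are immediate inductions on $n$ using the first two defining relations. In particular $\psi^2\alpha=\alpha\varphi^2$ and $\varphi^2\beta=\beta\psi^2$, so it suffices to kill $\alpha\varphi^2$ and $\varphi^2\beta$.

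The crux is the following computation. Using $\varphi^2=\beta\alpha$, then $\alpha\beta=\lambda^{-1}\psi^q$, then the commutation identity, and finally $\varphi^q=\varphi^2\varphi^{q-2}$,
$$\alpha\varphi^2=\alpha(\beta\alpha)=(\alpha\beta)\alpha=\lambda^{-1}\psi^q\alpha=\lambda^{-1}\alpha\varphi^q=\lambda^{-1}(\alpha\varphi^2)\varphi^{q-2},$$
so $\alpha\varphi^2\,\bigl(e-\lambda^{-1}\varphi^{q-2}\bigr)=0$, where $e$ denotes the trivial path at the vertex carrying the loop $\varphi$. Now $e-\lambda^{-1}\varphi^{q-2}$ is a unit in $\widehat{kQ}$: if $q=2$ it equals $(1-\lambda^{-1})e$, which is invertible precisely because $\lambda\ne1$ is assumed in that case; if $q\ge3$ then $\varphi^{q-2}$ lies in the arrow ideal $J$ of $\widehat{kQ}$, so the geometric series $\sum_{n\ge0}\lambda^{-n}\varphi^{n(q-2)}$ converges in the complete algebra $\widehat{kQ}$ and is a two-sided inverse. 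Passing to the quotient, the image of $e-\lambda^{-1}\varphi^{q-2}$ is still a unit, whence $\alpha\varphi^2=0$. The completely analogous computation $\varphi^2\beta=(\beta\alpha)\beta=\beta(\alpha\beta)=\lambda^{-1}\beta\psi^q=\lambda^{-1}(\beta\psi^2)\psi^{q-2}$, combined with the commutation identity $\varphi^2\beta=\beta\psi^2$, yields $\beta\psi^2=0$ in the same way.

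Once $\alpha\varphi^2=0=\beta\psi^2$ are in hand, the remaining identities are pure substitution: $\alpha\beta\alpha=\alpha\varphi^2=0$ and $\beta\alpha\beta=\varphi^2\beta=\beta\psi^2=0$; $\psi^2\alpha=\alpha\varphi^2=0$, while $\beta\psi^2=0$ and $\varphi^2\beta=\beta\psi^2=0$ are already established; $\varphi^4=\varphi^2(\beta\alpha)=(\varphi^2\beta)\alpha=0$; and $\psi^{q+2}=\psi^2\psi^q=\lambda\,\psi^2(\alpha\beta)=\lambda\,(\psi^2\alpha)\beta=0$.

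I expect the only non-formal point — and hence the main obstacle — to be the invertibility of $e-\lambda^{-1}\varphi^{q-2}$ for $q\ge3$: this genuinely uses that we have completed the path algebra, and without it the argument collapses, consistent with the paper's observation that the analogous relations can fail in the non-completed path algebra $kQ$ (for instance for $B_{2,2}(\lambda)$).
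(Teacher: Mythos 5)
Your argument is correct and is essentially the paper's own proof: you equate $\alpha\varphi^2=\psi^2\alpha=\alpha\beta\alpha$ with $\lambda^{-1}$ times itself times an extra factor $\varphi^{q-2}$ and cancel, using $\lambda\neq 1$ when $q=2$ and, for $q\ge 3$, invertibility of the geometric series in the completed algebra (the paper phrases this as $(1-\psi^{q-2})\psi^2\alpha=0$ and splits the two cases from the start), after which the remaining relations follow by substitution exactly as you list them. The only cosmetic point is that $e-\lambda^{-1}\varphi^{q-2}$ is a unit in the corner algebra $e\widehat{kQ}e$ (equivalently, replace $e$ by the identity $1=e+e'$), not in $\widehat{kQ}$ itself, which does not affect the cancellation.
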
 

\noindent
\textit{Proof}. We have to consider separately the cases $q = 2$ and $q \ge 3$. 

\medskip
\noindent
Let $q = 2$, then we assumed $\lambda \ne 1$. We have 
$$
\alpha \beta \alpha = \alpha \varphi^2 = \psi^2 \alpha = \lambda^{-1} 
\alpha \beta \alpha,
$$
hence $\alpha \beta \alpha = 0.$ In a similar way we obtain $\beta \alpha \beta = 0$. 
Then $\alpha \varphi^2 = \alpha \beta \alpha = 0$, $\varphi^2 = \beta \alpha \beta \alpha = 0$
and the remaining zero relations follow analogously. 

\medskip
\noindent
Let $q \ge 3$. Then 
$$
\psi^q \alpha = \alpha \beta \alpha = \alpha \varphi^2 = \psi^2 \alpha,
$$
so $(1 - \psi^{q-2})\psi^2 \alpha = 0$ and hence 
$$\psi^2 \alpha = \alpha \beta \alpha = 0$$ 
in  $\widehat{kQ}/I$. The remaining zero relations follow similarly. 
\qed

\vspace{0.5cm}

\begin{lemma}
We have the following relations in 
$B_{p,q}(\lambda)$:
$$
\varphi^{p+2} = 0, \psi^{q+2} = 0, 
\gamma \alpha \varphi =  \psi \gamma \alpha = 0,
\varphi \beta \delta  = \beta \delta  \psi =0. 
$$
Moreover, $\alpha \beta \cdot \delta \gamma = \delta \gamma \cdot \alpha \beta$. 
For $q \ge p \ge 2$ we have
$$
(\alpha \beta)^2 = (\delta \gamma)^2 = 0,
$$
for  $q > p = 1$ we have
$$
(\alpha \beta)^3 = 0, (\delta \gamma)^2 = 0, (\alpha \beta)^2 \cdot (\delta \gamma) = 0
$$
and for $p = q = 1$ 
$$
(\alpha \beta)^3 = (\gamma \delta)^3 = 0, (\alpha \beta)^2 = \alpha \beta \cdot \delta \gamma = 
\lambda (\delta \gamma)^2. 
$$
\end{lemma}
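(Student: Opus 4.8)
The plan is to deduce every identity in the list from the six defining relations
$$\beta\alpha=\varphi^{p},\quad \gamma\delta=\lambda\psi^{q},\quad \alpha\varphi=\delta\gamma\alpha,\quad \varphi\beta=\beta\delta\gamma,\quad \delta\psi=\alpha\beta\delta,\quad \psi\gamma=\gamma\alpha\beta$$
by relation chasing, in exactly the spirit of the preceding lemma for $A_q(\lambda)$. Throughout, products are read right to left as in the paper, so that with the quiver of $B_{p,q}(\lambda)$ labelled so that the loops $\varphi,\psi$ sit at the outer vertices $1,3$ and $\alpha\colon 1\to 2$, $\beta\colon 2\to 1$, $\gamma\colon 2\to 3$, $\delta\colon 3\to 2$, all six relations make sense as stated. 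For $p=q=1$ the loops are interpreted through $\varphi=\beta\alpha$, $\psi=\lambda^{-1}\gamma\delta$ and one works with the reduced presentation of $B_{1,1}(\lambda)$ recorded above. The argument divides into four short steps.

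\emph{Step 1 (the length-three zero relations $\gamma\alpha\varphi$, $\psi\gamma\alpha$, $\varphi\beta\delta$, $\beta\delta\psi$).} From $\psi\gamma=\gamma\alpha\beta$ and $\beta\alpha=\varphi^{p}$ one gets $\psi\gamma\alpha=\gamma\alpha\varphi^{p}$, hence by induction $\psi^{j}\gamma\alpha=\gamma\alpha\varphi^{jp}$ for all $j\ge0$; combining this with $\alpha\varphi=\delta\gamma\alpha$ and $\gamma\delta=\lambda\psi^{q}$ yields
$$\gamma\alpha\varphi=\gamma\delta\gamma\alpha=\lambda\,\psi^{q}\gamma\alpha=\lambda\,\gamma\alpha\varphi^{pq}=\lambda\,(\gamma\alpha\varphi)\varphi^{pq-1}.$$
If $pq\ge2$ then $\varphi^{pq-1}$ lies in the arrow ideal $J$, so $1-\lambda\varphi^{pq-1}$ is invertible in the radical-complete corner ring $e_{1}\widehat{kQ}e_{1}$ at the vertex carrying $\varphi$, and therefore $\gamma\alpha\varphi=0$; if $pq=1$ the display reads $\gamma\alpha\varphi=\lambda\,\gamma\alpha\varphi$ and the hypothesis $\lambda\neq1$ again forces $\gamma\alpha\varphi=0$. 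Then $\psi\gamma\alpha=(\gamma\alpha\varphi)\varphi^{p-1}=0$, and the mirror computation (using $\varphi\beta=\beta\delta\gamma$, $\gamma\delta=\lambda\psi^{q}$, $\delta\psi=\alpha\beta\delta$, $\beta\alpha=\varphi^{p}$, which produces $\varphi\beta\delta=\lambda\,\varphi^{pq-1}(\varphi\beta\delta)$) gives $\varphi\beta\delta=0$ and then $\beta\delta\psi=\beta\alpha\beta\delta=\varphi^{p-1}(\varphi\beta\delta)=0$.

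\emph{Step 2 (nilpotency of the loops).} $\varphi^{2}\beta=\varphi\beta\delta\gamma=(\varphi\beta\delta)\gamma=0$, so $\varphi^{p+2}=\varphi^{2}\varphi^{p}=(\varphi^{2}\beta)\alpha=0$; symmetrically $\psi^{2}\gamma=(\psi\gamma\alpha)\beta=0$ and $\psi^{q+2}=\lambda^{-1}(\psi^{2}\gamma)\delta=0$. \emph{Step 3 (the commutation)}: $\alpha\beta\cdot\delta\gamma=(\alpha\beta\delta)\gamma=\delta\psi\gamma=\delta(\psi\gamma)=\delta\gamma\cdot\alpha\beta$. \emph{Step 4 (the three regimes)}: for $q\ge p\ge2$, $(\alpha\beta)^{2}=\alpha(\beta\alpha)\beta=\alpha\varphi^{p-2}(\varphi^{2}\beta)=0$ and likewise $(\delta\gamma)^{2}=\lambda\,\delta\psi^{q-2}(\psi^{2}\gamma)=0$; for $q>p=1$, still $(\delta\gamma)^{2}=0$ while $(\alpha\beta)^{2}=\alpha\varphi\beta=\alpha\beta\delta\gamma$, so $(\alpha\beta)^{2}\delta\gamma=\alpha\beta(\delta\gamma)^{2}=0$ and, using Step 3, $(\alpha\beta)^{3}=(\alpha\beta)^{2}\alpha\beta=\alpha\beta(\delta\gamma\alpha\beta)=\alpha\beta(\alpha\beta\delta\gamma)=(\alpha\beta)^{2}\delta\gamma=0$; for $p=q=1$, in the reduced presentation $\alpha\beta\alpha=\delta\gamma\alpha$, $\alpha\beta\delta=\lambda\delta\gamma\delta$, $\gamma\alpha\beta=\lambda\gamma\delta\gamma$, $\beta\delta\gamma=\beta\alpha\beta$, one gets $(\alpha\beta)^{2}=(\alpha\beta\alpha)\beta=\delta\gamma\alpha\beta=\lambda(\delta\gamma)^{2}$ and $\alpha\beta\delta\gamma=(\alpha\beta\delta)\gamma=\lambda(\delta\gamma)^{2}$ (the asserted chain of equalities), and then $\beta(\delta\gamma)^{2}=(\beta\delta\gamma)\delta\gamma=\beta\alpha\beta\delta\gamma=\beta(\alpha\beta)^{2}=\lambda\,\beta(\delta\gamma)^{2}$ forces $\beta(\delta\gamma)^{2}=0$, whence $(\alpha\beta)^{3}=\alpha\beta(\alpha\beta)^{2}=\lambda\,\alpha\bigl(\beta(\delta\gamma)^{2}\bigr)=0$ and $(\delta\gamma)^{3}=0$, $(\gamma\delta)^{3}=0$ follow from the analogous scalar-cancellation identities (again using $\lambda\neq1$).

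\emph{Main obstacle.} There is no conceptual difficulty here: the substance is careful bookkeeping of non-commuting monomials under the paper's right-to-left composition convention, and the real care goes into keeping the three regimes $q\ge p\ge2$, $q>p=1$, $p=q=1$ apart and into tracking where the hypothesis $\lambda\neq1$ is genuinely used (precisely when $pq=1$). The single step that is more than a direct substitution is the passage from $\gamma\alpha\varphi=\lambda(\gamma\alpha\varphi)\varphi^{pq-1}$ (and its mirror for $\varphi\beta\delta$) to the vanishing of $\gamma\alpha\varphi$: this is exactly the point where completeness of $\widehat{kQ}$ is invoked, to invert $1-\lambda\varphi^{pq-1}$ by a geometric series.
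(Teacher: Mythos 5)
Your proof is correct, and it is exactly the argument the paper has in mind: the paper skips this proof as ``completely parallel'' to the preceding $A_q(\lambda)$ lemma, and your relation chase—inverting $1-\lambda\varphi^{pq-1}$ by a geometric series in the complete algebra when $pq\ge 2$, and cancelling via $\lambda\ne 1$ when $p=q=1$—is precisely that parallel computation, with all identities ($\gamma\alpha\varphi=\psi\gamma\alpha=\varphi\beta\delta=\beta\delta\psi=0$, the nilpotency of the loops, the commutation of $\alpha\beta$ and $\delta\gamma$, and the three regimes) checking out.
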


\noindent
The proof is completely parallel to the proof of the previous lemma and is therefore skipped. 
\qed

\medskip
\medskip
\noindent
The main result of this section is the following

\begin{theorem}\label{p4.6}
(a) Let $R$ be a $T_{3, 2q +2}(\lambda)$--singularity, where $q \ge 2$ and $\lambda \in k^*$. 
Then in the triangulated category $\underline{\CM}(R)$ there exists a
cluster tilting object 
with the corresponding 2-CY-tilted algebra isomorphic to $A_q(\lambda)$. 

(b)For $R = T_{2p +2, 2q +2}(\lambda)$ the category
$\underline{\CM}(R)$ has a cluster tilting object with endomorphism algebra isomorphic to
$B_{p,q}(\lambda)$.
\end{theorem}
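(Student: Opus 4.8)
The plan is to make everything explicit via the Knörrer-type correspondence used earlier in the paper, then identify the endomorphism algebra by generators and relations. First I would pass to the three-dimensional hypersurface $T_{p,q,2,2}(\lambda)=k[[x,y,u,v]]/(x^p+y^q+\lambda x^2y^2+uv)$, which by Proposition \ref{5.1}(c),(d) is a $cA_2$- or $cA_3$-singularity, and which by Theorem \ref{4.7} (d)$\Leftrightarrow$(e) has $\ul{\CM}$ triangle equivalent to $\ul{\CM}$ of the curve singularity $R=T_{p,q}(\lambda)$. In the relevant cases the defining power series $g(x,y)=x^p+y^q+\lambda x^2y^2$ (or its cusp normal form) factors, after the coordinate changes recorded in Section \ref{main}, as a product $f_1\cdots f_n$ of irreducible series with $f_i\notin{\mathfrak m}^2$: namely $(x-y^2)(x-y^q)(x+y^q)$ when $p=3$ (so $n=3$), and $(x^p-y)(x^p+y)(x-y^q)(x+y^q)$ when $p,q$ are even (so $n=4$), as already displayed in the examples following Theorem \ref{XAB}. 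Hence Theorem \ref{XAA}(b) applies and $M=\bigoplus_{i=1}^n S_i$ with $S_i=S/(f_1\cdots f_i)$ is a cluster tilting object in $\CM(R)$; dropping the free summand $S_n=R$ gives a cluster tilting object in $\ul{\CM}(R)$.

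The core of the proof is then to compute $\ul{\End}_R(M)$. I would use Proposition 2.18 (the quiver description): the quiver of $\End_R(M)$ is the $A_n$-shaped double quiver $S_1\rightleftarrows S_2\rightleftarrows\cdots\rightleftarrows S_n$ with a loop at the interior vertex $S_i$ precisely when $(f_i,f_{i+1})\neq{\mathfrak m}$, plus a loop at $S_n$; after deleting $S_n$ for the stable algebra one obtains exactly the quivers $Q$ appearing in the definitions of $A_q(\lambda)$ (for $n=3$, i.e. the $p=3$ case, where $(f_1,f_2)=(x-y^2,x-y^q)\ne{\mathfrak m}$ gives the loop $\varphi$ and $(f_2,f_3)=(x-y^q,x+y^q)$ gives after identification of $S_3$ with $S_2$ a second loop $\psi$) and $B_{p,q}(\lambda)$ (for $n=4$). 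The loops and the mesh arrows are concretely the multiplication maps read off from Lemma \ref{XAF} and its proof (the almost split / $2$-almost split sequences $S_i\to S_{i+1}\oplus S_{i-1}(\oplus S_i)\to\cdots$). I would then read the relations directly from these exact sequences: the commutativity relations such as $\psi\alpha-\alpha\varphi$ and $\beta\psi-\varphi\beta$ come from the mesh relations at $S_1$, and the "socle" relations $\varphi^2-\beta\alpha$, $\psi^q-\lambda\alpha\beta$ (respectively $\varphi^p-\beta\alpha$, $\psi^q-\lambda\gamma\delta$, etc.) come from computing the composites of the maps in Lemma \ref{XAF}(b)/Lemma \ref{XAG} modulo $(f)$, the scalar $\lambda$ entering precisely through the coefficient of $x^2y^2$ in the normal form. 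The completion discussion and the Lemma on minimal systems of relations guarantee that once we have produced these relations and verified (via the nilpotency lemmas already proved for $A_q(\lambda)$ and $B_{p,q}(\lambda)$) that the resulting algebra is finite-dimensional of the correct dimension — matching $\dim_k\ul{\End}_R(M)$, which equals $\sum_{i<j}\dim_k\ul{\Hom}_R(S_i,S_j)$ and is computed from the Hom-formula $\Hom_R(S_i,S_j)\simeq (f_{i+1}\cdots f_j)/(f_1\cdots f_j)$ — no further relations are needed, so the map $\widehat{kQ}/I\to\ul{\End}_R(M)$ is an isomorphism.

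The main obstacle will be the explicit identification of the two loops (or the loop and the long path $\gamma\delta$) with the correct elements of $\End_R(M)$ together with pinning down the scalar $\lambda$: the almost split sequences only determine the mesh maps up to composition with automorphisms, so one must make a careful normalization of the generators $\varphi,\psi,\alpha,\beta$ (and $\gamma,\delta$) — and, in the $\lambda\ne1$ boundary cases $q=2$ and $p=q=1$, check that the degenerate value is genuinely excluded because $R$ itself degenerates there. A secondary, purely bookkeeping difficulty is verifying the cusp normal-form factorization $x^p+y^q+\lambda x^2y^2\ \sim\ (x^{p-2}-y^2)(x^2-y^{q-2})$ in $k[[x,y]]$ up to the allowed coordinate change, so that the hypotheses of Theorem \ref{XAA}(b) really hold on the nose; this is elementary but must be done to license everything above. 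Once these normalizations are fixed, all remaining verifications are the routine matrix computations over $S/(f)$ already illustrated in Section \ref{sec2}, and the finite-dimensionality is exactly the content of the two nilpotency lemmas preceding the theorem.
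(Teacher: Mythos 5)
Your existence step and your quiver identification coincide with the paper's: the cluster tilting object is $\bigoplus_i S_i$ obtained from the explicit factorizations via Theorem \ref{XAA}/Corollary \ref{5.3}, with the free summand dropped in $\ul{\CM}(R)$, exactly as in the paper. The gap lies in how you conclude that $\ul{\End}_R(M)$ is presented by precisely the listed relations. Your dimension-count shortcut is not licensed as stated: the formula $\Hom_R(S_i,S_j)\simeq(f_{i+1}\cdots f_j)/(f_1\cdots f_j)$ produces rank-one maximal Cohen--Macaulay modules, which are infinite-dimensional over $k$; what you need is $\dim_k\ul{\Hom}_R(S_i,S_j)$, so you must in addition compute the maps factoring through free modules, and the sum has to run over all ordered pairs $(i,j)$ (including $i=j$ and $i>j$), not just $i<j$. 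Computing those stable Hom spaces is exactly the explicit matrix-factorization work the paper carries out, and once it is done the algebra can be read off directly; moreover the nilpotency lemmas only give finite-dimensionality of $A_q(\lambda)$ and $B_{p,q}(\lambda)$, not the exact dimension (or a matching upper bound), so as written the step ``no further relations are needed'' does not close.

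More seriously, the relations cannot simply be read off from the mesh/2-almost split sequences up to the displayed normal form. With the natural choices of generators the paper finds, for instance, $\alpha\beta=2\psi^q$ when $q\ge4$, and $\beta\alpha=\varphi^2+\varphi^3$, $\alpha\beta=2\psi^3$ when $q=3$; reducing these to the defining relations of $A_q(\lambda)$ requires a genuine change of generators --- a rescaling argument for $q\ge4$, and for $q=3$ an invertible power-series substitution found by solving $zw=u^2(1+tu)$, $zw=2v^3$, $uw=vw$, $uz=vz$ in $k[[t]]$ --- while in the simply elliptic cases $q=2$ and $p=q=1$ one needs the explicit computation with $y(y-x^2)(y-\lambda x^2)$ and $xy(x-y)(x-\lambda y)$ to see that the modulus $\lambda$ enters with exactly the stated coefficient. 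You correctly flag this normalization as ``the main obstacle,'' but you give no mechanism for carrying it out; since that normalization (together with the explicit stable Hom computations) is precisely the nontrivial content of the paper's proof, your proposal has a genuine gap at its crux, even though its overall skeleton agrees with the paper.
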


\noindent
\textit{Proof}.
(a) We consider first the case of $T_{3, 2q +2}(\lambda)$. 
 
\medskip
\noindent
The coordinate ring of  $T_{3,6}(\lambda)$ is isomorphic to 
$$R = k[[x,y]]/(y(y-x^2)(y-\lambda x^2)),$$
 where $\lambda \ne 0,1$.  
Consider Cohen-Macaulay modules $M$ and $N$ 
given by  the two-periodic free resolutions 
$$
\left\{
\begin{array}{l}
M = (R \xrightarrow{y-x^2} R \xrightarrow{y(y-\lambda x^2)} R), \\
N = (R \xrightarrow{y(y-x^2)} R \xrightarrow{y-\lambda x^2} R).
\end{array}
\right.
$$
Then $M\oplus N$ is cluster tilting by Theorem \ref{XAA} or Corollary \ref{5.3}.
In order to compute the endomorphism algebra $\underline\End(M \oplus N)$, note  that 
$$
\underline\End(M) \cong k[\varphi]/\langle\varphi^4\rangle, 
$$
where $\varphi = (x,x)$ is an endomorphism of $M$ viewed as a two-periodic map of a free resolution. In $\underline\End(M)$ we have $(y,y) = (x,x)^2 = \varphi^2$.
Similarly, 
$$
\underline\End(N) \cong k[\psi]/\langle\psi^4\rangle, \text{ }\psi  = (x,x),\text{ } (y,y) = \lambda (x,x)^2 =  
\lambda \psi^2,
$$
and 
$$
\underline\Hom(M,N) = k^2 = \langle (1,y), (x, xy)\rangle, 
\quad 
\underline\Hom(N,M) = k^2 = \langle (y,1), (xy, x)\rangle.
$$
The isomorphism $A_2(\lambda) \lar \underline\End(M \oplus N)$ is given by
$$
\varphi \mapsto  (x,x), \psi\mapsto  (x,x),  \alpha \mapsto (1,y), \beta \mapsto
 (y,1).  
$$

\noindent
Assume  now $q \ge 3$ and $R = T_{3, 2q+2}$. By \cite{AGV} we may write  
$$R = k[[x,y]]/((x-y^2)(x^2 - y^{2q})).$$ 
Consider the Cohen-Macaulay module 
$M \oplus N$, where 
$$
\left\{
\begin{array}{l}
M = (R \xrightarrow{x-y^2} R \xrightarrow{x^2 - y^{2q}} R), \\
N = (R \xrightarrow{(x-y^2)(x+y^q)} R \xrightarrow{x-y^q} R).
\end{array}
\right.
$$
Again, by a straightforward calculation 
$$
\underline\End(M) \cong k[\varphi]/\langle\varphi^4\rangle, \text{ }\varphi = (y,y), 
\quad
\underline\End(N) \cong k[\psi]/\langle\psi^{q+2}\rangle, \text{ }\psi = (y,y)
$$
and 
$$
\underline\Hom(M, N) = k^2 = \langle (1, x+y^q), (y, y(x+y^q))\rangle,
$$
$$
\underline\Hom(M, N) = k^2 = \langle (x+y^q, 1), (y(x+y^q), y)\rangle.
$$
If $q \ge 4$ then $\underline\End(M \oplus N)$ is isomorphic to $\widehat{kQ}/I$, where
$$
Q= 
\xymatrix{
    \cdot \ar@(dl,ul)[]^{\varphi} \ar@<0.4ex>[r]^{\alpha}&
    \cdot\ar@<0.4ex>[l]^{\beta}  \ar@(dr,ur)[]_{\psi}\\ 
    }
$$ 
and the relations are
$$
\beta \alpha = \varphi^2, \alpha \beta = 2 \psi^q, \alpha \varphi = \psi \alpha, 
\varphi \beta = \beta \psi
$$
for 
$$
\varphi = (y,y), \psi = (y,y),  \alpha = (1, x+y^q), \beta = (x+y^q, 1).
$$
By rescaling all generators 
$\alpha \mapsto 2^a \alpha, \beta \mapsto 2^b \beta, \varphi \mapsto 2^f \varphi, 
\psi \mapsto 2^g \psi$ for properly chosen  $a,b,f,g \in \mathbb{Q}$ 
one can easily show $\underline\End(M \oplus N) \cong 
A_q$. 

\medskip
\medskip
The case $q = 3$ has to be considered separately, since this time  the relations are
$$
\beta \alpha = \varphi^2 + \varphi^3, \alpha \beta = 2 \psi^q, \alpha \varphi = \psi \alpha, 
\varphi \beta = \beta \psi.
$$
We claim that there exist invertible power series 
$u(t), v(t), w(t), z(t) \in k[[t]]$ such that the new
generators 
$$
\varphi' = u(\varphi) \varphi, \psi' = v(\psi)\psi, 
\alpha' = \alpha w(\varphi) = w(\psi) \alpha, 
\beta' = \beta z(\psi) = z(\varphi) \beta
$$
satisfy precisely the relations of the algebra  $A_3$. This is fulfilled provided 
we have the following equations  in $k[[t]]$:
$$
\left\{
\begin{array}{l}
zw = u^2(1+tu) \\
zw = 2v^3 \\
uw = vw \\
uz = vz.
\end{array}
\right.
$$
This system is equivalent to 
$$u(t) = v(t) = (2-t)^{-1} = \frac{1}{2}(1 + \frac{t}{2} + (\frac{t}{2})^2 + \dots)
$$
and hence the statement is proven. 

\medskip
\medskip
The case of $T_{2p+2, 2q+2}(\lambda)$ is essentially similar. For $p = q = 1$ we have 
$$R = k[[x,y]]/(xy(x-y)(x-\lambda y)).$$  
Take  
$$
\left\{
\begin{array}{l}
M = (R \xrightarrow{x-y} R \xrightarrow{xy(x-\lambda y)} R), \\
N = (R \xrightarrow{x(x-y)} R \xrightarrow{y(x-\lambda y)} R), \\
K = (R \xrightarrow{xy(x-y)} R \xrightarrow{x-\lambda y} R). \\
\end{array}
\right.
$$
By Theorem \ref{XAA} or Corollary \ref{5.3}, $M \oplus N \oplus K$
is cluster tilting.
Moreover,  $B_{1, 1}(\lambda)\simeq  \underline\End(M\oplus N \oplus K)$.
\medskip
\medskip

\noindent
Let now $$R = k[[x,y]]/((x^p -y)(x^p +y)(y^q -x)(y^q+x)),$$ 
where $(p,q) \ne (1,1)$ and 
$$
\left\{
\begin{array}{l}
M = (R \xrightarrow{x^p-y} R \xrightarrow{(y^q+x)(y^q-x)(x^p+y)} R), \\
N = (R \xrightarrow{(x^p-y)(x^p +y)} R \xrightarrow{(y^q-x)(y^q + x) } R), \\
K = (R \xrightarrow{(x^p-y)(x^p+y)(y^q+x)} R \xrightarrow{y^q-x} R).
\end{array}
\right.
$$
By Theorem \ref{XAA} or Corollary \ref{5.3}, $M \oplus N \oplus K$ is cluster tilting, and by a similar case-by-case analysis 
it can be verified that $\underline\End(M \oplus N \oplus K) \cong B_{p,q}$.
\qed 

\medskip

We have seen that  the algebras $A_q(\lambda)$ and $B_{p,q}(\lambda)$
are symmetric, and the indecomposable nonprojective modules have
$\tau$-period at most 2, hence $\Omega$-period dividing 4 since
$\tau=\Omega^2$ in this case. A direct computation shows that the
Cartan matrix is nonsingular. Note that these algebras appear in
Erdmann's list of algebras of \emph{quaternion type} \cite{Er}, see
also \cite{Sk}, that is, in addition to the above properties, the
algebras are tame. Note that for the corresponding algebras, more
relations are given in Erdmann's list. This has to do with the fact
that we are working with the completion, as discussed earlier. In our
case all relations correspond to different arrows in the quiver. The
simply elliptic ones also appear in Bia\l kowski-Skowro\'nski's list
of weakly symmetric tubular algebras with a nonsingular Cartan
matrix. 

This provides a link between some stable categories of maximal Cohen-Macaulay modules over isolated hypersurface singularities, and some classes of finite dimensional algebras, obtained via cluster tilting theory.

Previously a link between maximal Cohen-Macaulay modules and finite
dimensional algebras was given with the canonical algebras of Ringel,
via the categories ${\rm Coh}(\mathbb{X})$ of coherent sheaves on weighted
projective lines in the sense of Geigle-Lenzing \cite{GL}. Here the
category of vector bundles is equivalent to the category of
graded maximal Cohen-Macaulay modules with degree zero maps, over some
isolated singularity. And the canonical algebras are obtained as
endomorphism algebras of certain tilting objects in ${\rm
  Coh}(\mathbb{X})$ which are vector bundles. 

Note that it is known from work of Dieterich \cite{Di}, Kahn \cite{Ka}, Drozd and  Greuel \cite{DG} that minimally elliptic curve singularities  have tame Cohen-Macaulay representation type. Vice versa, any Cohen-Macaulay tame reduced hypersurface curve singularity is isomorphic to one of the $T_{p,q}(\lambda)$, see \cite{DG}. Moreover, simply elliptic singularities are tame of polynomial growth and cusp singularities are tame of exponential growth. Furthermore, the Auslander-Reiten quiver of the corresponding stable categories  of maximal Cohen-Macaulay modules  consists of tubes of rank one or two, see \cite[Th. 3.1]{Ka} and \cite[Cor. 7.2]{DGK}. 

It should follow from the tameness of $\CM(T_{3,p}(\lambda))$  and $\CM(T_{p,q}(\lambda))$ that the associated 2-CY tilted algebras are tame.

We point out that in the wild case we can obtain symmetric 2-CY tilted
algebras where the stable AR-quiver consists of tubes of rank one and
two, and most of them should be wild. It was previously known that
there are examples of wild selfinjective algebras whose AR-quivers
consist of tubes of rank one or three \cite{AR}. 

\section{Appendix: 2-CY triangulated categories of finite type}\label{finite}
In this section, we consider a more general situation than in section
\ref{additive}.
Let $k$ be an algebraically closed field and
$\mathcal{C}$ a $k$-linear connected 2-Calabi-Yau triangulated category
with only finitely many indecomposable objects.
We show that it follows from the shape of
the AR quiver of $\mathcal{C}$ whether cluster tilting objects
(respectively, non-zero rigid objects) exist in $\mathcal{C}$ or not.
Let us start with giving the possible shapes of the AR quiver of $\mathcal{C}$.
Recall that a subgroup $G$ of ${\rm Aut}({\bf Z}\Delta)$ is called {\it weakly admissible} if $x$ and $gx$ do not have a common direct successor for any vertex $x$ in ${\bf Z}\Delta$ and $g\in G\backslash\{1\}$ \cite{XZ,Am}.

\begin{proposition}\label{list}
The AR quiver of $\mathcal{C}$ is ${\bf Z}\Delta/G$
for a Dynkin diagram $\Delta$ and a weakly admissible subgroup $G$ of
${\rm Aut}({\bf Z}\Delta)$ which contains $F\in{\rm Aut}({\bf Z}\Delta)$
defined by the list below. Moreover, $G$ is generated by a single element
$g\in{\rm Aut}({\bf Z}\Delta)$ in the list below.
\[\begin{array}{|c|c|c|c|}\hline
\Delta&{\rm Aut}({\bf Z}\Delta)&F&g\\ \hline\hline
(A_n)\ n:\mbox{odd}&{\bf Z}\times{\bf Z}/2{\bf Z}&(\frac{n+3}{2},1)&
(k,1)\ (k|\frac{n+3}{2},\ \frac{n+3}{2k}\mbox{ is odd})\\ \hline
(A_n)\ n:\mbox{even}&{\bf Z}&n+3&
k\ (k|n+3)\\ \hline
(D_n)\ n:\mbox{odd}&{\bf Z}\times{\bf Z}/2{\bf Z}&(n,1)&
(k,1)\ (k|n)\\ \hline
(D_4)&{\bf Z}\times S_3&(4,0)&
(k,\sigma)\ (k|4,\ \sigma^{\frac{4}{k}}=1)\\ \hline
(D_n)\ n:\mbox{even},\ n>4&{\bf Z}\times{\bf Z}/2{\bf Z}&(n,0)&
(k,0)\ (k|n)\mbox{ or }(k,1)\ (k|n,\ \frac{n}{k}\mbox{ is even})\\ \hline
(E_6)&{\bf Z}\times{\bf Z}/2{\bf Z}&(7,1)&
(1,1)\mbox{ or }(7,1)\\ \hline
(E_7)&{\bf Z}&10&
1,2,5\mbox{ or }10\\ \hline
(E_8)&{\bf Z}&16&
1,2,4,8\mbox{ or }16\\ \hline
\end{array}\]
In each case, elements in the torsion part of ${\rm Aut}({\bf Z}\Delta)$
are induced by the automorphism of $\Delta$.
The torsionfree part of ${\rm Aut}({\bf Z}\Delta)$ is generated by $\tau$
except the case $(A_n)$ with even $n$, in which case it is generated by
the square root of $\tau$.
\end{proposition}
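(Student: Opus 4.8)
The strategy is to combine the known structure theory of triangulated categories of finite representation type with the constraint imposed by the $2$-Calabi--Yau property, and then to run a short case analysis over the Dynkin types.

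First I would use that $\mathcal{C}$, being a connected $k$-linear triangulated category with only finitely many indecomposables, is Hom-finite with Serre duality and hence has AR triangles; by the results of Xiao--Zhu \cite{XZ} and Amiot \cite{Am} its AR quiver is therefore of the form $\mathbf{Z}\Delta/G$ for a Dynkin diagram $\Delta$ and a weakly admissible subgroup $G$ of $\operatorname{Aut}(\mathbf{Z}\Delta)$. Moreover $\mathcal{C}$ is algebraic and standard in the situations of interest, so by Amiot's theorem it is triangle equivalent to an orbit category $\mathcal{D}^b(k\Delta)/\langle\Phi\rangle$ of the bounded derived category, with $G$ the image of $\langle\Phi\rangle$ in $\operatorname{Aut}(\mathbf{Z}\Delta)$. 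Being the image of a cyclic group, $G$ is cyclic; this already gives the last assertion of the proposition apart from identifying the generator. (Alternatively one checks directly that a weakly admissible subgroup of $\operatorname{Aut}(\mathbf{Z}\Delta)$ is automatically cyclic, which also drops out of the case analysis below.)

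Second I would extract the consequence of $2$-CY-ness. In any Hom-finite triangulated category with Serre functor $S$ and shift $\Sigma$ the AR translation satisfies $\tau = S\circ\Sigma^{-1}$; the hypothesis $S = \Sigma^2$ thus forces $\tau = \Sigma$ as autoequivalences of $\mathcal{C}$. Under the orbit-category identification $\Sigma$ is induced by the shift $[1]$ of $\mathcal{D}^b(k\Delta)$ while $\tau$ is induced by the translation of $\mathbf{Z}\Delta$, so $\tau = \Sigma$ in $\mathcal{C}$ means precisely that $F := \tau^{-1}\circ[1]$, viewed in $\operatorname{Aut}(\mathbf{Z}\Delta)$, lies in $G$. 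To make $F$ explicit I would plug in the standard description of the shift on $\mathbf{Z}\Delta$, namely $[1] = \tau^{-h/2}\circ\phi$ with $h$ the Coxeter number of $\Delta$ and $\phi=-w_0$ the induced diagram automorphism (trivial except for $A_n$ with $n\ge 2$, $D_n$ with $n$ odd, and $E_6$), interpreted through the square root $\rho$ of $\tau$ as $[1]=\rho^{-h}$ in the one case where $h$ is odd, namely $A_n$ with $n$ even. This yields $F = \tau^{-(h+2)/2}\circ\phi$ (respectively $F = \rho^{-(n+3)}$), and substituting $h=n+1$ for $A_n$, $h=2n-2$ for $D_n$, and $h=12,18,30$ for $E_6,E_7,E_8$ reproduces, up to the convention for the direction of $\tau$, exactly the element $F$ recorded in each row of the table.

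Third I would compute $\operatorname{Aut}(\mathbf{Z}\Delta)$ in each type --- it is generated by $\tau$, or by its square root $\rho$ for $A_n$ with $n$ even, together with the group of graph automorphisms of $\Delta$, giving $\mathbf{Z}$ for $A_n$ ($n$ even), $E_7$ and $E_8$; $\mathbf{Z}\times\mathbf{Z}/2\mathbf{Z}$ for $A_n$ ($n$ odd), $D_n$ ($n$ odd, or $n$ even and $>4$) and $E_6$; and $\mathbf{Z}\times S_3$ for $D_4$ --- and then enumerate the weakly admissible cyclic subgroups $G=\langle g\rangle$ containing $F$. Here weak admissibility excludes every ``small'' generator: a nontrivial graph automorphism fixing a vertex (the flip of $A_n$ for $n$ odd, or any nontrivial element of $S_3$ acting on $D_4$) is not weakly admissible, since a fixed vertex and its image trivially share a direct successor, and the order-$3$ symmetry $\sigma$ of $D_4$ carries a vertex $x$ and its image $\sigma(x)$ to two outer vertices having the central vertex as a common direct successor. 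Reading off the automorphisms of $\mathbf{Z}\Delta$ directly, these obstructions leave precisely the generators $g$ listed in the table, together with the divisibility and order conditions guaranteeing $F\in\langle g\rangle$. I expect the only real work to be this last bookkeeping, and in particular the two exceptional phenomena --- the ``half-translation'' $\rho$ for $A_n$ with $n$ even and the triality $S_3$ for $D_4$ --- rather than anything conceptual; the conceptual input is entirely contained in the orbit-category description together with the identity $\tau=\Sigma$.
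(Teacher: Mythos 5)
Your proposal follows the same skeleton as the paper's proof: invoke Xiao--Zhu and Amiot for the shape $\mathbf{Z}\Delta/G$ with $G$ weakly admissible, use the $2$-CY property to force the element $F$ into $G$, observe that $G$ is cyclic, and then do the case-by-case bookkeeping; your computation of $F$ via $[1]=\tau^{-h/2}\phi$ and the Coxeter numbers $h=n+1,\ 2n-2,\ 12,\ 18,\ 30$, your description of $\operatorname{Aut}(\mathbf{Z}\Delta)$, and your weak-admissibility exclusions (a graph automorphism fixing a vertex, the torsion elements of $\mathbf{Z}\times S_3$ for $D_4$) all reproduce the table. The one genuine weak point is how you justify the two structural facts --- that $G$ is cyclic and, more importantly, that the suspension of $\mathcal{C}$ is induced on the cover $\mathbf{Z}\Delta$ by the derived shift, so that $\tau\simeq\Sigma$ translates into $F=\tau^{-1}\circ[1]\in G$: you get both by declaring $\mathcal{C}$ ``algebraic and standard in the situations of interest'' and invoking the orbit-category theorem. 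Neither algebraicity nor standardness is among the hypotheses of the proposition, which concerns an arbitrary $k$-linear connected $2$-CY triangulated category with finitely many indecomposables, and the triangle equivalence with $\mathcal{D}^b(k\Delta)/\langle\Phi\rangle$ is only available under those extra assumptions; so as written your main route proves the statement only for a restricted class of categories (enough for the paper's applications to $\ul{\CM}(R)$, but not for the proposition as stated).

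The paper avoids this. Cyclicity of $G$ is the purely combinatorial statement that a weakly admissible subgroup of $\operatorname{Aut}(\mathbf{Z}\Delta)$ is generated by a single element, cited as \cite[2.2.1]{Am}; your parenthetical fallback (such a subgroup meets the torsion part trivially, hence embeds into $\mathbf{Z}$) is exactly this and is fine. And the membership $F\in G$ is deduced directly from the functorial isomorphism $\tau\simeq\Sigma$ supplied by $2$-CY, combined with the quiver-level description of the suspension action coming from the cited combinatorial results, rather than from an orbit-category presentation of $\mathcal{C}$ itself. To close the gap you should either add the hypotheses you are actually using, or replace the orbit-category identification by the statement you really need, namely that $\Sigma$ acts on the AR quiver $\mathbf{Z}\Delta/G$ as the descent of $\tau^{-h/2}\phi$, so that $\tau$ and $\tau^{-h/2}\phi$ are two lifts of the same automorphism of $\mathbf{Z}\Delta/G$ and hence differ by a deck transformation, i.e.\ $F\in G$. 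With that substitution the rest of your argument, including the divisibility and parity conditions guaranteeing $F\in\langle g\rangle$, is the paper's proof.
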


\begin{proof}
By \cite{XZ} (see also \cite[4.0.4]{Am}),
the AR quiver of $\mathcal{C}$ is ${\bf Z}\Delta/G$
for a Dynkin diagram $\Delta$ and a weakly admissible subgroup $G$ of
${\rm Aut}({\bf Z}\Delta)$.
Since $\mathcal{C}$ is 2-Calabi-Yau, $G$ contains $F$.
By \cite[2.2.1]{Am}, $G$ is generated by a single element $g$.
By the condition $F\in\langle g\rangle$, we have the above list.
\end{proof}

Note that, by a result of Keller \cite{Ke}, 
the translation quiver ${\bf Z}\Delta/G$
for any Dynkin diagram $\Delta$ and any weakly admissible group $G$ of ${\rm Aut}({\bf Z}\Delta)$
is realized as the AR quiver of a triangulated orbit category $\mathcal{D}^b(H)/g$
for a hereditary algebra $H$ of type $\Delta$ and some autofunctor $g$ of $\mathcal{D}^b(H)$.

\begin{theorem}
(1) $\mathcal{C}$ has a cluster tilting object if and only if
the AR quiver of $\mathcal{C}$ is ${\bf Z}\Delta/g$
for a Dynkin diagram $\Delta$ and
$g\in{\rm Aut}({\bf Z}\Delta)$ in the list below.
\[\begin{array}{|c|c|c|}\hline
\Delta&{\rm Aut}({\bf Z}\Delta)&g\\ \hline\hline
(A_n)\ n:\mbox{odd}&{\bf Z}\times{\bf Z}/2{\bf Z}&
(\frac{n+3}{6},1)\ (3|n)\mbox{ or }(\frac{n+3}{2},1)\\ \hline
(A_n)\ n:\mbox{even}&{\bf Z}&
\frac{n+3}{3}\ (3|n)\mbox{ or }n+3\\ \hline
(D_n)\ n:\mbox{odd}&{\bf Z}\times{\bf Z}/2{\bf Z}&
(k,1)\ (k|n)\\ \hline
(D_4)&{\bf Z}\times S_3&
(k,\sigma)\ (k|4,\ \sigma^{\frac{4}{k}}=1,\ (k,\sigma)\neq(1,1))\\ \hline
(D_n)\ n:\mbox{even},\ n>4&{\bf Z}\times{\bf Z}/2{\bf Z}&
(k,\overline{k})\ (k|n)\\ \hline
(E_6)&{\bf Z}\times{\bf Z}/2{\bf Z}&
(7,1)\\ \hline
(E_7)&{\bf Z}&
10\\ \hline
(E_8)&{\bf Z}&
8\mbox{ or }16\\ \hline
\end{array}\]

(2) $\mathcal{C}$ does not have a non-zero rigid object if and only if
the AR quiver of $\mathcal{C}$ is ${\bf Z}\Delta/g$
for a Dynkin diagram $\Delta$ and
$g\in{\rm Aut}({\bf Z}\Delta)$ in the list below.
\[\begin{array}{|c|c|c|}\hline
\Delta&{\rm Aut}({\bf Z}\Delta)&g\\ \hline\hline
(A_n)\ n:\mbox{odd}&{\bf Z}\times{\bf Z}/2{\bf Z}&
-\\ \hline
(A_n)\ n:\mbox{even}&{\bf Z}&
1\\ \hline
(D_n)\ n:\mbox{odd}&{\bf Z}\times{\bf Z}/2{\bf Z}&
-\\ \hline
(D_4)&{\bf Z}\times S_3&
(1,1)\\ \hline
(D_n)\ n:\mbox{even},\ n>4&{\bf Z}\times{\bf Z}/2{\bf Z}&
(1,0)\\ \hline
(E_6)&{\bf Z}\times{\bf Z}/2{\bf Z}&
(1,1)\\ \hline
(E_7)&{\bf Z}&
1\\ \hline
(E_8)&{\bf Z}&
1\mbox{ or }2\\ \hline
\end{array}\]
\end{theorem}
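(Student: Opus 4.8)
The plan is to read both statements off the combinatorics of the translation quiver $\mathbf{Z}\Delta/\langle g\rangle$ provided by Proposition~\ref{list}. By Keller's realisation quoted after that proposition (and, where a cluster tilting object is present, Amiot's structure theorem), we may take $\mathcal{C}$ to be the standard orbit category $\mathcal{D}^b(k\Delta)/g$, so that the covering functor $\Pi\colon k(\mathbf{Z}\Delta)\to\mathcal{C}$ is available. Fixing lifts $\tilde X,\tilde Y\in\mathbf{Z}\Delta$ of indecomposables $X,Y$, and using $\Sigma=\tau$ in a 2-CY category, one has
$$\Ext^1_{\mathcal{C}}(X,Y)\ \cong\ \Hom_{\mathcal{C}}(X,\tau Y)\ \cong\ \bigoplus_{i\in\mathbf{Z}}\Hom_{k(\mathbf{Z}\Delta)}(\tilde X,\,g^i\tau\tilde Y),$$
where each summand is $0$- or $1$-dimensional and is nonzero exactly when $g^i\tau\tilde Y$ lies in the forward hammock $H^+(\tilde X)$ of $\tilde X$ in $\mathbf{Z}\Delta$. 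As these hammocks are explicitly known for every Dynkin $\Delta$, rigidity becomes a purely combinatorial condition, and I would phrase everything in terms of $g$-stable configurations of vertices of $\mathbf{Z}\Delta$.

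First I would classify the indecomposable rigid objects of $\mathcal{C}$ for every pair $(\Delta,g)$ in Proposition~\ref{list}: an indecomposable $X$ is rigid iff $g^i\tau\tilde X\notin H^+(\tilde X)$ for all $i\in\mathbf{Z}$. This is the hammock bookkeeping of Section~\ref{additive}, run uniformly over the divisor parameter. Part~(2) then follows by inspection: $\mathcal{C}$ has no nonzero rigid object exactly for those pairs where this check fails for every $X$, and a short case analysis identifies these with the rows of the second table. (For example, if $\Delta=A_n$ with $n$ even and $g=1$, then $\tau=\mathrm{id}$ on $\mathcal{C}$, so $\Ext^1(X,X)\cong\End(X)\ne0$ for every indecomposable $X$.)

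For part~(1) I would use a Galois-covering reduction. Since $\langle F\rangle\subseteq\langle g\rangle$, the quotient $\mathbf{Z}\Delta/\langle F\rangle\to\mathbf{Z}\Delta/\langle g\rangle$ exhibits the cluster category $\mathcal{C}_\Delta=\mathcal{D}^b(k\Delta)/F$ as a finite Galois cover of $\mathcal{C}$ with cyclic group $\langle g\rangle/\langle F\rangle=\langle\phi\rangle$. Using the covering functor one checks that the full preimage of a rigid object is rigid and $\phi$-stable, that a $\phi$-stable cluster tilting object of $\mathcal{C}_\Delta$ pushes down to a cluster tilting object of $\mathcal{C}$, and conversely that the full preimage of a cluster tilting object of $\mathcal{C}$ is a $\phi$-stable cluster tilting object of $\mathcal{C}_\Delta$; hence $\mathcal{C}$ has a cluster tilting object iff $\mathcal{C}_\Delta$ has a $\phi$-stable one. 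Now $\phi$ acts on the standard combinatorial models of the clusters of $\mathcal{C}_\Delta$ --- triangulations of the $(n+3)$-gon for $A_n$ (where $\phi$ is a rotation, the graph automorphism being absorbed in passing from $\mathcal{D}^b$ to $\mathcal{C}_\Delta$), the punctured polygon model for $D_n$ (where $\phi$ may in addition involve a reflection or tagging swap, and for $D_4$ the exceptional $S_3$), and the explicit finite list of clusters for $E_6,E_7,E_8$ --- and the symmetries that occur are precisely those permitted by Proposition~\ref{list}. Whether a $\phi$-symmetric cluster exists is decided by an orbit count: $\phi$ permutes the triangles (resp.\ the quadrilaterals in the type-$D$ model) of a symmetric cluster with orbit sizes dividing the order $d$ of $\phi$, a single cell can be $\phi^j$-fixed for $\phi^j\ne\mathrm{id}$ only when $\phi^j$ has order $3$ (essentially the equilateral or puncture-centred cell), and the total number of cells is congruent to a fixed residue modulo $d$; comparing these constraints with $d\mid(n+3)$ pins down exactly the ``yes'' rows and yields a contradiction for the others. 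For instance, for $A_n$ odd the admissible $\phi$ have odd order $d$; the number of triangles is $(n+3)-2$, which is not divisible by $d$ once $d\ge3$, and $\phi$ fixes a triangle only when $d=3$ (the central equilateral one, which forces $3\mid n$), so the surviving cases $d=1$ and $d=3$ (the latter requiring $3\mid n$) are exactly those in the table --- in the latter, the central triangle together with a symmetrised triangulation of the three outer pieces is an explicit cluster tilting object.

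In the ``yes'' rows I would record the explicit $\phi$-symmetric cluster and cross-check against Lemma~\ref{L1.2}(d): if $\mathcal{C}$ has $n$ indecomposables and the constructed $T$ has $t$ summands, then $\End_{\mathcal{C}}(T)$ must be representation-finite with exactly $n-t$ indecomposables. The main obstacle I anticipate is the bookkeeping in types $D$ and $E$. For $D_4$ one must handle the exceptional $S_3$ of graph automorphisms (the $(k,\sigma)$ entries), where rotations interact with the leg-permuting symmetries of the type-$D$ model; and for $E_6,E_7,E_8$ the orbit-count heuristic needs to be supplemented by --- or verified against --- the actual finite list of clusters and the action of $\phi$ on it, which is where an off-by-one error or a missed exceptional configuration is most likely. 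A direct machine computation of the relevant $\Ext^1$-spaces, along the lines of Section~\ref{sec2}, would be the natural safeguard there.
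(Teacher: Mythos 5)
Your proposal is correct and is essentially the paper's own argument: the paper also works entirely on the AR quiver ${\bf Z}\Delta/\langle g\rangle$ from Proposition~\ref{list}, computes the relevant $\Ext^1$-spaces by the covering/additive-function method of section~\ref{additive}, settles (2) by inspection at the end vertices, and proves (1) by exhibiting explicit $g$-stable configurations in ${\bf Z}\Delta$ for the listed $g$ (e.g.\ the $(1,1)$-orbit of an end vertex in type $D_n$) together with the CCS polygon-dissection model and the same centre-of-the-polygon rotation argument in type $A$. Your Galois-cover reformulation via $\phi$-stable cluster tilting objects in the cluster category $\mathcal{C}_\Delta$ is only a repackaging of the paper's direct use of $g$-invariant dissections, and the finite case-checking you defer in types $D_4$ and $E$ matches the paper's own ``one can check'' level of verification.
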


\begin{proof}
Our method is based on the computation of additive functions in
section \ref{additive}. We refer to \cite[Section 4.4]{I1} for detailed explanation.

(1) Assume that $g$ is on the list.
Then one can check that $\mathcal{C}$ has a cluster tilting object.
For example, consider the $(D_n)$ case here. Fix a vertex $x\in{\bf Z}\Delta$
corresponding to an end point of $\Delta$ which is adjacent
to the branch vertex of $\Delta$. Then the subset
$\{(1,1)^lx\ |\ l\in{\bf Z}\}$ of ${\bf Z}\Delta$ is stable under the action
of $g$, and gives a cluster tilting object of $\mathcal{C}$.

Conversely, assume that $\mathcal{C}$ has a cluster tilting object.
Then one can check that $g$ is on the list.
For example, consider the $(A_n)$ case with even $n$ here.
By \cite{CCS,I1}, cluster tilting objects correspond to
dissections of a regular $(n+3)$-polygon into triangles
by non-crossing diagonals. The action of $g$ shows that
it is invariant under the rotation of $\frac{2k\pi}{n+3}$-radian.
Since the center of the regular $(n+3)$-polygon is contained in
some triangle or its edge, we have $\frac{2k\pi}{n+3}=2\pi,\ \frac{4\pi}{3},
\ \pi$ or $\frac{2\pi}{3}$.
Since $k|n+3$ and $n$ is even, we have $k=n+3$ or $\frac{n+3}{3}$.

(2) If $g$ is on the list above, then one can easily check that $\mathcal{C}$
does not have non-zero rigid objects.
Conversely, if $g$ is not on the list, then one can easily check that
at least one indecomposable object which corresponds
to an end point of $\Delta$ is rigid.
\end{proof}

\end{document}